\definecolor{my-linkcolor}{rgb}{0.75,0,0}
\definecolor{my-citecolor}{rgb}{0.1,0.57,0}
\definecolor{my-urlcolor}{rgb}{0,0,0.75}
\title[Virasoro tensor categories]{Structure of Virasoro tensor categories at central charge $13-6p-6p^{-1}$ for integers $p > 1$}
 \author{Robert McRae and Jinwei Yang}
\date{}
 \address{Yau Mathematical Sciences Center, Tsinghua University, Beijing 100084, China}
  \email{rhmcrae@tsinghua.edu.cn}
 \address{School of Mathematical Sciences, Shanghai Jiao Tong University, Shanghai 200240, China}
 \email{jinwei2@sjtu.edu.cn}
 \subjclass{Primary 17B68, 17B69, 18M15, 81R10}
\newtheorem{thm}{Theorem}[section]
\newtheorem{cor}[thm]{Corollary}
\newtheorem{lem}[thm]{Lemma}
\newtheorem{prop}[thm]{Proposition}
\theoremstyle{definition}\newtheorem{defi}[thm]{Definition}
\theoremstyle{definition}\newtheorem{rem}[thm]{Remark}
\theoremstyle{definition}
\theoremstyle{definition}
\newcommand{\cE}{\mathcal{E}}
\newcommand{\cY}{\mathcal{Y}}
\newcommand{\cU}{\mathcal{U}}
\newcommand{\cV}{\mathcal{V}}
\newcommand{\cA}{\mathcal{A}}
\newcommand{\cR}{\mathcal{R}}
\newcommand{\cM}{\mathcal{M}}
\newcommand{\cX}{\mathcal{X}}
\newcommand{\cF}{\mathcal{F}}
\newcommand{\cI}{\mathcal{I}}
\newcommand{\cJ}{\mathcal{J}}
\newcommand{\cL}{\mathcal{L}}
\newcommand{\cO}{\mathcal{O}}
\newcommand{\cP}{\mathcal{P}}
\newcommand{\cC}{\mathcal{C}}
\newcommand{\cG}{\mathcal{G}}
\newcommand{\cW}{\mathcal{W}}
\newcommand{\cZ}{\mathcal{Z}}
\newcommand{\til}{\widetilde}
\newcommand{\CC}{\mathbb{C}}
\newcommand{\ZZ}{\mathbb{Z}}
\newcommand{\NN}{\mathbb{N}}
\newcommand{\RR}{\mathbb{R}}
\newcommand{\QQ}{\mathbb{Q}}
\newcommand{\Id}{\mathrm{Id}}
\newcommand{\tens}{\boxtimes}
\newcommand{\vac}{\mathbf{1}}
\newcommand{\ind}{\mathrm{Ind}}
 \DeclareMathOperator{\im}{Im}
 \DeclareMathOperator{\coker}{Coker}
 \DeclareMathOperator{\tr}{Tr}
 \DeclareMathOperator{\rep}{Rep}
 \let\ker\relax
 \let\hom\relax
 \DeclareMathOperator{\ker}{Ker}
 \DeclareMathOperator{\hom}{Hom}
\begin{document}
\bibliographystyle{alpha}

\numberwithin{equation}{section}

 \begin{abstract}
 Let $\cO_c$ be the category of finite-length central-charge-$c$ modules for the Virasoro Lie algebra whose composition factors are irreducible quotients of reducible Verma modules. Recently, it has been shown that $\cO_c$ admits vertex algebraic tensor category structure for any $c\in\CC$. Here, we determine the structure of this tensor category when $c=13-6p-6p^{-1}$ for an integer $p>1$. For such $c$, we prove that $\cO_{c}$ is rigid, and we construct projective covers of irreducible modules in a natural tensor subcategory $\cO_{c}^0$. We then compute all tensor products involving irreducible modules and their projective covers.
Using these tensor product formulas, we show that $\cO_c$ has a semisimplification which, as an abelian category, is the Deligne product of two tensor subcategories that are tensor equivalent to the Kazhdan-Lusztig categories for affine $\mathfrak{sl}_2$ at levels $-2+p^{\pm 1}$. Next, as a straightforward consequence of the braided tensor category structure on $\cO_c$ together with the theory of vertex operator algebra extensions, we rederive known results for triplet vertex operator algebras $\cW(p)$, including rigidity, fusion rules, and construction of projective covers. Finally, we prove a recent conjecture of Negron that $\cO_c^0$ is braided tensor equivalent to the $PSL(2,\CC)$-equivariantization of the category of $\cW(p)$-modules.
\end{abstract}

\maketitle

\tableofcontents

\section{Introduction}

The Virasoro algebra $\cV ir$ is the unique non-trivial one-dimensional central extension of the Lie algebra of polynomial vector fields on the circle. It is foundational in algebraic approaches to two-dimensional conformal field theory, and it is the source of one of the first-constructed families of vertex operator algebras \cite{FZ1}. As with all Lie algebras, the full category of $\cV ir$-modules is a symmetric tensor category, but for applications in physics, one restricts to categories of $\cV ir$-modules with a fixed central charge: this is the scalar by which the canonical central element of $\cV ir$ acts. The correct tensor product operation on such categories then becomes the fusion product of conformal field theory, which can be defined mathematically in terms of vertex algebraic intertwining operators (see for example \cite{HLZ3}).

At central charge $c = c_{p,q}= 13-6(\frac{p}{q} +\frac{q}{p})$ for $p, q \geq 2$ and $\gcd(p, q) = 1$, the $\cV ir$-module category of primary interest, corresponding to ``minimal models'' in rational conformal field theory \cite{BPZ}, is the representation category of the simple Virasoro vertex operator algebra $V_c$. The algebra $V_c$ is rational \cite{Wa} and $C_2$-cofinite \cite{Zh, DLM}, and thus its representations form a modular tensor category \cite{Hu_Vir_tens, Hu_rigid}. For all other central charges, however, the Virasoro vertex operator algebras are neither rational nor $C_2$-cofinite, and only recently has there been much progress in understanding the tensor structure of their representations.

In \cite{CJORY}, it was shown that for any $c\in\CC$, the category $\cO_c$ of $C_1$-cofinite grading-restricted generalized modules for the universal Virasoro vertex operator algebra of central charge $c$ is the same as the category of finite-length $\cV ir$-modules whose composition factors are irreducible quotients of reducible Verma modules of central charge $c$. As a consequence, it was shown that $\cO_c$ satisfies the conditions of Huang-Lepowsky-Zhang's vertex tensor category theory \cite{HLZ1}-\cite{HLZ8}, and thus $\cO_c$ is a braided tensor category as described in \cite{HLZ8}. Some details of the tensor structure on $\cO_c$ are known for the following $c$:
\begin{enumerate}
 \item For $c=13-6t-6t^{-1}$ with $t\notin\QQ$, it was shown in \cite{CJORY} that $\cO_c$ is a rigid semisimple tensor category, with tensor products of irreducible modules given by the fusion rules calculated previously in \cite{FZ2} using a Zhu algebra approach.

 \item For $c=1$, tensor products of simple modules in $\cO_1$ were determined in \cite{McR} using the fusion rule calculations of \cite{Mi}, and it was shown in \cite[Remark 4.4.6]{CMY2} using results from \cite{McR} that $\cO_1$ is rigid. The full category $\cO_1$ is not semisimple, but its simple objects generate a semisimple tensor subcategory, namely, the category of $C_1$-cofinite unitary modules for the unitary vertex operator algebra $V_1$.

 \item For $c=13-6p-6p^{-1}$ with $p > 1$ an integer and for $c=25$, fusion rules for irreducible modules in $\cO_c$ were calculated in \cite{Lin} and \cite{OH}, respectively. However, since these categories are not semisimple, fusion rules are not enough to identify tensor products of irreducible modules in $\cO_c$. Rigidity for these categories has also remained open.

\end{enumerate}

In this work, we present a comprehensive analysis of the tensor category $\cO_c$ at central charge $c=c_{p,1}=13-6p-6p^{-1}$ for integers $p > 1$; especially, we prove rigidity and compute all tensor products of irreducible modules. The simple Virasoro vertex operator algebras $V_c$ at these central charges occur as subalgebras of many of the best-known vertex operator algebras in logarithmic conformal field theory, including the singlet algebras \cite{Ka, A, AM_log_intw, CF, CMR, CMY2}, triplet algebras \cite{FHST, FGST1, FGST2, GR, AM_trip, AM_log_mods, NT, TW, CGR}, and logarithmic $\mathcal{B}_p$ algebras \cite{CRW, AuCKR, ACGY}. Reflecting the non-semisimplicity of the Virasoro zero-mode $L_0$ in logarithmic conformal field theory (which leads to logarithmic singularities in correlation functions), the Virasoro categories $\cO_{c_{p,1}}$ are neither semisimple nor finite.

Although the singlet and triplet algebra extensions of $V_c$ have been studied fairly extensively by mathematicians, most work on the Virasoro algebra itself at central charge $13-6p-6p^{-1}$ has appeared in the physics literature, in the study of ``logarithmic minimal models'' denoted $\mathcal{LM}(1,p)$. Starting with work of Gaberdiel and Kausch \cite{GaK}, indecomposable modules at these central charges have been constructed and fusion products have been predicted using a variety of methods \cite{PRZ, RP, RS, KyR, BFGT, BGT, Ra, MRR}. Comparison of these works with our results summarized in Theorem \ref{thm:main_thm} below shows that the vertex algebraic tensor category $\mathcal{O}_c$ can be viewed as a rigorous mathematical setting for logarithmic minimal models. For example, the formula in Theorem \ref{thm:main_thm}(3) for the tensor product of irreducible $V_c$-modules agrees with the fusion product conjecture in \cite[Equation 4.1]{GaK}. More precisely, the mathematics of $\mathcal{LM}(1,p)$ is captured by the tensor structure on the subcategory $\mathcal{O}_c^0$ of $\cO_c$ mentioned in Theorem \ref{thm:main_thm}(2), which we introduced in order to obtain projective covers of irreducible modules. This turns out to be the smallest tensor subcategory of $\cO_c$ that contains all irreducible modules.

At central charge $c=c_{p,1}$, the Virasoro category $\cO_{c}$ has simple modules labeled $\cL_{r,s}$ for $r,s\in\ZZ$ such that $r\geq 1$ and $1\leq s\leq p$. Tensor products of these $V_c$-modules are described in the following theorem, which summarizes our main results:
\begin{thm}\label{thm:main_thm}
Let $V_c$ denote the simple Virasoro vertex operator algebra of central charge $c=13-6p-6p^{-1}$ for an integer $p > 1$. Then:
 \begin{enumerate}
  \item The tensor category $\cO_c$ of $C_1$-cofinite grading-restricted generalized $V_c$-modules is rigid and ribbon, with duals given by the contragredient modules of \cite{FHL} and natural twist isomorphism $\theta=e^{2\pi iL_0}$.

  \item Every irreducible module $\cL_{r,s}$ in $\cO_c$ has a projective cover $\cP_{r,s}$ in a natural tensor subcategory $\cO_c^0$ of $\cO_c$.

  \item Tensor products of the irreducible modules in $\cO_c$ are as follows: 
  \begin{equation*}
   \cL_{r,s}\boxtimes \cL_{r',s'}   \cong \bigoplus_{\substack{k = |r-r'|+1\\ k+r+r' \equiv 1\; ({\rm mod}\; 2)}}^{r+r'-1} \bigg(\bigoplus_{\substack{\ell = |s-s'|+1\\ \ell+s+s' \equiv 1\; (\mathrm{mod}\; 2)}}^{\min(s+s'-1, 2p-1-s-s')} \cL_{k, \ell} \oplus \bigoplus_{\substack{\ell = 2p+1-s-s'\\ \ell+s+s' \equiv 1\; (\mathrm{mod}\; 2)}}^{p} \cP_{k, \ell}\bigg)
  \end{equation*}
for $r, r'\geq 1$ and $1\leq s,s'\leq p$, with sums taken to be empty if the lower bound exceeds the upper bound.
 \end{enumerate}
\end{thm}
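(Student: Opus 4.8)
The three statements are so tightly linked that I would prove them in a single bootstrap, taking as the fundamental building blocks the two simplest nontrivial simple modules $\cL_{2,1}$ and $\cL_{1,2}$, which generate $\cO_c^0$ under tensor products. Each is a degenerate module carrying a singular vector at level two, so by the standard Belavin--Polyakov--Zamolodchikov mechanism every intertwining operator with one leg in $\cL_{2,1}$ or $\cL_{1,2}$ is annihilated by a second-order differential operator, and the associated four-point functions solve a hypergeometric ODE. I would first use this to pin down the \emph{elementary} fusions $\cL_{2,1}\boxtimes\cL_{r,s}$ and $\cL_{1,2}\boxtimes\cL_{r,s}$, together with the associativity isomorphisms for triple products containing a generator. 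The $r$-direction (fusion with $\cL_{2,1}$) should come out semisimple, reproducing untruncated $\mathfrak{sl}_2$ Clebsch--Gordan, while the $s$-direction (fusion with $\cL_{1,2}$) stays semisimple as long as all channels remain strictly below the top index $p$, but acquires indecomposable summands as soon as a channel would reach $\ell=p$. Identifying those summands is the first place the non-semisimplicity of $\cO_c$ intervenes.

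Next I would establish rigidity of the two generators. Both are self-contragredient (via the duality of \cite{FHL}), so I take $\cL_{2,1}^*\cong\cL_{2,1}$ and $\cL_{1,2}^*\cong\cL_{1,2}$; since the unit $\cL_{1,1}=V_c$ occurs with multiplicity one in each self-fusion, the evaluation and coevaluation maps are unique up to scalar and the rigidity composite is multiplication by a single ``quantum dimension.'' The content is to show this scalar is nonzero, which I would extract from the explicit connection and monodromy coefficients of the hypergeometric solutions (ratios of Gamma functions). Here the hypothesis that $p$ is an integer is essential: it guarantees that the relevant Gamma-arguments avoid poles and that the coefficient does not degenerate. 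Once rigidity is in hand, the ribbon structure of part (1) is immediate, since the twist $\theta=e^{2\pi iL_0}$ is the canonical balancing coming from the vertex algebraic braiding and the compatibility $\theta_{X^*}=(\theta_X)^*$ holds formally in a rigid braided category with these self-dual data.

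With the generators rigid, tensoring with them is exact and preserves both rigidity and projectivity. Using that the intermediate fusions are semisimple, every $\cL_{r,s}$ with $s\le p-1$ is then a direct summand of an iterated tensor power $\cL_{2,1}^{\boxtimes(r-1)}\boxtimes\cL_{1,2}^{\boxtimes(s-1)}$, hence rigid. The top row $s=p$ is genuinely different: the elementary fusions show that $\cL_{1,p}\boxtimes(-)$ carries every simple module to a direct sum of the indecomposables that I would identify as the projective covers $\cP_{r,s}$, which is precisely the engine producing projectives and yielding part (2), with $\cO_c^0$ emerging as the tensor subcategory generated by the simples. Because, for instance, $\cP_{1,p}$ is a summand of the rigid object $\cL_{1,p-1}\boxtimes\cL_{1,2}$, the projective covers are themselves rigid; rigidity of the top-row \emph{simples} $\cL_{r,p}$, however, I would have to obtain by a separate direct analysis of their level-$p$ singular-vector differential equations, again reducing to nonvanishing of a Gamma-function coefficient. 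The general product $\cL_{r,s}\boxtimes\cL_{r',s'}$ then follows by induction (say on $r'+s'$), applying associativity to peel off one generator at a time, matching the bookkeeping against the stated formula and treating the top row through its projectivity, with particular care at the truncation boundary $\ell=2p-1-s-s'$ beyond which the channels must reassemble into the $\cP_{k,\ell}$.

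The main obstacle is concentrated in the non-semisimplicity and is twofold. First, rigidity is not formal: it rests on proving that the quantum-dimension scalars for the degenerate generators, and for the top-row modules, are nonzero, which requires genuine control of the hypergeometric monodromy rather than categorical abstraction, and this is exactly where integrality of $p$ is used. Second, the fusion-rule data already available from \cite{Lin} records only composition factors, and therefore cannot by itself distinguish a semisimple sum $\bigoplus\cL_{k,\ell}$ from a sum of indecomposable projectives $\bigoplus\cP_{k,\ell}$; supplying that distinction --- via the fact that tensoring with a rigid object preserves projectivity, together with an explicit identification of the $\cP_{r,s}$ --- is the structural heart of the computation, and is precisely why fusion rules alone were not enough to settle these cases previously.
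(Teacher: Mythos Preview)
Your overall bootstrap strategy is close to the paper's, and you correctly locate the structural heart of the argument in distinguishing semisimple sums from projective sums. There is one notable methodological difference and one genuine gap.

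The difference concerns rigidity of $\cL_{2,1}$. You propose to run the BPZ/hypergeometric argument for both generators symmetrically. The paper treats only $\cL_{1,2}$ this way; for the $\cL_{r,1}$ it instead realizes $V_c$ as the $SU(2)$-fixed-point subalgebra of the doublet abelian intertwining algebra and invokes \cite{McR} to obtain a braided equivalence between the subcategory spanned by the $\cL_{r,1}$ and a cocycle twist of $\rep\,SU(2)$, giving rigidity and the $\mathfrak{sl}_2$ fusion rules simultaneously. Your route is not wrong in principle, but note that the hypergeometric parameters for $\cL_{2,1}$ involve $p$ rather than $1/p$, so for every integer $p\geq 2$ the standard solution basis degenerates and logarithmic solutions appear (for $\cL_{1,2}$ this happens only at $p=2$); the doublet argument sidesteps this analysis entirely.

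The gap is in projectivity. You assert that $\cL_{1,p}\boxtimes(-)$ carries every simple module to a direct sum of projective covers, but this presupposes that $\cL_{1,p}$ is itself projective in $\cO_c^0$, and that is the nontrivial seed for the whole construction of the $\cP_{r,s}$. The paper proves this (Theorem~\ref{projoflrp}) using the \emph{M\"uger centralizer} definition of $\cO_c^0$---the full subcategory of modules whose monodromy with every $\cL_{2n+1,1}$ is trivial---rather than your ``subcategory generated by the simples'' (the two descriptions agree, but only a posteriori). The key step is to exclude the length-two Verma quotient $\cV_{1,p}/\cV_{5,p}$ from $\cO_c^0$: a Zhu-algebra computation with $A(\cL_{3,1})$ shows that $\cL_{3,1}\boxtimes(\cV_{1,p}/\cV_{5,p})$ is logarithmic, so its monodromy with $\cL_{3,1}$ is nontrivial. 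Without an argument of this kind no module is known to be projective and the recursion for the $\cP_{r,s}$ cannot start. (A minor related correction: no separate singular-vector analysis is needed for rigidity of $\cL_{r,p}$, since it is a summand of the already-rigid $\cL_{1,2}\boxtimes\cL_{r,p-1}\cong\cL_{r,p-2}\oplus\cL_{r,p}$; and in fact $\cP_{r,p}=\cL_{r,p}$.)
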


The proof of Theorem \ref{thm:main_thm} begins in Section \ref{sec:first_fus}, where we largely determine which composition factors of the tensor products $\cL_{1,2}\tens\cL_{r,s}$ show up in the lowest conformal weight spaces of the tensor product modules. To do so, we use the Zhu algebra approach developed in \cite{FZ1, Li, FZ2, HY}, among other references, but our calculations also resemble those done by physicists to compute fusion products using the Nahm-Gaberdiel-Kausch algorithm \cite{Na, GaK}. See \cite{KR} for a comparison of mathematicians' and physicists' approaches to fusion products; note that our work in Section \ref{sec:first_fus} as well as later in Proposition \ref{prop:P1_structure} recovers (in greater generality and more systematically) the results of the sample calculations in \cite[Sections 7 and 8]{KR}.

To fully determine tensor products in $\cO_c$, we use rigidity. To prove that $\cO_{c}$ is rigid, we first prove that $\cL_{1,2}$ is rigid (and self-dual) using explicit formulas for compositions of intertwining operators, obtained from solutions to Belavin-Polyakov-Zamolodchikov equations (Theorem \ref{rigidityofl12}); the method is the same as in \cite{TW} for the triplet algebras and in \cite{CMY2} for the singlet algebras. Next, the modules $\cL_{r,1}$, $r\geq 1$, are the irreducible $V_c$-modules appearing in the decomposition of the doublet abelian intertwining algebra \cite{AM_doub} as a $V_c$-module. As $V_c$ is an $SU(2)$-fixed point subalgebra of the doublet, results in \cite{McR} show that the modules $\cL_{r,1}$ generate a tensor subcategory of $\cO_c$ that is braided tensor equivalent to an abelian $3$-cocycle twist of $\rep SU(2)$ (Theorem \ref{thm:Lr1_fus_rules}). Consequently, these $V_c$-modules are rigid. Once we know that the modules $\cL_{1,2}$ and $\cL_{r,1}$ are rigid, we can compute tensor products involving these modules using the preliminary results of Section \ref{sec:first_fus}. We show that all remaining irreducible modules in $\cO_c$ occur as direct summands in repeated tensor products of the rigid modules $\cL_{1,2}$ and $\cL_{r,1}$, and thus are rigid. Finally, we use \cite[Theorem~4.4.1]{CMY2} to extend rigidity from irreducible modules to all finite-length modules in $\cO_c$.

The modules $\cL_{r,s}$ do not have projective covers in the full category $\cO_{c}$ since their associated Verma modules have infinite length. Thus  to obtain projective covers, it is indeed necessary to introduce the tensor subcategory $\cO_c^0$, which contains all irreducible modules in $\cO_c$. We can define $\cO_c^0$ in several ways: it turns out to be the tensor subcategory of $\cO_c$ (closed under tensor products and subquotients) generated by $\cL_{1,2}$, but it is more useful to define $\cO_c^0$ as the M\"{u}ger centralizer of the semisimple subcategory of $\cO_c$ that has simple objects $\cL_{2n+1,1}$, $n\in\NN$. Equivalently, this is the subcategory of modules in $\cO_c$ that induce to ordinary modules for the triplet vertex operator algebra $\cW(p)$, an infinite-order extension of $V_c$.

 In $\cO_c^0$, the irreducible modules $\cL_{r,p}$ are already projective (Theorem \ref{projoflrp}), and then we construct length-$3$ projective covers $\cP_{1,s}$ from $\cL_{1,p}$ recursively (Theorem \ref{thm:P1s_structure}), using the methods of \cite[Section 5.1]{CMY2}. Finally, we show that $\cP_{r,s}=\cL_{r,1}\tens\cP_{1,s}$ is a length-$4$ projective cover of $\cL_{r,s}$ for $r\geq 2$ (Theorem \ref{thm:Prs_structure}). After constructing all projective covers, we complete the proof of the tensor product formula in Theorem \ref{thm:main_thm}(3), and we also determine all tensor products of the  projective modules with irreducible modules and with each other (see the details in Theorem \ref{generalfusionrules}).

In Section \ref{subsec:ss}, we investigate relations between $\cO_c$ and representations of the affine Lie algebra $\widehat{\mathfrak{sl}}_2$ at levels $-2+p^{\pm1}$ (note that $V_c$ is the $W$-algebra obtained via quantum Drinfeld-Sokolov reduction from the universal affine vertex operator algebras for $\mathfrak{sl}_2$ at both levels \cite{FFr}; see also \cite[Chapter 15]{FB}). First, the tensor product formulas of Theorem \ref{generalfusionrules} show that $\cO_c$ has a semisimplification which is a ribbon category with simple objects $\cL_{r,s}$ for $r \geq 1$ and $1 \leq s \leq p-1$. As an abelian category, the semisimplification is the Deligne product of two subcategories: $\cO_{c}^L$ containing the modules $\cL_{r,1}$ for $r\geq 1$, and $\cO_c^R$ containing the modules $\cL_{1,s}$ for $1\leq s\leq p-1$. We then use \cite{ACGY} to show that $\cO_{c}^L$ is braided tensor equivalent to the Kazhdan-Lusztig category $KL_{-2+1/p}(\mathfrak{sl}_2)$ of $\widehat{\mathfrak{sl}}_2$-modules at level $-2+p^{-1}$, while we use the main theorem of \cite{KW} to show that $\cO_c^R$ is tensor equivalent to the $\widehat{\mathfrak{sl}}_2$-module category $KL_{-2+p}(\mathfrak{sl}_2)$. 

Note that $KL_{-2+p}(\mathfrak{sl}_2)$ is a modular tensor category since the simple affine vertex operator algebra of $\mathfrak{sl}_2$ at level $-2+p$ is rational and $C_2$-cofinite. The corresponding universal affine vertex operator algebra, however, has a non-semisimple $C_1$-cofinite module category; it would be interesting to see if this category bears any relation to the non-semisimple Virasoro category $\cO_c$. There is in fact a Kazhdan-Lusztig-type tensor equivalence conjectured in \cite{BFGT, BGT} between $\cO_c^0$ and a module category for the Lusztig limit of quantum $\mathfrak{sl}_2$ at the root of unity $e^{\pi i/p}$; see also \cite[Conjecture 11.4]{Ne} for a reformulation of this conjecture. After the initial version of the present paper was posted on arXiv, this conjecture was proved in \cite[Theorem 10.1]{GN}; the proof heavily used the tensor structure on $\cO_c^0$ deduced here.

We conclude this paper by applying our results, together with the vertex operator algebra extension theory of \cite{HKL, CKM, CMY1}, to the triplet vertex operator algebra extension $\cW(p)\supseteq V_c$. Using the rigid tensor category structure on $\cO_c$, we can rather quickly derive rigidity of the tensor category $\cC_{\cW(p)}$ of $\cW(p)$-modules, tensor product formulas in $\cC_{\cW(p)}$, and a construction of the projective covers of irreducible $\cW(p)$-modules. The only properties of $\cW(p)$ that we need come from \cite{AM_trip}: the classification of irreducible $\cW(p)$-modules and their decompositions as direct sums of $V_c$-modules, as well as some of the structure of the Zhu algebra of $\cW(p)$. Our results on $\cW(p)$ recover those obtained in \cite{AM_log_mods, NT, TW}. Our tensor-categorical approach especially provides an alternative to the technical construction of projective covers for irreducible $\cW(p)$-modules outlined in \cite{NT}. Note that since every vertex operator algebra has a built-in Virasoro subalgebra, vertex operator algebra extension techniques could be used to study the modules for many other vertex operator algebras. For example, the results on singlet algebras recently obtained in \cite{CMY2} could also be recovered from the structure of $\cO_c$.

Finally, we use our results together with ideas from \cite{McR2} to prove a precise relationship conjectured in \cite[Conjecture 11.6]{Ne} between the tensor categories $\cC_{\cW(p)}$ and $\cO_c^0$. It was shown in \cite{ALM} that the full automorphism group of $\cC_{\cW(p)}$ is $PSL(2,\CC)$, with fixed-point subalgebra $V_c$. Consequently, there is a braided tensor category $(\cC_{\cW(p)})^{PSL(2,\CC)}$, called the equivariantization of $\cC_{\cW(p)}$, whose objects are $\cW(p)$-modules equipped with a suitably compatible $PSL(2,\CC)$-action. Then an easy extension of \cite[Theorem 4.17]{McR2} (which was proved in a finite group setting) shows that there is a braided tensor equivalence from $\cO_c^0$ to $(\cC_{\cW(p)})^{PSL(2,\CC)}$ given by induction. We remark that essentially the same proof shows that if $T^\vee\subseteq PSL(2,\CC)$ is the one-dimensional torus, then the $T^\vee$-equivariantization of $\cC_{\cW(p)}$ is braided tensor equivalent to the category $\cC_{\cM(p)}^0$ of modules for the singlet vertex operator algebra $\cM(p)$ that was studied in \cite{CMY2}. Such a relationship had also been conjectured in \cite[Conjecture 11.6]{Ne}.

We plan to explore the tensor structure of $\cO_c$ for other central charges in future work. The remaining unsolved cases are the universal Virasoro vertex operator algebra at central charge $c_{p,q}$ and the simple Virasoro vertex operator algebra at central charge $c_t=13-6t-6t^{-1}$ for $t=-\frac{p}{q}$ a negative rational number. For $c_{p,q}$, the universal Virasoro vertex operator algebra is neither simple nor self-contragredient and thus the braided tensor category $\cO_{c_{p,q}}$ will be poorly behaved. For example, it will not be rigid because tensor products of non-zero modules in $\cO_{c_{p,q}}$ can be zero. However, we expect $\cO_{c_t}$ for $t=-\frac{p}{q}$ to be rigid and quite interesting, and we expect $V_{c_t}$ to admit large conformal vertex algebra extensions analogous to the triplet $W$-algebras. These categories $\cO_{c_t}$ will be subjects of forthcoming papers.

\medskip

\noindent {\bf Acknowledgments.} We thank Thomas Creutzig for many useful discussions, and we thank the referee for comments and suggestions. JY also thanks Florencia Orosz Hunziker for discussions on the Virasoro algebra.

\section{Preliminaries}

In this section we collect some results on the representation theory of the Virasoro Lie algebra, and on intertwining operators among modules for a vertex operator algebra.

\subsection{The Virasoro algebra}

Let $\cV ir$ denote the Virasoro Lie algebra with basis $\lbrace L_n\,\vert\,n\in\ZZ\rbrace\cup\lbrace\mathbf{c}\rbrace$ with $\mathbf{c}$ central and commutation relations
\begin{equation*}
 [L_m,L_n]=(m-n)L_{m+n}+\frac{m^3-m}{12}\delta_{m+n,0}\mathbf{c}.
\end{equation*}
We will sometimes use the decomposition $\cV ir=\cV ir_-\oplus\cV ir_{\geq 0}$, where
\begin{equation*}
 \cV ir_-=\mathrm{span}\lbrace L_n\,\vert\, n<0\rbrace,\qquad\cV ir_{\geq 0} =\mathrm{span}\lbrace L_n,\mathbf{c}\,\vert\,n\geq 0\rbrace.
\end{equation*}
For any vector space $\cU$ on which $L_0$ and $\mathbf{c}$ act by commuting operators, $\cU$ extends to a $\cV ir_{\geq 0}$-module on which $L_n$ acts by zero for $n>0$, and then we can form the induced module $\ind_{\cV ir_{\geq 0}}^{\cV ir} \cU$. In particular, for any central charge $c\in\CC$ and conformal dimension $h\in\CC$, the one-dimensional $\cV ir_{\geq 0}$-module $\CC_{c,h}$ on which $\mathbf{c}$ acts by $c$ and $L_0$ acts by $h$ induces to the Verma module $V(c,h)=\ind_{\cV ir_{\geq 0}}^{\cV ir} \CC_{c,h}$. Every Verma module $V(c,h)$ has a unique irreducible quotient $L(c,h)$.

For a central charge $c\in\CC$, we define $V_c$ to be the quotient of the Verma module $V(c,0)$ (induced from $\CC_{c,0}=\CC\vac$) by the submodule generated by the singular vector $L_{-1}\vac$. By \cite{FZ1}, $V_c$ is a vertex operator algebra in the sense of \cite{LL}. Moreover, every $\cV ir$-module $\cW$ that is suitably graded by generalized $L_0$-eigenvalues is a grading-restricted generalized $V_c$-module. Specifically, we require a grading $\cW=\bigoplus_{h\in\CC} \cW_{[h]}$ such that:
\begin{enumerate}
 \item $\cW_{[h]}$ is the generalized $L_0$-eigenspace with generalized eigenvalue $h$,
 \item $\dim\cW_{[h]}<\infty$ for all $h\in\CC$, and
 \item For any $h\in\CC$, $\cW_{[h+n]}=0$ for $n\in\ZZ$ sufficiently negative.
\end{enumerate}
The irreducible modules $L(c,h)$ for $h\in\CC$ comprise all irreducible $V_c$-modules. We are interested, however, in the category $\cO_c$ of $C_1$-cofinite grading-restricted generalized $V_c$-modules: by \cite{CJORY} this is the category of finite-length $\cV ir$-modules at central charge $c$ whose composition factors are irreducible quotients of reducible Verma modules. (In particular, irreducible Verma modules are not $C_1$-cofinite.)

Writing the central charge as $c=13-6t-6t^{-1}$ for some $t\in\CC\setminus\lbrace 0\rbrace$, the Feigin-Fuchs criterion for the existence of singular vectors in Verma modules \cite{FF} implies that $\cO_c$ contains all irreducible modules $\cL_{r,s}=L(c,h_{r,s})$ for $r,s\in\ZZ_+$, where
\begin{equation*}
 h_{r,s}:=\frac{r^2-1}{4} t-\frac{rs-1}{2}+\frac{s^2-1}{4} t^{-1} = \frac{(tr-s)^2}{4t}-\frac{(t-1)^2}{4t}.
\end{equation*}
Moreover, every irreducible module in $\cO_c$ is isomorphic to $L(c,h_{r,s})$ for some $r,s\in\ZZ$ (see \cite[Section 5.3]{IK} for a full description of the irreducible modules in $\cO_c$ for general central charges). For any $r,s\in\ZZ$, we use $\cV_{r,s}$ to denote the Verma module $V(c,h_{r,s})$.

It was established in \cite{CJORY} that for any central charge $c$, the category $\cO_c$ of $V_c$-modules admits the vertex algebraic braided tensor category structure of \cite{HLZ1}-\cite{HLZ8}. In this work, we are mainly concerned with central charges $c_{p,1}=13-6p-6p^{-1}$ for integers $p > 1$. At these central charges, we can use the conformal weight symmetries $h_{r,s+p}=h_{r-1,s}$ and $h_{r,s}=h_{-r,-s}$ for $r,s\in\ZZ$ to show that any irreducible module in $\cO_{c_{p,1}}$ is isomorphic to a unique $\cL_{r,s}$ with $r\geq 1$ and $1\leq s\leq p$. Then we have the following embedding diagrams involving the Verma modules $\cV_{r,s}$ (see for example \cite[Section 5.3]{IK}):
\begin{enumerate}
 \item When $1\leq s\leq p-1$, we have the diagram
  \begin{equation*}
   \cV_{1,s} \longleftarrow \cV_{2,p-s} \longleftarrow \cV_{3,s} \longleftarrow \cV_{4,p-s} \longleftarrow \cdots
  \end{equation*}
  In particular, the maximal proper submodule of $\cV_{r,s}$ is $\cV_{r+1,p-s}$ when $r\geq 1$ and $1\leq s\leq p-1$.

  \item When $s=p$, we have the diagram
  \begin{equation*}
   \cV_{i,p} \longleftarrow \cV_{i+2,p} \longleftarrow \cV_{i+4,p} \longleftarrow \cV_{i+6,p} \longleftarrow \cdots
  \end{equation*}
for $i=1,2$. In particular, the maximal proper submodule of $\cV_{r,p}$ is $\cV_{r+2,p}$ when $r\geq 1$.
\end{enumerate}
Note that the maximal proper submodule of $\cV_{1,1}$ is a Verma module generated by a singular vector of degree $1$, so $V_c\cong\cL_{1,1}$ as a $V_c$-module at the central charges we are considering. In particular, $V_c$ is a simple (and self-contragredient) vertex operator algebra.

In addition to Verma modules, we will sometimes need to work with their contragredients $\cV_{r,s}'$. Since irreducible Virasoro modules are self-contragredient, the surjections $\cV_{r,s}\rightarrow\cL_{r,s}$ dualize to injections $\cL_{r,s}\rightarrow\cV_{r,s}'$. In particular, $\cL_{r,s}$ is the $V_c$-submodule of $\cV_{r,s}'$ generated by the lowest conformal weight space.

\subsection{Intertwining operators among modules for a vertex operator algebra}

We recall the definition of (logarithmic) intertwining operator among a triple of modules for a vertex operator algebra $V$ from \cite{HLZ2}:
\begin{defi}
 Suppose $W_1$, $W_2$, and $W_3$ are grading-restricted generalized $V$-modules. An \textit{intertwining operator} of type $\binom{W_3}{W_1\,W_2}$ is a linear map
 \begin{align*}
  \cY: W_1\otimes W_2 & \rightarrow W_3[\log x]\lbrace x\rbrace\nonumber\\
   w_1\otimes w_2 & \mapsto \cY(w_1,x)w_2=\sum_{h\in\CC}\sum_{k\in\NN} (w_1)_{h,k} w_2\,x^{-h-1}(\log x)^k
 \end{align*}
which satisfies the following properties:
\begin{enumerate}
 \item \textit{Lower truncation}: For any $w_1\in W_1$, $w_2\in W_2$, and $h\in\CC$, $(w_1)_{h+n,k} w_2 =0$ for $n\in\ZZ$ sufficiently large, independently of $k$.

 \item The \textit{Jacobi identity}: For $v\in V$ and $w_1\in W_1$,
 \begin{align*}
  x_0^{-1}\delta\left(\frac{x_1-x_2}{x_0}\right) Y_{W_3}(v,x_1)\cY(w_1,x_2) & - x_0^{-1}\left(\frac{-x_2+x_1}{x_0}\right)\cY(w_1,x_2)Y_{W_2}(v,x_1)\nonumber\\
  & = x_1^{-1}\delta\left(\frac{x_2+x_0}{x_1}\right)\cY(Y_{W_1}(v,x_0)w_1,x_2).
 \end{align*}

 \item The \textit{$L_{-1}$-derivative property}: For $w_1\in W_1$,
 \begin{equation*}
  \cY(L_{-1} w_1,x)=\dfrac{d}{dx}\cY(w_1,x).
 \end{equation*}
\end{enumerate}
\end{defi}

We will need two consequences of the Jacobi identity. Extracting the coefficient of $x_0^{-1} x_1^{-n-1}$ in the Jacobi identity yields the \textit{commutator formula}
\begin{equation}\label{eqn:gen_comm_form}
 v_n\cY(w_1,x) = \cY(w_1,x)v_n+\sum_{i\geq 0} \binom{n}{i} x^{n-i}\cY(v_i w_1,x);
\end{equation}
in the special case that $v$ is the conformal vector $\omega$, this means
\begin{equation}\label{eqn:Vir_comm_form}
 L_n\cY(w_1,x) =\cY(w_1,x)L_n+\sum_{i\geq 0}\binom{n+1}{i} x^{n+1-i}\cY(L_{i-1} w_1,x).
\end{equation}
Similarly, extracting the coefficient of $x_0^{-n-1} x_1^{-1}$ yields the \textit{iterate formula}
\begin{align}\label{eqn:gen_it_form}
 \cY(v_n w_1,x) =\sum_{i\geq 0} (-1)^i\binom{n}{i}\left(v_{n-i}\, x^i\cY(w_1,x) -(-1)^n x^{n-i}\cY(w_1,x)v_i\right);
\end{align}
in the special case $v=\omega$ we have
\begin{align}\label{eqn:Vir_it_form}
\cY(L_n w_1,x) =\sum_{i\geq 0} (-1)^i\binom{n+1}{i}\left( L_{n-i}\,x^i\cY(w_1,x)+(-1)^{n} x^{n+1-i}\cY(w_1,x)L_{i-1}\right).
\end{align}

For grading-restricted generalized $V$-modules $W_1$, $W_2$, $W_3$,  we say that an intertwining operator $\cY$ of type $\binom{W_3}{W_1\,W_2}$ is \textit{surjective} if
\begin{equation*}
W_3=\mathrm{span}\lbrace (w_1)_{h,k} w_2\,\vert\,w_1\in W_1, w_2\in W_2, h\in\CC, k\in\NN\rbrace.
\end{equation*}
Actually, we can reduce the spanning set for the image of an intertwining operator somewhat:
\begin{lem}\label{lem:intw_op_surjectivity}
 Let $W_1$, $W_2$, and $W_3$ be grading-restricted generalized $V$-modules. An intertwining operator $\cY$ of type $\binom{W_3}{W_1\,W_2}$ is surjective if and only if
 \begin{equation*}
  W_3 =\mathrm{span}\lbrace (w_1)_{h,0} w_2\,\vert\,w_1\in W_1, w_2\in W_2, h\in\CC\rbrace.
 \end{equation*}
\end{lem}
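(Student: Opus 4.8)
The plan is to reduce everything to a single claim: that the span of the log-free coefficients,
\[
 M := \mathrm{span}\{(w_1)_{h,0}w_2 : w_1 \in W_1,\ w_2 \in W_2,\ h \in \CC\},
\]
already contains \emph{every} coefficient $(w_1)_{h,k}w_2$, including those with $k \geq 1$. Once this is shown, the lemma is immediate: surjectivity of $\cY$ says exactly that $W_3$ is the span of all coefficients $(w_1)_{h,k}w_2$, so if they all lie in $M$ we get $W_3 \subseteq M \subseteq W_3$; the converse implication is trivial since $M$ is contained in the image span by definition. Thus the real work is purely algebraic manipulation of coefficients, with no appeal to the tensor structure.

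First I would show that $M$ is a $V$-submodule of $W_3$, and in particular that it is stable under $L_0$. For this I use the commutator formula \eqref{eqn:gen_comm_form}. The crucial observation is that multiplication by the integer power $x^{n-i}$ shifts powers of $x$ but leaves $\log x$ untouched, so extracting the coefficient of $x^{-h-1}(\log x)^0$ produces the clean identity
\[
 v_n (w_1)_{h,0}w_2 = (w_1)_{h,0}(v_n w_2) + \sum_{i\geq 0}\binom{n}{i}(v_i w_1)_{h+n-i,\,0}\,w_2,
\]
whose right-hand side is manifestly a sum of log-free coefficients and hence lies in $M$. Taking $v=\omega$ (the conformal vector of $V$) and $n=1$, so that $v_n=\omega_1=L_0$, gives in particular $L_0 M \subseteq M$. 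I stress that at this stage I would deliberately \emph{not} invoke the $L_{-1}$-derivative property to rewrite $\cY(L_{-1}w_1,x)$ as $\tfrac{d}{dx}\cY(w_1,x)$; treating $(L_{-1}w_1)_{h+1,0}w_2$ as an honest log-free coefficient is precisely what keeps the identity inside $M$.

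Next I would climb up in the power of $\log x$ by induction on $k$. Here I \emph{do} use the $L_{-1}$-derivative property together with the Virasoro commutator formula \eqref{eqn:Vir_comm_form} at $n=0$, which produces the operator $x\tfrac{d}{dx}$ acting on $\cY(w_1,x)w_2$. Since $x\tfrac{d}{dx}$ sends $x^{-h-1}(\log x)^{k+1}$ to $(-h-1)x^{-h-1}(\log x)^{k+1}+(k+1)x^{-h-1}(\log x)^{k}$, it couples consecutive log powers, and extracting the coefficient of $x^{-h-1}(\log x)^k$ yields the recursion
\[
 (k+1)(w_1)_{h,k+1}w_2 = (L_0 + h + 1)(w_1)_{h,k}w_2 - (w_1)_{h,k}(L_0 w_2) - (L_0 w_1)_{h,k}w_2.
\]
The base case $k=0$ holds by the definition of $M$. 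For the inductive step, assuming every $k$-th coefficient lies in $M$, the three terms $(w_1)_{h,k}w_2$, $(w_1)_{h,k}(L_0 w_2)$, and $(L_0 w_1)_{h,k}w_2$ all lie in $M$ (applying the hypothesis to $w_2,\,L_0 w_2 \in W_2$ and $w_1,\,L_0 w_1 \in W_1$), while $L_0(w_1)_{h,k}w_2 \in M$ by the $L_0$-stability from the previous step; dividing by $k+1$ places $(w_1)_{h,k+1}w_2$ in $M$, completing the induction.

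The one genuinely delicate point is the interplay between the two uses of the Jacobi-identity consequences around the logarithmic terms. The term $x\tfrac{d}{dx}$ is both the source of the difficulty — it is exactly what prevents the log-free coefficients from obviously spanning, since it mixes log powers — and the engine of the induction. The key conceptual move is to obtain $L_0$-invariance of $M$ from the \emph{general} commutator formula \eqref{eqn:gen_comm_form}, where no derivative appears and log powers do not mix, and only afterward to exploit the derivative-coupling in \eqref{eqn:Vir_comm_form} to bootstrap from the $k$-th to the $(k+1)$-st log power. I expect no finiteness or nilpotency hypotheses on $L_0$ to be needed, and the induction to proceed uniformly in $k$.
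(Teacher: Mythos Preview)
Your argument is correct, but it takes a longer route than the paper. The paper uses only the $L_{-1}$-derivative property: differentiating $\cY(w_1,x)w_2$ term by term and comparing with $\cY(L_{-1}w_1,x)w_2$ yields the one-line recursion
\[
 (w_1)_{h,k+1}w_2 = \frac{1}{k+1}\Bigl((h+1)(w_1)_{h,k}w_2 + (L_{-1}w_1)_{h+1,k}w_2\Bigr),
\]
and induction on $k$ finishes immediately, since $(L_{-1}w_1)_{h+1,k}w_2$ is already a $k$th-log coefficient with first argument $L_{-1}w_1\in W_1$. No $L_0$-stability of $M$ is needed. Your route first establishes the stronger fact that $M$ is a $V$-submodule via the commutator formula, and then uses a second, more complicated recursion coming from the $L_0$-commutator combined with the derivative property; this works, and the submodule statement is a pleasant byproduct, but the detour through $L_0$-stability is avoidable.
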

\begin{proof}
We just need to show that all $(w_1)_{h,k} w_2$ for $k\in\NN$ are contained in the span of the vectors $(w_1)_{h,0} w_2$ for $w_1\in W_1$, $w_2\in W_2$, and $h\in\CC$. Using the $L_{-1}$-derivative property,
 \begin{align*}
  \cY(L_{-1}w_1,x)w_2 & = \frac{d}{dx}\sum_{h\in\CC}\sum_{k\in\NN} (w_1)_{h,k} w_2\,x^{-h-1}(\log x)^k \nonumber\\
  & =\sum_{h\in\CC}\sum_{k\in\NN} (w_1)_{h,k} w_2\,x^{-h-2}\left(k(\log x)^{k-1}-(h+1)(\log x)^k\right).
 \end{align*}
From this we see that
\begin{equation*}
 (w_1)_{h,k+1} w_2 =\frac{1}{k+1}\left((h+1)(w_1)_{h,k} w_2+(L_{-1}w_1)_{h+1,k} w_2\right),
\end{equation*}
so that
\begin{equation*}
 (w_1)_{h,k} w_2\in\mathrm{span}\lbrace (w_1)_{h,0} w_2\,\vert\,w_1\in W_1\,w_2\in W_2, h\in\CC\rbrace
\end{equation*}
for all $k\in\NN$ follows by induction on $k$.
\end{proof}

Associated to any intertwining operator $\cY$ of type $\binom{W_3}{W_1\,W_2}$, we have an \textit{intertwining map}
\begin{equation*}
 I: W_1\otimes W_2\rightarrow\overline{W}_3 =\prod_{h\in\CC} (W_3)_{[h]}
\end{equation*}
defined by
\begin{equation*}
 I(w_1\otimes w_2) =\cY(w_1,1)w_2
\end{equation*}
for $w_1\in W_1$, $w_2\in W_2$, where we realize the substitution $x\mapsto 1$ using the real-valued branch of logarithm $\ln 1=0$. In particular, for generalized $L_0$-eigenvectors $w_1\in W_1$ and $w_2\in W_2$, the coefficients $(w_1)_{h,0} w_2$ are simply the projections of $I(w_1\otimes w_2)$ to the conformal weight spaces of $W_3$. Thus we get the following corollary of Lemma \ref{lem:intw_op_surjectivity}:
\begin{cor}\label{cor:intw_op_surjectivity}
 Let $W_1$, $W_2$, and $W_3$ be grading-restricted generalized $V$-modules. An intertwining operator $\cY$ of type $\binom{W_3}{W_1\,W_2}$ is surjective if and only if $W_3$ is spanned by projections of vectors $\cY(w_1,1)w_2$ for $w_1\in W_1$, $w_2\in W_2$ to the conformal weight spaces of $W_3$.
\end{cor}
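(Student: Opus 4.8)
The plan is to deduce the corollary directly from Lemma \ref{lem:intw_op_surjectivity}, whose criterion already refers only to the $k=0$ coefficients $(w_1)_{h,0}w_2$. By that lemma, $\cY$ is surjective if and only if $W_3=\mathrm{span}\lbrace (w_1)_{h,0}w_2\,\vert\,w_1\in W_1,w_2\in W_2,h\in\CC\rbrace$, so it suffices to show that this span is the same as the span of the projections of the vectors $\cY(w_1,1)w_2$ onto the conformal weight spaces of $W_3$.

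First I would record the effect of the substitution $x\mapsto 1$ on the expansion
\begin{equation*}
 \cY(w_1,x)w_2=\sum_{h\in\CC}\sum_{k\in\NN}(w_1)_{h,k}w_2\,x^{-h-1}(\log x)^k.
\end{equation*}
Using the branch $\ln 1=0$, every factor $x^{-h-1}$ becomes $1$, while $(\log x)^k$ vanishes for $k\geq 1$ and $(\log 1)^0=1$; hence $\cY(w_1,1)w_2=\sum_{h\in\CC}(w_1)_{h,0}w_2$, so the image of the intertwining map $I$ is assembled from precisely the $k=0$ coefficients. Next I would check that, for generalized $L_0$-eigenvectors $w_1,w_2$ of generalized eigenvalues $h_1,h_2$, each summand $(w_1)_{h,0}w_2$ lies in a single conformal weight space of $W_3$ determined by $h$. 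This is the grading-compatibility of $\cY$: applying the Virasoro commutator formula \eqref{eqn:Vir_comm_form} with $n=0$ together with the $L_{-1}$-derivative property shows that $(w_1)_{h,0}w_2$ is a generalized $L_0$-eigenvector of eigenvalue $h_1+h_2-h-1$. Since $h\mapsto h_1+h_2-h-1$ is a bijection of $\CC$, distinct values of $h$ feed into distinct weight spaces $(W_3)_{[h_1+h_2-h-1]}$, and therefore $(w_1)_{h,0}w_2$ is exactly the projection of $\cY(w_1,1)w_2$ onto $(W_3)_{[h_1+h_2-h-1]}$. Because $W_1$ and $W_2$ are spanned by generalized $L_0$-eigenvectors and all maps involved are linear, the span of the $(w_1)_{h,0}w_2$ coincides with the span of the projections of the $\cY(w_1,1)w_2$, and the corollary follows from Lemma \ref{lem:intw_op_surjectivity}.

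The argument is essentially bookkeeping, and the only point requiring genuine care is the grading claim of the previous paragraph: that each coefficient $(w_1)_{h,0}w_2$ is homogeneous of the predicted conformal weight, so that decomposing $\cY(w_1,1)w_2$ into its weight-space components reproduces exactly the individual coefficients indexed by $h$. This is the step I would write out in full (first for honest eigenvectors via \eqref{eqn:Vir_comm_form}, then extending to generalized eigenvectors by filtering through the generalized eigenspaces), since it is what makes \emph{projection to conformal weight spaces} agree with \emph{the coefficients $(w_1)_{h,0}w_2$}; everything else is immediate.
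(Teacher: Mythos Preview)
Your proposal is correct and follows the same approach as the paper: reduce to Lemma \ref{lem:intw_op_surjectivity} and identify the coefficients $(w_1)_{h,0}w_2$ with the conformal-weight projections of $\cY(w_1,1)w_2$. The paper simply asserts this identification in one sentence, while you spell out the grading argument via \eqref{eqn:Vir_comm_form} and the $L_{-1}$-derivative property; this added detail is fine and does not change the route.
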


In \cite{HLZ3}, tensor products of $V$-modules are defined in terms of intertwining maps; they can be defined equivalently in terms of intertwining operators:
\begin{defi}
 Let $\cC$ be a category of grading-restricted generalized $V$-modules containing $W_1$ and $W_2$. A \textit{tensor product} of $W_1$ and $W_2$ in $\cC$ is a pair $(W_1\tens W_2,\cY_\tens)$, with $W_1\tens W_2$ a module in $\cC$ and $\cY_\tens$ an intertwining operator of type $\binom{W_1\tens W_2}{W_1\,W_2}$, which satisfies the following universal property: For any module $W_3$ in $\cC$ and intertwining operator $\cY$ of type $\binom{W_3}{W_1\,W_2}$, there is a unique $V$-module homomorphism $f: W_1\tens W_2\rightarrow W_3$ such that $\cY=f\circ\cY_\tens$.
\end{defi}

If the tensor product $(W_1\tens W_2, \cY_\tens)$ exists, then the tensor product intertwining operator $\cY_\tens$ is surjective \cite[Proposition 4.23]{HLZ3}. In \cite{HLZ1}-\cite{HLZ8}, it was shown under suitable conditions, such as closure under tensor products, that $V$-module categories $\cC$ have braided tensor category structure. In \cite{CJORY}, it was shown that these conditions are satisfied by the category $\cO_c$ of $C_1$-cofinite grading-restricted generalized modules for the Virasoro vertex operator algebra $V_c$ at any central charge $c$. For a detailed description of the braided tensor category structure on categories such as $\cO_c$, in particular a description of the left and right unit isomorphisms $l$ and $r$, the associativity isomorphisms $\cA$, and the braiding isomorphisms $\cR$, see \cite{HLZ8} or the exposition in \cite[Section 3.3]{CKM}.

\subsection{Zhu algebra construction of intertwining operators}

Let $V$ be a vertex operator algebra with grading-restricted generalized modules $W_1$, $W_2$, and $W_3$. The \textit{fusion rule} $\mathcal{N}^{W_3}_{W_1, W_2}$ is the dimension of the space of intertwining operators of type $\binom{W_3}{W_1\,W_2}$. Here, we recall some general results on constructing intertwining operators and determining fusion rules using the Zhu algebra approach developed in \cite{FZ1, Li, FZ2, HY}, among other references.

To start, consider a grading-restricted generalized $V$-module $W=\bigoplus_{h\in\CC} W_{[h]}$. If we take $I$ to be the set of cosets in $\CC/\ZZ$ such that for $i\in I$, $W_{[h]}\neq 0$ for some $h\in i$, then
\begin{equation}\label{eqn:W_decomp}
 W=\bigoplus_{i\in I} \bigoplus_{n=0}^\infty W_{[h_i+n]}
\end{equation}
with $h_i$ the minimal conformal weight occurring in the coset $i$. Each $W_i=\bigoplus_{n=0}^\infty W_{[h_i+n]}$ is a $V$-submodule of $W$, so that $\vert I\vert =1$ if $W$ is non-zero and indecomposable, and $\vert I\vert$ is finite if $W$ is finitely generated.

The decomposition \eqref{eqn:W_decomp} implies that $W$ has an $\NN$-grading $W=\bigoplus_{n=0}^\infty W(n)$, given by
\begin{equation*}
 W(n)=\bigoplus_{i\in I} W_{[h_i+n]},
\end{equation*}
such that
\begin{equation}\label{eqn:N-grading_cond}
 v_m\cdot W(n)\subseteq W(\deg v+n-m-1)
\end{equation}
for $v\in V$, $m\in\ZZ$, and $n\in\NN$. Although this need not be the unique $\NN$-grading such that \eqref{eqn:N-grading_cond} holds, we shall always use this particular $\NN$-grading for grading-restricted generalized $V$-modules unless specified otherwise. If $W$ is finitely generated, so that $\vert I\vert<\infty$, then each $W(n)$ is the direct sum of finitely many generalized $L_0$-eigenspaces. In this case, we have well-defined projection maps
\begin{equation*}
 \pi_n: \overline{W}=\prod_{h\in\CC} W_{[h]}\rightarrow W(n)
\end{equation*}
for each $n\in\NN$.

Now suppose $W_1$, $W_2$, and $W_3$ are three grading-restricted generalized $V$-modules such that $W_3$ is finitely generated (to guarantee that the projection map $\pi_0: \overline{W}_3\rightarrow W_3(0)$ exists). Let $A(V)$ denote the Zhu algebra of $V$ defined in \cite{Zh} and let $A(W_1)$ denote the $A(V)$-bimodule defined in \cite{FZ1}. The degree-$0$ subspaces $W_2(0)$ and $W_3(0)$ are left $A(V)$-modules \cite{Zh}. Now for any intertwining operator $\cY$ of type $\binom{W_3}{W_1\,W_2}$, the following $A(V)$-module map was first constructed in \cite{FZ1}:
\begin{align*}
 \pi(\cY): A(W_1) \otimes_{A(V)} W_2(0) &\longrightarrow W_3(0)\nonumber\\
  [w_1]\otimes u_2 & \longmapsto \pi_0\left(\cY(w_1,1)u_2\right),
\end{align*}
where $[w_1]$ is the image of $w_1\in W_1$ in $A(W_1)$ and $\cY(\cdot,1)\cdot$ is the intertwining map associated to $\cY$. The next proposition is essentially a version of \cite[Proposition 24]{TW}, where the result is attributed to Nahm \cite{Na}:
\begin{prop}\label{prop:piY_surjective}
 Assume that $W_1$, $W_2$, and $W_3$ are grading-restricted generalized $V$-modules such that $W_2$ is generated by $W_2(0)$ as a $V$-module and $W_3$ is finitely generated. If $\cY$ is a surjective intertwining operator, then $\pi(\cY)$ is surjective.
\end{prop}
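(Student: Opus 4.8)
The plan is to prove that the image of $\pi(\cY)$---which, since $A(W_1)\otimes_{A(V)}W_2(0)$ is spanned by the simple tensors $[w_1]\otimes u_2$, is exactly $\operatorname{span}\{\pi_0(\cY(w_1,1)u_2)\mid w_1\in W_1,\,u_2\in W_2(0)\}$---is all of $W_3(0)$. Since $\cY$ is surjective, Corollary \ref{cor:intw_op_surjectivity} shows that $W_3(0)$ is spanned by the projections $\pi_0(\cY(w_1,1)w_2)$ as $w_1$ and $w_2$ range over \emph{all} of $W_1$ and $W_2$, so it suffices to show that each such vector already lies in $\operatorname{im}\pi(\cY)$. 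Moreover, because $\pi(\cY)$ is a morphism of left $A(V)$-modules, its image is stable under the action of $A(V)$ on $W_3(0)$, in particular under $o(\omega)=L_0$, which acts with distinct generalized eigenvalues $h_i$ on the different cosets appearing in \eqref{eqn:W_decomp}; the image therefore splits along these cosets, and we may run the whole argument one coset at a time. Thus we assume from now on that $W_3(0)=W_3[h_0]$ is a single lowest conformal weight space, so that $\pi_0$ is just the projection onto it.

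The key structural input is that, since $W_2$ is generated by $W_2(0)$, the graded piece $W_2(n)$ for $n\geq 1$ is already spanned by vectors $v_m w_2'$ with $v\in V$ homogeneous, $v_m$ strictly raising the $\NN$-grading (that is, $\deg v-m-1>0$), and $w_2'\in W_2(n')$ for $n'=n-(\deg v-m-1)<n$. This is the standard fact that a module generated by its bottom level is spanned by ``creation'' modes applied to that level: writing an arbitrary element of $W_2$ as a sum of monomials in modes applied to $W_2(0)$, one repeatedly uses the commutator relations of $V$ to move any non-raising mode to the right until it either preserves or annihilates $W_2(0)$, the commutator corrections being again non-lowering. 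This reduction is precisely where the hypothesis on $W_2$ enters, and it lets us induct on the $\NN$-degree $n$ of $w_2$.

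For the base case $n=0$ we have $w_2\in W_2(0)$ and $\pi_0(\cY(w_1,1)w_2)=\pi(\cY)([w_1]\otimes w_2)\in\operatorname{im}\pi(\cY)$ by definition. For the inductive step it suffices, by the spanning just described, to take $w_2=v_m w_2'$ with $v_m$ strictly raising and $w_2'\in W_2(n')$, $n'<n$. Setting $x=1$ in the commutator formula \eqref{eqn:gen_comm_form} gives
\begin{equation*}
 \cY(w_1,1)\,v_m w_2'=v_m\,\cY(w_1,1)w_2'-\sum_{i\geq 0}\binom{m}{i}\cY(v_i w_1,1)w_2',
\end{equation*}
a finite sum by lower truncation in $V$. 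Applying $\pi_0$ and using the grading property \eqref{eqn:N-grading_cond}, the first term equals $v_m\,\pi_{m+1-\deg v}\big(\cY(w_1,1)w_2'\big)$, which vanishes because $\deg v-m-1>0$ makes the index $m+1-\deg v$ negative. Each remaining term $\pi_0(\cY(v_i w_1,1)w_2')$ involves $v_i w_1\in W_1$ and $w_2'\in W_2(n')$ with $n'<n$, hence lies in $\operatorname{im}\pi(\cY)$ by the inductive hypothesis (applied for all elements of $W_1$). Therefore $\pi_0(\cY(w_1,1)w_2)\in\operatorname{im}\pi(\cY)$, closing the induction and the proof.

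I expect the genuine content to be concentrated in the spanning statement of the second paragraph: the rest is clean grading bookkeeping, but reducing a general $w_2$ to raising modes applied to $W_2(0)$ is exactly what ensures that the potentially dangerous term $v_m\,\pi_{m+1-\deg v}(\cY(w_1,1)w_2')$ never requires knowledge of $\cY(w_1,1)w_2'$ in positive $\NN$-degree; were lowering modes allowed, this term would involve $\pi_k$ with $k>0$ and the induction on $n$ would break down. The finite generation of $W_3$ plays a supporting role throughout: it is what makes the projection $\pi_0$ well defined and, together with the $A(V)$-stability of the image, what justifies the reduction to a single coset.
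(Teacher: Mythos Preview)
Your proof is correct and follows essentially the same approach as the paper: use Corollary~\ref{cor:intw_op_surjectivity} to reduce to showing $\pi_0(\cY(w_1,1)w_2)\in\im\pi(\cY)$ for all $w_2$, then apply the commutator formula \eqref{eqn:gen_comm_form} to strip off raising modes. The only difference is that the paper invokes \cite[Proposition~4.5.6]{LL} to write any $w_2\in\bigoplus_{n\geq 1}W_2(n)$ directly as a linear combination of single modes $v_n u_2$ with $u_2\in W_2(0)$, which collapses your induction to one step; your reduction to a single coset is also unnecessary, since $v_m$ with $\deg v-m-1>0$ strictly raises the $\NN$-grading regardless of the coset decomposition, so $\pi_0\circ v_m=0$ holds in general.
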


\begin{proof}
Since $\cY$ is surjective, Corollary \ref{cor:intw_op_surjectivity} says that $W_3(0)$ is spanned by $\pi_0\left(\cY(w_1,1)w_2\right)$ for $w_1\in W_1$ and $w_2\in W_2$. Thus we need to show that
 \begin{equation*}
  \pi_0\left(\cY(w_1,1)w_2\right)\in \im\pi(\cY)
 \end{equation*}
for any $w_1\in W_1$, $w_2\in W_2$. This holds by definition for $w_2\in W_2(0)$. For $w_2\in\bigoplus_{n\geq 1} W_2(n)$, we note that because $W_2(0)$ generates $W_2$ as a $V$-module, $w_2$ is a linear combination of  vectors $v_n u_2$ for $u_2\in W_2(0)$, homogeneous $v\in V$, and $n\in\ZZ$ such that $\deg v-n-1> 0$ (see \cite[Proposition 4.5.6]{LL}). The commutator formula \eqref{eqn:gen_comm_form} then implies that for any $w_1\in W_1$,
\begin{align*}
 \pi_0\left(\cY(w_1,1)v_n u_2\right) &=\pi_0\bigg(v_n\cY(w_1,1)u_2-\sum_{i\geq 0}\binom{n}{i} \cY(v_i w_1,1)u_2\bigg)\\
 & = -\sum_{i\geq 0}\binom{n}{i}\pi_0(\cY(v_i w_1,1)u_2)\in \im\pi(\cY)
\end{align*}
since $\deg v_n>0$. This proves the proposition.
\end{proof}

Note that $\cY\mapsto\pi(\cY)$ defines a linear map from intertwining operators of type $\binom{W_3}{W_1\,W_2}$ to $\hom_{A(V)}(A(W_1)\otimes_{A(V)} W_2(0), W_3(0))$. The main theorem of \cite{Li} (generalized to logarithmic intertwining operators in \cite{HY}) is that this linear map is an isomorphism under suitable conditions on $W_1$, $W_2$, and $W_3$. For simplicity, we will describe these conditions only when $V$ is a Virasoro vertex operator algebra $V_c$, in which case we have an isomorphism $A(V_c)\cong\CC[x]$ given by $[\omega]\mapsto x$ \cite{FZ1}.

Any $\CC[x]$-module $\cU$ is equivalently an $A(V_c)$-module, which is equivalently a $\cV ir_{\geq 0}$-module on which $L_0$ acts by $x$ and $L_n$ acts by $0$ for $n>0$. We then have the induced generalized Verma module $\cV=\ind^{\cV ir}_{\cV ir_{\geq 0}} \cU$. If $\cU$ is finite dimensional, then we have an $A(V_c)\cong\CC[x]$-module isomorphism $\cU\cong\cU^*$, so that the lowest conformal weight space $\cV'(0)$ of the generalized Verma module  contragredient  is isomorphic to $\cU$. Now the following theorem is the main result of \cite{Li, HY} for Virasoro vertex operator algebras (see also \cite[Lemma 2.19]{FZ2}):
\begin{thm}\label{thm:Zhu_fus_rules}
 Suppose $\cW_1$ is a grading-restricted generalized $V_c$-module generated by $\cW_1(0)$ and $\cU_2$, $\cU_3$ are finite-dimensional $A(V_c)$-modules. Then $\cY\mapsto\pi(\cY)$ defines a linear isomorphism from intertwining operators of type $\binom{\cV_3'}{\cW_1\,\cV_2}$ to $\hom_{A(V_c)}(A(\cW_1)\otimes_{A(V_c)} \cU_2,\cU_3)$, where $\cV_i=\ind^{\cV ir}_{\cV ir_{\geq 0}}\cU_i$ for $i=2,3$. In particular, fusion rules satisfy
 \begin{equation*}
  \mathcal{N}_{\cW_1,\cV_2}^{\cV_3'} =\dim \hom_{A(V_c)}(A(\cW_1)\otimes_{A(V_c)} \cU_2,\cU_3).
 \end{equation*}
\end{thm}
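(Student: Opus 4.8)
The plan is to show that the linear map $\cY\mapsto\pi(\cY)$ is both injective and surjective. Well-definedness (that $\pi(\cY)$ is an $A(V_c)$-module homomorphism into $\cV_3'(0)$) is exactly the Frenkel--Zhu construction recalled above, and the target space is identified correctly because $\cU_3$ finite-dimensional gives $\cV_3'(0)\cong\cU_3^*\cong\cU_3$ as $A(V_c)$-modules. So only injectivity and surjectivity require work, and I expect the surjectivity (existence) direction to be the main obstacle.

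For injectivity, I would exploit the contragredient pairing $\langle\,\cdot\,,\cdot\,\rangle\colon\cV_3\times\cV_3'\to\CC$, together with the hypotheses that $\cW_1$ is generated by $\cW_1(0)$ and that $\cV_2=\ind^{\cV ir}_{\cV ir_{\geq 0}}\cU_2$ and $\cV_3=\ind^{\cV ir}_{\cV ir_{\geq 0}}\cU_3$ are induced. The goal is to prove that every matrix coefficient $\langle\xi,\cY(w_1,x)w_2\rangle$ with $\xi\in\cV_3$, $w_1\in\cW_1$, $w_2\in\cV_2$ is determined by $\pi(\cY)$. Writing $\xi=L_{-n}\xi'$ and using the contragredient relation $\langle L_{-n}\xi',\eta'\rangle=\langle\xi',L_n\eta'\rangle$ moves a Virasoro mode onto the output, where the commutator formula \eqref{eqn:Vir_comm_form} redistributes it either onto $w_2$ (lowering its conformal weight in $\cV_2$, since $n>0$) or onto $w_1$. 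Iterating, every coefficient reduces to the lowest ones $\langle u_3,\cY(w_1,1)u_2\rangle$ with $u_3\in\cU_3=\cV_3(0)$ and $u_2\in\cU_2=\cV_2(0)$; these are precisely $\langle u_3,\pi_0(\cY(w_1,1)u_2)\rangle=\langle u_3,\pi(\cY)([w_1]\otimes u_2)\rangle$. The powers of $x$ are fixed by the conformal-weight grading, and powers of $\log x$ are controlled via the $L_{-1}$-derivative property exactly as in Lemma~\ref{lem:intw_op_surjectivity}. Hence $\pi(\cY)=0$ forces $\cY=0$. The one point demanding care is choosing the induction parameter so that the reduction terminates: applying $L_{-1}$ to $w_1$ raises its weight, but this term is absorbed by the $L_{-1}$-derivative property rather than by the inductive hypothesis, so one should induct on, say, the conformal weight of $\xi$ together with that of $w_2$.

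For surjectivity I would run this reduction in reverse: given $\phi\in\hom_{A(V_c)}(A(\cW_1)\otimes_{A(V_c)}\cU_2,\cU_3)$, declare the lowest matrix coefficients $\langle u_3,\cY(w_1,1)u_2\rangle$ to be read off from $\phi$, and then \emph{define} $\cY(w_1,x)w_2$ on all of $\cW_1\otimes\cV_2$ by the formulas forced in the injectivity step: the iterate formula \eqref{eqn:Vir_it_form} extends the definition in the $\cW_1$-variable from $\cW_1(0)$ to all of $\cW_1$, while the commutator formula \eqref{eqn:Vir_comm_form} together with the contragredient relation extends it in the $\cV_2$- and $\cV_3$-variables. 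One must then verify that the resulting $\cY$ is well defined (independent of the chosen presentations of elements of $\cV_2$, $\cV_3$, and $\cW_1$) and satisfies lower truncation, the $L_{-1}$-derivative property, and the Jacobi identity. This consistency is exactly where the hypothesis that $\phi$ is a map of $A(V_c)$-modules respecting the $A(\cW_1)$-bimodule structure is used, and it is the heart of the theorem.

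This existence-and-consistency verification is the genuinely hard part, and it is the content of the main theorems of \cite{Li, HY} (see also \cite[Lemma 2.19]{FZ2}); rather than reproduce it, I would invoke those results after checking that our hypotheses---$\cW_1$ generated by $\cW_1(0)$, and $\cU_2,\cU_3$ finite-dimensional so that $\cV_2=\ind\cU_2$, $\cV_3=\ind\cU_3$ and $\cV_3'(0)\cong\cU_3$---match their setup. The surjectivity of $\pi(\cY)$ for surjective $\cY$ established in Proposition~\ref{prop:piY_surjective} is consistent with this picture and serves as a sanity check. The dimension count $\mathcal{N}^{\cV_3'}_{\cW_1,\cV_2}=\dim\hom_{A(V_c)}(A(\cW_1)\otimes_{A(V_c)}\cU_2,\cU_3)$ is then immediate from the isomorphism.
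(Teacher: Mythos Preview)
Your proposal is correct and aligns with the paper's treatment: the paper does not give an independent proof of this theorem but simply states it as the specialization to $V_c$ of the main results of \cite{Li} and \cite{HY} (with \cite[Lemma 2.19]{FZ2} as a supplementary reference). Your sketch of the injectivity argument and your identification of the surjectivity/consistency step as the substantive content to be imported from those references is accurate, so there is nothing to correct.
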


\begin{rem}\label{rem:non_std_N_grad}
 In the preceding theorem, we need to define $\pi(\cY)$ using the $\NN$-grading on $\cV_3'$ such that $\cV_3'(0)=\cU_3^*\cong\cU_3$. This $\NN$-grading will differ slightly from our usual $\NN$-grading convention if $L_0$ has two eigenvalues on $\cU_3$ that differ by a non-zero integer.
\end{rem}

\section{First results on Virasoro fusion}\label{sec:first_fus}

In this section, our goal is to use Proposition \ref{prop:piY_surjective} to obtain upper bounds on tensor products of certain $V_c$-modules in $\cO_c$, and to use Theorem \ref{thm:Zhu_fus_rules} to obtain lower bounds. At first, we consider arbitrary central charges, and then we specialize to the central charge $c_{p,1}$.

\subsection{Results at general central charge}

In this subsection, we assume $c=13-6t-6t^{-1}$ for any $t\in\CC\setminus\lbrace 0\rbrace$. We want to see what Proposition \ref{prop:piY_surjective}, applied to the surjective tensor product intertwining operator $\cY_\tens$, says about the tensor products $\cL_{1,2}\tens\cL_{r,s}$ and $\cL_{2,1}\tens\cL_{r,s}$ for $r,s\in\ZZ_+$. Thus we must first determine the $A(V_c)$-bimodules $A(\cL_{1,2})$ and $A(\cL_{2,1})$. This was done in \cite[Lemmas 2.10 and 2.11]{FZ2} under the assumption $t\notin\QQ$. Here, we review the calculations to confirm that the same results hold for general $t$ (except that when $t=\frac{p}{q}$ for relatively prime $p,q\in\ZZ_{\geq 2}$, the calculations actually compute Zhu bimodules for certain non-simple Verma module quotients).

In this and the following sections, we use $v_{r,s}$ to denote a lowest-conformal-weight vector generating either $\cV_{r,s}$ or one of its quotients, such as $\cL_{r,s}$. We now compute $A(\cL_{1,2})$, noting that $A(\cL_{2,1})$ can be determined almost identically with the substitutions $v_{1,2}\mapsto v_{2,1}$, $h_{1,2}\mapsto h_{2,1}$, and $t^{-1}\mapsto t$. To begin, the isomorphism $A(V_c)\cong\CC[x]$ corresponds to an isomorphism
\begin{align*}
 \CC[x,y] & \rightarrow A(\cV_{1,2})\nonumber\\
 x^m y^n & \mapsto [\omega]^m\cdot[v_{1,2}]\cdot[\omega]^n,
\end{align*}
where the left and right actions of $\CC[x]$ on the bimodule $\CC[x,y]$ are multiplication by $x$ and $y$, respectively, while the left and right actions of $A(V_c)$ on $A(\cV_{1,2})$ are given by
\begin{equation*}
[\omega]\cdot[v] =[(L_0+2L_{-1}+L_{-2})v],\qquad[v]\cdot[\omega]=[(L_{-2}+L_{-1})v]
\end{equation*}
for $v\in\cV_{1,2}$. Under this isomorphism, we can then identify
$$A(\cL_{1,2})\cong\CC[x,y]/(f_{1,2}(x,y)),$$
where $f_{1,2}(x,y)$ is the polynomial corresponding to the singular vector $(L_{-1}^2-\frac{1}{t}L_{-2})v_{1,2}\in\cV_{1,2}$ generating the maximal proper submodule of $\cV_{1,2}$.

To determine $f_{1,2}(x,y)$, we first note that for $v\in\cV_{1,2}$,
\begin{equation}\label{eqn:bimod_reln_2}
 [L_{-2} v]=[v]\cdot[\omega]-[L_{-1}v].
\end{equation}
This together with
\begin{align*}
 [\omega]\cdot[v] =(\mathrm{wt}\,v)[v]+2[L_{-1} v]+[L_{-2} v]
\end{align*}
implies
\begin{equation}\label{eqn:bimod_reln}
 [L_{-1}v]=[\omega]\cdot[v]-[v]\cdot[\omega]-(\mathrm{wt}\,v)[v].
\end{equation}
Consequently,
\begin{align*}
 \bigg[\bigg(L_{-1}^2- & \frac{1}{t}L_{-2}\bigg)v_{1,2}\bigg]  =[\omega]\cdot[L_{-1}v_{1,2}]-[L_{-1}v_{1,2}]\cdot[\omega]-(h_{1,2}+1)[L_{-1} v_{1,2}]\nonumber\\
 &\qquad\qquad\qquad\qquad-\frac{1}{t}([v_{1,2}]\cdot[\omega]-[L_{-1}v_{1,2}])\nonumber\\
 & =[\omega]\cdot\big([\omega]\cdot[v_{1,2}]-[v_{1,2}]\cdot[\omega]-h_{1,2}[v_{1,2}]\big) -\big([\omega]\cdot[v_{1,2}]-[v_{1,2}]\cdot[\omega]-h_{1,2}[v_{1,2}]\big)\cdot[\omega]\nonumber\\
 &\qquad\qquad-(h_{1,2}+1)\big([\omega]\cdot[v_{1,2}]-[v_{1,2}]\cdot[\omega]-h_{1,2}[v_{1,2}]\big)\nonumber\\
 & \qquad\qquad-\frac{1}{t}[v_{1,2}]\cdot[\omega]+\frac{1}{t}\big([\omega]\cdot[v_{1,2}]-[v_{1,2}]\cdot[\omega]-h_{1,2}[v_{1,2}]\big)\nonumber\\
 & =[\omega]^2\cdot[v_{1,2}]-2[\omega]\cdot[v_{1,2}]\cdot[\omega]+[v_{1,2}]\cdot[\omega]^2-\left(2h_{1,2}+1-\frac{1}{t}\right)[\omega]\cdot[v_{1,2}]\nonumber\\
 & \qquad\qquad + \left(2h_{1,2}+1-\frac{2}{t}\right)[v_{1,2}]\cdot[\omega]+h_{1,2}\left(h_{1,2}+1-\frac{1}{t}\right)[v_{1,2}].
\end{align*}
This corresponds to the polynomial
\begin{align*}
 f_{1,2}(x,y) & =x^2-2xy+y^2-\left(2h_{1,2}+1-\frac{1}{t}\right)x+\left(2h_{1,2}+1-\frac{2}{t}\right)y+h_{1,2}\left(h_{1,2}+1-\frac{1}{t}\right)\nonumber\\
 & =\left(x-y-\left(h_{1,2}+1-\frac{1}{t}\right)\right)\left(x-y-h_{1,2}\right)-\frac{1}{t}y.
\end{align*}
We have now determined $A(\cL_{1,2})$; similarly, we can use the singular vector $(L_{-1}^2-t\,L_{-2})v_{2,1}\in\cV_{2,1}$ to show that
\begin{equation*}
 A(\cL_{2,1})\cong\CC[x,y]/(f_{2,1}(x,y))
\end{equation*}
where
\begin{equation*}
 f_{2,1}(x,y)=\left(x-y-\left(h_{2,1}+1-t\right)\right)(x-y-h_{2,1})-t\,y.
\end{equation*}

Now it is easy to determine the $A(V_c)$-modules $\cM_{r,s}=A(\cL_{1,2})\otimes_{A(V_c)}\CC v_{r,s}$ and $\mathcal{N}_{r,s}=A(\cL_{2,1})\otimes_{A(V_c)}\CC v_{r,s}$ for $r,s\in\ZZ_+$, where $\CC v_{r,s}$ is both $\cV_{r,s}(0)$ and $\cL_{r,s}(0)$. We have
\begin{align*}
 \cM_{r,s} & \cong \CC[x]/(f_{1,2}(x,h_{r,s})),\\
 \mathcal{N}_{r,s} & \cong \CC[x]/(f_{2,1}(x,h_{r,s})),
\end{align*}
where
\begin{align*}
 f_{1,2}(x,h_{r,s}) &=\left(x-\left(h_{1,2}+h_{r,s}+1-\frac{1}{t}\right)\right)\left(x-(h_{1,2}+h_{r,s})\right)-\frac{h_{r,s}}{t}\\
 &= (x-h_{r,s-1})(x-h_{r,s+1}),\nonumber\\
 f_{2,1}(x,h_{r,s}) &=\left(x-\left(h_{2,1}+h_{r,s}+1-t\right)\right)\left(x-(h_{2,1}+h_{r,s})\right)-t\,h_{r,s}\\
 &= (x-h_{r-1,s})(x-h_{r+1,s}).
\end{align*}
In other words, $L_0$ has eigenvalue(s) $h_{r,s\pm 1}$ on $\cM_{r,s}$ and eigenvalue(s) $h_{r\pm 1,s}$ on $\mathcal{N}_{r,s}$.

We can now apply Proposition \ref{prop:piY_surjective}:
\begin{prop}\label{prop:conf_wts}
 Let $r,s\in\ZZ_+$ and let $\cW$ be a grading-restricted generalized $V_c$-module in $\cO_c$.
 \begin{enumerate}
  \item If there is a surjective intertwining operator of type $\binom{\cW}{\cL_{1,2}\,\cL_{r,s}}$, then the conformal weights of $\cW$ are contained in $\lbrace h_{r,s-1}+\NN\rbrace\cup\lbrace h_{r,s+1}+\NN\rbrace$. In particular, this conclusion holds for $\cW=\cL_{1,2}\tens\cL_{r,s}$.

  \item If there is a surjective intertwining operator of type $\binom{\cW}{\cL_{2,1}\,\cL_{r,s}}$, then the conformal weights of $\cW$ are contained in $\lbrace h_{r-1,s}+\NN\rbrace\cup\lbrace h_{r+1,s}+\NN\rbrace$. In particular, this conclusion holds for $\cW=\cL_{2,1}\tens\cL_{r,s}$.
 \end{enumerate}
\end{prop}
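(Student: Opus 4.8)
The plan is to apply Proposition~\ref{prop:piY_surjective} to the given surjective intertwining operator and then to read off the conformal weights from the explicit $A(V_c)$-module $\cM_{r,s}=A(\cL_{1,2})\otimes_{A(V_c)}\CC v_{r,s}$ computed above. For part (1), I would take $W_1=\cL_{1,2}$, $W_2=\cL_{r,s}$, and $W_3=\cW$. The hypotheses of Proposition~\ref{prop:piY_surjective} hold: $\cL_{r,s}$ is irreducible and hence generated by its one-dimensional lowest-conformal-weight space $\cL_{r,s}(0)=\CC v_{r,s}$, while $\cW\in\cO_c$ has finite length and so is finitely generated. Thus Proposition~\ref{prop:piY_surjective} makes the induced $A(V_c)$-module map $\pi(\cY)\colon \cM_{r,s}=A(\cL_{1,2})\otimes_{A(V_c)}\cL_{r,s}(0)\to\cW(0)$ surjective.

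Next I would extract the conformal-weight information. We showed above that $\cM_{r,s}\cong\CC[x]/\bigl((x-h_{r,s-1})(x-h_{r,s+1})\bigr)$, so the generalized eigenvalues of $[\omega]=L_0$ on $\cM_{r,s}$ lie in $\lbrace h_{r,s-1},h_{r,s+1}\rbrace$. Because $\pi(\cY)$ is a surjection of $A(V_c)$-modules and $[\omega]$ acts as $L_0$ on the degree-zero space, $\cW(0)$ is a quotient of $\cM_{r,s}$ as an $L_0$-module; hence every generalized $L_0$-eigenvalue appearing in $\cW(0)$ lies in $\lbrace h_{r,s-1},h_{r,s+1}\rbrace$.

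To finish, I would translate this into a statement about all of $\cW$. By the definition of the $\NN$-grading in \eqref{eqn:W_decomp}, the degree-zero space $\cW(0)=\bigoplus_{i\in I}\cW_{[h_i]}$ is precisely the sum of the minimal-conformal-weight spaces over the cosets $i\in I$, so the minimal weights $\lbrace h_i\rbrace_{i\in I}$ are exactly the generalized $L_0$-eigenvalues on $\cW(0)$, which we have just bounded by $\lbrace h_{r,s-1},h_{r,s+1}\rbrace$. Since every conformal weight of $\cW$ has the form $h_i+n$ with $i\in I$ and $n\in\NN$, the conformal weights all lie in $\lbrace h_{r,s-1}+\NN\rbrace\cup\lbrace h_{r,s+1}+\NN\rbrace$. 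The ``in particular'' claim follows by taking $\cY=\cY_\tens$ and $\cW=\cL_{1,2}\tens\cL_{r,s}$, since the tensor-product intertwining operator is surjective by \cite[Proposition~4.23]{HLZ3}. Part (2) is identical after replacing $\cL_{1,2}$ by $\cL_{2,1}$ and $\cM_{r,s}$ by $\mathcal{N}_{r,s}\cong\CC[x]/\bigl((x-h_{r-1,s})(x-h_{r+1,s})\bigr)$.

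The heavy lifting is already done in Proposition~\ref{prop:piY_surjective} and the bimodule computation, so the remaining steps are routine; the only point requiring care is the bookkeeping in the last paragraph. Specifically, I must use the particular $\NN$-grading \eqref{eqn:W_decomp}, for which $\cW(0)$ collects the bottom conformal weight of each coset---with a different $\NN$-grading satisfying \eqref{eqn:N-grading_cond} the identification of minimal weights with the $L_0$-eigenvalues on $\cW(0)$ could fail, so this choice of grading must be tracked throughout.
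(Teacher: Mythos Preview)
Your proposal is correct and follows essentially the same approach as the paper's proof: apply Proposition~\ref{prop:piY_surjective} (after checking that $\cL_{r,s}$ is generated by its degree-zero space and that $\cW$ is finitely generated), read off the $L_0$-eigenvalues on $\cW(0)$ from the explicit description of $\cM_{r,s}$, and then invoke the $\NN$-grading convention. Your write-up is more detailed than the paper's, but the argument is identical.
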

\begin{proof}
 First note that $\cL_{r,s}$ is generated by $\cL_{r,s}(0)$ and that $\cW$, as a $C_1$-cofinite module in $\cO_c$, is finitely generated. So in the first case, Proposition \ref{prop:piY_surjective} says that $\cW(0)$ is a homomorphic image of $\cM_{r,s}$ as an $A(V_c)$-module. Thus the generalized $L_0$-eigenvalue(s) on $\cW(0)$ are $h_{r,s\pm1}$, and then the first conclusion of the proposition follows from our $\NN$-grading convention. The proof of the second part of the proposition is the same.
\end{proof}

From now on, we will mainly focus on the tensor products $\cL_{1,2}\tens\cL_{r,s}$. We have shown that there is a surjective $A(V_c)$-homomorphism $\pi(\cY_\tens): \cM_{r,s}\rightarrow(\cL_{1,2}\tens\cL_{r,s})(0)$. We may also regard $\pi(\cY_\tens)$ as a $\cV ir_{\geq 0}$-homomorphism, so  if we set $\cW_{r,s}=\ind_{\cV ir_{\geq 0}}^{\cV ir}\cM_{r,s}$, then the universal property of induced modules leads to a $V_c$-module homomorphism
\begin{equation*}
 \Pi_{r,s}: \cW_{r,s}\rightarrow\cL_{1,2}\tens\cL_{r,s}
\end{equation*}
such that $(\cL_{1,2}\tens\cL_{r,s})(0)\subseteq\im\Pi_{r,s}$. We show that $\Pi_{r,s}$ is usually surjective:
\begin{prop}\label{prop:Pi_rs_surjective}
If $h_{r,s-1}-h_{r,s+1}\notin\ZZ\setminus\lbrace 0\rbrace$, then the homomorphism $\Pi_{r,s}$ is surjective.
\end{prop}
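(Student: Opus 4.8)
The plan is to pass to the cokernel of $\Pi_{r,s}$ and then apply Proposition~\ref{prop:piY_surjective} a second time, now to that cokernel rather than to the tensor product itself. Write $M=\cL_{1,2}\tens\cL_{r,s}$ and let $q\colon M\to Q$ be the cokernel of $\Pi_{r,s}$. Since $\cO_c$ is closed under quotients, $Q$ again lies in $\cO_c$ and is in particular finitely generated. I would argue by contradiction, assuming $Q\neq 0$, and aim to contradict the minimality of the lowest conformal weight of $Q$.

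The key observation is that $\cY':=q\circ\cY_\tens$ is an intertwining operator of type $\binom{Q}{\cL_{1,2}\,\cL_{r,s}}$, and it is surjective because both $\cY_\tens$ and $q$ are. Applying Proposition~\ref{prop:piY_surjective} with $W_1=\cL_{1,2}$ and $W_2=\cL_{r,s}$ (each generated by its degree-zero space) and $W_3=Q$ then shows that the induced $A(V_c)$-module map $\pi(\cY')\colon\cM_{r,s}\to Q(0)$ is surjective. Since $A(V_c)\cong\CC[x]$ acts via $L_0$ and $\cM_{r,s}$ has (generalized) $L_0$-eigenvalues only $h_{r,s-1}$ and $h_{r,s+1}$, every $L_0$-eigenvalue occurring on $Q(0)$ must lie in $\{h_{r,s-1},h_{r,s+1}\}$.

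To finish, note that if $Q\neq 0$ then $Q(0)\neq 0$, so $Q$ has a minimal conformal weight $h^*\in\{h_{r,s-1},h_{r,s+1}\}$ in some coset of $\CC/\ZZ$. As $Q$ is a quotient of $M$, $h^*$ is also a conformal weight of $M$, and Proposition~\ref{prop:conf_wts}(1) places all weights of $M$ in the coset of $h^*$ inside $h^*+\NN$. Here the hypothesis $h_{r,s-1}-h_{r,s+1}\notin\ZZ\setminus\{0\}$ enters: it forces the other of the two weights $h_{r,s\pm 1}$ to be either equal to $h^*$ or in a different coset, so that $h^*$ is genuinely the minimal weight of $M$ in its coset, i.e. $M_{[h^*]}\subseteq M(0)$. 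Since $M(0)\subseteq\im\Pi_{r,s}$ by construction, we get $Q_{[h^*]}=q(M_{[h^*]})=0$, contradicting the minimality of $h^*$ for $Q$. Hence $Q=0$ and $\Pi_{r,s}$ is surjective.

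The main obstacle, and the only place the numerical hypothesis is used, is this last promotion of ``a lowest weight of $Q$ equals $h_{r,s\pm 1}$'' to ``it is a lowest weight of $M$''. When $h_{r,s-1}-h_{r,s+1}\in\ZZ\setminus\{0\}$, the larger of $h_{r,s-1},h_{r,s+1}$ lies strictly above the minimal weight of $M$ in its common coset; the argument then breaks down, and indeed one expects $\Pi_{r,s}$ to fail to be surjective in that case, since the higher degree-zero vector of $\cM_{r,s}$ is no longer detected in $M(0)$.
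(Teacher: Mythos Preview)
Your proof is correct and follows essentially the same route as the paper: pass to the cokernel, apply Proposition~\ref{prop:piY_surjective} to the induced surjective intertwining operator, and use the hypothesis to conclude that the relevant lowest-weight spaces of the cokernel already lie in $\im\Pi_{r,s}$, hence vanish. The paper phrases the last step directly (observing that $(\cL_{1,2}\tens\cL_{r,s})_{[h_{r,s-1}]}+(\cL_{1,2}\tens\cL_{r,s})_{[h_{r,s+1}]}=(\cL_{1,2}\tens\cL_{r,s})(0)$ under the hypothesis) rather than via contradiction with a chosen $h^*$, but the argument is the same.
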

\begin{proof}
Set $\cW=(\cL_{1,2}\tens\cL_{r,s})/\im\Pi_{r,s}$ and let $\pi:\cL_{1,2}\tens\cL_{r,s}\rightarrow\cW$ denote the canonical quotient map. The grading-restricted generalized module $\cW$ is in $\cO_c$, and $\pi\circ\cY_\tens$ is a surjective intertwining operator of type $\binom{\cW}{\cL_{1,2}\,\cL_{r,s}}$. Thus from Propositions \ref{prop:piY_surjective} and \ref{prop:conf_wts}(1),
\begin{equation*}
 \cW(0)\subseteq\cW_{[h_{r,s-1}]}+\cW_{[h_{r,s+1}]}.
\end{equation*}
The two sets $\lbrace h_{r,s-1}+\NN\rbrace$ and $\lbrace h_{r,s+1}+\NN\rbrace$ of potential conformal weights of $\cW$ are either disjoint (if $h_{r,s-1}-h_{r,s+1}\notin\ZZ$) or identical (if $h_{r,s-1}=h_{r,s+1}$). Thus
\begin{equation*}
 (\cL_{1,2}\tens\cL_{r,s})_{[h_{r,s-1}]}+(\cL_{1,2}\tens\cL_{r,s})_{[h_{r,s+1}]} = (\cL_{1,2}\tens\cL_{r,s})(0) \subseteq\im\Pi_{r,s},
\end{equation*}
which means $\cW(0)=0$. By our $\NN$-grading convention, $\cW=0$ as well, that is, $\cL_{1,2}\tens\cL_{r,s} =\im\Pi_{r,s}$ and $\Pi_{r,s}$ is surjective.
\end{proof}

\begin{rem}
 The proof of the above proposition fails when, say, $h_{r,s-1}-h_{r,s+1}\in\ZZ_+$, because then it is possible that $(\cL_{1,2}\tens\cL_{r,s})(0)=(\cL_{1,2}\tens\cL_{r,s})_{[h_{r,s+1}]}$ and that
$(\cL_{1,2}\boxtimes\cL_{r,s})/\im\Pi_{r,s}$ has a non-zero space of conformal weight $h_{r,s-1}$.
\end{rem}

Note that if $h_{r,s-1}\neq h_{r,s+1}$, then $\cW_{r,s}\cong\cV_{r,s-1}\oplus\cV_{r,s+1}$. In these cases, we can determine the images of $v_{r,s\pm1}\in\cV_{r,s\pm1}$ in $\cL_{1,2}\tens\cL_{r,s}$ under the homomorphism $\Pi_{r,s}$. In fact, we get the following by determining the $x$-eigenvectors in $\CC[x]/(f_{1,2}(x,h_{r,s})$ and using definitions:
\begin{equation*}
 \begin{array}{rll}
  \CC v_{r,s-1}\oplus\CC v_{r,s+1} & \rightarrow \CC[x]/(f_{1,2}(x,h_{r,s}) & \rightarrow A(\cL_{1,2})\otimes_{A(V_c)}\CC v_{r,s} \\
  v_{r,s\pm1} & \mapsto x-h_{r,s\mp 1}+(f_{1,2}(x,h_{r,s})) & \mapsto ([\omega]-h_{r,s\mp1})\cdot[v_{1,2}]\otimes_{A(V_c)} v_{r,s}
 \end{array}
\end{equation*}
Then \eqref{eqn:bimod_reln} implies
\begin{align*}
 ([\omega]-h_{r,s\mp1})\cdot[v_{1,2}]\otimes_{A(V_c)} v_{r,s} & =[v_{1,2}] \cdot([\omega]+h_{1,2}-h_{r,s\mp1})\otimes_{A(V_c)} v_{r,s}\nonumber\\
 &\hspace{5em}+[L_{-1}v_{1,2}]\otimes_{A(V_c)} v_{r,s} \nonumber\\
 & =(h_{1,2}+h_{r,s}-h_{r,s\mp1})[v_{1,2}]\otimes_{A(V_c)} v_{r,s}+[L_{-1} v_{1,2}]\otimes_{A(V_c)} v_{r,s},
\end{align*}
which $\pi(\cY_\tens)$ maps to
\begin{equation*}
 \left(-\frac{1\pm r}{2}+\frac{1\pm s}{2} t^{-1}\right)\pi_0(v_{1,2}\tens v_{r,s})+\pi_0(L_{-1}v_{1,2}\tens v_{r,s});
\end{equation*}
here $\tens$ denotes the tensor product intertwining map $\cY_\tens(\cdot,1)\cdot$. Rescaling these vectors a little, we may conclude:
\begin{prop}\label{prop:top_level_eigenvectors}
 For $r,s\in\ZZ_+$, the vectors
 \begin{equation*}
  \Pi_{r,s}(v_{r,s\pm1}) =\left(1\pm s-(1\pm r)t\right)\pi_0(v_{1,2}\tens v_{1,2})+2t\,\pi_0(L_{-1}v_{1,2}\tens v_{1,2})\in(\cL_{1,2}\tens\cL_{r,s})(0)
 \end{equation*}
are, if non-zero, $L_0$-eigenvectors with eigenvalues $h_{r,s\pm1}$.
\end{prop}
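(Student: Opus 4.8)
The plan is to deduce everything from the fact that $\Pi_{r,s}$ is a homomorphism of $V_c$-modules and hence commutes with $L_0$: it must send each genuine $L_0$-eigenvector of $\cW_{r,s}$ either to zero or to an $L_0$-eigenvector of $\cL_{1,2}\tens\cL_{r,s}$ of the same eigenvalue. Thus it suffices to check that the generators $v_{r,s\pm1}$ are genuine $L_0$-eigenvectors of $\cW_{r,s}$ with eigenvalues $h_{r,s\pm1}$, and then to transcribe into the asserted normalization the image of $v_{r,s\pm1}$ already computed just above.

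For the eigenvector claim I would recall that $\cW_{r,s}=\ind_{\cV ir_{\geq 0}}^{\cV ir}\cM_{r,s}$, that $\cM_{r,s}\cong\CC[x]/\big((x-h_{r,s-1})(x-h_{r,s+1})\big)$ with $L_0$ acting as multiplication by $x$, and that the $v_{r,s\pm1}$ are the classes of $x-h_{r,s\mp1}$. A one-line reduction modulo $(x-h_{r,s-1})(x-h_{r,s+1})$ gives $x\cdot(x-h_{r,s\mp1})=h_{r,s\pm1}\,(x-h_{r,s\mp1})$, so each $v_{r,s\pm1}$ is a genuine $L_0$-eigenvector of eigenvalue $h_{r,s\pm1}$. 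This identity holds verbatim even in the degenerate case $h_{r,s-1}=h_{r,s+1}$ (i.e.\ $t=s/r$), where $\cM_{r,s}\cong\CC[x]/\big((x-h)^2\big)$ is non-semisimple: the class of $x-h$ remains a genuine eigenvector (it spans the image of the square-zero operator $x-h$), and the two expressions for $\Pi_{r,s}(v_{r,s\pm1})$ then collapse to a single vector, consistently with $v_{r,s-1}=v_{r,s+1}$.

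Granting this, the eigenvalue assertion is immediate from the $L_0$-equivariance of $\Pi_{r,s}$, and only the normalization remains. To pin it down I would trace $v_{r,s\pm1}=x-h_{r,s\mp1}$ through $\cM_{r,s}\to A(\cL_{1,2})\otimes_{A(V_c)}\CC v_{r,s}\to(\cL_{1,2}\tens\cL_{r,s})(0)$ and apply \eqref{eqn:bimod_reln}, exactly as done before the statement, obtaining $\big(-\tfrac{1\pm r}{2}+\tfrac{1\pm s}{2}t^{-1}\big)\pi_0(v_{1,2}\tens v_{r,s})+\pi_0(L_{-1}v_{1,2}\tens v_{r,s})$; multiplying by the nonzero scalar $2t$ produces the stated coefficients $1\pm s-(1\pm r)t$ and $2t$, which changes neither the vanishing nor the eigenvalue.

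Since the substantive input was already carried out in computing $A(\cL_{1,2})$ and the polynomial $f_{1,2}$, there is essentially no obstacle here beyond the bookkeeping just described, the one point requiring care being the collapse in the degenerate case. Should one instead want a proof bypassing $\Pi_{r,s}$ entirely, the one genuine computation would be to verify the eigenvalues directly: the $n=0$ commutator formula \eqref{eqn:Vir_comm_form}, the singular-vector relation $L_{-1}^2 v_{1,2}=t^{-1}L_{-2}v_{1,2}$, and the identity $\pi_0(L_{-2}v_{1,2}\tens v_{r,s})=h_{r,s}\,\pi_0(v_{1,2}\tens v_{r,s})-\pi_0(L_{-1}v_{1,2}\tens v_{r,s})$ coming from \eqref{eqn:bimod_reln_2} together express $L_0$ as an explicit $2\times2$ matrix on the span of $\pi_0(v_{1,2}\tens v_{r,s})$ and $\pi_0(L_{-1}v_{1,2}\tens v_{r,s})$, whose eigenvalues one checks to be $h_{r,s\pm1}$ with eigenvectors exactly the two displayed vectors.
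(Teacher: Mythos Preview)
Your proposal is correct and follows essentially the same route as the paper: the paper's argument is carried out in the paragraphs immediately preceding the proposition, where the $x$-eigenvectors $x-h_{r,s\mp1}$ in $\CC[x]/(f_{1,2}(x,h_{r,s}))$ are identified, pushed through \eqref{eqn:bimod_reln} and $\pi(\cY_\tens)$, and then rescaled by $2t$. Your write-up is in fact slightly more careful than the paper's, since the paper's derivation is phrased under the hypothesis $h_{r,s-1}\neq h_{r,s+1}$, whereas you explicitly verify that the eigenvector identity $x\cdot(x-h)\equiv h\,(x-h)$ survives in the degenerate case $h_{r,s-1}=h_{r,s+1}$ (so that the two displayed vectors coincide there); this is a useful addition.
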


\subsection{Results at specialized central charge}

In this section, we now assume that $c=13-6p-6p^{-1}$ where $p > 1$ is an integer. In this case, irreducible modules in $\cO_c$ are given by $\cL_{r,s}$ for $r\geq 1$ and $1\leq s\leq p$, and conformal weights satisfy
\begin{equation*}
 h_{r,s-1}-h_{r,s+1} =r-\frac{s}{p}.
\end{equation*}
We see that $h_{r,s-1}=h_{r,s+1}$ only when $(r,s)=(1,p)$, so that the generalized Verma module $\cW_{r,s}=\ind^{\cV ir}_{\cV ir_{\geq 0}} \cM_{r,s}$ is given by
\begin{equation*}
 \cW_{r,s} \cong\left\lbrace\begin{array}{ccc}
                         \cV_{r,s-1}\oplus\cV_{r,s+1} & \text{if} & (r,s)\neq(1,p)\\
                         \cV^{(2)}_{1,p-1} & \text{if} & (r,s)=(1,p)
                        \end{array}
\right. ,
\end{equation*}
where $\cV^{(2)}_{1,p-1}$ is the generalized Verma module induced from the two-dimensional $\cV ir_{\geq 0}$-module on which $L_0$ acts by the matrix $\left[\begin{array}{cc}
                     h_{1,p-1} & 1 \\
                                     0 & h_{1,p-1}\\                                                                                                                        \end{array}\right]$. Moreover, Proposition \ref{prop:Pi_rs_surjective} yields:
\begin{cor}\label{cor:Pi_rs_surjective}
 The homomorphism $\Pi_{r,s}: \cW_{r,s}\rightarrow\cL_{1,2}\tens\cL_{r,s}$ is surjective when $1\leq s\leq p-1$ and when $(r,s)=(1,p)$.
\end{cor}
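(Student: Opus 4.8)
The plan is to apply Proposition \ref{prop:Pi_rs_surjective} directly, so the only work is to verify its hypothesis $h_{r,s-1}-h_{r,s+1}\notin\ZZ\setminus\{0\}$ in each of the two listed cases. At central charge $c=13-6p-6p^{-1}$ we have the identity $h_{r,s-1}-h_{r,s+1}=r-\tfrac{s}{p}$ recorded just above the corollary, so everything reduces to a short arithmetic check on this quantity.

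First, for $1\leq s\leq p-1$ I would observe that $\tfrac{s}{p}$ lies strictly between $0$ and $1$ and hence is not an integer; since $r\in\ZZ$, the difference $r-\tfrac{s}{p}$ is then a non-integer, so in particular it does not lie in $\ZZ\setminus\{0\}$. Proposition \ref{prop:Pi_rs_surjective} then immediately gives surjectivity of $\Pi_{r,s}$. Next, for $(r,s)=(1,p)$ the same formula gives $h_{1,p-1}-h_{1,p+1}=1-\tfrac{p}{p}=0$. This value lies in $\ZZ$, but it equals $0$ and is therefore excluded from $\ZZ\setminus\{0\}$; this is precisely the borderline case the proposition was formulated to allow, and it is also the case in which $\cW_{1,p}$ is the indecomposable generalized Verma module $\cV^{(2)}_{1,p-1}$ rather than a direct sum. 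Hence Proposition \ref{prop:Pi_rs_surjective} again applies and $\Pi_{1,p}$ is surjective.

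There is no substantive obstacle here: all of the analytic content—controlling the degree-zero subspace $(\cL_{1,2}\tens\cL_{r,s})(0)$ via Proposition \ref{prop:piY_surjective} and lifting it through the induced module to deduce surjectivity once the potential weight cosets $\{h_{r,s-1}+\NN\}$ and $\{h_{r,s+1}+\NN\}$ either coincide or are disjoint—has already been carried out in Propositions \ref{prop:piY_surjective}–\ref{prop:Pi_rs_surjective}. The corollary is just the observation that the two regimes $1\leq s\leq p-1$ and $(r,s)=(1,p)$ are exactly the situations in which the excluded set $\ZZ\setminus\{0\}$ is avoided: the former because $\tfrac{s}{p}$ is a proper fraction, and the latter because the weight difference collapses to $0$.
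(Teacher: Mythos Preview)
Your proof is correct and follows exactly the approach the paper intends: the paper states the identity $h_{r,s-1}-h_{r,s+1}=r-\tfrac{s}{p}$ immediately before the corollary and then simply says ``Proposition \ref{prop:Pi_rs_surjective} yields'' the result, leaving the arithmetic check implicit. Your verification that $r-\tfrac{s}{p}\notin\ZZ$ for $1\leq s\leq p-1$ and that it equals $0$ for $(r,s)=(1,p)$ is precisely that check.
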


Corollary \ref{cor:Pi_rs_surjective} gives an upper bound for the tensor product $\cL_{1,2}\tens\cL_{r,s}$ when $1\leq s\leq p-1$ or when $(r,s)=(1,p)$: in the first case, $\cL_{1,2}\tens\cL_{r,s}$ is a quotient of $\cV_{r,s-1}\oplus\cV_{r,s+1}$, and in the second, $\cL_{1,2}\tens\cL_{1,p}$ is a quotient of $\cV_{1,p-1}^{(2)}$. We next use  Theorem \ref{thm:Zhu_fus_rules} to get lower bounds for these tensor products. We start by obtaining some non-zero intertwining operators:
\begin{prop}\label{prop:intwo_op_exist} \hspace{2em}
 \begin{enumerate}
 \item When $r\geq 1$ and $s=1$, there is a non-zero intertwining operator of type $\binom{\cV_{r,2}'}{\cL_{1,2}\,\cL_{r,1}}$.

  \item When $r\geq 1$ and $2\leq s\leq p-1$, or when $(r,s)=(1,p)$, there is an intertwining operator of type $\binom{\cW_{r,s}'}{\cL_{1,2}\,\cL_{r,s}}$ that contains $\cW_{r,s}'(0)\cong\cM_{r,s}$ in its image.

 \end{enumerate}
\end{prop}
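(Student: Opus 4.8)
The plan is to construct each operator with a \emph{Verma} module in the second slot using Theorem~\ref{thm:Zhu_fus_rules}, and then to descend it to the simple quotient $\cL_{r,s}$. For part (2), I would apply the theorem with $\cW_1=\cL_{1,2}$, $\cV_2=\cV_{r,s}$, and $\cV_3=\cW_{r,s}$, so that $\cU_3=\cW_{r,s}(0)\cong\cM_{r,s}$. Since $A(\cL_{1,2})\otimes_{A(V_c)}\CC v_{r,s}=\cM_{r,s}$ by definition, the theorem identifies intertwining operators of type $\binom{\cW_{r,s}'}{\cL_{1,2}\,\cV_{r,s}}$ with $\hom_{A(V_c)}(\cM_{r,s},\cM_{r,s})$, and I take $\cY$ corresponding to the identity endomorphism; then $\pi(\cY)$ is surjective, so the top level $\cW_{r,s}'(0)$ lies in $\im\cY$. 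For part (1), where $\cM_{r,1}\cong\CC[x]/\bigl((x-h_{r,0})(x-h_{r,2})\bigr)$ has two distinct eigenvalues, I would instead take $\cV_3=\cV_{r,2}$, so that $\hom_{A(V_c)}(\cM_{r,1},\CC v_{r,2})$ is one-dimensional (projection onto the $h_{r,2}$-eigenline) and produces a nonzero $\cY$.

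To pass from $\cV_{r,s}$ to $\cL_{r,s}$, recall from the embedding diagrams that the maximal proper submodule $J$ of $\cV_{r,s}$ is itself a Verma module---namely $\cV_{r+1,p-s}$ when $1\le s\le p-1$, and $\cV_{3,p}$ when $(r,s)=(1,p)$---generated by a single singular vector $u_0$. Using the Virasoro commutator formula~\eqref{eqn:Vir_comm_form} together with the fact that $w_1$ ranges over all of $\cL_{1,2}$, an induction on $\cV ir_-$-degree shows that $\cY(w_1,x)$ annihilates all of $J$ as soon as it annihilates $u_0$; equivalently, $\cY$ descends to $\cL_{r,s}$ precisely when its restriction to $\cL_{1,2}\otimes J$, an intertwining operator of type $\binom{\cV_3'}{\cL_{1,2}\,J}$, vanishes. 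Applying Theorem~\ref{thm:Zhu_fus_rules} a second time, this restricted space is isomorphic to $\hom_{A(V_c)}(\cM',\cU_3)$, where $\cM'=A(\cL_{1,2})\otimes_{A(V_c)}\CC u_0$ has $L_0$-eigenvalues $h_{r+1,(p-s)\pm1}$ (or $h_{3,p\pm1}$ when $(r,s)=(1,p)$). Thus descent reduces to checking that none of these eigenvalues equals an eigenvalue of $\cU_3$, namely $h_{r,2}$ in part (1) and $h_{r,s\pm1}$ in part (2). Because the descended operator is determined on $\cL_{r,s}(0)=\CC v_{r,s}$ by the same data as $\cY$, it remains nonzero and its image still contains $\cW_{r,s}'(0)$.

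The main work---and the place where the hypotheses on $s$ are used---is this final vanishing check. Using the weight formula together with the symmetries $h_{-r,s}=h_{r,s}+rs$ and $h_{r,s+p}=h_{r-1,s}$, I would rewrite the eigenvalues of $\cM'$ as shifts of the top eigenvalues, e.g.\ $h_{r+1,p-s+1}=h_{r,s-1}+r(s-1)$ and $h_{r+1,p-s-1}=h_{r,s+1}+r(s+1)$, and then verify, using $h_{r,s-1}-h_{r,s+1}=r-\tfrac{s}{p}\notin\ZZ$, that for $2\le s\le p-1$ these shifts are nonzero and produce no coincidence with $\{h_{r,s-1},h_{r,s+1}\}$; the case $(r,s)=(1,p)$ is a short separate computation showing $h_{3,p\pm1}\neq h_{1,p-1}$. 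The one genuinely delicate point explains why part (1) must target $\cV_{r,2}'$ rather than the full $\cW_{r,1}'$: the eigenvalue $h_{r,0}$ of $\cM_{r,1}$ coincides with $h_{r+1,p}$, which is an eigenvalue of $\cM'=\cM_{r+1,p-1}$, so an operator into $\cW_{r,1}'$ would fail to descend along that eigenspace. Restricting the target to $\cV_{r,2}'$ removes exactly this eigenvalue, which is why part (1) is stated separately and claims only nonvanishing.
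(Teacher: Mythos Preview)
Your proposal is correct and follows essentially the same route as the paper: construct the intertwining operator over the Verma module $\cV_{r,s}$ via Theorem~\ref{thm:Zhu_fus_rules} using the identity (or projection) on $\cM_{r,s}$, then show the restriction to the maximal proper submodule $\cJ_{r,s}$ vanishes by applying Theorem~\ref{thm:Zhu_fus_rules} again and comparing $L_0$-eigenvalues. Your shift formulas $h_{r+1,p-s\pm1}=h_{r,s\mp1}+r(s\mp1)$ make the eigenvalue check more explicit than the paper's bare assertion, and your explanation of why part~(1) must target $\cV_{r,2}'$ rather than $\cW_{r,1}'$ matches the paper's observation that $h_{r+1,p}=h_{r,0}$ forces the coincidence; the detour through the commutator formula and the singular vector $u_0$ is unnecessary, since it suffices to note that the restriction of an intertwining operator to a submodule is again an intertwining operator.
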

\begin{proof}
Note that $\cL_{1,2}$ is generated by $\cL_{1,2}(0)$ and that $\cM_{r,s}$ is finite dimensional. Thus by Theorem \ref{thm:Zhu_fus_rules}, the identity on $\cM_{r,s}$ induces an intertwining operator $\cY$ of type $\binom{\cW_{r,s}'}{\cL_{1,2}\,\cV_{r,s}}$ such that $\pi(\cY)=\Id_{\cM_{r,s}}$. This intertwining operator will induce a non-zero quotient intertwining operator $\overline{\cY}$ of type $\binom{\cW_{r,s}'}{\cL_{1,2}\,\cL_{r,s}}$ if $\cY\vert_{\cL_{1,2}\otimes\cJ_{r,s}}=0$, where $\cJ_{r,s}$ is the maximal proper submodule of $\cV_{r,s}$. To show this, it is enough to show that there are no non-zero intertwining operators of type $\binom{\cW_{r,s}'}{\cL_{1,2}\,\cJ_{r,s}}$. Since $\cJ_{r,s}$ is a Verma module, this is equivalent to
\begin{equation}\label{eqn:desc_cond}
 \dim\hom_{A(V_c)}(A(\cL_{1,2})\otimes_{A(V_c)}\cJ_{r,s}(0), \cM_{r,s}) =0,
\end{equation}
by Theorem \ref{thm:Zhu_fus_rules}.

For $1\leq s\leq p-1$, $\cJ_{r,s}=\cV_{r+1,p-s}$, so the $L_0$-eigenvalues on $A(\cL_{1,2})\otimes_{A(V_c)}\cJ_{r,s}(0)=\cM_{r+1,p-s}$ are $h_{r+1,p-s\pm1}$. For $2\leq s\leq p-1$, these never equal the $L_0$-eigenvalues $h_{r,s\pm1}$ on $\cM_{r,s}$, proving the second assertion in the proposition for $s<p$. But when $s=1$, we have
\begin{equation*}
 h_{r+1,p-1+1}=h_{r+1,p}=h_{r,0}=h_{r,1-1},
\end{equation*}
so \eqref{eqn:desc_cond} fails. However, since
\begin{equation*}
\dim \hom_{A(V_c)}(\cM_{r+1,p-1},\CC v_{r,2}) =0,
\end{equation*}
we do get a non-zero intertwining operator of type $\binom{\cV_{r,2}'}{\cL_{1,2}\,\cL_{r,1}}$ induced by a non-zero homomorphism $A(\cL_{1,2})\otimes_{A(V_c)}\CC v_{r,1}\rightarrow\CC v_{r,2}$. This proves the first assertion of the proposition.

For $(r,s)=(1,p)$, $\cJ_{1,p}=\cV_{3,p}$, and the eigenvalues of $L_0$ on $\cM_{3,p}$ are
\begin{equation*}
 h_{3,p\pm1}=h_{3,p-1},h_{2,1}.
\end{equation*}
Neither equals the generalized eigenvalue $h_{1,p-1}$ of $L_0$ on $\cM_{1,p}$, so \eqref{eqn:desc_cond} holds, proving the second assertion of the proposition for $(r,s)=(1,p)$.
\end{proof}

\begin{rem}
 For $r\geq 2$ and $s=p$, there is also a non-zero intertwining operator $\cY$ of type $\binom{\cW_{r,p}'}{\cL_{1,2}\,\cL_{r,p}}$ induced by the identity on $\cM_{r,p}$, but we cannot conclude that its image includes $\cM_{r,p}\cong\CC v_{r,p-1}\oplus\CC v_{r-1,1}$, even though $\im\pi(\cY)=\cM_{r,p}$. The reason is that $\pi(\cY)$ is defined using the non-standard $\NN$-grading of Remark \ref{rem:non_std_N_grad} for $\cW_{r,p}'$. In particular, the projection $\pi_0$ does not quite correspond to projection onto conformal weight spaces, which means that we cannot conclude that $\im\pi(\cY)$ is contained in $\im\cY$.
\end{rem}

Using the intertwining operators we have obtained, we can prove:

\begin{prop}\label{prop:Pi_rs_nontrivial} \hspace{2em}
 \begin{enumerate}
  \item For $r\geq 1$ and $s=1$, there is a surjective $V_c$-module map $\cL_{1,2}\tens\cL_{r,1} \rightarrow \cL_{r,2}$.

  \item For $r\geq 1$ and $2\leq s\leq p-1$, there is a surjective $V_c$-module map $\cL_{1,2}\tens\cL_{r,s}\rightarrow\cL_{r,s-1}\oplus\cL_{r,s+1}$.

  \item For $(r,s)=(1,p)$, $(\cL_{1,2}\tens\cL_{1,p})(0)\cong\cM_{1,p}$ as $A(V_c)$-modules.
 \end{enumerate}
\end{prop}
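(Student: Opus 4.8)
The plan is to pin down each tensor product $T:=\cL_{1,2}\tens\cL_{r,s}$ between an upper bound from Corollary \ref{cor:Pi_rs_surjective} and a lower bound from the intertwining operators of Proposition \ref{prop:intwo_op_exist}. The main structural input would be a coset decomposition: for $1\le s\le p-1$ the weights $h_{r,s-1}$ and $h_{r,s+1}$ differ by $r-\frac{s}{p}\notin\ZZ$, so $\cV_{r,s-1}$ and $\cV_{r,s+1}$ lie in distinct cosets of $\CC/\ZZ$. Since by \eqref{eqn:W_decomp} every module in $\cO_c$ splits over these cosets and submodules are graded by generalized $L_0$-eigenspaces, the surjection $\Pi_{r,s}$ of Corollary \ref{cor:Pi_rs_surjective} exhibits $T\cong T_-\oplus T_+$ with $T_\pm$ a quotient of $\cV_{r,s\pm1}$ (for part (1), $T_0\oplus T_2$ with $T_0,T_2$ quotients of $\cV_{r,0},\cV_{r,2}$). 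Because a nonzero quotient of a Verma module $\cV_{r,s\pm1}$ automatically surjects onto its unique irreducible quotient $\cL_{r,s\pm1}$, the lower bound then only has to certify which $T_\pm$ are nonzero.

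To certify nonvanishing I would feed the operators of Proposition \ref{prop:intwo_op_exist} through the universal property of the tensor product; here the contragredient (generalized) Verma targets are grading-restricted generalized modules, so intertwining operators of type $\binom{W_3}{\cL_{1,2}\,\cL_{r,s}}$ correspond to $V_c$-module maps out of $T$. In part (2) the operator of Proposition \ref{prop:intwo_op_exist}(2) has image containing $\cW_{r,s}'(0)\cong\cM_{r,s}$, and since $\cY_\tens$ is surjective the induced map $f\colon T\to\cV_{r,s-1}'\oplus\cV_{r,s+1}'$ has image equal to that of the operator, hence contains both $v_{r,s-1}$ and $v_{r,s+1}$. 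As $f$ preserves the coset grading, $f(T_\pm)\ni v_{r,s\pm1}\ne 0$, so both summands are nonzero and $T\twoheadrightarrow\cL_{r,s-1}\oplus\cL_{r,s+1}$. In part (1) only the target $\cV_{r,2}'$ is available---the $h_{r,0}$ side being obstructed by $h_{r+1,p}=h_{r,0}$, as in the proof of Proposition \ref{prop:intwo_op_exist}(1)---and the same argument yields $T_2\ne 0$, hence $T\twoheadrightarrow\cL_{r,2}$, with $T_0$ playing no role.

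Part (3) I would handle as a dimension count. Since $h_{1,p-1}=h_{1,p+1}$, the module $\cM_{1,p}\cong\CC[x]/((x-h_{1,p-1})^2)$ is two-dimensional with a single generalized $L_0$-eigenvalue, and the surjection $\pi(\cY_\tens)\colon\cM_{1,p}\to T(0)$ of Proposition \ref{prop:piY_surjective} gives $\dim T(0)\le 2$. For the reverse inequality I would again invoke the universal property with the operator of Proposition \ref{prop:intwo_op_exist}(2) for $(r,s)=(1,p)$, getting $f\colon T\to\cW_{1,p}'$ with $\cW_{1,p}'(0)\subseteq\im f$. By Proposition \ref{prop:conf_wts}(1) all weights of $T$ lie in $h_{1,p-1}+\NN$, so the full generalized $h_{1,p-1}$-eigenspace of $T$ is exactly $T(0)$; as $f$ is graded and $\cW_{1,p}'(0)$ is the two-dimensional generalized $h_{1,p-1}$-eigenspace of $\cW_{1,p}'$, this forces $\cW_{1,p}'(0)\subseteq f(T(0))$ and $\dim T(0)\ge 2$. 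Hence $\pi(\cY_\tens)$ is an isomorphism of $A(V_c)$-modules.

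The main obstacle I anticipate is the grading bookkeeping in part (3): one must match the non-standard $\NN$-grading of Remark \ref{rem:non_std_N_grad} on $\cW_{1,p}'$ with the genuine lowest-weight space $T(0)$, and verify that $\im f$ meets the generalized $h_{1,p-1}$-eigenspace in the \emph{full} space $f(T(0))$ rather than a proper subspace. What rescues the argument is the coincidence $h_{1,p-1}=h_{1,p+1}$, which makes the standard and non-standard gradings agree and renders $\cW_{1,p}'(0)$ a single generalized eigenspace. A secondary point to be careful about is the legitimacy of applying the universal property to targets such as $\cW_{r,s}'$ and $\cV_{r,2}'$, which lie outside $\cO_c$; this is justified because the Huang--Lepowsky--Zhang tensor product represents intertwining operators into arbitrary grading-restricted generalized modules.
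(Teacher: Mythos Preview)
Your approach is correct and matches the paper's: both pin $T$ between the surjection $\Pi_{r,s}$ from above and the intertwining operators of Proposition~\ref{prop:intwo_op_exist} from below, with your explicit coset decomposition $T\cong T_-\oplus T_+$ being implicit in the paper's separate treatment of the images of $v_{r,s-1}$ and $v_{r,s+1}$ in $\cW_{r,s}'$. The one point to sharpen is your justification for applying the universal property to targets such as $\cV_{r,2}'$ or $\cW_{r,s}'$ that lie outside $\cO_c$: the paper handles this not by a general HLZ representability claim (which is not immediate from the definition of the tensor product in a fixed category $\cC$) but by first restricting to the image of the intertwining operator, which is $C_1$-cofinite and hence in $\cO_c$ by Miyamoto's Key Theorem~\cite{Miy}.
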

\begin{proof}
 For the cases of $(r,s)$ that we are considering, we have shown that
 \begin{equation*}
  (\cL_{1,2}\tens\cL_{r,s})(0)=(\cL_{1,2}\tens\cL_{r,s})_{[h_{r,s-1}]}+(\cL_{1,2}\tens\cL_{r,s})_{[h_{r,s+1}]}
 \end{equation*}
and that $\Pi_{r,s}: \cW_{r,s}\rightarrow\cL_{1,2}\tens\cL_{r,s}$ is surjective.

When $s=1$, the image of any non-zero intertwining operator of type $\binom{\cV_{r,2}'}{\cL_{1,2}\,\cL_{r,1}}$ is a $C_1$-cofinite module in $\cO_c$ by \cite[Key Theorem]{Miy}. Thus the universal property of the tensor product induces a non-zero map $f:\cL_{1,2}\tens\cL_{r,1}\rightarrow\cV_{r,2}'$, whose image must contain the unique minimal non-zero submodule $\cL_{r,2}$. Moreover, $\im f$ is a quotient of $\cW_{r,1}\cong\cV_{r,0}\oplus\cV_{r,2}$ because $\Pi_{r,1}$ is surjective. As $\cL_{r,2}$ is the only non-zero quotient of $\cV_{r,0}\oplus\cV_{r,2}$ that is also a submodule of $\cV_{r,2}'$, it follows that  $\im f=\cL_{r,2}$, that is, we have a surjective map $\cL_{1,2}\tens\cL_{r,1}\rightarrow\cL_{r,2}$.

Similarly, for $2\leq s\leq p-1$ or $(r,s)=(1,p)$, Proposition \ref{prop:intwo_op_exist} and the universal property of tensor products yield a homomorphism $f:\cL_{1,2}\tens\cL_{r,s}\rightarrow\cW_{r,s}'$ whose image contains
$$\cW_{r,s}'(0)=(\cW_{r,s}')_{[h_{r,s-1}]}+(\cW_{r,s}')_{[h_{r,s+1}]}\cong\cM_{r,s}.$$
This forces $\dim\,(\cL_{1,2}\tens\cL_{r,s})(0)\geq 2$, so $\Pi_{r,s}\vert_{\cM_{r,s}}$ must be injective as well as surjective, proving the proposition in the $(r,s)=(1,p)$ case. When $2\leq s\leq p-1$, surjectivity of $\Pi_{r,s}$ implies that $\im f$ is generated by
\begin{equation*}
 (f\circ\Pi_{r,s})(v_{r,s\pm1})\in\cM_{r,s}\subseteq\cW_{r,s}'\cong\cV_{r,s-1}'\oplus\cV_{r,s+1}'.
\end{equation*}
These vectors generate a submodule isomorphic to $\cL_{r,s-1}\oplus\cL_{r,s+1}$, so we have a surjection $\cL_{1,2}\tens\cL_{r,s}\rightarrow\cL_{r,s-1}\oplus\cL_{r,s+1}$.
\end{proof}

The upper bound of Corollary \ref{cor:Pi_rs_surjective} and the lower bound of Proposition \ref{prop:Pi_rs_nontrivial} already provide strong constraints on the tensor product $\cL_{1,2}\tens\cL_{r,s}$. To fully identify this tensor product, we will need $\cL_{1,2}\tens\cL_{r,s}$ to be a self-contragredient $V_c$-module. This will follow from the rigidity of $\cL_{1,2}$ and $\cL_{r,s}$ in the tensor category $\cO_c$, which we prove for $\cL_{1,2}$ next.

\section{Rigidity, categorical dimensions, and some fusion rules}

In this section, we show that $\cO_c$ is a rigid (and also ribbon) tensor category, and we calculate the categorical dimensions of all simple modules $\cL_{r,s}$. In addition, we determine some tensor products in $\cO_c$ involving $\cL_{1,2}$, and some involving the modules $\cL_{r,1}$ for $r\geq 1$.

\subsection{Rigidity and categorical dimension for \texorpdfstring{$\cL_{1,2}$}{L{1,2}}}

We begin by showing that $\cL_{1,2}$ is rigid and self-dual in  $\cO_c$. Since $V_c = \cL_{1,1}$ is the unit object of $\cO_c$,  we first of all need an evaluation map $e: \cL_{1,2}\tens\cL_{1,2}\rightarrow\cL_{1,1}$ and a coevaluation $i:\cL_{1,1}\rightarrow\cL_{1,2}\tens\cL_{1,2}$.

The evaluation is easy: Since $\cL_{1,2}$ is self-contragredient with lowest conformal weight $h_{1,2}$, symmetries of intertwining operators from \cite{FHL, HLZ2} applied to (possibly a rescaling of) the vertex operator $Y_{\cL_{1,2}}$ yield an intertwining operator $\cE$ of type $\binom{\cL_{1,1}}{\cL_{1,2}\,\cL_{1,2}}$ such that
\begin{equation*}
 \cE(v_{1,2},x)v_{1,2}\in x^{-2 h_{1,2}}\big(\vac + x \cL_{1,1}[[x]]\big).
\end{equation*}
We then define the evaluation $e:\cL_{1,2}\boxtimes\cL_{1,2}\rightarrow\cL_{1,1}$ to be the unique map such that $e\circ\cY_\boxtimes =\cE$.

For the coevaluation, Proposition \ref{prop:top_level_eigenvectors} describes a homomorphism $\cV_{1,1}\rightarrow\cL_{1,2}\boxtimes\cL_{1,2}$. It will descend to a map $i: \cL_{1,1}\rightarrow\cL_{1,2}\boxtimes\cL_{1,2}$ such that
\begin{equation*}
 i(\vac)= -\pi_0(v_{1,2}\boxtimes v_{1,2})+2p\,\pi_0(L_{-1}v_{1,2}\boxtimes v_{1,2})
\end{equation*}
provided that $L_{-1} i(\vac)=0$ (since $L_{-1} v_{1,1}$ generates the maximal proper submodule of $\cV_{1,1}$). To prove this, we use the commutator formula \eqref{eqn:Vir_comm_form}, the iterate formula \eqref{eqn:Vir_it_form}, and the relation $(L_{-1}^2-\frac{1}{p}L_{-2})v_{1,2}=0$ in $\cL_{1,2}$ to compute
\begin{align*}
 L_{-1}\pi_0(L_{-1}v_{1,2}\boxtimes v_{1,2}) & =\pi_1(L_{-1}^2 v_{1,2}\boxtimes v_{1,2})+\pi_1(L_{-1}v_{1,2}\boxtimes L_{-1}v_{1,2})\nonumber\\
 & =\frac{1}{p}\pi_1(L_{-2}v_{1,2}\boxtimes v_{1,2})+L_{-1}\pi_0(v_{1,2}\boxtimes L_{-1}v_{1,2})-\pi_1(v_{1,2}\boxtimes L_{-1}^2 v_{1,2})\nonumber\\
 & = \frac{1}{p}\pi_1(v_{1,2}\boxtimes(L_{-1}+L_0)v_{1,2})-L_{-1}\pi_0(L_{-1}v_{1,2}\boxtimes v_{1,2})\nonumber\\
 &\qquad\qquad-\frac{1}{p}\pi_1(v_{1,2}\boxtimes L_{-2}v_{1,2}).
\end{align*}
We solve for $L_{-1}\pi_0(L_{-1}v_{1,2}\boxtimes v_{1,2})$ and apply the commutator formula \eqref{eqn:Vir_comm_form} to get
\begin{align*}
 L_{-1}\pi_0(L_{-1} v_{1,2}\boxtimes v_{1,2}) & = \frac{h_{1,2}}{2p}\pi_1(v_{1,2}\boxtimes v_{1,2})+\frac{1}{2p}\pi_1(v_{1,2}\boxtimes L_{-1}v_{1,2})\nonumber\\
 &\qquad\qquad+\frac{1}{2p}\pi_1((L_{-1}-L_0)v_{1,2}\boxtimes v_{1,2})\nonumber\\
 & =\frac{1}{2p}\big(\pi_1(L_{-1}v_{1,2}\boxtimes v_{1,2}+\pi_1(v_{1,2}\boxtimes L_{-1}v_{1,2})\big)\nonumber\\
 & = \frac{1}{2p} L_{-1}\pi_0(v_{1,2}\boxtimes v_{1,2}).
\end{align*}
Thus indeed
\begin{equation*}
 L_{-1}\big(-\pi_0(v_{1,2}\boxtimes v_{1,2})+2p\,\pi_0(L_{-1}v_{1,2}\boxtimes v_{1,2})\big) =0,
\end{equation*}
showing that the coevaluation $i$ exists.

We now prove the rigidity of $\cL_{1,2}$:
\begin{thm}\label{rigidityofl12}
 The module $\cL_{1,2}$ is rigid and self-dual in the tensor category $\cO_c$.
\end{thm}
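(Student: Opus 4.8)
The plan is to prove rigidity of $\cL_{1,2}$ by verifying the two zig-zag (triangle) identities for the evaluation $e$ and coevaluation $i$ constructed just above. Since $\cL_{1,2}$ is proposed to be self-dual, I would show that the two composites
\begin{align*}
 \cL_{1,2} &\xrightarrow{\,l_{\cL_{1,2}}^{-1}\,} \cL_{1,1}\tens\cL_{1,2} \xrightarrow{\,i\tens\Id\,} (\cL_{1,2}\tens\cL_{1,2})\tens\cL_{1,2}\\
 &\xrightarrow{\,\cA\,} \cL_{1,2}\tens(\cL_{1,2}\tens\cL_{1,2}) \xrightarrow{\,\Id\tens\, e\,} \cL_{1,2}\tens\cL_{1,1}\xrightarrow{\,r_{\cL_{1,2}}\,}\cL_{1,2}
\end{align*}
and the analogous composite on the other side are each a nonzero scalar times $\Id_{\cL_{1,2}}$; rescaling $i$ then makes the scalar $1$. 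Because $\cL_{1,2}$ is irreducible, each composite is automatically a scalar multiple of the identity by Schur's lemma, so the entire content of the proof is to check that this scalar is \emph{nonzero}.

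The key reduction is that it suffices to evaluate the composite on the lowest-conformal-weight generator $v_{1,2}$ and track only the projection onto the lowest weight space $(\cL_{1,2})_{[h_{1,2}]}=\CC v_{1,2}$. Concretely, I would apply the composite to $v_{1,2}$, insert the explicit formula for $i(\vac)$ from above together with the formula for $\cE$ (normalized so that $\cE(v_{1,2},x)v_{1,2}\in x^{-2h_{1,2}}(\vac+x\cL_{1,1}[[x]])$), and then use the associativity isomorphism $\cA$ to rewrite the composition of two intertwining operators as a single iterated intertwining operator. The standard way to make this computation tractable, exactly as in \cite{TW} for the triplet and \cite{CMY2} for the singlet, is to realize the relevant four-point function $\langle w', \cE(v_{1,2},x_1)\cY_\tens(v_{1,2},x_2)v_{1,2}\rangle$ as a solution of the Belavin-Polyakov-Zamolodchikov (BPZ) differential equation coming from the singular vector $(L_{-1}^2-\tfrac1p L_{-2})v_{1,2}=0$. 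The BPZ equation here is a second-order hypergeometric-type ODE in the cross-ratio, whose local exponents at the singular points $0,1,\infty$ are governed by the fusion channels $h_{1,s\pm1}$; its solutions are expressible via the hypergeometric function ${}_2F_1$.

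The heart of the argument, and the main obstacle, is computing the connection coefficients relating the $x_1\to x_2$ (i.e.\ $\cL_{1,2}\tens\cL_{1,2}\to\cL_{1,1}$ channel used by $i$) expansion of the four-point function to the $x_2\to 0$ expansion used by $e$. These are precisely the entries of the monodromy/fusion matrix expressed through ratios of Gamma functions via the standard hypergeometric connection formulas. The rigidity scalar is then a specific product of such Gamma-function ratios, and I must confirm it is finite and nonzero at the value $t=p$. The delicate point is that at $c=13-6p-6p^{-1}$ the parameters are resonant (the channels $h_{r,s-1}$ and $h_{r,s+1}$ can collide, and logarithmic structure can appear, as the companion Proposition and Remarks flag for $(r,s)=(1,p)$), so I would need to check that none of the Gamma functions entering the denominator develops a pole, and that the relevant Gamma arguments avoid the non-positive integers; with $t=p$ the arguments work out to half-integer-type combinations of $p$ that stay away from the poles. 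Once nonvanishing is established for both zig-zags, I normalize $i$ to absorb the scalar, obtain the identities, and conclude that $\cL_{1,2}$ is rigid and self-dual.
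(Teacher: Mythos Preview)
Your approach is essentially the paper's: reduce to showing a zig-zag composite acts by a nonzero scalar on $v_{1,2}$, interpret the relevant four-point function as a solution of the BPZ/hypergeometric equation coming from the singular vector $(L_{-1}^2-\tfrac1p L_{-2})v_{1,2}=0$, and read off the scalar via hypergeometric connection formulas. One efficiency you are missing: by \cite[Lemma 4.2.1 and Corollary 4.2.2]{CMY3} it suffices to show \emph{one} zig-zag is nonzero, not both.

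There is, however, a concrete gap at $p=2$. The connection coefficients you intend to use are Gamma-function ratios of the type $\dfrac{\Gamma(\frac{2}{p}-1)\Gamma(2-\frac{2}{p})}{\Gamma(\frac{1}{p})\Gamma(1-\frac{1}{p})}$; at $p=2$ this involves $\Gamma(0)$, so the generic connection formula blows up rather than producing a clean nonzero scalar (and your claim that the arguments are ``half-integer-type combinations of $p$ that stay away from the poles'' is not what actually occurs---the arguments are rational in $1/p$). This breakdown is \emph{not} the resonance you flag: the collision $h_{r,s-1}=h_{r,s+1}$ at $(r,s)=(1,p)$ pertains to $\cL_{1,2}\tens\cL_{1,p}$, which is irrelevant to the rigidity computation for $\cL_{1,2}$ itself. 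The actual issue is that for $p=2$ the local exponents of the hypergeometric equation at $x=0$ differ by an integer, so one basis solution is logarithmic and the standard connection formula must be replaced by its degenerate version (e.g.\ \cite[Equation 15.8.10]{DLMF}). The paper treats $p=2$ as a separate case in exactly this way and extracts the scalar $-4/\pi$ from the non-logarithmic part of the expansion. Your outline is correct for $p\geq 3$ but needs this additional analysis to cover $p=2$.
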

\begin{proof}
We need to show that the compositions
\begin{equation*}
 \cL_{1,2}\xrightarrow{l^{-1}}\cL_{1,1}\boxtimes\cL_{1,2}\xrightarrow{i\boxtimes\Id}(\cL_{1,2}\boxtimes\cL_{1,2})\boxtimes\cL_{1,2}\xrightarrow{\cA^{-1}}\cL_{1,2}\boxtimes(\cL_{1,2}\boxtimes\cL_{1,2})\xrightarrow{\Id\boxtimes e}\cL_{1,2}\boxtimes\cL_{1,1}\xrightarrow{r}\cL_{1,2}
\end{equation*}
and
\begin{equation*}
 \cL_{1,2}\xrightarrow{r^{-1}}\cL_{1,2}\boxtimes\cL_{1,1}\xrightarrow{\Id\boxtimes i}\cL_{1,2}\boxtimes(\cL_{1,2}\boxtimes\cL_{1,2})\xrightarrow{\cA}(\cL_{1,2}\boxtimes\cL_{1,2})\boxtimes\cL_{1,2}\xrightarrow{e\boxtimes\Id}\cL_{1,1}\boxtimes\cL_{1,2}\xrightarrow{l}\cL_{1,2}
\end{equation*}
are identical non-zero multiples of the identity (we can then rescale either $e$ or $i$ to get the identity). By Lemma 4.2.1 and Corollary 4.2.2 of \cite{CMY3}, these two compositions are equal, so it is enough to show that one of them is non-zero. We shall show that the second, which we label $\mathfrak{R}$ for convenience, is non-zero. In particular, we just need to show that $\langle v_{1,2},\mathfrak{R}(v_{1,2})\rangle\neq 0$, where $\langle\cdot,\cdot\rangle$ is the nondegenerate invariant bilinear form on $\cL_{1,2}$ such that $\langle v_{1,2},v_{1,2}\rangle=1$.

To compute $\langle v_{1,2},\mathfrak{R}(v_{1,2})\rangle$, we first use the definition of $r$ (see \cite[Section 12.2]{HLZ8} or \cite[Section 3.3.3]{CKM}) to get
\begin{equation*}
 r^{-1}(v_{1,2})=r^{-1}\left(\pi_0\left(e^{L_{-1}}Y_{\cL_{1,2}}(\vac,-1)v_{1,2}\right)\right)= \pi_0\left(\cY_\tens(v_{1,2},1)\vac\right).
\end{equation*}
Then we observe that $i(\vac)$ is the coefficient of the monomial $x^{-2 h_{1,2}} (\log x)^0$ in
\begin{align*}
 x^{L_0}\big( 2p  \,(L_{-1} x^{-L_0}v_{1,2}\boxtimes x^{-L_0}v_{1,2}) & -x^{-L_0}v_{1,2}\boxtimes x^{-L_0} v_{1,2}\big)\nonumber\\
 & =2p\,x\cY_\tens(L_{-1} v_{1,2},x)v_{1,2}-\cY_\tens(v_{1,2},x)v_{1,2}\nonumber\\
 & =\left(2p\,x\dfrac{d}{dx}-1\right)\cY_\tens(v_{1,2},x)v_{1,2}.
\end{align*}
Thus $\langle v_{1,2},\mathfrak{R}(v_{1,2})\rangle$ is the coefficient of $x^{-2h_{1,2}}(\log x)^0$ in
\begin{align*}
\left(2p\,x\frac{d}{dx}-1\right)\left \langle v_{1,2}, \left[l\circ(e\tens\Id)\circ\cA\circ\cY_\tens\right](v_{1,2},1)\cY_\tens(v_{1,2},x)v_{1,2}\right\rangle.
\end{align*}
This series is the expansion of a multivalued analytic function on the punctured unit disk. Alternatively, it is a single-valued analytic function on the simply-connected region
\begin{equation*}
 U_1=\lbrace z\in\CC\,\vert\,\vert z\vert <1\rbrace\setminus(-1,0],
\end{equation*}
where we choose the single-valued branch corresponding to the branch of logarithm
\begin{equation*}
\log z = \ln \vert z\vert +i\,\arg z
\end{equation*}
with $-\pi<\arg z<\pi$. From the definitions of $\cA$, $e$, and $l$ (again see \cite[Section 12.2]{HLZ8} or \cite[Section 3.3]{CKM}), the analytic continuation of this function to the simply-connected region
\begin{equation*}
 U_2 =\lbrace z\in\CC\,\vert\,\vert z\vert>\vert 1-z\vert>0\rbrace\setminus[1,\infty)=\lbrace z\in\CC\,\vert\,\mathrm{Re}\,z>1/2\rbrace\setminus[1,\infty)
\end{equation*}
is
\begin{align}\label{eqn:rigidity_iterate}
 \left(2p\,x\frac{d}{dx}-1\right) &\,\left \langle v_{1,2}, \left[l\circ(e\tens\Id)\circ\cY_\tens\right]\left(\cY_\tens(v_{1,2},1-x)v_{1,2},x\right)v_{1,2}\right\rangle\nonumber\\
 & = \left( 2p\,x\frac{d}{dx}-1\right)\left\langle v_{1,2}, [l\circ\cY_\tens](\cE(v_{1,2},1-x)v_{1,2},x)v_{1,2},x)v_{1,2}\right\rangle\nonumber\\
 & =\left(2p\,x\frac{d}{dx}-1\right)\left\langle v_{1,2}, Y_{\cL_{1,2}}(\cE(v_{1,2},1-x)v_{1,2},x)v_{1,2}\right\rangle.
\end{align}
This expression should be interpreted as a double series in $1-x$ and $x$, with the branch of logarithm $\log z$ used for both $1-x$ and $x$. Thus to show $\langle v_{1,2},\mathfrak{R}(v_{1,2})\rangle\neq 0$, we need to find the explicit expansion of \eqref{eqn:rigidity_iterate} as a series in $x$ and $\log x$ on $U_1\cap U_2$, and then extract the coefficient of $x^{-2 h_{1,2}}(\log x)^0$.

Compositions of intertwining operators involving $C_1$-cofinite modules for the Virasoro algebra are solutions to Belavin-Polyakov-Zamolodchikov equations \cite{BPZ, Hu_Vir_tens}. When all insertions in the intertwining operators are lowest-conformal-weight vectors $v_{1,2}\in\cL_{1,2}$, the specific differential equation appears in \cite[Equation 8.71]{Fran} (with the parameters of that equation specialized to $t=\frac{1}{p}$ and $h_0=h_1=h_2=h_3=h_{1,2}$); see also \cite[Section 4.2]{TW}. Namely, on $U_1$, the series
\begin{equation}\label{eqn:phi(x)}
 \phi(x) = \left \langle v_{1,2}, \left[l\circ(e\tens\Id)\circ\cA\circ\cY_\tens\right](v_{1,2},1)\cY_\tens(v_{1,2},x)v_{1,2}\right\rangle
\end{equation}
is a solution to the second-order regular-singular-point differential equation
\begin{equation}\label{eqn:BPZ-equation}
 x(1-x)\phi''(x)+\frac{1}{p}(1-2x)\phi'(x)-\frac{h_{1,2}}{p} x^{-1}(1-x)^{-1}\phi(x)=0.
\end{equation}
For a detailed vertex algebraic derivation of this equation, see \cite[Proposition 4.1.2]{CMY2}.

For the reader's convenience, we summarize how \eqref{eqn:BPZ-equation} is derived in \cite{CMY2}. First, we set
\begin{equation*}
\Phi(x_1,x_2) =\langle v_{1,2},\cY_1(v_{1,2},x_1)\cY_2(v_{1,2},x_2)v_{1,2}\rangle
\end{equation*}
where $\cY_1=l\circ(e\tens\Id)\circ\cA\circ\cY_\tens$ and $\cY_2=\cY_\tens$. Then the relation $(L_{-1}^2-\frac{1}{p} L_{-2})v_{1,2} =0$ in $\cL_{1,2}$ and the $L_{-1}$-derivative property of intertwining operators imply that
\begin{align*}
\partial_{x_2}^2\Phi(x_1,x_2) & =\frac{1}{p}\langle v_{1,2},\cY_1(v_{1,2},x_1)\cY_2(L_{-2}v_{1,2},x_2)v_{1,2}\rangle.
\end{align*}
Using the iterate formula \eqref{eqn:Vir_it_form} and commutator formula \eqref{eqn:Vir_comm_form}, as well as the $L_{-1}$-derivative property and the relations $L_0 v_{1,2}=h_{1,2}v_{1,2}$ and $L_n v_{1,2}=0$ for $n>0$, we can express the right side of this equation in terms of $\Phi(x_1,x_2)$ and its first partial derivatives. That is, we obtain a second-order partial differential equation for $\Phi(x_1,x_2)$. However, we want an ordinary differential equation for $\phi(x)=\Phi(1,x)$. For this, we use the $L_0$-conjugation formula for intertwining operators (see for example \cite[Proposition 3.36(b)]{HLZ2}) to write
\begin{equation*}
\Phi(x_1,x_2) = x_1^{-2h_{1,2}}\phi\left(\frac{x_2}{x_1}\right),
\end{equation*}
which implies the relations
\begin{align*}
\partial_{x_1}\Phi(x_1,x_2)\vert_{(x_1,x_2)=(1,x)} & =-2h_{1,2}\phi(x)-x\phi'(x),\nonumber\\
 \partial_{x_2}\Phi(x_1,x_2)\vert_{(x_1,x_2)=(1,x)} & =\phi'(x),\qquad\partial_{x_2}^2\Phi(x_1,x_2)\vert_{(x_1,x_2)=(1,x)} =\phi''(x).
\end{align*}
Plugging these expressions into the partial differential equation for $\Phi(x_1,x_2)$ then yields \eqref{eqn:BPZ-equation}; see \cite{CMY2} for further details.

Now, since the composition of intertwining operators $\phi(x)$ in \eqref{eqn:phi(x)} satisfies the differential equation \eqref{eqn:BPZ-equation},
its analytic continuation
\begin{equation*}
 \psi(x)=\left\langle v_{1,2}, Y_{\cL_{1,2}}(\cE(v_{1,2},1-x)v_{1,2},x)v_{1,2}\right\rangle
\end{equation*}
solves the same differential equation on $U_2$. If we write
\begin{equation}\label{eqn:var_change}
 \psi(x)=x^{1/2p}(1-x)^{1/2p} f(x)
\end{equation}
for some analytic function $f(x)$, then a tedious but straightforward calculation shows that $f(x)$ solves the hypergeometric differential equation
\begin{equation}\label{eqn:hypgeo_diff_eq}
 x(1-x) f''(x) +\frac{2}{p}(1-2x)f'(x)+\frac{1}{p}\left(1-\frac{3}{p}\right)f(x)=0,
\end{equation}
whose solutions are well known (see for example \cite[Section 15.10]{DLMF}).

For $p\geq 3$, \eqref{eqn:hypgeo_diff_eq} has the following basis of solutions on $U_2$ (see \cite[Equations 15.10.13 and 15.10.14]{DLMF}):
\begin{align}\label{eqn:hypgeo_solns}
 f_1(x) & = x^{-1/p}{}_2 F_1\left(\frac{1}{p},1-\frac{1}{p};\frac{2}{p};-\frac{1-x}{x}\right)\nonumber\\
 f_2(x) & =x^{-1/p}(1-x)^{1-2/p}{}_2 F_1\left(\frac{1}{p},1-\frac{1}{p};2-\frac{2}{p};-\frac{1-x}{x}\right).
\end{align}
On the other hand, the $L_0$-conjugation formula and the definition of $\cE$ show that
\begin{align}\label{eqn:psi_series}
 (1-x)^{2h_{1,2}}\psi(x) & = \left(\frac{1-x}{x}\right)^{2h_{1,2}}\left\langle v_{1,2}, Y_{\cL_{1,2}}\left(\cE\left(v_{1,2},\frac{1-x}{x}\right)v_{1,2},1\right)v_{1,2}\right\rangle\nonumber\\
 & =\left(\frac{1-x}{x}\right)^{2 h_{1,2}}\left(\langle v_{1,2},Y_{\cL_{1,2}}(\vac,1)v_{1,2}\rangle\left(\frac{1-x}{x}\right)^{-2h_{1,2}} +\ldots\right)\nonumber\\
 & \in 1+\left(\frac{1-x}{x}\right)\CC\left[\left[\frac{1-x}{x}\right]\right].
\end{align}
By examining the powers of $\frac{1-x}{x}$ in \eqref{eqn:var_change} and \eqref{eqn:hypgeo_solns}, we see that
\begin{equation*}
 \psi(x)= x^{1/2p}(1-x)^{1/2p} f_2(x) = (1-x)^{-2h_{1,2}}\left(1+\frac{1-x}{x}\right)^{1/2p} {}_2 F_1\left(\frac{1}{p},1-\frac{1}{p};2-\frac{2}{p};-\frac{1-x}{x}\right).
\end{equation*}
Now we need to expand $\psi(x)$ in $U_1$ as a series in $x$. By the connection formulas for hypergeometric functions (see for example \cite[Equation 15.10.18]{DLMF}), we have
\begin{align*}
 f_2(x) = \frac{\Gamma\big(1-\frac{2}{p}\big)\Gamma\big(2-\frac{2}{p}\big)}{\Gamma\big(1-\frac{1}{p}\big)\Gamma\big(2-\frac{3}{p}\big)} &  {}_2 F_1\left(\frac{1}{p},\frac{3}{p}-1;\frac{2}{p}; x\right)\nonumber\\ &+\frac{\Gamma\big(\frac{2}{p}-1\big)\Gamma\big(2-\frac{2}{p}\big)}{\Gamma\big(\frac{1}{p}\big)\Gamma\big(1-\frac{1}{p}\big)} x^{1-2/p} {}_2 F_1\left(\frac{1}{p},1-\frac{1}{p};2-\frac{2}{p}; x\right)
\end{align*}
on $U_1\cap U_2$. Only the second term contributes to the coefficient of $x^{-2 h_{1,2}}$ in $(2p\,x\frac{d}{dx}-1)\psi(x)$:
\begin{align*}
 \left(2p\,x\dfrac{d}{dx}-1\right) & x^{-2h_{1,2}}(1-x)^{1/2p} {}_2 F_1\left(\frac{1}{p},1-\frac{1}{p};2-\frac{2}{p}; x\right)\nonumber\\
 & =x^{-2h_{1,2}}(1-x)^{1/2p}\left[\left(-4p\,h_{1,2} -\frac{x}{1-x}-1\right){}_2 F_1\left(\frac{1}{p},1-\frac{1}{p};2-\frac{2}{p}; x\right)\right.\nonumber\\
 &\qquad\qquad\qquad\qquad\qquad\qquad\left.+2p\,x\,{}_2 F_1 '\left(\frac{1}{p},1-\frac{1}{p};2-\frac{2}{p}; x\right) \right]\nonumber\\
 & \in x^{-2 h_{1,2}}\big(2(p-2)+x\CC[[x]]\big).
\end{align*}
We conclude that when $p\geq 3$,
\begin{equation*}
 \langle v_{1,2}, \mathfrak{R}(v_{1,2})\rangle =2(p-2)\frac{\Gamma\big(\frac{2}{p}-1\big)\Gamma\big(2-\frac{2}{p}\big)}{\Gamma\big(\frac{1}{p}\big)\Gamma\big(1-\frac{1}{p}\big)} = -2(p-2)\frac{\sin(\pi/p)}{\sin(2\pi/p)}=-\frac{p-2}{\cos(\pi/p)}\neq 0,
\end{equation*}
using \cite[Equation 5.5.3]{DLMF} in the second equality. This proves $\cL_{1,2}$ is rigid when $p\geq 3$.

For $p = 2$, the equation \eqref{eqn:hypgeo_diff_eq} has logarithmic solutions. On the simply-connected region
\begin{equation*}
 1-U_1 =\lbrace z\in\CC\,\vert\, \vert 1-z\vert<1\rbrace\setminus [1,2),
\end{equation*}
which has non-empty intersection with $U_2$, \eqref{eqn:hypgeo_diff_eq} has the following basis of solutions:
\begin{align*}
 f_1(x) & = {}_2 F_1\left(\frac{1}{2},\frac{1}{2};1;1-x\right)\nonumber\\
 f_2(x) & = f_1(x)\log(1-x)+G(1-x),
\end{align*}
where $G(x)$ is a power series (which we may assume has no constant term). Since \eqref{eqn:psi_series} shows that $(1-x)^{-2h_{1,2}}\psi(x)$ is analytic at $x=1$ with value $1$, we must have
\begin{equation*}
 \psi(x)=x^{1/4}(1-x)^{1/4}f_1(x) = x^{1/4}(1-x)^{1/4}{}_2 F_1\left(\frac{1}{2},\frac{1}{2};1;1-x\right)
\end{equation*}
on $(1-U_1)\cap U_2$. We need to expand $\psi(x)$ on $U_1$ as a series in $x$; to do so, we use \cite[Equation 15.8.10]{DLMF}, which states that
\begin{align*}
 {}_2 F_1\left(\frac{1}{2},\frac{1}{2};1;1-x\right) = -\frac{1}{\Gamma\big(\frac{1}{2}\big)\Gamma\big(\frac{1}{2}\big)}\sum_{n= 0}^\infty \frac{\big(\frac{1}{2}\big)_n \big(\frac{1}{2}\big)_n}{(n!)^2} x^n\cdot\left(\log x +C_n\right),
\end{align*}
for $x\in U_1\cap(1-U_1)$, where the constants $C_n$ can be expressed in terms of the digamma function. Thus on the non-empty open region $U_1\cap(1-U_1)\cap U_2$,
\begin{align*}
 \left(4x\dfrac{d}{dx}-1\right)\psi(x) & =x^{1/4}(1-x)^{1/4}\left[4\cdot\frac{1}{4}-\frac{x}{1-x}-1\right] {}_2 F_1\left(\frac{1}{2},\frac{1}{2};1;1-x\right)\nonumber\\
 &\qquad\qquad  -\frac{4\,x^{1/4}(1-x)^{1/4}}{\Gamma\big(\frac{1}{2}\big)\Gamma\big(\frac{1}{2}\big)}\sum_{n= 0}^\infty \frac{\big(\frac{1}{2}\big)_n \big(\frac{1}{2}\big)_n}{(n!)^2}\cdot\left[n x^n(\log x+C_n)+x^n \right],
\end{align*}
and the coefficient of $x^{1/4}$ is
\begin{equation*}
 -\frac{4}{\Gamma\big(\frac{1}{2}\big)\Gamma\big(\frac{1}{2}\big)} = -\frac{4}{\pi/\sin(\pi/2)} =-\frac{4}{\pi}\neq 0.
\end{equation*}
We conclude that $\langle v_{1,2},\mathfrak{R}(v_{1,2})\rangle\neq 0$ and thus $\cL_{1,2}$ is rigid when $p=2$.
\end{proof}

Our calculations allow us to describe the evaluation and coevaluation for $\cL_{1,2}$ explicitly. If we fix a non-zero lowest-conformal weight vector $v_{1,2}\in\cL_{1,2}$, we take the evaluation to be
\begin{align*}
 e: \cL_{1,2}\tens\cL_{1,2} & \rightarrow \cL_{1,1}\nonumber\\
 \pi_0(v_{1,2}\boxtimes v_{1,2}) & \mapsto \vac.
\end{align*}
The $L_0$-conjugation formula determines $e$ on the other possibly linearly independent lowest-conformal-weight vector:
\begin{align*}
 e\left(\pi_0(L_{-1}v_{1,2}\boxtimes v_{1,2})\right) & =e\left(\pi_0(L_0(v_{1,2}\boxtimes v_{1,2})-L_0v_{1,2}\boxtimes v_{1,2}-v_{1,2}\tens L_0v_{1,2})\right)\nonumber\\
 & = (L_0-2h_{1,2}) e(\pi_0(v_{1,2}\tens v_{1,2}))= -2h_{1,2}\vac.
\end{align*}
With this choice of evaluation, we must take the coevaluation as follows:
\begin{equation*}
 i(\vac) = \left\lbrace\begin{array}{lll}
                        \frac{\cos(\pi/p)}{p-2}\left(\pi_0(v_{1,2}\tens v_{1,2})-2p\,\pi_0(L_{-1} v_{1,2}\tens v_{1,2})\right) & \text{if} & p\geq 3\\
                        \frac{\pi}{4}\left(\pi_0(v_{1,2}\tens v_{1,2})-4\,\pi_0(L_{-1} v_{1,2}\tens v_{1,2})\right) & \text{if} & p=2
                       \end{array}
                       \right. .
\end{equation*}
Using these explicit evaluation and coevaluation, we determine the categorical dimension
\begin{equation*}
 \dim_{\cO_c} \cL_{1,2} = e\circ\cR\circ(\theta\tens\Id)\circ i: \cL_{1,1}\rightarrow\cL_{1,1}
\end{equation*}
of $\cL_{1,2}$ in $\cO_c$, where $\theta=e^{2\pi i L_0}$ is the ribbon twist on $\cO_c$:
\begin{prop}\label{prop:L12_dim}
 In the tensor category $\cO_c$, $\dim_{\cO_c} \cL_{1,2}=-2\cos(\pi/p)\,\Id_{\cL_{1,1}}$.
\end{prop}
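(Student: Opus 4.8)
The plan is to compute the scalar $\dim_{\cO_c}\cL_{1,2}$ directly from the explicit evaluation $e$ and coevaluation $i$ recorded above, the scalar action of the twist, and the standard description of the braiding; no new analytic work should be needed, since the hypergeometric monodromy producing $\cos(\pi/p)$ has already been absorbed into the normalization of $i$ via Theorem \ref{rigidityofl12}. Since $\cL_{1,1}=V_c$ is simple, $\Endo_{\cO_c}(\cL_{1,1})=\CC\,\Id_{\cL_{1,1}}$, so it suffices to compute $e\big(\cR\big((\theta\tens\Id)(i(\vac))\big)\big)$ and read off its $\vac$-coefficient via the invariant form $\langle\cdot,\cdot\rangle$ on $\cL_{1,1}$ with $\langle\vac,\vac\rangle=1$.

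First I would apply $\theta\tens\Id$ to $i(\vac)$. The two lowest-conformal-weight vectors $v_{1,2}$ and $L_{-1}v_{1,2}$ occurring in the first tensor factor of $i(\vac)$ are $L_0$-eigenvectors of eigenvalues $h_{1,2}$ and $h_{1,2}+1$, so under $\theta=e^{2\pi iL_0}$ both are scaled by the common factor $e^{2\pi i h_{1,2}}$, giving $(\theta\tens\Id)(i(\vac))=e^{2\pi i h_{1,2}}i(\vac)$. Next I would apply $e\circ\cR$. Using the description of the braiding in \cite{HLZ8} (see also \cite[Section 3.3]{CKM}), namely $\cR(\cY_\tens(a,x)b)=e^{xL_{-1}}\cY_\tens(b,e^{i\pi}x)a$ for $a,b\in\cL_{1,2}$ (both tensor factors coincide, so the target tensor product intertwining operator is again $\cY_\tens$), together with $e\circ\cY_\tens=\cE$, the composite $e\circ\cR$ sends $\pi_0(\cY_\tens(a,1)b)$ to $\pi_0\big(e^{L_{-1}}\cE(b,e^{i\pi})a\big)$. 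Because $L_1\vac=0$, the form satisfies $\langle\vac,e^{L_{-1}}w\rangle=\langle\vac,w\rangle$, so the $\vac$-coefficient of this image is just $\langle\vac,\cE(b,e^{i\pi})a\rangle$, and the whole computation reduces to the two scalar two-point functions $\langle\vac,\cE(v_{1,2},z)v_{1,2}\rangle$ and $\langle\vac,\cE(v_{1,2},z)L_{-1}v_{1,2}\rangle$ evaluated at $z=e^{i\pi}$.

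These two functions are elementary. By conformal-weight grading the only term of $\cE(v_{1,2},z)v_{1,2}$ carrying a nonzero $\vac$-component sits in degree $z^{-2h_{1,2}}$, and the normalization $\cE(v_{1,2},x)v_{1,2}\in x^{-2h_{1,2}}(\vac+x\cL_{1,1}[[x]])$ gives $\langle\vac,\cE(v_{1,2},z)v_{1,2}\rangle=z^{-2h_{1,2}}$. Using $[L_{-1},\cE(v_{1,2},z)]=\frac{d}{dz}\cE(v_{1,2},z)$ and $L_1\vac=0$ once more yields $\langle\vac,\cE(v_{1,2},z)L_{-1}v_{1,2}\rangle=-\frac{d}{dz}z^{-2h_{1,2}}=2h_{1,2}\,z^{-2h_{1,2}-1}$. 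Evaluating at $z=e^{i\pi}$, where $(e^{i\pi})^{-2h_{1,2}-1}=-e^{-2\pi i h_{1,2}}$, and assembling with the explicit coefficients of $i(\vac)$ and the twist factor $e^{2\pi i h_{1,2}}$, the phases $e^{\pm2\pi i h_{1,2}}$ cancel and one is left (for $p\geq 3$) with $\frac{\cos(\pi/p)}{p-2}\big(1+4p\,h_{1,2}\big)$. Since $h_{1,2}=-\tfrac12+\tfrac{3}{4p}$ gives $1+4p\,h_{1,2}=-2(p-2)$, this equals $-2\cos(\pi/p)$; the identical computation with the $p=2$ coevaluation (yielding $\tfrac{\pi}{4}(1+8h_{1,2})$ with $h_{1,2}=-\tfrac18$) gives $0=-2\cos(\pi/2)$.

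The cancellation of the $e^{\pm2\pi i h_{1,2}}$ phases is precisely the consistency of the ribbon structure, and the main point requiring care is fixing the branch $e^{i\pi}$ (rather than $e^{-i\pi}$) in the braiding formula, so that the twist phase cancels the braiding phase instead of doubling it and thereby produces a genuine scalar in $\Endo_{\cO_c}(\cL_{1,1})$. I would also remark that for $p=2$ the logarithmic (Jordan-block) structure on the top level of $\cL_{1,2}\tens\cL_{1,2}$ causes no difficulty, since throughout we only project onto the weight-zero $\vac$-component, where all of the relevant correlators remain the monomials computed above.
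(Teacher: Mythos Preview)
Your proof is correct and is essentially the same computation as the paper's. The only cosmetic difference is that the paper uses the $L_0$-conjugation formula to bring the braiding evaluation at $e^{i\pi}$ back to $1$ and then invokes the previously recorded values $e(\pi_0(v_{1,2}\tens v_{1,2}))=\vac$ and $e(\pi_0(L_{-1}v_{1,2}\tens v_{1,2}))=-2h_{1,2}\vac$, whereas you evaluate the two-point functions $\langle\vac,\cE(v_{1,2},z)v_{1,2}\rangle=z^{-2h_{1,2}}$ and $\langle\vac,\cE(v_{1,2},z)L_{-1}v_{1,2}\rangle=2h_{1,2}z^{-2h_{1,2}-1}$ directly at $z=e^{i\pi}$; these are equivalent manipulations and both yield $a_p(1+4ph_{1,2})=-2\cos(\pi/p)$.
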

\begin{proof}
Since $\cL_{1,1}$ is simple, the dimension is just a scalar multiple of the identity. Using $a_p$ to denote $\frac{\cos(\pi/p)}{p-2}$ or $\frac{\pi}{4}$ according as $p\geq 3$ or $p=2$ (note that $a_2=\lim_{p\to 2} a_p$), we calculate
\begin{align*}
 \dim_{\cO_c} \cL_{1,2} & \, : \vac \mapsto  a_p\,e^{2\pi i h_{1,2}} (e\circ\cR)\left(\pi_0(v_{1,2}\tens v_{1,2})-2p\,\pi_0(L_{-1} v_{1,2}\tens v_{1,2})\right)\nonumber\\
 & =a_p\,e^{2\pi i h_{1,2}} (e\circ\pi_0)\left(e^{L_{-1}}\cY_\tens(v_{1,2},e^{\pi i}) v_{1,2}-2p\,e^{L_{-1}}\cY_\tens(v_{1,2},e^{\pi i})L_{-1}v_{1,2}\right)\nonumber\\
 & =a_p\,e^{2\pi i h_{1,2}} e^{\pi i L_0}(e\circ\pi_0)\left( e^{-\pi i L_0} v_{1,2}\tens e^{-\pi i L_0} v_{1,2}-2p\,(e^{-\pi i L_0} v_{1,2}\tens e^{-\pi i L_0} L_{-1} v_{1,2})\right)\nonumber\\
 & = a_p\,(e\circ\pi_0)\left(v_{1,2}\tens v_{1,2}+2p\,(v_{1,2}\tens L_{-1}v_{1,2})\right)\nonumber\\
 & =a_p\,e\left(\pi_0(v_{1,2}\tens v_{1,2})-2p\,\pi_0(L_{-1}v_{1,2}\tens v_{1,2})\right)\nonumber\\
 & =a_p\,(1+4ph_{1,2})\vac\nonumber\\
 & =2a_p(2-p)\vac = -2\cos(\pi/p)\vac
\end{align*}
as required.
\end{proof}
Note that the dimension formula is valid for all $p\geq 2$; in particular, $\dim_{\cO_c} \cL_{1,2} =0$ when $p=2$. Note also that if we ignore the braiding and twist isomorphisms, we still get
\begin{equation}\label{eqn:L12_left_trace}
 e\circ i = -2\cos(\pi/p)\,\Id_{\cL_{1,1}}.
\end{equation}
This quantity is an invariant of the tensor category structure on $\cO_c$ (it depends on the associativity isomorphisms, but not on the braiding or ribbon twist).

\subsection{Rigidity of \texorpdfstring{$\cO_c$}{Oc} and some fusion rules}\label{sec:fus_and_rig}

In this section, we determine the tensor products of $\cL_{1,2}$ with the irreducible modules in $\cO_c$, and we prove that $\cO_c$ is rigid. But first, we establish rigidity and fusion products of the modules $\cL_{r,1}$:
\begin{thm}\label{thm:Lr1_fus_rules}
 The irreducible $V_c$-modules $\cL_{r,1}$ are rigid for $r\geq 1$, and
 \begin{equation}\label{eqn:Lr1_fus_rules}
  \cL_{r,1}\tens\cL_{r',1} \cong \bigoplus_{\substack{k = |r-r'|+1\\ k+r+r' \equiv 1\; ({\rm mod}\; 2)}}^{r+r'-1} \cL_{k,1}
 \end{equation}
for $r,r'\geq 1$.
\end{thm}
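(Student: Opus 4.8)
The plan is to realize the modules $\cL_{r,1}$ as the $V_c$-isotypic components of the doublet abelian intertwining algebra $\cA$ of \cite{AM_doub} and to transport fusion and rigidity from $\rep SU(2)$ across the fixed-point correspondence of \cite{McR}. Recall from \cite{AM_doub} that $\cA$ carries an action of $SU(2)$ whose fixed-point subalgebra is exactly $V_c$, and that as a $V_c$-module $\cA\cong\bigoplus_{r\geq 1} M_r\otimes\cL_{r,1}$, where $M_r$ is the $r$-dimensional (spin $\frac{r-1}{2}$) irreducible $SU(2)$-module; in particular the $\cL_{r,1}$ are pairwise non-isomorphic simple $V_c$-modules occurring with multiplicity $\dim M_r=r$. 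Because $\cA$ is an abelian intertwining algebra rather than an honest vertex operator superalgebra, its operator products are governed by an abelian $3$-cocycle on the central $\ZZ/2$ of $SU(2)$, which is reflected in the fractional conformal weights $h_{r,1}$.

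First I would invoke the main fixed-point result of \cite{McR}: for the compact group $G=SU(2)$ acting on $\cA$ with $\cA^{G}=V_c$, the assignment $M_r\mapsto\cL_{r,1}$ extends to a braided tensor equivalence between $\rep SU(2)$ and the tensor subcategory of $\cO_c$ generated by the $\cL_{r,1}$, up to twisting the braiding by the aforementioned abelian $3$-cocycle. Since $\rep SU(2)$ is rigid and semisimple, and an abelian $3$-cocycle twist alters only the associativity and braiding constraints while leaving the underlying monoidal structure, and hence duals, intact up to rescaling of evaluation and coevaluation, the equivalence forces each $\cL_{r,1}$ to be rigid in $\cO_c$, with dual $\cL_{r,1}^{*}\cong\cL_{r,1}$ matching $M_r^{*}\cong M_r$.

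Finally, the fusion rule \eqref{eqn:Lr1_fus_rules} is read off from the Clebsch-Gordan decomposition $M_r\otimes M_{r'}\cong\bigoplus_k M_{k}$, whose simple summands are indexed by $k=|r-r'|+1,|r-r'|+3,\dots,r+r'-1$; since a braided tensor equivalence preserves decompositions into simples, the same multiplicities hold for $\cL_{r,1}\tens\cL_{r',1}$, and the step-by-two range is precisely what the displayed bounds together with the parity condition $k+r+r'\equiv 1\pmod 2$ encode.

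The main obstacle I anticipate is verifying the hypotheses of the \cite{McR} correspondence in this abelian-intertwining-algebra setting: one must confirm that $V_c$ is genuinely the full $SU(2)$-fixed-point subalgebra of $\cA$, that the $\cL_{r,1}$ are exactly the $V_c$-modules occurring in $\cA$ with the correct multiplicities, and, most delicately, that the abelian $3$-cocycle data arising from the fractional gradings is handled so that the resulting functor is a genuine braided tensor equivalence onto a subcategory of $\cO_c$ closed under $\tens$. Establishing this closure and the compatibility of the braiding twist, rather than the Clebsch-Gordan bookkeeping, is where the real content lies.
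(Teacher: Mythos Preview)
Your proposal is correct and follows essentially the same route as the paper: realize $V_c$ as the $SU(2)$-fixed-point subalgebra of the doublet abelian intertwining algebra $\cA(p)$, use the decomposition $\cA(p)\cong\bigoplus_{r\geq 1} M_r\otimes\cL_{r,1}$, and invoke the main theorems of \cite{McR} to obtain a braided tensor equivalence (up to an abelian $3$-cocycle twist on $\ZZ/2\ZZ$) between the subcategory generated by the $\cL_{r,1}$ and $\rep SU(2)$, from which both rigidity and the Clebsch--Gordan fusion rules follow. The paper additionally routes the $SU(2)$-action through \cite{ACGY} and mentions the intermediate triplet $\cW(p)$ with its $SO(3,\RR)$-action, but this is background rather than a different argument.
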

\begin{proof}
  We use a realization of $V_c$ as the fixed-point subalgebra of a compact automorphism group of an abelian intertwining algebra. The triplet vertex operator algebra $\cW(p)$ is a $C_2$-cofinite vertex operator algebra extension of $V_c$; its automorphism group is $PSL(2,\CC)$ \cite{ALM} and $V_c$ is the fixed-point subalgebra. In particular, $V_c$ is the fixed-point subalgebra of the compact automorphism group $SO(3,\RR)$ acting on $\cW(p)$.

  The triplet $\cW(p)$ admits a simple current extension $\cA(p)$ called the doublet \cite{AM_doub}; it is an abelian intertwining algebra. The Lie algebra $\mathfrak{sl}_2$ acts by derivations on $\cA(p)$ \cite[Remark 2]{ACGY}, and this action exponentiates to an action of $SL(2,\CC)$ by automorphisms. In particular, $V_c$ is the fixed-point subalgebra of the compact automorphism group $SU(2)$ acting continuously on $\cA(p)$. As an $SU(2)\times V_c$-module,
\begin{equation*}
 \cA(p)\cong\bigoplus_{r\geq 1} M_r\otimes \cL_{r,1}
\end{equation*}
where $M_r$ is the $r$-dimensional irreducible $SU(2)$-module (again see \cite[Remark 2]{ACGY}).

Now by the main theorems of \cite{McR}, the modules $\cL_{r,1}$ are the simple objects of a semisimple tensor subcategory of $\cO_c$ that is braided tensor equivalent to $\rep SU(2)$ (twisted by an abelian $3$-cocycle of $\ZZ/2\ZZ$). In particular, the modules $\cL_{r,1}$ are rigid (since finite-dimensional $SU(2)$-modules are rigid) and the fusion rules \eqref{eqn:Lr1_fus_rules} hold.
\end{proof}

Now we can determine the tensor products of $\cL_{1,2}$ with most irreducible modules in $\cO_c$:
\begin{thm}\label{thm:L12_fus_rules}
 For $r\geq 1$ and $1\leq s\leq p$, the irreducible $V_c$-module $\cL_{r,s}$ is rigid. Moreover,
 \begin{equation}\label{eqn:most_L12_fusion}
  \cL_{1,2}\boxtimes\cL_{r,s}\cong\left\lbrace\begin{array}{lll}
                                               \cL_{r,2} & \text{if} & s=1 \\
                                               \cL_{r,s-1}\oplus\cL_{r,s+1} & \text{if} & 2\leq s\leq p-1
                                              \end{array} \right.
 \end{equation}
 for all $r\geq 1$.
\end{thm}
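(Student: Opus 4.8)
The plan is to pin down $\cL_{1,2}\tens\cL_{r,s}$ by squeezing it between the upper bound of Corollary \ref{cor:Pi_rs_surjective} and the lower bound of Proposition \ref{prop:Pi_rs_nontrivial}, using self-contragredience to force equality. First I would record two general facts. Since $\cL_{1,2}$ and every irreducible $\cL_{r,s}$ are self-contragredient, and the contragredient functor on $\cO_c$ is a contravariant tensor functor with $(W_1\tens W_2)'\cong W_2'\tens W_1'$, the braiding gives $(\cL_{1,2}\tens\cL_{r,s})'\cong\cL_{r,s}\tens\cL_{1,2}\cong\cL_{1,2}\tens\cL_{r,s}$, so $\cL_{1,2}\tens\cL_{r,s}$ is self-contragredient. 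Second, because each Verma module $\cV_{k,\ell}$ has a linearly ordered submodule lattice whose composition factors have strictly increasing first index (the embedding diagrams of Section 2.1), they are pairwise distinct; hence every nonzero self-contragredient quotient of $\cV_{k,\ell}$ equals its simple top $\cL_{k,\ell}$, for such a quotient has simple top $\cL_{k,\ell}$ and therefore simple socle $\cL_{k,\ell}$, which is impossible in length $\geq 2$ when all factors are distinct.

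For $2\leq s\leq p-1$ this closes the argument at once. The candidate conformal weights lie in the disjoint cosets $h_{r,s-1}+\NN$ and $h_{r,s+1}+\NN$, since $h_{r,s-1}-h_{r,s+1}=r-s/p\notin\ZZ$ (as $1\leq s\leq p-1$ forces $p\nmid s$). Thus $\cL_{1,2}\tens\cL_{r,s}$ splits as $M_-\oplus M_+$ along these cosets; by Corollary \ref{cor:Pi_rs_surjective} each summand is a quotient of the relevant Verma ($\cV_{r,s-1}$ or $\cV_{r,s+1}$), by Proposition \ref{prop:Pi_rs_nontrivial}(2) each surjects onto the relevant simple, and by the first fact each is self-contragredient. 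The quotient-of-Verma lemma then gives $M_\mp\cong\cL_{r,s\mp1}$, i.e.\ $\cL_{1,2}\tens\cL_{r,s}\cong\cL_{r,s-1}\oplus\cL_{r,s+1}$. Rigidity of every $\cL_{r,s}$ now follows by induction on $s$: $\cL_{r,1}$ is rigid by Theorem \ref{thm:Lr1_fus_rules} and $\cL_{1,2}$ by Theorem \ref{rigidityofl12}, so each $\cL_{1,2}\tens\cL_{r,s}$ is rigid, and $\cL_{r,s+1}$, being a direct summand of it, is rigid and self-dual; this reaches $\cL_{r,p}$ at $s=p-1$. Crucially, this step does not require the $s=1$ fusion to be determined, since $\cL_{r,2}$ splits off $\cL_{1,2}\tens\cL_{r,1}$ regardless.

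The case $s=1$ is the genuine obstacle. The two weights are now $h_{r,2}$ and $h_{r,0}=h_{r+1,p}$, still in disjoint cosets (difference $r-1/p\notin\ZZ$), so $\cL_{1,2}\tens\cL_{r,1}=M_2\oplus M_0$ with $M_2$ a self-contragredient quotient of $\cV_{r,2}$ and $M_0$ a self-contragredient quotient of $\cV_{r,0}=\cV_{r+1,p}$. Proposition \ref{prop:Pi_rs_nontrivial}(1) gives a surjection $M_2\twoheadrightarrow\cL_{r,2}$, so the lemma yields $M_2\cong\cL_{r,2}$, while it only yields $M_0\in\{0,\cL_{r+1,p}\}$. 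Everything therefore reduces to excluding the summand $\cL_{r+1,p}$, that is, to proving $\dim\hom(\cL_{1,2}\tens\cL_{r,1},\cL_{r+1,p})=\mathcal{N}^{\cL_{r+1,p}}_{\cL_{1,2},\cL_{r,1}}=0$.

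This is where I expect the real work, because the potential ghost $\cL_{r+1,p}$ is invisible to the coarse invariants: it has categorical dimension zero (the Chebyshev recursion coming from Proposition \ref{prop:L12_dim} together with the clean $r=1$ fusions gives $\dim_{\cO_c}\cL_{1,p}=0$, and the $s=p$ modules are dimension-zero in general), so no dimension count can detect it, and one checks that associativity against the already-established fusions yields only the tautology $\epsilon=\epsilon^2$ for the multiplicity $\epsilon\in\{0,1\}$ — both values are categorically consistent. The resolution must be module-theoretic. I would bound $\mathcal{N}^{\cL_{r+1,p}}_{\cL_{1,2},\cL_{r,1}}\leq\mathcal{N}^{\cV_{r+1,p}'}_{\cL_{1,2},\cL_{r,1}}$ and use Theorem \ref{thm:Zhu_fus_rules}, which produces a one-dimensional space of operators of type $\binom{\cV_{r+1,p}'}{\cL_{1,2}\,\cV_{r,1}}$; the key point is that its generator fails to descend to $\cL_{r,1}=\cV_{r,1}/\cV_{r+1,p-1}$, i.e.\ that its restriction to the singular submodule $\cV_{r+1,p-1}$ is nonzero. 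Since the space $\binom{\cV_{r+1,p}'}{\cL_{1,2}\,\cV_{r+1,p-1}}$ is also one-dimensional, this comes down to a concrete singular-vector computation showing the restriction map between these lines is an isomorphism; granting it, $M_0=0$ and $\cL_{1,2}\tens\cL_{r,1}\cong\cL_{r,2}$. Alternatively, one may simply invoke the irreducible fusion rules of \cite{Lin}, which supply the same vanishing.
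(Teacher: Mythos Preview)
Your argument for $2\leq s\leq p-1$ and the rigidity induction is essentially the paper's, but your justification of self-contragredience is circular: you assert that the contragredient functor is monoidal on $\cO_c$, i.e.\ $(\cW_1\tens\cW_2)'\cong\cW_2'\tens\cW_1'$ in general, but this is a consequence of rigidity of $\cO_c$ (Theorem \ref{thm:rigidity}), which is proved \emph{after} the present theorem and in fact relies on it. What is available is that if \emph{both} factors are already rigid (with duals their contragredients), then their tensor product is rigid and its dual equals its contragredient; this is exactly what the paper uses. So your ``general fact'' must be replaced by an inductive one, interleaved with the fusion computation: at step $s$ you use rigidity of $\cL_{1,2}$ and of $\cL_{r,s}$ (established at the previous step) to get self-contragredience of $\cL_{1,2}\tens\cL_{r,s}$, then deduce the fusion, then extract rigidity of $\cL_{r,s+1}$. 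With that reordering your $2\leq s\leq p-1$ argument coincides with the paper's.

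The genuine gap is the $s=1$ case. You correctly isolate the obstruction---excluding a possible $\cL_{r+1,p}$ summand---but you do not resolve it: you defer to an unspecified singular-vector computation or to \cite{Lin}. The paper avoids this entirely with a short conformal-weight argument by induction on $r$. From $\cL_{r+1,1}\hookrightarrow\cL_{2,1}\tens\cL_{r,1}$ (by \eqref{eqn:Lr1_fus_rules}) and exactness of $\cL_{1,2}\tens\bullet$ (rigidity of $\cL_{1,2}$), one gets an injection
\[
\cL_{1,2}\tens\cL_{r+1,1}\hookrightarrow\cL_{1,2}\tens(\cL_{2,1}\tens\cL_{r,1})\cong\cL_{2,1}\tens(\cL_{1,2}\tens\cL_{r,1})\cong\cL_{2,1}\tens\cL_{r,2},
\]
the last isomorphism using the inductive hypothesis. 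Now Proposition \ref{prop:conf_wts}(2) forces the conformal weights of $\cL_{2,1}\tens\cL_{r,2}$, and hence of $\cL_{1,2}\tens\cL_{r+1,1}$, into $\{h_{r-1,2}+\NN\}\cup\{h_{r+1,2}+\NN\}$; since $h_{r+1,0}-h_{r\pm1,2}\notin\ZZ$, the weight $h_{r+1,0}$ cannot occur, so $v_{r+1,0}\in\ker\Pi_{r+1,1}$ and $\cL_{1,2}\tens\cL_{r+1,1}$ is a self-contragredient quotient of $\cV_{r+1,2}$, hence $\cL_{r+1,2}$. This replaces your proposed singular-vector computation with a two-line weight count, and is the key idea you are missing.
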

\begin{proof}
We prove the theorem by induction on $s$. For $s=1$, Theorem \ref{thm:Lr1_fus_rules} shows that $\cL_{r,1}$ is rigid, but we still need to determine $\cL_{1,2}\tens\cL_{r,1}$. We will prove that this tensor product is $\cL_{r,2}$ by induction on $r$, with the base case $\cL_{1,2}\tens\cL_{1,1}\cong\cL_{1,2}$ clear because $\cL_{1,1}$ is the unit object of $\cO_c$.

Now assume that we know $\cL_{1,2}\tens\cL_{r,1}\cong\cL_{r,2}$ for some $r\geq 1$, and consider $\cL_{r+1,1}$. Because $\cL_{1,2}$ is rigid, the tensoring functor $\cL_{1,2}\tens\bullet$ is exact, so by \eqref{eqn:Lr1_fus_rules} and the inductive hypothesis, we have an injection
\begin{equation*}
 \cL_{1,2}\tens\cL_{r+1,1}\rightarrow\cL_{1,2}\tens(\cL_{2,1}\tens\cL_{r,1})\cong\cL_{2,1}\tens\cL_{r,2}.
\end{equation*}
Now on the one hand, Proposition \ref{prop:conf_wts}(1) says that the conformal weights of $\cL_{1,2}\tens\cL_{r+1,1}$ are contained in $\lbrace h_{r+1,0}+\NN\rbrace\cup\lbrace h_{r+1,2}+\NN\rbrace$, while on the other hand, Proposition \ref{prop:conf_wts}(2) says that the weights are contained in $\lbrace h_{r-1,2}+\NN\rbrace\cup\lbrace h_{r+1,2}+\NN\rbrace$.
Since
\begin{equation*}
 h_{r+1,0}-h_{r\pm1,2} =(r\mp r)\frac{p}{2}+r\pm1-p^{-1}\notin\ZZ,
\end{equation*}
we have $h_{r+1,0}\notin\lbrace h_{r-1,2}+\NN\rbrace\cup\lbrace h_{r+1,2}+\NN\rbrace$. Thus $v_{r+1,0}$ is in the kernel of the surjection
\begin{equation*}
 \Pi_{r+1,1}: \cV_{r+1,0}\oplus\cV_{r+1,2}\rightarrow\cL_{1,2}\tens\cL_{r+1,1}
\end{equation*}
from Section \ref{sec:first_fus}, and so there is a surjective map $\cV_{r+1,2}\rightarrow\cL_{1,2}\tens\cL_{r+1,1}$. But now because $\cL_{1,2}$ and $\cL_{r+1,1}$ are rigid and self-dual, their tensor product is also rigid and we have isomorphisms
\begin{equation*}
 \cL_{1,2}\tens\cL_{r+1,1}\cong\cL_{r+1,1}\tens\cL_{1,2}\cong\cL_{r+1,1}'\tens\cL_{1,2}'\cong(\cL_{1,2}\tens\cL_{r+1,1})'.
\end{equation*}
As $\cL_{r+1,2}$ is the only quotient of $\cV_{r+1,2}$ that is self-contragredient, we conclude that $\cL_{1,2}\tens\cL_{r+1,1}\cong\cL_{r+1,2}$. This proves the $s=1$ case of the theorem.

Now assume by induction that for all $r\geq 1$ and some $s\in\lbrace 1,\ldots, p-1\rbrace$, $\cL_{r,s}$ is rigid and \eqref{eqn:most_L12_fusion} holds. Then for all $r\geq 1$, $\cL_{r,s+1}$ is also rigid, since it is a direct summand of the tensor product of rigid objects. If $s\leq p-2$, we still need to compute the fusion products $\cL_{1,2}\tens\cL_{r,s+1}$.
By Corollary \ref{cor:Pi_rs_surjective} and  Proposition \ref{prop:Pi_rs_nontrivial}, this tensor product is a homomorphic image of $\cV_{r,s}\oplus\cV_{r,s+2}$ that has $\cL_{r,s}\oplus\cL_{r,s+2}$ as a quotient. Also, since $\cL_{1,2}$ and $\cL_{r,s+1}$ are both rigid and self-dual, their tensor product is also rigid and self-dual. Thus $\cL_{1,2}\tens\cL_{r,s+1}$ also contains $\cL_{r,s}\oplus\cL_{r,s+2}$ as a submodule. As the only such homomorphic image of $\cV_{r,s}\oplus\cV_{r,s+2}$ is $\cL_{r,s}\oplus\cL_{r,s+2}$ itself, this proves the fusion rules of the theorem in the $s+1$ case.
\end{proof}

We shall describe the fusion products $\cL_{1,2}\tens\cL_{r,p}$ soon, but first note that we have now proved that all simple modules in $\cO_c$ are rigid. This means we can use \cite[Theorem 4.4.1]{CMY2} to extend rigidity to general finite-length modules in $\cO_c$:
\begin{thm}\label{thm:rigidity}
For $c=13-6p-6p^{-1}$ with $p > 1$ an integer, the tensor category $\cO_c$ of $C_1$-cofinite grading-restricted generalized $V_c$-modules is rigid. Moreover, it is a braided ribbon tensor category with natural twist isomorphism $\theta=e^{2\pi i L_0}$.
\end{thm}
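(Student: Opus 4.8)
The plan is to bootstrap from the rigidity of the simple objects, already established in Theorems \ref{thm:Lr1_fus_rules} and \ref{thm:L12_fus_rules}, to rigidity of every object of $\cO_c$. Every module in $\cO_c$ has finite length, hence is a finite iterated extension of the simple modules $\cL_{r,s}$, all of which are now rigid. The essential subtlety is that the class of rigid objects is closed under tensor products, duals, direct sums, and retracts, but \emph{not} under extensions in a non-semisimple setting; thus one cannot directly deduce rigidity of an indecomposable module of length $>1$ from rigidity of its composition factors. Closing this gap is precisely the role of \cite[Theorem 4.4.1]{CMY2}.

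First I would verify that $\cO_c$ satisfies the hypotheses of \cite[Theorem 4.4.1]{CMY2}: it is a braided tensor category (by \cite{CJORY}) in which every object has finite length, the unit object $V_c=\cL_{1,1}$ is simple and self-contragredient (as recorded in the Preliminaries), and every simple object is rigid. With these in hand the theorem applies and yields rigidity of $\cO_c$.

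For the ribbon structure, the natural twist $\theta=e^{2\pi iL_0}$ is part of the vertex-algebraic braided tensor category data on $\cO_c$ furnished by \cite{HLZ8}, and it satisfies the balancing axiom relating $\theta$ to the double braiding (a general feature of the HLZ construction; see \cite{CKM}). To promote the rigid balanced braided category to a ribbon category, the only remaining point is compatibility of $\theta$ with the contragredient duals of \cite{FHL}, i.e.\ $\theta_{W'}=(\theta_W)'$. This is immediate because the contragredient functor preserves generalized $L_0$-eigenvalues, so $e^{2\pi iL_0}$ acts identically on $W$ and $W'$; together with $\theta_{V_c}=\Id$ (since $V_c$ has lowest conformal weight $0$), this establishes the ribbon axiom.

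The hard part is the propagation of rigidity from simple objects to all finite-length objects, encapsulated in \cite[Theorem 4.4.1]{CMY2}; by contrast, verifying the hypotheses of that theorem and checking the balancing and dual-compatibility of the twist are routine given the earlier results. In other words, once all composition factors are known to be rigid, the genuine content is the categorical fact that rigidity then propagates to the whole category in this class of finite-length braided tensor categories.
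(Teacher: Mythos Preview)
Your proposal is correct and matches the paper's approach exactly: the paper likewise observes that all simple modules $\cL_{r,s}$ are rigid by Theorems \ref{thm:Lr1_fus_rules} and \ref{thm:L12_fus_rules}, then invokes \cite[Theorem 4.4.1]{CMY2} to extend rigidity to all finite-length objects, with the ribbon twist $\theta=e^{2\pi iL_0}$ coming from the HLZ framework. Your additional verification of the hypotheses of \cite[Theorem 4.4.1]{CMY2} and the twist--contragredient compatibility is a reasonable expansion of what the paper leaves implicit.
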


As another consequence of Theorem \ref{thm:L12_fus_rules}, we can derive some more fusion rules in $\cO_c$:
\begin{thm}\label{thm:Lr_Ls_fusion}
 For $r\geq 1$ and $1\leq s\leq p$,
 \begin{equation*}
  \cL_{r,1}\tens\cL_{1,s}\cong \cL_{r,s}.
 \end{equation*}
\end{thm}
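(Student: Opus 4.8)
The plan is to induct on $s$, bootstrapping from the fusion rules for $\cL_{1,2}$ in Theorem \ref{thm:L12_fus_rules} together with the associativity and commutativity isomorphisms of the braided tensor category $\cO_c$. The base cases are immediate. For $s=1$ we have $\cL_{r,1}\tens\cL_{1,1}\cong\cL_{r,1}$ because $\cL_{1,1}=V_c$ is the unit object, and for $s=2$ commutativity together with the $s=1$ case of \eqref{eqn:most_L12_fusion} gives $\cL_{r,1}\tens\cL_{1,2}\cong\cL_{1,2}\tens\cL_{r,1}\cong\cL_{r,2}$.

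For the inductive step, I would fix $2\le s\le p-1$, assume the claim for all $r\geq 1$ at the indices $s-1$ and $s$, and compute the triple tensor product $\cL_{r,1}\tens\cL_{1,2}\tens\cL_{1,s}$ in two ways. First, applying \eqref{eqn:most_L12_fusion} to obtain $\cL_{1,2}\tens\cL_{1,s}\cong\cL_{1,s-1}\oplus\cL_{1,s+1}$ and then tensoring on the left with $\cL_{r,1}$, additivity of the tensor product and the inductive hypothesis at index $s-1$ yield $\cL_{r,s-1}\oplus(\cL_{r,1}\tens\cL_{1,s+1})$. Second, using commutativity and associativity to bring $\cL_{1,2}$ to the outside, the inductive hypothesis at index $s$ gives $\cL_{1,2}\tens(\cL_{r,1}\tens\cL_{1,s})\cong\cL_{1,2}\tens\cL_{r,s}$, and a second application of \eqref{eqn:most_L12_fusion} produces $\cL_{r,s-1}\oplus\cL_{r,s+1}$. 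Comparing the two expressions gives
\[
\cL_{r,s-1}\oplus(\cL_{r,1}\tens\cL_{1,s+1})\cong\cL_{r,s-1}\oplus\cL_{r,s+1}.
\]

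Since every module in $\cO_c$ has finite length and finite-dimensional endomorphism algebra, $\cO_c$ is a Krull--Schmidt category and hence admits direct-sum cancellation; cancelling the common simple summand $\cL_{r,s-1}$ yields $\cL_{r,1}\tens\cL_{1,s+1}\cong\cL_{r,s+1}$. As $s$ ranges over $2,\dots,p-1$ this produces the claim at $s+1=3,\dots,p$, which together with the base cases $s=1,2$ covers all $1\le s\le p$. The argument is essentially formal once the $\cL_{1,2}$ fusion rules and the rigidity of $\cO_c$ (Theorems \ref{thm:L12_fus_rules} and \ref{thm:rigidity}) are in hand, so I do not anticipate a serious obstacle; the only step meriting care is the cancellation, which I would justify by the Krull--Schmidt property, noting that a priori $\cL_{r,1}\tens\cL_{1,s+1}$ need not be known to be semisimple, but uniqueness of decomposition into indecomposables still forces it to equal the simple module $\cL_{r,s+1}$.
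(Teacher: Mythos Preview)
Your proof is correct and follows essentially the same induction on $s$ as the paper: both compute $\cL_{r,1}\tens\cL_{1,2}\tens\cL_{1,s}$ in two ways using associativity, commutativity, the inductive hypothesis, and \eqref{eqn:most_L12_fusion}, and then cancel $\cL_{r,s-1}$. The only cosmetic difference is that the paper phrases the inductive step via a short exact sequence and invokes exactness of $\cL_{r,1}\tens\bullet$ (from rigidity) rather than your direct-sum computation with Krull--Schmidt cancellation, but the substance is the same.
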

\begin{proof}
 The $s=1$ case is clear and the $s=2$ case was proved in Theorem \ref{thm:L12_fus_rules}. We can prove the general case by induction on $s$. In particular, for $2\leq s\leq p-1$, Theorem \ref{thm:L12_fus_rules} shows that we have an exact sequence
 \begin{equation*}
  0\longrightarrow\cL_{1,s-1}\longrightarrow\cL_{1,2}\tens\cL_{1,s}\longrightarrow\cL_{1,s+1}\longrightarrow 0.
 \end{equation*}
Since $\cL_{r,1}$ is rigid, the tensoring functor $\cL_{r,1}\tens\bullet$ is exact, and the inductive hypothesis implies that there is an exact sequence
\begin{equation*}
 0\longrightarrow\cL_{r,s-1}\longrightarrow\cL_{1,2}\tens\cL_{r,s}\longrightarrow\cL_{r,1}\tens\cL_{1,s+1}\longrightarrow 0.
\end{equation*}
Since $\cL_{1,2}\tens\cL_{r,s}\cong \cL_{r,s-1}\oplus\cL_{r,s+1}$ by Theorem \ref{thm:L12_fus_rules}, it follows that $\cL_{r,1}\tens\cL_{1,s+1}\cong \cL_{r,s+1}$.
\end{proof}

We now turn to the fusion products $\cL_{1,2}\tens\cL_{r,p}$. In the next section, we will show that these modules are projective covers of $\cL_{r,p-1}$ in a certain tensor subcategory of $\cO_c$, so we will use the notation $\cP_{r,p-1}=\cL_{1,2}\tens\cL_{r,p}$. First we handle $r=1$:
\begin{prop}\label{prop:P1_structure}
 The tensor product $\cP_{1,p-1}$ is a self-dual indecomposable length-$3$ module with subquotients as indicated in the diagram
 \begin{equation*}
  \xymatrix{
  \cL_{1,p-1} \ar[r] \ar[rd] & (\cV_{1,p-1}/\cV_{3,p-1})' \ar[r] \ar[d] & \cL_{2,1} \ar[d] \\
  & \cP_{1,p-1} \ar[r] \ar[rd] & \cV_{1,p-1}/\cV_{3,p-1} \ar[d] \\
  & & \cL_{1,p-1} \\
  }
 \end{equation*}
and Loewy diagram
\begin{equation*}
\xymatrixrowsep{1pc}
\xymatrixcolsep{.75pc}
 \xymatrix{
 & & \cL_{1,p-1} \\
 \cP_{1,p-1}: & \cL_{2,1} \ar[ru] & \\
& & \cL_{1,p-1} \ar[lu]\\
 }
\end{equation*}
\end{prop}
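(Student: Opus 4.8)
The plan is to pin down $\cP_{1,p-1}=\cL_{1,2}\boxtimes\cL_{1,p}$ by combining the upper and lower bounds already established with self-duality, reducing the entire problem to one point: that $\cL_{2,1}$ genuinely occurs as a composition factor. First I would assemble the structural input. By Proposition~\ref{prop:Pi_rs_nontrivial}(3) the lowest-weight space $\cP_{1,p-1}(0)\cong\cM_{1,p}$ is two-dimensional with $L_0$ acting by a single Jordan block of eigenvalue $h_{1,p-1}$; by Proposition~\ref{prop:conf_wts}(1), together with the coincidence $h_{1,p+1}=h_{1,p-1}$, all conformal weights lie in $h_{1,p-1}+\NN$, so $h_{1,p-1}$ is minimal. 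Since $\cL_{1,2}$ and $\cL_{1,p}$ are rigid and self-contragredient (Theorems~\ref{thm:L12_fus_rules} and~\ref{thm:rigidity}), braiding and duality give $\cP_{1,p-1}'\cong(\cL_{1,2}\boxtimes\cL_{1,p})'\cong\cL_{1,p}\boxtimes\cL_{1,2}\cong\cP_{1,p-1}$, so $\cP_{1,p-1}$ is self-dual. Finally I record the surjection $\Pi_{1,p}\colon\cW_{1,p}\cong\cV^{(2)}_{1,p-1}\to\cP_{1,p-1}$ of Corollary~\ref{cor:Pi_rs_surjective}, which is an isomorphism on lowest-weight spaces, and the intertwining operator of Proposition~\ref{prop:intwo_op_exist}(2), which gives a map $f\colon\cP_{1,p-1}\to\cW_{1,p}'$; since $f$ is an isomorphism on the minimal-weight space, it is injective, so $\cP_{1,p-1}$ is simultaneously a quotient of $\cV^{(2)}_{1,p-1}$ and a submodule of its contragredient.

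Next I would extract head, socle, and multiplicities. As $\cV^{(2)}_{1,p-1}$ is generated by its top level it is cyclic, hence has simple head $\cL_{1,p-1}$, and so does its quotient $\cP_{1,p-1}$; by self-duality the socle is $\cL_{1,p-1}$ as well, whence $\cP_{1,p-1}$ is indecomposable. A weight count shows $\cL_{1,p-1}$ is the only irreducible in $\cO_c$ with $h_{r,s}=h_{1,p-1}$, so since $h_{1,p-1}$ is minimal and $\dim(\cP_{1,p-1})_{[h_{1,p-1}]}=2$, the factor $\cL_{1,p-1}$ occurs with multiplicity exactly two, necessarily as head and socle. The $L_0$-eigenvector in $\cP_{1,p-1}(0)$ is a genuine singular vector; the submodule it generates has head $\cL_{1,p-1}$, and were it of length $>1$ it would produce a third copy of $\cL_{1,p-1}$, so it equals the socle. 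Because $\cP_{1,p-1}$ is generated by $\cP_{1,p-1}(0)$, dividing out the socle leaves a module with one-dimensional lowest-weight space at $h_{1,p-1}$, hence a cyclic lowest-weight module: a quotient of the Verma module $\cV_{1,p-1}$.

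Now I would cap the length using uniseriality. The embedding diagrams of Section~2 make $\cV_{1,p-1}$ uniserial, with composition series $\cV_{1,p-1}\supset\cV_{2,1}\supset\cV_{3,p-1}\supset\cdots$, so $\cP_{1,p-1}/\mathrm{socle}\cong\cV_{1,p-1}/\cV_k$ for some finite $k\ge2$, where $\cV_k$ denotes the $k$-th term. Since the socle is simple, every nonzero submodule of $\cP_{1,p-1}$ contains it; the submodule lattice is therefore a chain, and $\cP_{1,p-1}$ is itself uniserial with composition factors, read bottom to top, $\cL_{1,p-1},\cL_{k-1,\cdot},\ldots,\cL_{2,1},\cL_{1,p-1}$. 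The contragredient of a uniserial module is uniserial with reversed factor sequence, so self-duality forces this sequence to be palindromic; comparing the second factors from the bottom and the top yields $\cL_{k-1,\cdot}=\cL_{2,1}$, i.e. $k=3$, the only other possibility being the degenerate $k=2$. Thus $k\in\{2,3\}$.

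It remains to exclude $k=2$, i.e. to show $\cL_{2,1}$ actually occurs, and this is the main obstacle: no abstract argument separates a length-two self-extension of $\cL_{1,p-1}$ from the length-three diamond, as both are self-dual, indecomposable, and carry a Jordan block on the top level. I would settle it by the explicit computation foreshadowed in the introduction: working inside $\cP_{1,p-1}$ via $\Pi_{1,p}$ and the commutator and iterate formulas \eqref{eqn:Vir_comm_form} and \eqref{eqn:Vir_it_form}, I would apply the singular vector generating $\cV_{2,1}\subset\cV_{1,p-1}$ to the image of the top level and verify that the resulting vector at conformal weight $h_{2,1}$ is nonzero in $\cP_{1,p-1}/\mathrm{socle}$. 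This is precisely the Nahm--Gaberdiel--Kausch / Zhu-algebra calculation of \cite{KR}, specialized to lowest-weight vectors of $\cL_{1,2}$ and $\cL_{1,p}$. Once $\cL_{2,1}$ is known to appear we get $k=3$, so that $\cP_{1,p-1}/\mathrm{socle}\cong\cV_{1,p-1}/\cV_{3,p-1}$ and, dually, $\mathrm{rad}\,\cP_{1,p-1}\cong(\cV_{1,p-1}/\cV_{3,p-1})'$; these two identifications reproduce exactly the subquotient and Loewy diagrams in the statement, with the self-duality already proved.
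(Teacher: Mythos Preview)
Your overall strategy is correct and close to the paper's: establish self-duality via rigidity, obtain simple head from the surjection $\Pi_{1,p}\colon\cV_{1,p-1}^{(2)}\twoheadrightarrow\cP_{1,p-1}$, deduce simple socle by duality, and identify $\cP_{1,p-1}/\mathrm{Soc}$ as a quotient of the Verma module $\cV_{1,p-1}$. Your palindrome argument---uniseriality (simple socle plus uniserial Verma quotient) combined with self-duality forcing a symmetric composition series---is a clean and valid alternative to the paper's analysis of $\cZ_{1,p-1}'$ for bounding the length at $3$.

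The gap is in the final step. Your assertion that ``no abstract argument separates a length-two self-extension of $\cL_{1,p-1}$ from the length-three diamond'' is incorrect: the paper disposes of $k=2$ in one line by citing \cite[Section 5.4]{GK}, which shows that $\cL_{1,p-1}$ admits no non-split self-extensions at central charge $c_{p,1}$. Since a length-$2$ $\cP_{1,p-1}$ would be a non-split (in fact logarithmic, given the Jordan block on $\cP_{1,p-1}(0)$) self-extension of $\cL_{1,p-1}$, this is impossible. Your proposed explicit computation---applying the degree-$(p-1)$ singular vector expression to the non-eigenvector at the top and checking nonvanishing in $\cP_{1,p-1}/\mathrm{Soc}$---is not carried out, and as described is not obviously well-posed: you would need a concrete way to detect nonvanishing of a vector in the abstractly defined tensor product (e.g., by pairing with an explicit element of $\cV_{1,p-1}^{(2)}$ via the embedding $f$, or by realizing the intertwining operator explicitly). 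This is certainly doable in principle, but it is substantially more work than the one-line citation to \cite{GK}, and without it your proof is incomplete at precisely the point you flag as the main obstacle.
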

\begin{proof}
 First, $\cP_{1,p-1}$ is self-dual because $\cL_{1,2}$ and $\cL_{r,p}$ are self-dual and because the tensor product is commutative. Also, since $\Pi_{1,p-1}$ is surjective by Corollary \ref{cor:Pi_rs_surjective}, $\cP_{1,p-1}$ is a quotient of the generalized Verma module $\cV_{1,p-1}^{(2)}$. As this generalized Verma module has a unique maximal proper submodule (the sum of all proper submodules is proper because any proper submodule is graded and must intersect $\cV_{1,p-1}^{(2)}(0)$ in its $L_0$-eigenspace), $\cP_{1,p-1}$ has unique irreducible quotient $\cL_{1,p-1}$. Then because $\cP_{1,p-1}$ is self-dual, it also contains $\cL_{1,p-1}$ as unique irreducible submodule. Since $\Pi_{1,p-1}$ is an isomorphism on degree-$0$ spaces by Proposition \ref{prop:Pi_rs_nontrivial}(3), the submodule $\cL_{1,p-1}$ is generated by the image under $\Pi_{1,p-1}$ of an $L_0$-eigenvector in $\cV_{1,p-1}^{(2)}(0)$. This means that $\ker\Pi_{1,p-1}$ contains the maximal proper submodule of the Verma submodule $\cV_{1,p-1}\subseteq\cV_{1,p-1}^{(2)}$.

 So far, we have shown that there is an exact sequence
 \begin{equation*}
  0\longrightarrow\cL_{1,p-1}\longrightarrow\cP_{1,p-1}\longrightarrow \cV_{1,p-1}/\cJ\longrightarrow 0,
 \end{equation*}
where the submodule $\cJ\subseteq\cV_{1,p-1}$ is a Verma module occurring in the embedding diagram
\begin{equation*}
 \cV_{1,p-1}\longleftarrow \cV_{2,1}\longleftarrow\cV_{3,p-1}\longleftarrow\cV_{4,1}\longleftarrow\cdots
\end{equation*}
Let $\cL_{r,s}$ denote the unique irreducible submodule of $\cV_{1,p-1}/\cJ$ (that is, $\cJ=\cV_{r+1,p-s}$). We have $r\geq 2$ because $\cL_{1,p-1}$ does not admit non-split self-extensions at central charge $c_{p,1}$ \cite[Section 5.4]{GK}. Now let $\cZ_{1,p-1}\subseteq\cP_{1,p-1}$ denote the inverse image of $\cL_{r,s}$ under the surjection $\cP_{1,p-1}\rightarrow\cV_{1,p-1}/\cJ$; thus we have an exact sequence
\begin{equation*}
 0\longrightarrow \cL_{1,p-1}\longrightarrow\cZ_{1,p-1}\longrightarrow\cL_{r,s}\longrightarrow 0.
\end{equation*}
This sequence does not split because $\cL_{1,p-1}$ is the unique irreducible submodule of $\cP_{1,p-1}$, and $r\geq 2$. Applying the exact contragredient functor, we get the non-split sequence
\begin{equation*}
 0\longrightarrow\cL_{r,s}\longrightarrow\cZ_{1,p-1}'\longrightarrow\cL_{1,p-1}\longrightarrow 0.
\end{equation*}
Since $h_{r,s}-h_{1,p-1}\in\ZZ_+$, $\cZ_{1,p-1}'$ contains a singular vector of weight $h_{1,p-1}$, and therefore there is a non-zero homomorphism $\cV_{1,p-1}\rightarrow\cZ_{1,p-1}'$. The image has length at least $2$ (since $\cZ_{1,p-1}'$ does not contain $\cL_{1,p-1}$ as a submodule), and thus $\cZ_{1,p-1}'$ is a homomorphic image of $\cV_{1,p-1}$. The only length-$2$ quotient of $\cV_{1,p-1}$ is $\cV_{1,p-1}/\cV_{3,p-1}$, so $\cZ_{1,p-1}\cong(\cV_{1,p-1}/\cV_{3,p-1})'$ and therefore $(r,s)=(2,1)$.

This verifies the top row in the subquotient diagram for $\cP_{1,p-1}$, and also $\cP_{1,p-1}/\cL_{1,p-1}\cong\cV_{1,p-1}/\cJ$ with $\cJ=\cV_{3,p-1}$. This finishes the proof that $\cP_{1,p-1}$ has the subquotients indicated in the diagram. Now the Loewy diagram is easy: the socle of $\cP_{1,p-1}$ is $\cL_{1,p-1}$ since this is the unique irreducible submodule, and then the socle of $\cP_{1,p-1}/\cL_{1,p-1}\cong\cV_{1,p-1}/\cV_{3,p-1}$ is $\cL_{2,1}$. Moreover, the two extensions $(\cV_{1,p-1}/\cV_{3,p-1})'$ and $\cV_{1,p-1}/\cV_{3,p-1}$ of irreducible subquotients of $\cP_{1,p-1}$ are both indecomposable. Finally, $\cP_{1,p-1}$ itself is indecomposable since the intersection of any two non-zero submodules must contain the unique irreducible submodule $\cL_{1,p-1}$.
\end{proof}

\begin{rem}
 Note that $\cP_{1,p-1}$ is a logarithmic $V_c$-module, with maximum Jordan block size $2$ for $L_0$ beginning in degree $0$.
\end{rem}

Now we handle $r\geq 2$:
\begin{prop}\label{prop:Pr_structure}
 For $r\geq 2$, the tensor product $\cP_{r,p-1}$ is a self-dual indecomposable length-$4$ module with subquotients as indicated in the diagram
 \begin{equation*}
  \xymatrix{
  \cL_{r,p-1} \ar[r] \ar[d] \ar[rd] & (\cV_{r,p-1}/\cV_{r+2,p-1})' \ar[d] \ar[r] & \cL_{r+1,1} \ar[d] \\
  \cV_{r-1,1}/\cV_{r+1,1} \ar[r] \ar[d]  & \cP_{r,p-1} \ar[r] \ar[d] \ar[rd] & \cV_{r,p-1}/\cV_{r+2,p-1} \ar[d] \\
  \cL_{r-1,1} \ar[r] & (\cV_{r-1,1}/\cV_{r+1,1})' \ar[r] & \cL_{r,p-1} \\
  }
 \end{equation*}
and Loewy diagram
\begin{equation*}
 \xymatrixrowsep{1pc}
\xymatrixcolsep{.75pc}
 \xymatrix{
 & & \cL_{r,p-1} & \\
 \cP_{r,p-1}: & \cL_{r-1,1} \ar[ru] & & \cL_{r+1,1} \ar[lu] \\
& & \cL_{r,p-1} \ar[lu] \ar[ru] & \\
 }
\end{equation*}

\end{prop}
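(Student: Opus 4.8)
The plan is to leverage the $r=1$ case (Proposition \ref{prop:P1_structure}) together with the now-available rigidity and the fusion rules $\cL_{r,1}\tens\cL_{1,s}\cong\cL_{r,s}$ from Theorem \ref{thm:Lr_Ls_fusion}. The key observation is that since $\cP_{r,p-1}=\cL_{1,2}\tens\cL_{r,p}$ and $\cL_{r,p}\cong\cL_{r,1}\tens\cL_{1,p}$ by Theorem \ref{thm:Lr_Ls_fusion}, commutativity and associativity of $\tens$ give $\cP_{r,p-1}\cong\cL_{r,1}\tens(\cL_{1,2}\tens\cL_{1,p})=\cL_{r,1}\tens\cP_{1,p-1}$. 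Thus I would first establish this isomorphism, reducing the problem to computing $\cL_{r,1}\tens\cP_{1,p-1}$. Self-duality of $\cP_{r,p-1}$ is then immediate: $\cL_{r,1}$ and $\cP_{1,p-1}$ are both self-dual (the latter by Proposition \ref{prop:P1_structure}), and duals reverse tensor order, so $\cP_{r,p-1}'\cong\cP_{1,p-1}'\tens\cL_{r,1}'\cong\cL_{r,1}\tens\cP_{1,p-1}\cong\cP_{r,p-1}$.

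Next I would apply the exact functor $\cL_{r,1}\tens\bullet$ (exact because $\cL_{r,1}$ is rigid) to the two short exact sequences from Proposition \ref{prop:P1_structure} that exhibit the length-$3$ structure of $\cP_{1,p-1}$, namely $0\to\cL_{1,p-1}\to\cP_{1,p-1}\to\cV_{1,p-1}/\cV_{3,p-1}\to 0$ and the socle filtration with subquotients $\cL_{1,p-1}$, $\cL_{2,1}$, $\cL_{1,p-1}$. Applying $\cL_{r,1}\tens\bullet$ to the individual composition factors and using Theorem \ref{thm:Lr_Ls_fusion} gives $\cL_{r,1}\tens\cL_{1,p-1}\cong\cL_{r,p-1}$, while by Theorem \ref{thm:Lr1_fus_rules} we have $\cL_{r,1}\tens\cL_{2,1}\cong\cL_{r-1,1}\oplus\cL_{r+1,1}$ for $r\geq 2$. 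Hence the composition factors of $\cP_{r,p-1}$ are exactly $\cL_{r,p-1}$ (twice), $\cL_{r-1,1}$, and $\cL_{r+1,1}$, confirming that $\cP_{r,p-1}$ has length $4$ with the listed subquotients. The exact functor also carries the indecomposable length-$2$ subquotients $(\cV_{1,p-1}/\cV_{3,p-1})'$ and $\cV_{1,p-1}/\cV_{3,p-1}$ of $\cP_{1,p-1}$ to modules whose composition factors match the claimed length-$2$ subquotients in the diagram, and I would identify these with $(\cV_{r,p-1}/\cV_{r+2,p-1})'$, $\cV_{r,p-1}/\cV_{r+2,p-1}$, $\cV_{r-1,1}/\cV_{r+1,1}$, and its dual by the same argument as in Proposition \ref{prop:P1_structure}: a nonzero map out of or into the relevant Verma module, combined with the constraint that each is the unique length-$2$ quotient, pins down the isomorphism type.

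The main obstacle will be establishing the precise \emph{Loewy} (socle/radical) structure rather than merely the multiset of composition factors. Exactness of $\cL_{r,1}\tens\bullet$ controls subobjects and quotients but does not by itself guarantee that the socle of $\cP_{r,p-1}$ is simple or that the two middle factors $\cL_{r\pm1,1}$ sit in the claimed positions; in principle the tensored module could split off summands. I would resolve this by first arguing that the socle is $\cL_{r,p-1}$: since $\cP_{r,p-1}$ is self-dual, it suffices to show the top is $\cL_{r,p-1}$, which follows because applying the right-exact functor to the surjection $\cP_{1,p-1}\twoheadrightarrow\cL_{1,p-1}$ yields a surjection $\cP_{r,p-1}\twoheadrightarrow\cL_{r,p-1}$, and I would verify this is the unique irreducible quotient by checking that no $\cL_{r\pm1,1}$ can be a quotient (their conformal weights are incompatible with being generated in the top degree, or one can use the nonsplit extensions produced by the functor). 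Indecomposability then follows as in the $r=1$ case: any two nonzero submodules meet in the simple socle $\cL_{r,p-1}$. Finally, the two nonsplit extensions $\cL_{r,p-1}$-by-$\cL_{r\pm1,1}$ visible in the diagram are inherited from the nonsplit extension $\cL_{1,p-1}$-by-$\cL_{2,1}$ in $\cP_{1,p-1}$ via the functor together with the fusion rule $\cL_{r,1}\tens\cL_{2,1}\cong\cL_{r-1,1}\oplus\cL_{r+1,1}$, which separates the two middle factors into distinct extension classes.
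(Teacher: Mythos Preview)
Your overall architecture matches the paper's: realize $\cP_{r,p-1}\cong\cL_{r,1}\tens\cP_{1,p-1}$, then transport the structure of $\cP_{1,p-1}$ through the exact functor $\cL_{r,1}\tens\bullet$. Self-duality, the list of composition factors, and the identification of the length-$2$ subquotients with the indicated Verma quotients (via conformal weights, as in Proposition \ref{prop:P1_structure}) all proceed as you outline.

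The gap is precisely where you flag the ``main obstacle,'' namely showing that $\cL_{r\pm1,1}$ are neither submodules nor quotients of $\cP_{r,p-1}$. Neither of your proposed mechanisms works. Conformal weights do not help: one computes $h_{r,p-1}-h_{r-1,1}=r-1>0$, so $\cL_{r-1,1}$ sits \emph{below} $\cL_{r,p-1}$ in weight and nothing about the grading obstructs it from being a quotient. As for ``nonsplit extensions produced by the functor'': tensoring the nonsplit sequence $0\to\cL_{1,p-1}\to\cZ_{1,p-1}\to\cL_{2,1}\to 0$ with $\cL_{r,1}$ yields $0\to\cL_{r,p-1}\to\cZ_{r,p-1}\to\cL_{r-1,1}\oplus\cL_{r+1,1}\to 0$, but exactness alone does not prevent one (or both) of the summands $\cL_{r\pm1,1}$ from splitting off. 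So this step is genuinely unresolved in your sketch.

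The paper closes this gap with the rigidity adjunction. Since $\cL_{r,1}$ is rigid and self-dual,
\[
\hom_{V_c}(\cL_{r\pm1,1},\cP_{r,p-1})\cong\hom_{V_c}(\cL_{r\pm1,1},\cL_{r,1}\tens\cP_{1,p-1})\cong\hom_{V_c}(\cL_{r,1}\tens\cL_{r\pm1,1},\cP_{1,p-1}),
\]
and by \eqref{eqn:Lr1_fus_rules} the tensor product $\cL_{r,1}\tens\cL_{r\pm1,1}$ is a direct sum of modules $\cL_{k,1}$ with $k\geq 2$, none of which can map nontrivially into $\cP_{1,p-1}$ because $\mathrm{Soc}(\cP_{1,p-1})=\cL_{1,p-1}$. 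Thus $\hom(\cL_{r\pm1,1},\cP_{r,p-1})=0$; self-duality then gives $\hom(\cP_{r,p-1},\cL_{r\pm1,1})=0$. This single computation forces both pairs of length-$2$ extensions to be nonsplit and pins down the Loewy diagram. Once you insert this argument, the rest of your outline goes through.
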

\begin{proof}
First, $\cP_{r,p-1}$ is self-dual exactly as in the $r=1$ case. Then from Theorem \ref{thm:Lr_Ls_fusion},
 \begin{equation}\label{eqn:Prp-1}
  \cP_{r,p-1} =\cL_{1,2}\tens\cL_{r,p}\cong\cL_{r,1}\tens(\cL_{1,2}\tens\cL_{1,p})=\cL_{r,1}\tens\cP_{1,p-1}.
 \end{equation}
Thus because $\cL_{r,1}\tens\bullet$ is exact (since $\cL_{r,1}$ is rigid), $\cP_{r,p-1}$ contains submodules $\cL_{r,p-1}\cong\cL_{r,1}\tens\cL_{1,p-1}$ and $\cZ_{r,p-1}\cong\cL_{r,1}\tens(\cV_{1,p-1}/\cV_{3,p-1})'$, and using \eqref{eqn:Lr1_fus_rules}, we have an exact sequence
\begin{equation*}
 0\longrightarrow\cL_{r,p-1}\longrightarrow\cZ_{r,p-1}\longrightarrow\cL_{r-1,1}\oplus\cL_{r+1,1}\longrightarrow 0.
\end{equation*}
Moreover, $\cZ_{r,p-1}$ is a maximal proper submodule of $\cP_{r,p-1}$ because we have an exact sequence
\begin{equation*}
 0\longrightarrow\cZ_{r,p-1}\longrightarrow\cP_{r,p-1}\longrightarrow\cL_{r,p-1}\longrightarrow 0.
\end{equation*}
So $\cL_{r,p-1}$ is both a submodule and quotient of $\cP_{r,p-1}$.

We claim that $\cL_{r\pm1,1}$ are neither submodules nor quotients of $\cP_{r,p-1}$. Indeed, using rigidity,
\begin{align*}
 \hom_{V_c}(\cL_{r\pm1,1},\cP_{r,p-1}) & \cong\hom_{V_c}(\cL_{r\pm1,1},\cL_{r,1}\tens\cP_{1,p-1})\nonumber\\
 &\cong\hom_{V_c}(\cL_{r\pm1,1}\tens\cL_{r,1},\cP_{1,p-1})=0,
\end{align*}
since $\cL_{r\pm1,1}\tens\cL_{r,1}$ is a direct sum of submodules $\cL_{r',1}$ that does not include $\cL_{1,1}$ (by \eqref{eqn:Lr1_fus_rules}) and
since $\cL_{1,p-1}$ is the only irreducible submodule of $\cP_{1,p-1}$. Then since $\cP_{r,p-1}$ is self-dual,
\begin{equation*}
 \hom_{V_c}(\cP_{r,p-1},\cL_{r\pm1,1})=0
\end{equation*}
as well. So if we use $\cX_{r\pm1,1}\subseteq\cZ_{r,p-1}$ to denote the inverse images of $\cL_{r\pm1,1}$ under the surjection $\cZ_{r,p-1}\rightarrow\cL_{r-1,1}\oplus\cL_{r+1,1}$, the exact sequences
\begin{equation*}
 0\longrightarrow\cL_{r,p-1}\longrightarrow\cX_{r\pm1,1}\longrightarrow\cL_{r\pm1,1}\longrightarrow 0
\end{equation*}
do not split. Then using conformal weight considerations as in the $r=1$ case, $\cX_{r+1,1}'$ is a quotient of $\cV_{r,p-1}$ while $\cX_{r-1,1}$ is a quotient of $\cV_{r-1,1}$. Specifically,
\begin{equation*}
 \cX_{r+1,1}\cong(\cV_{r,p-1}/\cV_{r+2,p-1})'
\end{equation*}
and
\begin{equation*}
 \cX_{r-1,1}\cong\cV_{r-1,1}/\cV_{r+1,1},
\end{equation*}
verifying the upper left half of the subquotient diagram for $\cP_{r,p-1}$.

We still need to determine $\cP_{r,p-1}/\cX_{r\pm1,1}$. These quotients appear in the exact sequences
\begin{equation*}
 0\longrightarrow\cZ_{r,p-1}/\cX_{r\pm1,1}\longrightarrow\cP_{r,p-1}/\cX_{r\pm1,1}\longrightarrow\cL_{r,p-1}\longrightarrow 0,
\end{equation*}
with $\cZ_{r,p-1}/\cX_{r\pm1,1}\cong\cL_{r\mp1,1}$. These sequences do not split because $\cL_{r\pm1,1}$ are not quotients of $\cP_{r,p-1}$, so conformal weight considerations as before show that
\begin{equation*}
 \cP_{r,p-1}/\cX_{r+1,1}\cong(\cV_{r-1,1}/\cV_{r+1,1})'
\end{equation*}
and
\begin{equation*}
 \cP_{r,p-1}/\cX_{r-1,1}\cong\cV_{r,p-1}/\cV_{r+2,p-1}.
\end{equation*}
This verifies all subquotients in the diagram for $\cP_{r,p-1}$, and the Loewy diagram also follows easily. In particular, $\mathrm{Soc}(\cP_{r,p-1})\cong\cL_{r,p-1}$ because $\cL_{r\pm1,1}$ are not submodules and $\cL_{r,p-1}$ occurs as a submodule only once (otherwise $\cL_{r\pm1,1}$ would be quotients), and then $\mathrm{Soc}(\cP_{r,p-1}/\cL_{r,p-1})\cong\cL_{r-1,1}\oplus\cL_{r+1,1}$ because again $\cL_{r\pm1,1}$ are not quotients of $\cP_{r,p-1}$. Finally, as in the $r=1$ case, $\cP_{r,p-1}$ is indecomposable because the intersection of any two non-zero submodules must contain the irreducible socle $\cL_{r,p-1}$.
\end{proof}
\begin{rem}
 Proposition \ref{prop:Pr_structure} shows that for $r\geq 2$, the homomorphism $\Pi_{r,p}: \cV_{r,p-1}\oplus\cV_{r,p+1}\rightarrow\cL_{1,2}\tens\cL_{r,p}$ is not surjective: its image is the Verma module quotient $\cV_{r-1,1}/\cV_{r+1,1}$ (note that $\cV_{r-1,1}=\cV_{r,p+1}$).
\end{rem}

We summarize the fusion rules of this section in the following theorem:
\begin{thm}\label{thm:basic_fusion_rules}
 The following fusion rules hold in $\cO_c$:
 \begin{enumerate}
  \item For $r,r'\geq 1$ and $1\leq s\leq p$,
  \begin{equation}\label{fr1}
   \cL_{r',1}\tens\cL_{r,s}\cong\bigoplus_{\substack{k = |r-r'|+1\\ k+r+r' \equiv 1\; ({\rm mod}\; 2)}}^{r+r'-1} \cL_{k,s}.
  \end{equation}

  \item For $r\geq1$ and $1\leq s\leq p$,
  \begin{equation}\label{fr2}
   \cL_{1, 2}\tens\cL_{r, s}\cong\left\lbrace\begin{array}{lll}
                                            \cL_{r,2} & \text{if} & s=1\\
                                            \cL_{r,s-1}\oplus\cL_{r,s+1} & \text{if} & 2\leq s\leq p-1\\
                                            \cP_{r,p-1} & \text{if} & s=p
                                           \end{array}
\right. ,
  \end{equation}
where $\cP_{r,p-1}$ is the indecomposable module described in Propositions \ref{prop:P1_structure} and \ref{prop:Pr_structure}.
 \end{enumerate}
\end{thm}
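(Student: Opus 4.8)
The plan is to observe that both statements are consolidations of fusion rules already established in this section, assembled using the associativity, commutativity, and additivity of $\tens$; no genuinely new computation is needed.

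For part (2), I would simply point out that the cases $s=1$ and $2\leq s\leq p-1$ are precisely the content of \eqref{eqn:most_L12_fusion} in Theorem \ref{thm:L12_fus_rules}, while the case $s=p$ is the definition $\cP_{r,p-1}=\cL_{1,2}\tens\cL_{r,p}$ together with the structural descriptions furnished by Propositions \ref{prop:P1_structure} and \ref{prop:Pr_structure}. Thus part (2) requires no further argument beyond citing these results.

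For part (1), the idea is to factor $\cL_{r,s}$ as a product of an $\cL_{\ast,1}$ and an $\cL_{1,\ast}$ and then reduce to the already-known $\mathfrak{sl}_2$-type fusion. Concretely, Theorem \ref{thm:Lr_Ls_fusion} gives $\cL_{r,s}\cong\cL_{r,1}\tens\cL_{1,s}$, so by associativity
\begin{equation*}
 \cL_{r',1}\tens\cL_{r,s}\cong\cL_{r',1}\tens(\cL_{r,1}\tens\cL_{1,s})\cong(\cL_{r',1}\tens\cL_{r,1})\tens\cL_{1,s}.
\end{equation*}
I would then apply \eqref{eqn:Lr1_fus_rules} from Theorem \ref{thm:Lr1_fus_rules} to write $\cL_{r',1}\tens\cL_{r,1}\cong\bigoplus_k\cL_{k,1}$ over the indicated range $|r-r'|+1\leq k\leq r+r'-1$ with $k+r+r'\equiv 1\;(\mathrm{mod}\;2)$, distribute $\tens\cL_{1,s}$ over this finite direct sum, and finally reapply Theorem \ref{thm:Lr_Ls_fusion} termwise, $\cL_{k,1}\tens\cL_{1,s}\cong\cL_{k,s}$. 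Each summand is legitimate because every $k$ in the range satisfies $k\geq|r-r'|+1\geq 1$ and because $s$ lies in the admissible range $1\leq s\leq p$, so the reconversion applies to each term and yields exactly the direct sum in \eqref{fr1}.

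There is no substantive obstacle here; the only point requiring care is the index bookkeeping, namely verifying that the summation range for $k$ produced by \eqref{eqn:Lr1_fus_rules} is carried through unchanged by tensoring with $\cL_{1,s}$. The distribution of $\tens$ over $\oplus$ and the exactness of $\cL_{1,s}\tens\bullet$ are guaranteed by the rigidity of $\cL_{1,s}$ established in Theorem \ref{thm:L12_fus_rules}, so the argument is purely formal once the earlier theorems are in hand.
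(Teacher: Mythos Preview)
Your proposal is correct and matches the paper's treatment: the theorem is explicitly presented there as a summary of results already established in the section, with no separate proof given. Your derivation of part (1) via the factorization $\cL_{r,s}\cong\cL_{r,1}\tens\cL_{1,s}$ from Theorem \ref{thm:Lr_Ls_fusion} and the $\mathfrak{sl}_2$-type fusion of Theorem \ref{thm:Lr1_fus_rules} is exactly the intended reading; the only minor remark is that distributing $\tens$ over a finite direct sum requires nothing more than additivity of the tensor bifunctor, so you need not invoke rigidity or exactness for that step.
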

We will use these formulas to compute all fusion products of irreducible modules later, but we will first need to construct additional indecomposable modules $\cP_{r,s}$ that will appear in the fusion products.

\subsection{Categorical dimensions in \texorpdfstring{$\cO_c$}{Oc}}

Now we can use Proposition \ref{prop:L12_dim} and Theorem \ref{thm:basic_fusion_rules} to compute the categorical dimensions of all irreducible modules in $\cO_c$:
\begin{thm}\label{thm:cat_dim}
 In the ribbon tensor category $\cO_c$,
 \begin{equation}\label{eqn:rs_cat_dim}
  \dim_{\cO_c} \cL_{r,s} =(-1)^{(p+1)(r+1)+s+1}\, r\cdot \frac{\sin(\pi s/p)}{\sin(\pi/p)}
 \end{equation}
for all $r\geq 1$ and $1\leq s\leq p$.
\end{thm}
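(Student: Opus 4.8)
The plan is to compute the categorical dimensions by exploiting the ring-homomorphism property of $\dim_{\cO_c}$ together with the fusion rules already established in Theorem \ref{thm:basic_fusion_rules}. Since $\cO_c$ is a ribbon tensor category (Theorem \ref{thm:rigidity}), the assignment $\cL_{r,s}\mapsto\dim_{\cO_c}\cL_{r,s}$ is additive on short exact sequences and multiplicative on tensor products: $\dim_{\cO_c}(M\tens N)=\dim_{\cO_c}(M)\dim_{\cO_c}(N)$. Combined with Theorem \ref{thm:Lr_Ls_fusion}, which gives $\cL_{r,1}\tens\cL_{1,s}\cong\cL_{r,s}$, this reduces the entire computation to knowing $\dim_{\cO_c}\cL_{r,1}$ and $\dim_{\cO_c}\cL_{1,s}$ separately, since then
\begin{equation*}
 \dim_{\cO_c}\cL_{r,s}=\dim_{\cO_c}\cL_{r,1}\cdot\dim_{\cO_c}\cL_{1,s}.
\end{equation*}

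First I would handle the first index. From Theorem \ref{thm:Lr1_fus_rules}, the modules $\cL_{r,1}$ form a subcategory braided tensor equivalent to an abelian $3$-cocycle twist of $\rep SU(2)$, with $\cL_{r,1}$ corresponding to the $r$-dimensional irreducible. A tensor equivalence preserves categorical dimensions up to sign (the cocycle twist can introduce signs), so the dimensions should satisfy the $SU(2)$ fusion recursion. Concretely, using \eqref{eqn:Lr1_fus_rules} with $r'=2$ gives $\cL_{2,1}\tens\cL_{r,1}\cong\cL_{r-1,1}\oplus\cL_{r+1,1}$, whence $\dim_{\cO_c}\cL_{2,1}\cdot\dim_{\cO_c}\cL_{r,1}=\dim_{\cO_c}\cL_{r-1,1}+\dim_{\cO_c}\cL_{r+1,1}$. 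I expect $\dim_{\cO_c}\cL_{2,1}=\pm 2$ (the twisted-$SU(2)$ analogue of $\dim M_2=2$); determining the correct sign is where I would need to track the abelian cocycle carefully, but the conformal weight $h_{2,1}=\tfrac{3}{4}(p-1)$ and the twist $\theta=e^{2\pi iL_0}$ should pin it down. Solving the Chebyshev-type recursion with this seed yields $\dim_{\cO_c}\cL_{r,1}=(-1)^{(p+1)(r+1)}\,r$, matching the $s=1$ specialization of \eqref{eqn:rs_cat_dim}.

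Next I would handle the second index by the same recursive strategy, using \eqref{fr2}: for $2\leq s\leq p-1$ we have $\cL_{1,2}\tens\cL_{1,s}\cong\cL_{1,s-1}\oplus\cL_{1,s+1}$, so $\dim_{\cO_c}\cL_{1,2}\cdot\dim_{\cO_c}\cL_{1,s}=\dim_{\cO_c}\cL_{1,s-1}+\dim_{\cO_c}\cL_{1,s+1}$. By Proposition \ref{prop:L12_dim}, $\dim_{\cO_c}\cL_{1,2}=-2\cos(\pi/p)$, so setting $d_s:=\dim_{\cO_c}\cL_{1,s}$ gives the recursion $d_{s+1}=-2\cos(\pi/p)\,d_s-d_{s-1}$ with $d_1=1$ and $d_2=-2\cos(\pi/p)$. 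This is solved by $d_s=(-1)^{s+1}\sin(\pi s/p)/\sin(\pi/p)$, which I would verify by the standard trigonometric identity $2\cos(\pi/p)\sin(\pi s/p)=\sin(\pi(s+1)/p)+\sin(\pi(s-1)/p)$; the sign bookkeeping works because $(-1)^{s+1}\cdot(-2\cos(\pi/p))$ absorbs correctly into the $(-1)^{s+2}$ term. Multiplying $\dim_{\cO_c}\cL_{r,1}$ and $\dim_{\cO_c}\cL_{1,s}$ then produces \eqref{eqn:rs_cat_dim}.

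The main obstacle will be the sign accounting. The recursions determine the dimensions only once the seed values are fixed with correct signs, and the abelian $3$-cocycle twist in Theorem \ref{thm:Lr1_fus_rules} means $\dim_{\cO_c}\cL_{r,1}$ carries a nontrivial sign $(-1)^{(p+1)(r+1)}$ that does not appear in the untwisted $\rep SU(2)$. I would nail this down by computing $\dim_{\cO_c}\cL_{2,1}$ directly, either from an explicit evaluation/coevaluation for $\cL_{2,1}$ analogous to the $\cL_{1,2}$ computation in Proposition \ref{prop:L12_dim}, or by invoking the ribbon structure: in a ribbon category the dimension is the twist-weighted trace, and the equivalence of Theorem \ref{thm:Lr1_fus_rules} transports the twist $\theta=e^{2\pi iL_0}$ to the ribbon structure on the twisted $\rep SU(2)$, from which the sign $(-1)^{(p+1)(r+1)}$ emerges once the cocycle is identified. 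The self-consistency of the two recursions (both must be compatible with multiplicativity across all fusion rules in Theorem \ref{thm:basic_fusion_rules}) provides a strong check that the signs have been chosen correctly.
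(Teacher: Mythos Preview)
Your overall strategy is correct and matches the paper's closely: reduce to the two families $\cL_{r,1}$ and $\cL_{1,s}$ via $\cL_{r,s}\cong\cL_{r,1}\tens\cL_{1,s}$, then run Chebyshev recursions seeded by $\dim_{\cO_c}\cL_{1,2}=-2\cos(\pi/p)$ and $\dim_{\cO_c}\cL_{2,1}$. The $\cL_{1,s}$ computation is exactly what the paper does.

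The substantive gap is your seed for the $\cL_{r,1}$ recursion. You correctly flag the sign of $\dim_{\cO_c}\cL_{2,1}$ as the obstacle, but both of your proposed fixes are either laborious (repeating the BPZ/hypergeometric analysis of Proposition~\ref{prop:L12_dim} for $\cL_{2,1}$) or underspecified (extracting the sign from the abelian $3$-cocycle on $\ZZ/2\ZZ$ and the transported ribbon structure requires knowing precisely which cocycle arises, which Theorem~\ref{thm:Lr1_fus_rules} does not record). The paper sidesteps this entirely with a one-line trick you missed: $\cL_{2,1}$ is a composition factor of $\cP_{1,p-1}=\cL_{1,2}\tens\cL_{1,p}$, and since categorical dimension is additive on composition factors and multiplicative on tensor products,
\[
\dim_{\cO_c}\cL_{2,1}=\dim_{\cO_c}\cP_{1,p-1}-2\dim_{\cO_c}\cL_{1,p-1}=(\dim_{\cO_c}\cL_{1,2})(\dim_{\cO_c}\cL_{1,p})-2\dim_{\cO_c}\cL_{1,p-1}.
\]
Since your $\cL_{1,s}$ formula gives $\dim_{\cO_c}\cL_{1,p}=0$ and $\dim_{\cO_c}\cL_{1,p-1}=(-1)^p$, this yields $\dim_{\cO_c}\cL_{2,1}=(-1)^{p+1}\cdot 2$ directly, with no further analysis needed. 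This is why the paper orders the computation as it does: first finish the $\cL_{1,s}$ column completely (including $s=p$), then read off the $\cL_{2,1}$ seed from the structure of $\cP_{1,p-1}$ established in Proposition~\ref{prop:P1_structure}.
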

\begin{proof}
  We have $\dim_{\cO_c}\cL_{1,1}=1$, and Proposition \ref{prop:L12_dim} shows that
 \begin{equation*}
  \dim_{\cO_c}\cL_{1,2}=-2\cos(\pi/p)=-\frac{\sin(2\pi/p)}{\sin(\pi/p)} =-\frac{q^2-q^{-2}}{q-q^{-1}}
 \end{equation*}
where $q=e^{\pi i/p}$. We can now prove by induction on $s$ that $\dim_{\cO_c}\cL_{1,s}=(-1)^{s+1}\frac{\sin(s\pi/p)}{\sin(\pi/p)}$ for $1\leq s\leq p$. Indeed, if this formula holds for $s$, then using the fusion rules \eqref{fr2} and the fact that categorical dimension respects tensor products, we get
\begin{align*}
 \dim_{\cO_c}\cL_{1,s+1} & =(\dim_{\cO_c}\cL_{1,2})(\dim_{\cO_c}\cL_{1,s})-\dim_{\cO_c}\cL_{1,s-1}\nonumber\\
 & =(-1)^{s+2}\frac{(q^2-q^{-2})(q^s-q^{-s})}{(q-q^{-1})^2}-(-1)^{s}\frac{q^{s-1}-q^{-s+1}}{q-q^{-1}}\nonumber\\
 & =\frac{(-1)^{s+2}}{q-q^{-1}}\left((q+q^{-1})(q^s-q^{-s})-q^{s-1}+q^{-s+1}\right) = (-1)^{s+2}\frac{q^{s+1}-q^{-s-1}}{q-q^{-1}},
\end{align*}
as required. From this dimension formula, we can see that $\dim_{\cO_c}\cL_{1,p}=0$.

Next we consider $\cL_{2,1}$. Since this is a composition factor of $\cP_{1,p-1}=\cL_{1,2}\tens\cL_{1,p}$ and since categorical dimension respects extensions,
\begin{align*}
 \dim_{\cO_c}\cL_{2,1} & =\dim_{\cO_{c}}\cP_{1,p-1}-2\dim_{\cO_c}\cL_{1,p-1}\nonumber\\
 & =(\dim_{\cO_c}\cL_{1,2})(\dim_{\cO_c}\cL_{1,p})-2(-1)^p\frac{\sin((p-1)\pi/p)}{\sin(\pi/p)}\nonumber\\
 & =0+(-1)^{p+1}\, 2\cdot\frac{\sin(\pi-\pi/p)}{\sin(\pi/p)} =(-1)^{p+1}\, 2.
\end{align*}
From this, the $\mathfrak{sl}_2$-type fusion rules \eqref{fr1} and induction on $r$ show that
\begin{equation*}
 \dim_{\cO_c}\cL_{r,1} =(-1)^{(p+1)(r+1)}\, r
\end{equation*}
for all $r\geq 1$. Then \eqref{eqn:rs_cat_dim} for general $r$ and $s$ follows from $\cL_{r,s}\cong\cL_{r,1}\tens\cL_{1,s}$.
\end{proof}

\section{Projective modules}\label{sec:proj}

The category $\cO_c$ is quite wild: for example, since all Verma modules $\cV_{r,s}$ have infinite length, each irreducible module $\cL_{r,s}$ has non-split extensions in $\cO_c$ of arbitrary length. This means that no irreducible module $\cL_{r,s}$ has a projective cover in $\cO_c$, and consequently, there is probably no hope of any reasonable classification or description of the indecomposable objects in $\cO_c$. We can remedy this situation somewhat by restricting attention to a tamer tensor subcategory, which we introduce next.

\subsection{The tensor subcategory \texorpdfstring{$\cO_c^0$}{Oc0}}

Recall from Theorem \ref{thm:Lr1_fus_rules} that the modules $\cL_{r,1}$ for $r\geq 1$ are the simple objects of a semisimple tensor subcategory of $\cO_c$ that is braided tensor equivalent to an abelian $3$-cocycle twist of $\rep SU(2)$. Moreover, the modules $\cL_{2n+1,1}$ for $n\in\NN$ are the simple objects of a semisimple symmetric tensor subcategory that is equivalent to $\rep SO(3,\RR)$. These are the irreducible $V_c$-modules that appear in the decomposition of the triplet vertex operator algebra $\cW(p)$ as a $V_c$-module: specifically,
\begin{equation*}
 \cW(p)\cong\bigoplus_{n=0}^\infty (2n+1)\cdot\cL_{2n+1,0}.
\end{equation*}
Because the subcategory $\rep SO(3,\RR)$ of $\cO_c$ is symmetric, monodromies satisfy
\begin{equation*}
 \cR_{\cL_{2n'+1,1},\cL_{2n+1,1}}\circ\cR_{\cL_{2n+1,1},\cL_{2n'+1,1}} =\Id_{\cL_{2n+1,1}\tens\cL_{2n'+1,1}}
\end{equation*}
for all $n,n'\in\NN$, that is, the modules $\cL_{2n+1,1}$ and $\cL_{2n'+1,1}$ centralize each other. We define the subcategory $\cO_c^0\subseteq\cO_c$ to consist of all modules that centralize the $\cL_{2n+1,1}$:
\begin{defi}\label{def:oc0}
 The category $\cO_c^0$ is the M\"{u}ger centralizer of $\rep SO(3,\RR)$ in $\cO_c$, that is, $\cO_c^0\subseteq\cO_c$ is the full subcategory whose objects $\cW$ satisfy
 \begin{equation*}
  \cR_{\cL_{2n+1,1},\cW}\circ\cR_{\cW,\cL_{2n+1,1}} =\Id_{\cW\tens\cL_{2n+1,1}}
 \end{equation*}
for all $n\in\NN$.
\end{defi}

The next result establishes the fundamental properties of $\cO_c^0$:
\begin{prop}
 The category $\cO_c^0$ is a ribbon tensor subcategory of $\cO_c$ that contains all irreducible $V_c$-modules $\cL_{r,s}$ for $r\geq1$, $1\leq s\leq p$.
\end{prop}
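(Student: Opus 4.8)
The plan is to deduce the tensor-subcategory and ribbon claims from the general theory of M\"uger centralizers in a braided ribbon category, and to verify the containment of the irreducibles $\cL_{r,s}$ by a short computation with conformal weights. Throughout I use that, by Theorem \ref{thm:rigidity}, $\cO_c$ is a braided ribbon tensor category with twist $\theta=e^{2\pi i L_0}$, so that $\theta$ acts on each simple $\cL_{k,\ell}$ by the scalar $e^{2\pi i h_{k,\ell}}$.

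For the first claim I would check that the centralizing condition of Definition \ref{def:oc0} is preserved under the basic categorical operations. Because the monodromy (double braiding) is natural and multiplicative in each variable, the double braiding of $\cW_1\tens\cW_2$ with $\cL_{2n+1,1}$ decomposes into the double braidings of $\cW_1$ and of $\cW_2$ with $\cL_{2n+1,1}$; hence $\cO_c^0$ is closed under $\tens$. Naturality of $\cR$ shows it is closed under subquotients and direct sums, and the ribbon compatibility of $\theta$ with duals shows it is closed under the contragredient duality of Theorem \ref{thm:rigidity}. Since $\cO_c^0$ is then a full subcategory containing the unit $\cL_{1,1}$ (which centralizes everything), closed under $\tens$ and duals, the braiding, twist, and rigid structure of $\cO_c$ all restrict to make $\cO_c^0$ a braided ribbon tensor subcategory.

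For the containment of the $\cL_{r,s}$, I would reduce the centralizing condition to integrality of conformal-weight differences via the balancing axiom. In a ribbon category the monodromy satisfies $\cR_{Y,X}\circ\cR_{X,Y}=(\theta_X^{-1}\tens\theta_Y^{-1})\circ\theta_{X\tens Y}$, so applying this with $Y=\cL_{2n+1,1}$, $X=\cL_{r,s}$—whose tensor product is the semisimple module given by the $\mathfrak{sl}_2$-type fusion rule \eqref{fr1}—the monodromy acts on each simple summand $\cL_{k,s}$ by the scalar $e^{2\pi i(h_{k,s}-h_{r,s}-h_{2n+1,1})}$. Thus $\cL_{r,s}$ centralizes $\cL_{2n+1,1}$ exactly when $h_{k,s}-h_{r,s}-h_{2n+1,1}\in\ZZ$ for every $k$ occurring in \eqref{fr1}.

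The only genuine computation is this integrality check. With $t=p$ one has $h_{2n+1,1}=n(n+1)p-n\in\ZZ$, and
\begin{equation*}
 h_{k,s}-h_{r,s}=\frac{(k^2-r^2)p}{4}-\frac{(k-r)s}{2}.
\end{equation*}
The fusion rule \eqref{fr1} forces $k\equiv r\pmod 2$, so $k-r$ and $k+r$ are both even; hence $k^2-r^2\equiv 0\pmod 4$ and $k-r\equiv 0\pmod 2$, making both displayed terms integers. Therefore $h_{k,s}-h_{r,s}-h_{2n+1,1}\in\ZZ$, the monodromy scalar is $1$ on every summand, and each $\cL_{r,s}$ lies in $\cO_c^0$. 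I expect the main subtlety to be organizational rather than computational: one must invoke the centralizer closure properties with the correct ribbon compatibilities, especially closure under duals, after which the weight computation is immediate and rests entirely on the parity constraint $k\equiv r\pmod 2$ built into \eqref{fr1}.
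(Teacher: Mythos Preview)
Your proposal is correct and follows essentially the same approach as the paper: both verify closure of the M\"uger centralizer under tensor products via the hexagon axiom, under subquotients via naturality of the braiding (together with exactness of tensoring by the rigid $\cL_{2n+1,1}$), and under duals; and both verify $\cL_{r,s}\in\cO_c^0$ via the balancing equation and an integrality check on $h_{k,s}-h_{r,s}-h_{2n+1,1}$ using the parity constraint $k\equiv r\pmod 2$ from \eqref{fr1}. The only notable difference is that the paper makes the duals-closure step explicit by invoking \cite[Lemma 8.9.1]{EGNO} to express $\cR_{\cW',\cL_{2n+1,1}}$ in terms of $\cR_{\cW,\cL_{2n+1,1}}^{-1}$, whereas you appeal to the general ribbon compatibility; your appeal is legitimate but you might want to cite that lemma or a similar reference to make the step concrete.
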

\begin{proof}
 To show that $\cO_c^0$ is a monoidal subcategory of $\cO_c$, we just need to show that if $\cW_1$ and $\cW_2$ are modules in $\cO_c^0$, then so is $\cW_1\tens\cW_2$, that is, $$\cR^2_{\cW_1\tens\cW_2,\cL_{2n+1,1}}=\Id_{(\cW_1\tens\cW_2)\tens\cL_{2n+1,1}}$$
 for all $n\in\NN$. But this is straightforward from the hexagon axiom for the braiding $\cR$. Then to show that $\cO_c^0$ is abelian and thus a tensor subcategory of $\cO_c$, it is enough to show that $\cO_c^0$ is closed under submodules and quotient modules. This follows from the rigidity of $\cL_{2n+1,1}$ and corresponding exactness of $\cL_{2n+1,1}\tens\bullet$, as well as the naturality of the braiding in $\cO_c$.

 To show that $\cO_c^0$ is rigid and thus a ribbon subcategory of $\cO_c$, we just need to show closure under contragredients, that is, if $\cR^2_{\cW,\cL_{2n+1,1}}$ is the identity for each $n\in\NN$, then so is $\cR^2_{\cW',\cL_{2n+1,1}}$. Since any such $\cW$ is rigid (in $\cO_c$) by Theorem \ref{thm:rigidity}, we can use \cite[Lemma 8.9.1]{EGNO}, which states that $\cR_{\cW',\cL_{2n+1,1}}$ agrees with the composition
 \begin{align*}
  \cW'\tens & \cL_{2n+1,1} \xrightarrow{r^{-1}} (\cW'\tens\cL_{2n+1,1})\tens V_c\xrightarrow{\Id\tens i_\cW} (\cW'\tens\cL_{2n+1,1})\tens(\cW\tens\cW')\nonumber\\
  & \xrightarrow{assoc.} \cW'\tens((\cL_{2n+1,1}\tens\cW)\tens\cW') \xrightarrow{\Id\tens(\cR_{\cW,\cL_{2n+1,1}}^{-1}\tens\Id)} \cW'\tens((\cW\tens\cL_{2n+1,1})\tens\cW')\nonumber\\
  & \xrightarrow{assoc.} (\cW'\tens\cW)\tens(\cL_{2n+1,1}\tens\cW')\xrightarrow{e_\cW\tens\Id} V_c\tens(\cL_{2n+1,1}\tens\cW')\xrightarrow{l} \cL_{2n+1,1}\tens\cW',
 \end{align*}
where the arrows marked $assoc.$ represent compositions of associativity isomorphisms in $\cO_c$. Using the opposite braiding, $\cR_{\cL_{2n+1,1},\cW'}^{-1}$ is the identical composition, except that $\cR_{\cW,\cL_{2n+1,1}}^{-1}$ is replaced with $\cR_{\cL_{2n+1,1},\cW}$. But $\cR_{\cW,\cL_{2n+1,1}}^{-1}=\cR_{\cL_{2n+1,1},\cW}$ since $\cW$ is an object of $\cO_c^0$, so the compositions giving $\cR_{\cW',\cL_{2n+1,1}}$ and $\cR_{\cL_{2n+1,1},\cW'}^{-1}$ are the same. Therefore the monodromy of $\cW'$ with each $\cL_{2n+1,1}$ is the identity.

Finally, to show that each $\cL_{r,s}$ is an object of $\cO_c^0$, we use the balancing equation for monodromies:
\begin{align*}
 \cR^2_{\cL_{r,s},\cL_{2n+1,1}} =\theta_{\cL_{r,s}\tens\cL_{2n+1,1}}\circ(\theta_{\cL_{r,s}}^{-1}\tens\theta_{\cL_{2n+1,1}}^{-1}).
\end{align*}
Recall that $\theta=e^{2\pi i L_0}$ and that $$\cL_{r,s}\tens\cL_{2n+1,1}\cong\bigoplus_{\substack{k = |r-2n-1|+1\\ k+r+2n \equiv 0\; ({\rm mod}\; 2)}}^{r+2n} \cL_{k,s} =\bigoplus_{k=1}^{\min(r,2n+1)} \cL_{r+2(n-k+1),s}$$
(from Theorem \ref{thm:basic_fusion_rules}). Thus on the $\cL_{r+2(n-k+1),s}$ summand of $\cL_{r,s}\tens\cL_{2n+1,1}$, the monodromy is given by the scalar
\begin{equation*}
 e^{2\pi i(h_{r+2(n-k+1),s}-h_{r,s}-h_{2n+1,1})} =e^{2\pi i\left[(pr-s)(n-k+1)+(k-1)^2p-(2k-1)np+n\right]} =1.
\end{equation*}
Thus $\cR^2_{\cL_{r,s},\cL_{2n+1,1}}=\Id_{\cL_{r,s}\tens\cL_{2n+1,1}}$ for all $n\in\NN$ as required.
\end{proof}

\begin{rem}
 Although $\cO_c^0$ is closed under submodules, quotients, and contragredients, it need not be closed under arbitrary (non-split) extensions. Thus it is possible for a module to be projective in the subcategory $\cO_c^0$ even if it is not projective in $\cO_c$. In fact, we will show that every irreducible module $\cL_{r,s}$ has a projective cover in $\cO_c^0$, although not in $\cO_c$.
\end{rem}

We now begin to obtain projective objects in $\cO_c^0$:
\begin{thm}\label{projoflrp}
 For all $r\geq 1$, the module $\cL_{r,p}$ is both projective and injective in $\cO_c^0$.
\end{thm}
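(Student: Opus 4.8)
The plan is to reduce the statement to the single module $\cL_{1,p}$ and then to exploit the fact, noted in the preceding remark, that $\cO_c^0$ is \emph{not} closed under extensions. First I would dispose of the dependence on $r$. By Theorem \ref{thm:Lr_Ls_fusion} we have $\cL_{r,p}\cong\cL_{r,1}\tens\cL_{1,p}$, and $\cL_{r,1}$ is rigid and self-dual inside the ribbon subcategory $\cO_c^0$. Hence $\cL_{r,1}\tens-$ is an exact endofunctor of $\cO_c^0$ whose two-sided adjoint is naturally isomorphic to itself, so it preserves both projective and injective objects; it therefore suffices to treat $\cL_{1,p}$. Moreover, the contragredient functor is an exact contravariant autoequivalence of $\cO_c^0$ (which is closed under contragredients), and $\cL_{1,p}$ is self-contragredient, so $\cL_{1,p}$ is projective if and only if it is injective. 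It remains to prove projectivity of $\cL_{1,p}$.

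For projectivity I would argue by lifting. Given any surjection $\pi\colon M\twoheadrightarrow\cL_{1,p}$ in $\cO_c^0$, choose a lift $\tilde v\in M_{[h_{1,p}]}$ of a generating lowest-weight vector. The submodule $\langle\tilde v\rangle$ is a highest-weight quotient of $\cV_{1,p}$, and $\pi$ splits as soon as $\langle\tilde v\rangle\cong\cL_{1,p}$, that is, as soon as the image in $M$ of the singular vector of $\cV_{1,p}$ (which has weight $h_{3,p}=h_{1,p}+p$ and generates $\cV_{3,p}$) vanishes. If this image were nonzero, then from the embedding chain $\cV_{1,p}\leftarrow\cV_{3,p}\leftarrow\cV_{5,p}\leftarrow\cdots$ the module $\langle\tilde v\rangle$ would admit the length-two module $\cE:=\cV_{1,p}/\cV_{5,p}$ as a quotient; since $\cO_c^0$ is closed under submodules and quotients, this would force $\cE\in\cO_c^0$. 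Thus the whole argument reduces to showing $\cE\notin\cO_c^0$.

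This last step is the crux and the main obstacle. Here $\cE$ is the nonsplit extension $0\to\cL_{3,p}\to\cE\to\cL_{1,p}\to0$, and because $h_{3,p}-h_{1,p}=p\in\ZZ$ the twist $\theta=e^{2\pi iL_0}$ acts on $\cE$ by the single scalar $e^{2\pi ih_{1,p}}$. By the balancing equation $\cR^2_{\cE,\cL_{3,1}}=(\theta_{\cE}\tens\theta_{\cL_{3,1}})^{-1}\circ\theta_{\cE\tens\cL_{3,1}}$, and since $\theta_{\cE}$ and $\theta_{\cL_{3,1}}$ are scalars (note $h_{3,1}\in\ZZ$, so all weights of $\cE\tens\cL_{3,1}$ lie in one coset mod $\ZZ$), the monodromy $\cR^2_{\cE,\cL_{3,1}}$ equals $e^{2\pi iL_0}$ up to an overall scalar. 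Hence $\cE$ fails the centralizing condition of Definition \ref{def:oc0} precisely when $L_0$ acts non-semisimply on $\cE\tens\cL_{3,1}$, and I would establish exactly this.

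To see the logarithmic structure I would compute Hom spaces using rigidity and the fusion rules \eqref{fr1}. Tensoring the defining sequence of $\cE$ with $\cL_{3,1}$ yields a submodule $\cL_{3,p}\tens\cL_{3,1}\cong\cL_{1,p}\oplus\cL_{3,p}\oplus\cL_{5,p}$ with quotient $\cL_{1,p}\tens\cL_{3,1}\cong\cL_{3,p}$, so $\cL_{3,p}$ appears with multiplicity two, both constituents sitting in weight $h_{3,p}$. On the other hand, rigidity together with \eqref{fr1} gives $\hom_{V_c}(\cE\tens\cL_{3,1},\cL_{3,p})\cong\hom_{V_c}(\cE,\cL_{1,p}\oplus\cL_{3,p}\oplus\cL_{5,p})\cong\CC$, whereas a semisimple $\cE\tens\cL_{3,1}$ would have a two-dimensional Hom space there. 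Thus the two copies of $\cL_{3,p}$ are glued nontrivially; being at the same conformal weight, such a gluing can only be realized by a rank-two Jordan block for $L_0$ (a nonsplit self-extension of an irreducible highest-weight module with diagonalizable $L_0$ on its one-graded lowest space necessarily splits). Hence $\cE\tens\cL_{3,1}$ is genuinely logarithmic, $\cR^2_{\cE,\cL_{3,1}}\neq\Id$, and $\cE\notin\cO_c^0$, which closes the argument. The points requiring the most care are this final implication about nondiagonalizability at a single weight and the clean bookkeeping of the balancing equation for the logarithmic twist.
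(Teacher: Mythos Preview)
Your overall strategy coincides with the paper's: reduce to $\cL_{1,p}$, reduce projectivity to showing that the length-two Verma quotient $\cE=\cV_{1,p}/\cV_{5,p}$ does not lie in $\cO_c^0$, and detect this by proving $\cL_{3,1}\tens\cE$ is logarithmic so that the monodromy with $\cL_{3,1}$ is nontrivial. There is, however, a gap in your lifting step. You assert that for an arbitrary surjection $\pi\colon M\twoheadrightarrow\cL_{1,p}$ in $\cO_c^0$, a lift $\tilde v\in M_{[h_{1,p}]}$ generates a highest-weight quotient of $\cV_{1,p}$. This requires $\tilde v$ to be a genuine $L_0$-eigenvector, not merely a generalized one, and no such eigenvector lift need exist if $M$ involves a logarithmic self-extension of $\cL_{1,p}$ (then every $L_0$-eigenvector in weight $h_{1,p}$ lies in $\ker\pi$). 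The paper closes this by first reducing to length-two extensions $0\to\cL_{r,s}\to\cW\to\cL_{1,p}\to0$ (a standard induction on length) and then invoking \cite[Section 5.4]{GK} to rule out nonsplit self-extensions of $\cL_{1,p}$; once $(r,s)\neq(1,p)$, minimality of $h_{1,p}$ makes $\cW_{[h_{1,p}]}$ one-dimensional and your argument runs verbatim. You need some such input here.

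Where your argument genuinely departs from the paper is the proof that $\cL_{3,1}\tens\cE$ is logarithmic. The paper performs an explicit Zhu-bimodule computation: it determines $A(\cL_{3,1})\cong\CC[x,y]/(f_{3,1})$, finds $f_{3,1}(x,h_{1,p})=(x-h_{1,p})(x-h_{3,p})^2$, and uses surjectivity of $\pi(\cY)$ (Proposition \ref{prop:piY_surjective}) to conclude that the degree-zero space of the quotient $\cL_{3,p}^{(2)}=(\cL_{3,1}\tens\cE)/(\cL_{1,p}\oplus\cL_{5,p})$ inherits the Jordan block at $h_{3,p}$. Your route via rigidity is slicker and avoids the singular-vector calculation entirely: the adjunction $\hom(\cE\tens\cL_{3,1},\cL_{3,p})\cong\hom(\cE,\cL_{3,1}\tens\cL_{3,p})\cong\CC$ forces this same quotient to be a nonsplit self-extension of $\cL_{3,p}$, and such a self-extension must be logarithmic since $\cL_{3,p}$ occurs only once as a composition factor of $\cV_{3,p}$. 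This is a nice simplification; the paper's computation, by contrast, yields the bimodule $A(\cL_{3,1})$ explicitly, which is information one might want elsewhere.
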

\begin{proof}
 Since $\cL_{r,p}$ is self-dual, injectivity of $\cL_{r,p}$ will follow from projectivity. Moreover, it is enough to show that $\cL_{1,p}$ is projective because $\cL_{r,p}\cong\cL_{r,1}\tens\cL_{1,p}$ from Theorem \ref{thm:Lr_Ls_fusion} (recall that projective objects form a tensor ideal in any rigid tensor category).

 Now because $\cL_{1,p}$ is simple, it will be projective in $\cO_c^0$ if all surjections $\cW\twoheadrightarrow\cL_{1,p}$ with $\cW$ an object of $\cO_c^0$ split. In fact, because all modules in $\cO_c^0$ have finite length, we may assume that $\cW$ has length $2$. (If all length-$2$ extensions of $\cL_{1,p}$ split, then so do all finite-length extensions by induction on length.) Thus we are reduced to considering extensions
 \begin{equation}\label{eqn:proj_exact_seq}
  0\longrightarrow\cL_{r,s}\longrightarrow\cW\longrightarrow\cL_{1,p}\longrightarrow 0.
 \end{equation}
It is easy to see that $h_{1,p}$ is the minimum of all conformal weights $h_{r,s}$ at central charge $c_{p,1}$, so because $\cL_{1,p}$ does not admit non-split self-extensions (see \cite[Section 5.4]{GK}), we may assume $h_{r,s}>h_{1,p}$. This means that $\cW$ contains a singular vector of conformal weight $h_{1,p}$, and thus $\cW$ contains a homomorphic image of the Verma module $\cV_{1,p}$.

If the image of $\cV_{1,p}$ in $\cW$ has length $1$, the exact sequence \eqref{eqn:proj_exact_seq} splits, so we may assume the length is $2$. In this case, the structure of $\cV_{1,p}$ as a $\cV ir$-module shows that $\cW\cong \cV_{1,p}/\cV_{5,p}$ and $(r,s)=(3,p)$. Thus we just need to show that $\cV_{1,p}/\cV_{5,p}$ is not an object of $\cO_c^0$, and for this it is sufficient to show that the monodromy $\cR_{\cL_{3,1},\cV_{1,p}/\cV_{5,p}}^2$ is non-trivial. From the balancing equation
\begin{equation*}
 \cR_{\cL_{3,1},\cV_{1,p}/\cV_{5,p}}^2=\theta_{\cL_{3,1}\tens(\cV_{1,p}/\cV_{5,p})}\circ(\theta_{\cL_{3,1}}^{-1}\tens\theta_{\cV_{1,p}/\cV_{5,p}}^{-1}) = e^{2\pi i(L_0-h_{3,1}-h_{1,p})},
\end{equation*}
it is enough to show that $\cL_{3,1}\tens(\cV_{1,p}/\cV_{5,p})$ is a logarithmic $V_c$-module, that is, $L_0$ acts non-semisimply on the tensor product. To show this, we prove that $\cL_{3,1}\tens(\cV_{1,p}/\cV_{5,p})$ surjects onto a logarithmic self-extension of $\cL_{3,p}$.

First, the exactness of $\cL_{3,1}\tens\bullet$ and the fusion rules of Theorem \ref{thm:basic_fusion_rules} imply there is an exact sequence
\begin{equation*}
 0\longrightarrow\cL_{1,p}\oplus\cL_{3,p}\oplus\cL_{5,p}\longrightarrow\cL_{3,1}\tens(\cV_{1,p}/\cV_{5,p})\longrightarrow\cL_{3,p}\longrightarrow 0.
\end{equation*}
We quotient out the submodule $\cL_{1,p}\oplus\cL_{5,p}$ from the tensor product to get a surjection
\begin{equation*}
 f:\cL_{3,1}\tens(\cV_{1,p}/\cV_{5,p})\longrightarrow\cL_{3,p}^{(2)},
\end{equation*}
where $\cL_{3,p}^{(2)}$ is some self-extension of $\cL_{3,p}$. We want to show that $L_0$ acts non-semisimply on $\cL_{3,p}^{(2)}(0)$; this $A(V_c)$-module is $2$-dimensional and $h_{3,p}$ is its only $L_0$-eigenvalue.

The intertwining operator $\cY=f\circ\cY_\tens$ of type $\binom{\cL_{3,p}^{(2)}}{\cL_{3,1}\,\cV_{1,p}/\cV_{5,p}}$ is surjective because $f$ and $\cY_\tens$ are surjective. Then Proposition \ref{prop:piY_surjective} implies that
\begin{equation*}
 \pi(\cY): A(\cL_{3,1})\otimes_{A(V_c)} (\cV_{1,p}/\cV_{5,p})(0)\longrightarrow\cL_{3,p}^{(2)}(0)
\end{equation*}
is surjective, so $\cL_{3,p}^{(2)}(0)$ is a homomorphic image of $A(\cL_{3,1})\otimes_{A(V_c)}\CC v_{1,p}$. This latter $A(V_c)$-module was determined in \cite{FZ2} (under the unnecessary assumption that $p\notin\QQ$): we now review the computation.

 The computation of the $A(V_c)\cong\CC[x]$-bimodule $A(\cL_{3,1})$ is similar to the computation of $A(\cL_{1,2})$ from Section \ref{sec:first_fus}. Recall there is an isomorphism
\begin{align*}
 \varphi: A(\cV_{3,1}) & \rightarrow \CC[x,y]\\
 [\omega]^m\cdot[v_{3,1}]\cdot[\omega]^n & \mapsto x^m y^n,
\end{align*}
and that
\begin{equation*}
 A(\cL_{3,1})\cong\CC[x,y]/(f_{3,1}(x,y))
\end{equation*}
where $f_{3,1}(x,y)=\varphi([\til{v}])$ for a singular vector $\til{v}\in\cV_{3,1}$ generating the maximal proper submodule. We can take
\begin{equation*}
\widetilde{v}=\left( L_{-1}^3-4p L_{-1}L_{-2}+2p(2p+1)L_{-3}\right)v_{3,1}.
\end{equation*}
Then to compute $\varphi([\til{v}])$, first note that \eqref{eqn:bimod_reln} implies
\begin{equation*}
 \varphi([L_{-1} v])=(x-y-\mathrm{wt}\,v)\varphi([v])
\end{equation*}
for $v\in\cV_{3,1}$, while \eqref{eqn:bimod_reln_2} implies
\begin{equation*}
 \varphi([L_{-2}v])=y\varphi([v])-\varphi([L_{-1}v]) =(2y-x+\mathrm{wt}\,v)\varphi([v]).
\end{equation*}
Then the relation
\begin{equation*}
 [L_{-n} v] =(-1)^n[(n-1)L_{-2}v+(n-2)L_{-1}v]
\end{equation*}
in $A(\cV_{3,1})$ specialized to $n=3$ (see the proof of \cite[Lemma 2.11]{FZ2}) implies
\begin{align*}
 \varphi([L_{-3}v]) & =-2\varphi([L_{-2}v])-\varphi([L_{-1} v])=(x-3y-\mathrm{wt}\,v)\varphi([v]).
\end{align*}
Using these formulas, we get
\begin{align*}
 f_{3,1}(x,y)&= (x-y-h_{3,1}-2)(x-y-h_{3,1}-1)(x-y-h_{3,1})\nonumber\\
 &\hspace{3em}-4p(x-y-h_{3,1}-2)(2y-x+h_{3,1})+2p(2p+1)(x-3y-h_{3,1})\nonumber\\
 & =(x-y)\left((x-y-2p+1)(x-y-1)-4p\,y\right)
\end{align*}
(see \cite[Example 2.12]{FZ2}).

We now have
\begin{align*}
 A(\cL_{3,1})\otimes_{A(V_c)} \CC v_{1,p} &\cong\CC[x]/(f_{3,1}(x,h_{1,p})),
\end{align*}
and it turns out that
\begin{align*}
 f_{3,1}(x,h_{1,p})&=(x-h_{1,p})(x-h_{3,p})^2.
\end{align*}
Thus $L_0$ acts non-semisimply on the only $2$-dimensional quotient of $A(\cL_{3,1})\otimes_{A(V_c)}\CC v_{1,p}$ whose only $L_0$-eigenvalue is $h_{3,p}$. So $\cL_{3,p}^{(2)}$ is a logarithmic module in $\cO_c$, proving that $\cV_{1,p}/\cV_{5,p}$ is not an object of $\cO_c^0$. This completes the proof that $\cL_{1,p}$ is projective in $\cO_c^0$.
\end{proof}

As the modules $\cL_{r,p}$ are irreducible, they are their own projective covers in $\cO_c^0$. For this reason, we will sometimes use the alternate notation $\cL_{r,p}=\cP_{r,p}$ for $r\geq 1$. The irreducible modules $\cL_{r,p-1}$ also have projective covers in $\cO_c^0$:
\begin{prop}\label{prop:Prp-1_proj_cover}
For $r\geq 1$, the module $\cP_{r,p-1}$ is a projective cover of $\cL_{r,p-1}$ in $\cO_c^0$.
\end{prop}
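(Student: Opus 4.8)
The plan is to deduce everything from two facts already in hand: the projectivity of $\cL_{r,p}$ in $\cO_c^0$ (Theorem \ref{projoflrp}) and the explicit Loewy structure of $\cP_{r,p-1}=\cL_{1,2}\boxtimes\cL_{r,p}$ determined in Propositions \ref{prop:P1_structure} and \ref{prop:Pr_structure}. Recall that in a finite-length abelian category, a projective cover of an irreducible object $\cL$ is precisely an indecomposable projective module with unique irreducible quotient $\cL$. So it suffices to check that $\cP_{r,p-1}$ is projective in $\cO_c^0$, that it is indecomposable, and that its unique irreducible quotient is $\cL_{r,p-1}$.

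For projectivity, I would first note that $\cP_{r,p-1}$ lies in $\cO_c^0$, since this is a tensor subcategory containing all irreducible modules $\cL_{r,s}$ and hence their tensor products. By Theorem \ref{projoflrp}, $\cL_{r,p}$ is projective in $\cO_c^0$, and since $\cO_c^0$ is rigid, the projective objects form a tensor ideal (as already used in the proof of Theorem \ref{projoflrp}); tensoring the projective object $\cL_{r,p}$ by $\cL_{1,2}$ therefore yields a projective object. Thus $\cP_{r,p-1}=\cL_{1,2}\boxtimes\cL_{r,p}$ is projective in $\cO_c^0$.

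For the remaining properties, I would simply read off Propositions \ref{prop:P1_structure} and \ref{prop:Pr_structure}: in both cases $\cP_{r,p-1}$ is indecomposable, and its Loewy diagram exhibits $\cL_{r,p-1}$ as the unique top composition factor, so $\cL_{r,p-1}$ is the unique irreducible quotient. An indecomposable projective module with simple top $\cL_{r,p-1}$ in a finite-length abelian category has local endomorphism ring and superfluous radical, so the canonical surjection $\cP_{r,p-1}\twoheadrightarrow\cL_{r,p-1}$ is essential; that is, $\cP_{r,p-1}$ is a projective cover of $\cL_{r,p-1}$. I do not expect a genuine obstacle, as all the hard analysis is already done; the only point to handle with care is invoking the standard projective-cover criterion, which relies on the finite-length hypothesis on objects of $\cO_c^0$ (valid here) to guarantee that the radical is superfluous.
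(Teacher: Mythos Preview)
Your proposal is correct and follows essentially the same approach as the paper: both argue that $\cP_{r,p-1}$ is projective because it is a tensor product of a rigid module with the projective module $\cL_{r,p}$, and both then invoke the indecomposability and Loewy structure from Propositions \ref{prop:P1_structure} and \ref{prop:Pr_structure}. The only cosmetic difference is that the paper spells out the essentiality of the surjection $\cP_{r,p-1}\twoheadrightarrow\cL_{r,p-1}$ via an explicit lifting argument and Fitting's Lemma, whereas you invoke the equivalent standard criterion that an indecomposable projective with simple top is a projective cover.
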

\begin{proof}
 The module $\cP_{r,p-1}$ is projective in $\cO_c^0$ because it is by definition the tensor product of a rigid with a projective module. From Propositions \ref{prop:P1_structure} and \ref{prop:Pr_structure}, there is also a surjective homomorphism $q: \cP_{r,p-1}\rightarrow\cL_{r,p-1}$.

 Now let $\cP$ be any projective module in $\cO_c^0$ with surjective homomorphism $\til{q}: \cP\rightarrow\cL_{r,p-1}$. Because both $\cP$ and $\cP_{r,p-1}$ are projective, there are homomorphisms $f: \cP\rightarrow\cP_{r,p-1}$ and $g:\cP_{r,p-1}\rightarrow\cP$ such that the diagrams
 \begin{equation*}
  \xymatrix{
  & \cP \ar[ld]_{f} \ar[d]^{\til{q}} \\
  \cP_{r,p-1} \ar[r]_q & \cL_{r,p-1} \\
  } \qquad \qquad
  \xymatrix{
  & \cP_{r,p-1} \ar[ld]_{g} \ar[d]^{q} \\
  \cP \ar[r]_(.4){\til{q}} & \cL_{r,p-1} \\
  }
 \end{equation*}
commute; we need to show that $f$ is surjective. Indeed, $f\circ g$, as an endomorphism of a finite-length indecomposable module, is either nilpotent or an isomorphism by Fitting's Lemma, and it cannot be nilpotent because for all $N\in\NN$,
\begin{equation*}
 q\circ(f\circ g)^N = q\neq 0.
\end{equation*}
Therefore $f\circ g$ is an isomorphism, which means $f$ is surjective (and $g$ is injective).
\end{proof}

\subsection{The remaining projective covers}\label{subsec:more_proj_covers}

For $p=2$, we have shown that every irreducible module has a projective cover in $\cO_c^0$. For $p\geq 3$, we now construct projective covers of the remaining irreducible modules $\cL_{r,s}$, $s\leq p-2$, using the method of \cite[Section 5.1]{CMY2}. In fact, many of the arguments from \cite{CMY2} go through almost verbatim in this context.

\subsubsection{The case \texorpdfstring{$r = 1$}{r=1}}
From Proposition \ref{prop:P1_structure}, the maximal submodule $\cZ_{1,p-1}$ of the projective module $\cP_{1,p-1}$ is isomorphic to $(\cV_{1,p-1}/\cV_{3,p-1})'$, and there is an exact sequence
\begin{equation}\label{z1p-1}
0 \longrightarrow \cL_{1,p-1} \longrightarrow \cZ_{1,p-1} \longrightarrow \cL_{2,1} \longrightarrow 0.
\end{equation}
Since $\cL_{1,2}$ is rigid, the functor $\cL_{1,2}\boxtimes \bullet$ is exact. Applying $\cL_{1,2} \boxtimes \bullet$ to \eqref{z1p-1} and using the fusion rules \eqref{fr2}, we get an exact sequence
\begin{equation*}
0 \longrightarrow \cL_{1, p-2}\oplus \cL_{1,p} \longrightarrow \cL_{1,2} \boxtimes \cZ_{1,p-1} \longrightarrow \cL_{2,2} \longrightarrow 0.
\end{equation*}
Because $\cL_{1,p}$ is injective in $\cO_{c}^0$, it is a direct summand of $\cL_{1,2} \boxtimes \cZ_{1,p-1}$. Let $\cZ_{1, p-2}$ be a submodule complement of $\cL_{1,p}$ in $\cL_{1,2} \boxtimes \cZ_{1,p-1}$, that is,
\begin{equation}\label{decz1p-1}
\cL_{1,2} \boxtimes \cZ_{1,p-1} = \cL_{1,p} \oplus \cZ_{1, p-2}.
\end{equation}
It is easy to see that there is an exact sequence
\begin{equation}\label{z1p-2}
0 \longrightarrow \cL_{1,p-2} \longrightarrow \cZ_{1, p-2} \longrightarrow \cL_{2,2} \longrightarrow 0.
\end{equation}
We claim that this exact sequence does not split. Indeed, the rigidity of $\cL_{1,2}$, the fusion rules \eqref{fr2}, and the Loewy diagram of $\cZ_{1,p-1}$ imply
\begin{align*}
\hom (\cL_{2,2}, \cL_{1,2} \boxtimes \cZ_{1,p-1}) &\cong \hom(\cL_{1,2} \boxtimes \cL_{2,2}, \cZ_{1,p-1})\\
&\cong \hom(\cL_{2,1}\oplus \cL_{2,3}, \cZ_{1,p-1}) = 0.
\end{align*}
So $\cL_{2,2}$ cannot be a submodule of $\cZ_{1,p-2}\subseteq\cL_{1,2}\tens\cZ_{1,p-1}$. Note that the non-splitting of \eqref{z1p-2} together with conformal weight considerations show that $\cZ_{1,p-2}\cong(\cV_{1,p-2}/\cV_{3,p-2})'$, just as in the proof of Proposition \ref{prop:P1_structure}.

Now we apply $\cL_{1,2}\boxtimes \bullet$ to the exact sequence
\[
0 \longrightarrow \cZ_{1,p-1} \longrightarrow \cP_{1,p-1} \longrightarrow \cL_{1,p-1} \longrightarrow 0.
\]
Using \eqref{fr2} and the decomposition \eqref{decz1p-1}, we get the exact sequence
\begin{equation*}
0 \longrightarrow \cL_{1,p} \oplus \cZ_{1,p-2} \longrightarrow \cL_{1,2}\boxtimes \cP_{1,p-1} \longrightarrow \cL_{1,p-2}\oplus \cL_{1,p} \longrightarrow 0.
\end{equation*}
Because $\cL_{1,p}$ is both projective and injective in $\cO_c^0$, $2\cdot \cL_{1,p}$ is a direct summand of $\cL_{1,2}\boxtimes \cP_{1,p-1}$. Defining $\cP_{1,p-2}$ to be a direct summand of $\cL_{1,2} \boxtimes \cP_{1,p-1}$ complementary to $2\cdot \cL_{1,p}$, we get an exact sequence
\begin{equation}\label{p1p-2}
0 \longrightarrow \cZ_{1,p-2} \longrightarrow \cP_{1,p-2} \longrightarrow \cL_{1,p-2} \longrightarrow 0.
\end{equation}
We claim that ${\rm Soc}(\cP_{1,p-2}) = \cL_{1, p-2}$. Indeed, \eqref{z1p-2} and \eqref{p1p-2} show that the composition factors of $\cP_{1,p-2}$ are $\cL_{1,p-2}$, $\cL_{1,p-2}$, and $\cL_{2,2}$. We have already seen that $\cL_{2,2}$ is not a submodule of $\cP_{1,p-2}$, while
\begin{align*}
\dim\hom(\cL_{1,p-2},\cP_{1,p-2}) & =\dim\hom(\cL_{1,p-2},\cL_{1,2}\tens\cP_{1,p-1})\nonumber\\
&=\dim\hom(\cL_{1,2}\tens\cL_{1,p-2},\cP_{1,p-1})\nonumber\\ &=\dim\hom(\cL_{1,p-3}\oplus\cL_{1,p-1},\cP_{1,p-1}) =1,
\end{align*}
proving the claim.

Next, the exact sequences \eqref{p1p-2} and \eqref{z1p-2} give
\begin{equation}\label{qp1p-2}
0 \longrightarrow \cL_{2,2} \longrightarrow \cP_{1,p-2}/\cL_{1,p-2} \longrightarrow \cL_{1,p-2} \longrightarrow 0.
\end{equation}
We claim this sequence does not split and thus ${\rm Soc}(\cP_{1,p-2}/\cL_{1,p-2}) = \cL_{2,2}$. Otherwise, we would have $\cP_{1,p-2}/\cL_{1,p-2} \cong \cL_{1,p-2} \oplus \cL_{2,2}$; using the rigidity of $\cL_{1,2}$, this would imply
\begin{align*}
\hom(\cP_{1,p-1}/\cL_{1,p-1}, \cL_{1,2}\boxtimes \cL_{2,2}) &\cong \hom(\cL_{1,2}\boxtimes(\cP_{1,p-1}/\cL_{1,p-1}), \cL_{2,2})\\
& \cong \hom((\cL_{1,2}\boxtimes \cP_{1,p-1})/(\cL_{1,2}\boxtimes\cL_{1,p-1}), \cL_{2,2})\\
& \cong \hom((\cP_{1,p-2}/\cL_{1,p-2})\oplus \cL_{1,p}, \cL_{2,2}) \neq 0,
\end{align*}
whereas in fact the Loewy diagram of $\cP_{1,p-1}$ shows
\begin{align*}
\hom(\cP_{1,p-1}/\cL_{1,p-1}, \cL_{1,2}\boxtimes \cL_{2,2}) &\cong \hom(\cV_{1,p-1}/\cV_{3,p-1}, \cL_{2,1}\oplus \cL_{2,3}) = 0.
\end{align*}
This proves the claim; note that because \eqref{qp1p-2} does not split, $\cP_{1,p-2}/\cL_{1,p-2}\cong\cV_{1,p-2}/\cV_{3,p-2}$, just as in the proof of Proposition \ref{prop:P1_structure}.

We have now derived the Loewy diagram for $\cP_{1,p-2}$ stated in the next proposition. Moreover, $\cP_{1,p-2}$ is indecomposable for the same reasons as $\cP_{1,p-1}$, and $\cP_{1,p-2}$ is projective in $\cO_c^0$ because it is a direct summand of the projective tensor product $\cL_{1,2}\tens\cP_{1,p-1}$. Thus the argument of Proposition \ref{prop:Prp-1_proj_cover} shows that $\cP_{1,p-2}$ is a projective cover of $\cL_{1,p-2}$:
\begin{prop}
The module $\cP_{1,p-2}$ is indecomposable and a projective cover of $\cL_{1,p-2}$ in $\cO_c^0$. It has Loewy diagram
\begin{equation*}
\xymatrixrowsep{1pc}
\xymatrixcolsep{.75pc}
 \xymatrix{
 & & \cL_{1,p-2} \\
 \cP_{1,p-2}: & \cL_{2,2} \ar[ru] & \\
& & \cL_{1,p-2} \ar[lu]\\
 }
\end{equation*}
\end{prop}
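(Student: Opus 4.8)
The plan is to collect the structural information about $\cP_{1,p-2}$ obtained in the preceding paragraphs and then deduce the three claims — Loewy structure, indecomposability, and the projective-cover property — one at a time. For the Loewy diagram, I would first read off from \eqref{z1p-2} and \eqref{p1p-2} that the composition factors of $\cP_{1,p-2}$ are $\cL_{1,p-2}$ with multiplicity two and $\cL_{2,2}$ with multiplicity one. Combined with the two socle computations already established, namely $\mathrm{Soc}(\cP_{1,p-2})=\cL_{1,p-2}$ and $\mathrm{Soc}(\cP_{1,p-2}/\cL_{1,p-2})=\cL_{2,2}$ (the latter from the non-splitting of \eqref{qp1p-2}), this forces the displayed three-layer diagram with $\cL_{1,p-2}$ at bottom and top and $\cL_{2,2}$ in the middle.

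Indecomposability I would argue exactly as for $\cP_{1,p-1}$ in Proposition \ref{prop:P1_structure}: since $\cL_{1,p-2}$ is the unique irreducible submodule, any two nonzero submodules have nonzero intersection, so $\cP_{1,p-2}$ admits no nontrivial direct-sum decomposition.

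For projectivity, I would recall that $\cP_{1,p-2}$ was defined as a direct summand of $\cL_{1,2}\tens\cP_{1,p-1}$. Since $\cP_{1,p-1}$ is projective in $\cO_c^0$ (Proposition \ref{prop:Prp-1_proj_cover}) and projective objects form a tensor ideal in the rigid category $\cO_c^0$, the tensor product $\cL_{1,2}\tens\cP_{1,p-1}$ is projective, hence so is its summand $\cP_{1,p-2}$. Finally, to upgrade to a projective cover, I would repeat the Fitting's-lemma argument of Proposition \ref{prop:Prp-1_proj_cover}: given any projective $\cP$ surjecting onto $\cL_{1,p-2}$, projectivity of both $\cP$ and $\cP_{1,p-2}$ yields mutually lifting maps whose composite endomorphism of the finite-length indecomposable module $\cP_{1,p-2}$ cannot be nilpotent (it intertwines the two surjections onto $\cL_{1,p-2}$), so Fitting's lemma forces it to be an isomorphism and the lift $\cP\to\cP_{1,p-2}$ to be surjective.

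The genuine content here was already dispatched in the construction above — the two non-splitting assertions that pin down the socle filtration. Those rely on rigidity of $\cL_{1,2}$, the fusion rules \eqref{fr2}, and a pair of $\hom$-space computations performed in two ways (in particular comparing $\hom(\cP_{1,p-1}/\cL_{1,p-1},\cL_{1,2}\tens\cL_{2,2})$ against the Loewy data of $\cP_{1,p-1}$). Granting those, the proposition itself is largely bookkeeping; the only mild subtlety is to confirm that the multiplicity-two factor $\cL_{1,p-2}$ really occupies both the top and the socle rather than forming a length-two uniserial piece, which is exactly what the two separate socle computations guarantee.
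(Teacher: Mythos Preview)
Your proposal is correct and follows essentially the same approach as the paper: the paper likewise treats the proposition as bookkeeping after the preceding socle and non-splitting computations, invoking the same indecomposability argument as for $\cP_{1,p-1}$, projectivity via direct summand of $\cL_{1,2}\tens\cP_{1,p-1}$, and the Fitting's-lemma argument of Proposition~\ref{prop:Prp-1_proj_cover} for the projective-cover property. Your explicit remark about ruling out a length-two uniserial piece via the two separate socle computations is a nice clarification that the paper leaves implicit.
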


Now that we have projective covers $\cP_{1, p-2}$, $\cP_{1,p-1}$, and $\cP_{1,p}$, we proceed to construct modules $\cP_{1,s}$ for $1 \leq s \leq p-3$ recursively (assuming now that $p\geq 4$). Fix $s \in \{1,2, \dots, p-3\}$ and assume we have $\cP_{1,\sigma}$ for all $s+1 \leq \sigma \leq p-1$ such that:
\begin{itemize}
\item The module $\cP_{1, \sigma}$ is a projective cover of $\cL_{1,\sigma}$ in $\cO_c^0$.
\item The Loewy diagram of $\cP_{1, \sigma}$ is
\begin{equation*}
\xymatrixrowsep{1pc}
\xymatrixcolsep{.75pc}
 \xymatrix{
 & & \cL_{1,\sigma} \\
 \cP_{1,\sigma}: & \cL_{2,p-\sigma} \ar[ru] & \\
& & \cL_{1,\sigma} \ar[lu]\\
 }
\end{equation*}
\end{itemize}
We now define $\cP_{1,s}$ as follows. We have a surjection
\[
\cL_{1,2}\boxtimes \cP_{1,s+1} \longrightarrow \cL_{1,2}\boxtimes \cL_{1,s+1} \cong \cL_{1,s}\oplus \cL_{1,s+2} \longrightarrow \cL_{1,s+2}.
\]
Because $\cL_{1,2}$ is rigid and $\cP_{1,s+1}$ is projective, $\cL_{1,2}\boxtimes \cP_{1,s+1}$ is also projective. So because $\cP_{1,s+2}$ is the projective cover of $\cL_{1,s+2}$, we get a surjective map
\[
\cL_{1,2}\boxtimes \cP_{1,s+1} \longrightarrow \cP_{1,s+2}.
\]
Since $\cP_{1,s+2}$ is projective, this surjection splits and $\cP_{1,s+2}$ is a direct summand of $\cL_{1,2}\boxtimes \cP_{1,s+1}$. Define $\cP_{1,s}$ to be a complement of $\cP_{1,s+2}$:
\[
\cL_{1,2}\boxtimes \cP_{1,s+1} = \cP_{1,s}\oplus \cP_{1,s+2}.
\]
The module $\cP_{1,s}$ is in $\cO_c^0$ because this category is a tensor subcategory of $\cO_c$, and it is projective in $\cO_c^0$ because it is a summand of a projective module. We can now prove:
\begin{thm}\label{thm:P1s_structure}
The module $\cP_{1,s}$ is a projective cover of $\cL_{1,s}$ in $\cO_c^0$ with Loewy diagram
\begin{equation*}
\xymatrixrowsep{1pc}
\xymatrixcolsep{.75pc}
 \xymatrix{
 & & \cL_{1,s} \\
 \cP_{1,s}: & \cL_{2,p-s} \ar[ru] & \\
& & \cL_{1,s} \ar[lu]\\
 }
\end{equation*}
\end{thm}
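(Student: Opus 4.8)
The plan is to mirror the bookkeeping used above for $\cP_{1,p-2}$, but organized around the Grothendieck ring together with two adjunction computations, all of it fed by the inductive Loewy data on $\cP_{1,s+1}$ and $\cP_{1,s+2}$. First I would pin down the composition factors of $\cP_{1,s}$. Since $\cL_{1,2}$ is rigid, $\cL_{1,2}\boxtimes\bullet$ is exact, so the inductive Loewy diagrams give $[\cP_{1,\sigma}]=2[\cL_{1,\sigma}]+[\cL_{2,p-\sigma}]$ in the Grothendieck ring for $\sigma\in\{s+1,s+2\}$. Expanding factor-by-factor via \eqref{fr2},
\begin{align*}
[\cL_{1,2}\boxtimes\cP_{1,s+1}]=2\big([\cL_{1,s}]+[\cL_{1,s+2}]\big)+[\cL_{2,p-s-2}]+[\cL_{2,p-s}],
\end{align*}
and subtracting $[\cP_{1,s+2}]=2[\cL_{1,s+2}]+[\cL_{2,p-s-2}]$ via the defining decomposition $\cL_{1,2}\boxtimes\cP_{1,s+1}=\cP_{1,s}\oplus\cP_{1,s+2}$ yields $[\cP_{1,s}]=2[\cL_{1,s}]+[\cL_{2,p-s}]$, exactly the three factors of the claimed diagram.

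Next I would identify the head. Regarding $\cP_{1,s}$ as a direct summand of $\cL_{1,2}\boxtimes\cP_{1,s+1}$ and using the rigid adjunction with self-duality of $\cL_{1,2}$, $\hom(\cL_{1,2}\boxtimes\cP_{1,s+1},\cX)\cong\hom(\cP_{1,s+1},\cL_{1,2}\boxtimes\cX)$ for any $\cX$. Taking $\cX=\cL_{1,s}$ and $\cX=\cL_{2,p-s}$ and expanding $\cL_{1,2}\boxtimes\cX$ by \eqref{fr2}, the fact that $\cP_{1,s+1}$ has simple head $\cL_{1,s+1}$ forces $\hom(\cL_{1,2}\boxtimes\cP_{1,s+1},\cL_{1,s})$ to be one-dimensional and $\hom(\cL_{1,2}\boxtimes\cP_{1,s+1},\cL_{2,p-s})=0$; since $\cP_{1,s+2}$ has head $\cL_{1,s+2}\neq\cL_{1,s}$, both Hom spaces are carried entirely by the summand $\cP_{1,s}$. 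Thus $\cP_{1,s}$ has simple head $\cL_{1,s}$, so it is indecomposable, and being projective it is the projective cover of $\cL_{1,s}$ exactly as in Proposition \ref{prop:Prp-1_proj_cover}.

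The socle is then handled by the mirror-image computation. The same adjunction with arrows reversed, $\hom(\cX,\cL_{1,2}\boxtimes\cP_{1,s+1})\cong\hom(\cL_{1,2}\boxtimes\cX,\cP_{1,s+1})$, now fed by the inductive fact that $\cP_{1,s+1}$ has simple socle $\cL_{1,s+1}$, gives $\hom(\cL_{1,s},\cP_{1,s})$ one-dimensional and $\hom(\cL_{2,p-s},\cP_{1,s})=0$, so $\mathrm{Soc}(\cP_{1,s})=\cL_{1,s}$. To assemble the Loewy diagram I would note that $\cP_{1,s}/\mathrm{Soc}(\cP_{1,s})$ has composition factors $\cL_{1,s}$ and $\cL_{2,p-s}$ and simple head $\cL_{1,s}$, hence is an indecomposable length-$2$ module with socle $\cL_{2,p-s}$; stacking this on the socle $\cL_{1,s}$ produces the three-layer diagram $\cL_{1,s}/\cL_{2,p-s}/\cL_{1,s}$.

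I expect the main obstacle to be organizational rather than conceptual: there is no hard analysis, but one must feed the two adjunction computations with the \emph{full} inductive Loewy structure of $\cP_{1,s+1}$ and $\cP_{1,s+2}$ (not merely their composition factors) and carefully track the case distinctions in \eqref{fr2}. In particular, one must verify the boundary value $s=1$, where $\cL_{1,2}\boxtimes\cL_{1,1}=\cL_{1,2}$ replaces the generic middle-case formula, and check that the second indices $p-s$ and $p-s\pm1$ all remain within the range $1\le s'\le p$ so that every Hom space is evaluated against the correct list of simple modules.
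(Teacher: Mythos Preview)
Your argument is correct and uses the same basic ingredients as the paper (rigidity adjunction, the fusion rules \eqref{fr2}, and the inductive Loewy data for $\cP_{1,s+1}$, $\cP_{1,s+2}$), but it is organized differently. The paper tracks the maximal proper submodule $\cZ_{1,s+1}\subseteq\cP_{1,s+1}$ through the functor $\cL_{1,2}\boxtimes\bullet$, invokes a conformal-weight decomposition $\cL_{1,2}\boxtimes\cZ_{1,s+1}\cong\cZ_{1,s}\oplus\widetilde{\cZ}_{1,s+2}$, and then proves separately that the extensions $0\to\cL_{1,s}\to\cZ_{1,s}\to\cL_{2,p-s}\to 0$ and $0\to\cL_{2,p-s}\to\cP_{1,s}/\cL_{1,s}\to\cL_{1,s}\to 0$ are nonsplit; this yields slightly more, namely an explicit identification of the radical $\cZ_{1,s}$ (and, as remarked afterwards, $\cZ_{1,s}\cong(\cV_{1,s}/\cV_{3,s})'$). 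You bypass the $\cZ$-modules and the conformal-weight argument entirely: a Grothendieck-ring count pins down the three composition factors, and two symmetric adjunction computations determine the head and socle directly from the simple head and socle of $\cP_{1,s+1}$. Your route is shorter and avoids the analytic input (conformal weights), at the cost of not exhibiting $\cZ_{1,s}$ explicitly; since that identification is not used later, nothing is lost.
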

\begin{proof}
From its Loewy diagram, $\cP_{1,s+1}$ has a maximal proper submodule $\cZ_{1,s+1}$ with non-split exact sequence
\begin{equation}\label{z1s}
0 \longrightarrow \cL_{1,s+1} \longrightarrow \cZ_{1,s+1} \longrightarrow \cL_{2,p-s-1} \longrightarrow 0.
\end{equation}
We apply $\cL_{1,2}\boxtimes \bullet$ to \eqref{z1s} and use the fusion rules \eqref{fr2} to get an exact sequence
\begin{equation*}
0 \longrightarrow \cL_{1,s}\oplus \cL_{1,s+2} \longrightarrow \cL_{1,2}\boxtimes \cZ_{1,s+1} \longrightarrow \cL_{2,p-s}\oplus \cL_{2, p-s-2} \longrightarrow 0.
\end{equation*}
This shows that the conformal weights of $\cL_{1,2}\boxtimes \cZ_{1,s+1}$ are contained in the two distinct cosets $h_{1,s}+\ZZ$ and $h_{1,s+2}+\ZZ$, and thus $\cL_{1,2}\boxtimes \cZ_{1,s+1}$ decomposes as a direct sum of two modules, say $\cZ_{1,s}$ and $\widetilde{\cZ}_{1,s+2}$, with exact sequences
\begin{equation}\label{z1s-1}
0 \longrightarrow \cL_{1,s} \longrightarrow \cZ_{1,s} \longrightarrow \cL_{2, p-s} \longrightarrow 0
\end{equation}
and
\[
0 \longrightarrow \cL_{1,s+2} \longrightarrow \widetilde{\cZ}_{1,s+2} \longrightarrow \cL_{2, p-s-2} \longrightarrow 0.
\]
We claim that \eqref{z1s-1} does not split. Otherwise, $\cL_{2,p-s}$ is a submodule of $\cZ_{1,s}$, and thus also a submodule of $\cL_{1,2}\boxtimes \cZ_{1,s+1}$. Rigidity of $\cL_{1,2}$ would then imply
\[
\hom(\cL_{1,2}\boxtimes \cL_{2,p-s}, \cZ_{1,s+1}) \cong \hom(\cL_{2,p-s}, \cL_{1,2}\boxtimes \cZ_{1,s+1}) \neq 0.
\]
However by the fusion rules \eqref{fr2} and the non-split exact sequence \eqref{z1s} for $\cZ_{1,s+1}$, there is no non-zero homomorphism
\[
\cL_{1,2}\boxtimes \cL_{2,p-s} \cong \cL_{2,p-s-1} \oplus \cL_{2,p-s+1} \longrightarrow \cZ_{1,s+1}.
\]
As a result, ${\rm Soc}(\cZ_{1,s}) = \cL_{1,s}$.

Now we apply $\cL_{1,2}\boxtimes \bullet$ to the exact sequence
\[
0 \longrightarrow \cZ_{1,s+1} \longrightarrow \cP_{1,s+1} \longrightarrow \cL_{1,s+1} \longrightarrow 0
\]
and get an exact sequence
\[
0 \longrightarrow \cZ_{1,s} \oplus \widetilde{\cZ}_{1,s+2} \longrightarrow \cL_{1,2}\boxtimes \cP_{1,s+1} \longrightarrow \cL_{1,s} \oplus \cL_{1,s+2} \longrightarrow 0.
\]
Conformal weight considerations again show that $\cP_{1,s}$ satisfies the exact sequence
\begin{equation}\label{p1s-1}
0 \longrightarrow \cZ_{1,s} \longrightarrow \cP_{1,s} \longrightarrow \cL_{1,s} \longrightarrow 0.
\end{equation}
We claim that ${\rm Soc}(\cP_{1,s}) = \cL_{1,s}$. Otherwise, since \eqref{z1s-1} is non-split, we would have ${\rm Soc}(\cP_{1,s}) = 2\cdot\cL_{1,s}$, and rigidity of $\cL_{1,2}$ would imply
\[
\dim \hom(\cL_{1,2}\boxtimes \cL_{1,s}, \cP_{1,s+1}) = \dim \hom(\cL_{1,s}, \cL_{1,2}\boxtimes \cP_{1,s+1}) = 2.
\]
However, this would conflict with
\[
\dim \hom(\cL_{1,2}\boxtimes \cL_{1,s}, \cP_{1,s+1}) = \dim \hom([\cL_{1,s-1} \oplus] \cL_{1,s+1}, \cP_{1,s+1}) = 1,
\]
where the summand in the brackets occurs for $s \geq 2$.

The exact sequences  \eqref{z1s-1} and \eqref{p1s-1} give an exact sequence
\begin{equation}\label{qp1s-1}
0 \longrightarrow \cL_{2,p-s} \longrightarrow \cP_{1,s}/\cL_{1,s} \longrightarrow \cL_{1,s} \longrightarrow 0.
\end{equation}
We claim that \eqref{qp1s-1} does not split and thus ${\rm Soc}(\cP_{1,s}/\cL_{1,s}) = \cL_{2,p-s}$. Otherwise, we would have $\cP_{1,s}/\cL_{1,s} = \cL_{2,p-s} \oplus \cL_{1,s}$,
and rigidity of $\cL_{1,2}$ would imply
\begin{align*}
\hom(\cP_{1,s+1}/\cL_{1,s+1}, \cL_{1,2}\boxtimes \cL_{2,p-s}) &\cong \hom((\cL_{1,2}\boxtimes \cP_{1,s+1})/(\cL_{1,2}\boxtimes \cL_{1,s+1}), \cL_{2,p-s})\\
&\cong \hom((\cP_{1,s+2}/\cL_{1,s+2}) \oplus (\cP_{1,s}/\cL_{1,s}), \cL_{2,p-s}) \neq 0.
\end{align*}
However, in fact
\begin{align*}
\hom(\cP_{1,s+1}/\cL_{1,s+1}, \cL_{1,2}\boxtimes \cL_{2,p-s}) & \cong \hom(\cP_{1,s+1}/\cL_{1,s+1}, \cL_{2,p-s-1}\oplus \cL_{2,p-s+1}) = 0.
\end{align*}
Thus $\cP_{1,s}/\cL_{1,s}$ is indecomposable, and we have verified the Loewy diagram for $\cP_{1,s}$.

Finally, $\cP_{1,s}$ is a projective cover of $\cL_{1,s}$ in $\cO_c^0$ by the argument of Proposition \ref{prop:Prp-1_proj_cover}.
\end{proof}

\begin{rem}
 As in the proof of Proposition \ref{prop:P1_structure}, we have $\cP_{1,s}/\cL_{1,s}\cong \cV_{1,s}/\cV_{3,s}$ and $\cZ_{1,s}\cong(\cV_{1,s}/\cV_{3,s})'$.
\end{rem}

\subsubsection{The case \texorpdfstring{$r \geq 2$}{r>=2}} We can construct the projective cover $\cP_{r,s}$ for $r\geq 2$, $1 \leq s \leq p-2$ exactly as in \cite[Section~5]{CMY2}. Alternatively, we can simply define $\cP_{r,s}=\cL_{r,1}\tens\cP_{1,s}$.
\begin{thm}\label{thm:Prs_structure}
 For $r\geq 2$ and $1\leq s\leq p-2$, the module $\cP_{r,s}=\cL_{r,1}\tens\cP_{1,s}$ is a projective cover of $\cL_{r,s}$ in $\cO_c^0$. It has Loewy diagram
 \begin{equation*}
 \xymatrixrowsep{1pc}
\xymatrixcolsep{.75pc}
 \xymatrix{
 & & \cL_{r,s} & \\
 \cP_{r,s}: & \cL_{r-1,p-s} \ar[ru] & & \cL_{r+1,p-s} \ar[lu] \\
& & \cL_{r,s} \ar[lu] \ar[ru] & \\
 }
\end{equation*}
\end{thm}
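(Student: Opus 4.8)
The plan is to exploit the definition $\cP_{r,s}=\cL_{r,1}\tens\cP_{1,s}$ and run the argument of Proposition \ref{prop:Pr_structure} essentially verbatim, now building everything from the length-$3$ module $\cP_{1,s}$ of Theorem \ref{thm:P1s_structure} instead of from $\cP_{1,p-1}$. First, $\cP_{r,s}$ lies in $\cO_c^0$ because $\cO_c^0$ is a tensor subcategory, and it is projective in $\cO_c^0$ because projective objects form a tensor ideal in the rigid category $\cO_c^0$ while $\cL_{r,1}$ is rigid and $\cP_{1,s}$ is projective. Since $\cL_{r,1}$ is rigid, the functor $\cL_{r,1}\tens\bullet$ is exact, so the whole module structure of $\cP_{r,s}$ will be read off from that of $\cP_{1,s}$ together with the $\mathfrak{sl}_2$-type fusion rules \eqref{fr1} and the identity $\cL_{r,1}\tens\cL_{1,s}\cong\cL_{r,s}$ of Theorem \ref{thm:Lr_Ls_fusion}.

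Concretely, I would apply the exact functor $\cL_{r,1}\tens\bullet$ to the two short exact sequences expressing the Loewy filtration of $\cP_{1,s}$, namely $0\to\cL_{1,s}\to\cZ_{1,s}\to\cL_{2,p-s}\to 0$ and $0\to\cZ_{1,s}\to\cP_{1,s}\to\cL_{1,s}\to 0$, where $\cZ_{1,s}$ is the maximal submodule of $\cP_{1,s}$. Using $\cL_{r,1}\tens\cL_{1,s}\cong\cL_{r,s}$ and the computation $\cL_{r,1}\tens\cL_{2,p-s}\cong\cL_{r-1,p-s}\oplus\cL_{r+1,p-s}$ from \eqref{fr1}, this produces $\cZ_{r,s}:=\cL_{r,1}\tens\cZ_{1,s}$ together with exact sequences
\begin{equation*}
0\longrightarrow\cL_{r,s}\longrightarrow\cZ_{r,s}\longrightarrow\cL_{r-1,p-s}\oplus\cL_{r+1,p-s}\longrightarrow 0,\qquad 0\longrightarrow\cZ_{r,s}\longrightarrow\cP_{r,s}\longrightarrow\cL_{r,s}\longrightarrow 0.
\end{equation*}
In particular the composition factors of $\cP_{r,s}$ are $\cL_{r,s}$ with multiplicity two, together with $\cL_{r-1,p-s}$ and $\cL_{r+1,p-s}$, and the middle layer $\cZ_{r,s}/\cL_{r,s}\cong\cL_{r-1,p-s}\oplus\cL_{r+1,p-s}$ is manifestly a direct sum of two simples, which is exactly the waist of the desired diamond.

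The heart of the proof is four Hom computations obtained by combining rigidity of $\cL_{r,1}$ (to move it across $\hom$) with the fusion rules. On the one hand, $\hom(\cL_{r\pm1,p-s},\cP_{r,s})\cong\hom(\cL_{r,1}\tens\cL_{r\pm1,p-s},\cP_{1,s})$ and $\hom(\cP_{r,s},\cL_{r\pm1,p-s})\cong\hom(\cP_{1,s},\cL_{r,1}\tens\cL_{r\pm1,p-s})$ both vanish, since $\cL_{r,1}\tens\cL_{r\pm1,p-s}$ is a direct sum of modules $\cL_{k,p-s}$ with $k$ even and $\geq 2$, none of which is the unique simple submodule (and unique simple quotient) $\cL_{1,s}$ of $\cP_{1,s}$. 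On the other hand, $\hom(\cL_{r,s},\cP_{r,s})$ and $\hom(\cP_{r,s},\cL_{r,s})$ are each one-dimensional, because $\cL_{r,1}\tens\cL_{r,s}\cong\bigoplus_{k}\cL_{k,s}$ over odd $k$ from $1$ to $2r-1$, and exactly the $k=1$ term $\cL_{1,s}$ contributes. From these facts the Loewy structure follows immediately: $\mathrm{Soc}(\cP_{r,s})=\cL_{r,s}$ and the top $\cP_{r,s}/\mathrm{rad}\,\cP_{r,s}=\cL_{r,s}$ are simple, forcing $\cZ_{r,s}=\mathrm{rad}\,\cP_{r,s}$ and $\mathrm{Soc}(\cP_{r,s}/\cL_{r,s})\cong\cL_{r-1,p-s}\oplus\cL_{r+1,p-s}$, which is precisely the stated diamond. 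Having a simple top makes $\cZ_{r,s}$ the unique maximal submodule, so $\cP_{r,s}$ is indecomposable, and since it is projective with simple top $\cL_{r,s}$, the argument of Proposition \ref{prop:Prp-1_proj_cover} via Fitting's Lemma shows it is a projective cover of $\cL_{r,s}$.

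The only genuine obstacle is the index bookkeeping in the fusion rules: one must verify carefully that $\cL_{r,1}\tens\cL_{r\pm1,p-s}$ consists solely of modules $\cL_{k,p-s}$ with $k$ even, so that $\cL_{1,s}$ never appears there, while $\cL_{1,s}$ does appear with multiplicity one in $\cL_{r,1}\tens\cL_{r,s}$; the parity conditions in \eqref{fr1} must be tracked exactly. Everything else is routine and parallels Proposition \ref{prop:Pr_structure}. I note that self-duality of $\cP_{r,s}$ (which follows from self-duality of $\cL_{r,1}$ and of $\cP_{1,s}$ together with commutativity of $\tens$) offers a shortcut for passing between the submodule and quotient statements, but once all four Hom spaces above are computed directly it is not strictly needed; unlike Proposition \ref{prop:Pr_structure}, the present statement records only the Loewy diagram, so no identification of the length-$2$ subquotients with explicit Verma-module quotients is required.
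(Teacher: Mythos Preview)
Your proposal is correct and follows essentially the same approach as the paper: the paper's proof simply says that the Loewy diagram for $\cP_{r,s}$ follows from that of $\cP_{1,s}$ ``exactly as in the $s=p-1$ case of Proposition \ref{prop:Pr_structure},'' that projectivity comes from the tensor ideal property, and that the projective cover statement follows from the argument of Proposition \ref{prop:Prp-1_proj_cover}. You have spelled out precisely those steps, with the same Hom computations via rigidity and self-duality of $\cL_{r,1}$ together with the fusion rules \eqref{fr1}, and your closing remark that the Verma-quotient identifications are not needed here is accurate.
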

\begin{proof}
 The Loewy diagram for $\cP_{r,s}$ follows from that for $\cP_{1,s}$ exactly as in the $s=p-1$ case of Proposition \ref{prop:Pr_structure}. Also as in Proposition \ref{prop:Pr_structure}, $\cP_{r,s}$ is indecomposable and there is a surjection $\cP_{r,s}\rightarrow\cL_{r,s}$. Moreover, $\cP_{r,s}$ is projective in $\cO_c^0$ since it is the tensor product of a rigid with a projective module. Thus the argument of Proposition \ref{prop:Prp-1_proj_cover} shows that $\cP_{r,s}$ is a projective cover of $\cL_{r,s}$.
\end{proof}

\section{Tensor product formulas and semisimplification}

We now compute all tensor products involving irreducible modules $\cL_{r,s}$ and their projective covers $\cP_{r,s}$. As a consequence, we show that there is a semisimple subquotient category of $\cO_c$ which is a product of two $\mathfrak{sl}_2$-type tensor subcategories.

\subsection{General fusion rules}
We first show how the irreducible modules $\cL_{r',1}$ and $\cL_{1,2}$ tensor with the projective covers; recall that $\cP_{r,p}=\cL_{r,p}$ for $r\geq 1$:
\begin{thm}\label{thm:prs_fusion}
\begin{itemize}
\item[(1)]For $r,r'\geq 1$ and $1\leq s \leq p$,
\begin{equation}\label{moreprs}
\cL_{r',1}\boxtimes \cP_{r,s} \cong\bigoplus_{\substack{k = |r-r'|+1\\ k+r+r' \equiv 1\; ({\rm mod}\; 2)}}^{r+r'-1} \cP_{k,s}
\end{equation}
\item[(2)]For $p\geq 3$ and $r \geq 1$, $1\leq s \leq p-1$,
\begin{equation}\label{more1prs}
\cL_{1,2}\boxtimes \cP_{r,s} \cong
\begin{cases}
\cP_{1,2}\oplus \cP_{2,p} \;\;\; &\mbox{if}\;\;\; r=s = 1\\
\cP_{r,2}\oplus \cP_{r-1,p}\oplus \cP_{r+1, p} \;\;\; &\mbox{if}\;\;\; s = 1, \; r \geq 2\\
\cP_{r,s-1}\oplus \cP_{r,s+1}\;\;\; &\mbox{if}\;\;\; 2\leq s\leq p-2\\
\cP_{r,p-2}\oplus 2\cdot \cP_{r,p}, \;\;\; &\mbox{if}\;\;\; s = p-1
\end{cases}
\end{equation}
\item[(3)] For $p=2$ and $r\geq 1$,
\begin{equation}\label{more2prs}
 \cL_{1,2}\boxtimes\cP_{r,1} \cong
\begin{cases}
 2\cdot\cP_{1,2}\oplus \cP_{2,2}\;\;\; &\mbox{if}\;\;\; r = 1 \\
\cP_{r-1,2}\oplus 2\cdot\cP_{r,2}\oplus\cP_{r+1,2}\;\;\; &\mbox{if}\;\;\; r \geq 2
\end{cases}
\end{equation}
\end{itemize}
\end{thm}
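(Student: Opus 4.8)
The plan is to reduce every formula to the case $r=1$ and to compute multiplicities homologically. Two structural facts drive the argument. First, each tensor product in the theorem is the tensor product of a rigid module ($\cL_{r',1}$ or $\cL_{1,2}$) with a projective module ($\cP_{r,s}$), hence is itself projective in $\cO_c^0$, since projectives form a tensor ideal in the rigid tensor category $\cO_c^0$. Second, because every object of $\cO_c^0$ has finite length and each simple $\cL_{k,t}$ has the projective cover $\cP_{k,t}$, every projective module $P$ in $\cO_c^0$ decomposes as $P\cong\bigoplus_{k,t} m_{k,t}\,\cP_{k,t}$ with $m_{k,t}=\dim\hom(P,\cL_{k,t})$; here we use that $\hom(\cP_{k',t'},\cL_{k,t})\neq 0$ only when $(k',t')=(k,t)$, as $\cP_{k',t'}$ has simple top $\cL_{k',t'}$. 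When $P=\cL_{1,2}\boxtimes\cP_{r',s'}$, self-duality of $\cL_{1,2}$ and rigidity give the adjunction
\begin{equation*}
 m_{k,t}=\dim\hom(\cL_{1,2}\boxtimes\cP_{r',s'},\cL_{k,t})=\dim\hom(\cP_{r',s'},\cL_{1,2}\boxtimes\cL_{k,t})=[\cL_{1,2}\boxtimes\cL_{k,t}:\cL_{r',s'}],
\end{equation*}
using $\dim\hom(\cP_{r',s'},M)=[M:\cL_{r',s'}]$. Thus every multiplicity is read off from the fusion rules \eqref{fr2} and the known composition series of the $\cP_{k,t}$ from Propositions \ref{prop:P1_structure} and \ref{prop:Pr_structure}.

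For part (1), I would use that $\cP_{r,s}\cong\cL_{r,1}\boxtimes\cP_{1,s}$ for all $r\geq1$ and $1\leq s\leq p$, which follows from Theorem \ref{thm:Prs_structure}, Proposition \ref{prop:Pr_structure}, and the identity $\cP_{r,p}=\cL_{r,p}=\cL_{r,1}\boxtimes\cL_{1,p}$ of Theorem \ref{thm:Lr_Ls_fusion}. Associativity and commutativity of $\boxtimes$, the $\mathfrak{sl}_2$-type rule \eqref{fr1}, and $\cP_{k,s}\cong\cL_{k,1}\boxtimes\cP_{1,s}$ then give
\begin{equation*}
 \cL_{r',1}\boxtimes\cP_{r,s}\cong(\cL_{r',1}\boxtimes\cL_{r,1})\boxtimes\cP_{1,s}\cong\bigoplus_k\left(\cL_{k,1}\boxtimes\cP_{1,s}\right)\cong\bigoplus_k\cP_{k,s},
\end{equation*}
with $k$ running over the index set of \eqref{fr1}; this is \eqref{moreprs}. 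Note that for first index equal to $1$ the formula \eqref{moreprs} collapses to the single term $k=r'$, i.e. $\cL_{r',1}\boxtimes\cP_{1,t}\cong\cP_{r',t}$.

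The same factorization reduces part (2) to the case $r=1$: since $\cL_{1,2}\boxtimes\cP_{r,s}\cong\cL_{r,1}\boxtimes(\cL_{1,2}\boxtimes\cP_{1,s})$, once $\cL_{1,2}\boxtimes\cP_{1,s}$ is known as a direct sum of indecomposable projectives, applying $\cL_{r,1}\boxtimes\bullet$ and \eqref{moreprs} to each summand gives the general-$r$ formula. For $r=1$ the cases $2\leq s\leq p-1$ are already in hand from the construction of projective covers: the recursion preceding Theorem \ref{thm:P1s_structure} is precisely $\cL_{1,2}\boxtimes\cP_{1,\sigma}\cong\cP_{1,\sigma-1}\oplus\cP_{1,\sigma+1}$ for $2\leq\sigma\leq p-2$, and $\cL_{1,2}\boxtimes\cP_{1,p-1}\cong\cP_{1,p-2}\oplus2\cdot\cP_{1,p}$ is exactly how $\cP_{1,p-2}$ was produced. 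The one genuinely new case is $s=r=1$, where the multiplicity formula gives $m_{k,t}=[\cL_{1,2}\boxtimes\cL_{k,t}:\cL_{1,1}]$; scanning \eqref{fr2} and the composition series of the $\cP_{k,p-1}$ shows that $\cL_{1,1}$ occurs (once) only in $\cL_{1,2}\boxtimes\cL_{1,2}\cong\cL_{1,1}\oplus\cL_{1,3}$ and in $\cL_{1,2}\boxtimes\cL_{2,p}\cong\cP_{2,p-1}$, whence $\cL_{1,2}\boxtimes\cP_{1,1}\cong\cP_{1,2}\oplus\cP_{2,p}$. Applying $\cL_{r,1}\boxtimes\bullet$ and \eqref{moreprs}, the summand $\cP_{2,p}$ (first index $2$) expands to $\cP_{r-1,p}\oplus\cP_{r+1,p}$ for $r\geq2$ and stays $\cP_{2,p}$ for $r=1$, giving the two sub-cases of the $s=1$ line of \eqref{more1prs}; in the ranges $2\leq s\leq p-1$ every summand has first index $1$ and collapses, directly yielding the remaining lines of \eqref{more1prs}.

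Finally, part (3) is immediate from the multiplicity formula once one notes that for $p=2$ we have $\cL_{1,2}\boxtimes\cL_{k,1}\cong\cL_{k,2}$ and $\cL_{1,2}\boxtimes\cL_{k,2}\cong\cP_{k,p-1}=\cP_{k,1}$, so that in $m_{k,t}=[\cL_{1,2}\boxtimes\cL_{k,t}:\cL_{r,1}]$ only $t=2$ contributes; hence $\cL_{1,2}\boxtimes\cP_{r,1}\cong\bigoplus_k[\cP_{k,1}:\cL_{r,1}]\,\cP_{k,2}$, and reading off $[\cP_{k,1}:\cL_{r,1}]$ from Propositions \ref{prop:P1_structure} and \ref{prop:Pr_structure} yields \eqref{more2prs}. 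I expect the main obstacle to be not any single hard computation but the boundary bookkeeping at $s=1$ and $s=p$ (and the degenerate case $p=2$), where the $\mathfrak{sl}_2$-type sums truncate and $\cL_{r,p}=\cP_{r,p}$ plays the double role of simple and projective object; the multiplicity formula is precisely the device that makes this bookkeeping uniform, replacing a case-by-case analysis of extensions by composition-factor counts.
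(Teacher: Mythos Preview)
Your proposal is correct. For part (1) and for the subcases $2\leq s\leq p-1$ of part (2), your argument coincides with the paper's: both use $\cP_{r,s}\cong\cL_{r,1}\boxtimes\cP_{1,s}$ together with associativity and \eqref{fr1}, and both observe that the $r=1$ cases with $2\leq s\leq p-1$ are already built into the recursive construction of Section~\ref{subsec:more_proj_covers}.

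The genuine difference lies in the $s=1$ case of part (2) and in part (3). The paper handles these by applying the exact functor $\cL_{1,2}\boxtimes\bullet$ to the two-step filtration $\cL_{r,1}\subset\cZ_{r,1}\subset\cP_{r,1}$, then splitting off the simple projective summands $\cL_{r\pm1,p}$ using their injectivity and projectivity, and finally identifying the remaining summand with $\cP_{r,2}$ by a length count against the projective cover. You instead invoke the uniform multiplicity formula
\[
m_{k,t}=\dim\hom(\cL_{1,2}\boxtimes\cP_{r,s},\cL_{k,t})=\dim\hom(\cP_{r,s},\cL_{1,2}\boxtimes\cL_{k,t})=[\cL_{1,2}\boxtimes\cL_{k,t}:\cL_{r,s}],
\]
which reduces the problem to reading off composition factors from \eqref{fr2} and the Loewy diagrams of Propositions~\ref{prop:P1_structure} and~\ref{prop:Pr_structure}. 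Your route is shorter and handles all boundary cases (including $p=2$) at once, without tracking extensions; the paper's filtration argument is more hands-on but self-contained, not relying on the general identity $\dim\hom(\cP_{r,s},M)=[M:\cL_{r,s}]$. Both are valid, and your reduction of the general-$r$ case of $s=1$ to $r=1$ via $\cL_{r,1}\boxtimes\bullet$ and \eqref{moreprs} is a nice simplification over the paper, which carries out the exact-sequence argument for arbitrary $r$ directly.
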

\begin{proof}
The $s=p$ case of \eqref{moreprs} is just the $s=p$ case of \eqref{fr1}. For $1\leq s\leq p-1$ and $r=1$, \eqref{moreprs} is just \eqref{eqn:Prp-1} and the definition of $\cP_{r',s}$ in Theorem \ref{thm:Prs_structure}. For $r\geq 2$, we simply calculate
\begin{align*}
 \cL_{r',1}\tens\cP_{r,s} & =\cL_{r',1}\tens(\cL_{r,1}\tens\cP_{1,s})\cong(\cL_{r',1}\tens\cL_{r,1})\tens\cP_{1,s}\nonumber\\
 & \cong \bigoplus_{\substack{k = |r-r'|+1\\ k+r+r' \equiv 1\; ({\rm mod}\; 2)}}^{r+r'-1} \cL_{k,1}\tens\cP_{1,s}\cong\bigoplus_{\substack{k = |r-r'|+1\\ k+r+r' \equiv 1\; ({\rm mod}\; 2)}}^{r+r'-1} \cP_{k,s},
\end{align*}
using the $r=1$ case and \eqref{fr1}.

Next, note that the $r=1$, $2\leq s\leq p-1$ cases of \eqref{more1prs} are immediate from our construction of the modules $\cP_{1,s}$ in Section \ref{subsec:more_proj_covers}. For $r\geq 2$, we can then use \eqref{moreprs} and the $r=1$ case. It remains to prove the $s=1$ cases of \eqref{more1prs}.

Taking $s=1$ now, the maximal proper submodule $\cZ_{r,1}$ of $\cP_{r,1}$ satisfies the exact sequence
\begin{equation*}
0 \longrightarrow \cL_{r,1} \longrightarrow \cZ_{r,1} \longrightarrow [\cL_{r-1, p-1}\oplus] \cL_{r+1, p-1} \longrightarrow 0,
\end{equation*}
where from now on, terms in brackets vanish if $r=1$.
 Applying $\cL_{1,2}\boxtimes \bullet$ and using the fusion rules \eqref{fr2}, we have
\begin{equation}\label{seq:more1prs_proof}
0 \longrightarrow \cL_{r,2} \longrightarrow \cL_{1,2}\boxtimes \cZ_{r,1} \longrightarrow [\cL_{r-1, p-2}\oplus \cL_{r-1,p}] \oplus \cL_{r+1, p-2}\oplus \cL_{r+1,p} \longrightarrow 0.
\end{equation}
Since both of $\cL_{r\pm1,p}$ are projective, $[\cL_{r-1,p}\oplus] \cL_{r+1,p}$ is a direct summand of $\cL_{1,2}\boxtimes \cZ_{r,1}$. Then the complement $\widetilde{\cZ}_{r, 2}$ of $[\cL_{r-1,p}\oplus] \cL_{r+1,p}$ satisfies the exact sequence
\begin{equation}\label{seq:z_til}
0 \longrightarrow \cL_{r,2} \longrightarrow \widetilde{\cZ}_{r, 2} \longrightarrow [\cL_{r-1, p-2}\oplus] \cL_{r+1, p-2} \longrightarrow 0.
\end{equation}

Now consider the exact sequence
\begin{equation*}
0 \longrightarrow \cZ_{r,1} \longrightarrow \cP_{r,1} \longrightarrow \cL_{r,1} \longrightarrow 0.
\end{equation*}
Applying $\cL_{1,2}\boxtimes \bullet$ and using the fusion rules \eqref{fr2}, we have
\begin{equation*}
0 \longrightarrow \widetilde{\cZ}_{r,2}\oplus [\cL_{r-1,p}\oplus] \cL_{r+1,p} \longrightarrow \cL_{1,2}\boxtimes \cP_{r,1} \longrightarrow \cL_{r,2} \longrightarrow 0.
\end{equation*}
Since both of $\cL_{r\pm1,p}$ are injective, there exists a direct summand $\widetilde{\cP}_{r,2}$ of $\cL_{1,2}\boxtimes \cP_{r,1}$ complementary to $[\cL_{r-1,p}\oplus] \cL_{r+1,p}$ satisfying the exact sequence
\begin{equation}\label{seq:p_til}
0 \longrightarrow \widetilde{\cZ}_{r,2} \longrightarrow \widetilde{\cP}_{r,2} \longrightarrow \cL_{r,2} \longrightarrow 0.
\end{equation}
The module $\widetilde{P}_{r,2}$ is projective in $\cO_c^0$ since it is a summand of a projective module. Since $\cP_{r,2}$ is a projective cover of $\cL_{r,2}$, there is thus a surjection $\widetilde{P}_{r,2}\longrightarrow\cP_{r,2}$; but since \eqref{seq:z_til} and \eqref{seq:p_til} show that these two modules have the same length, we get $\widetilde{P}_{r,2}\cong\cP_{r,2}$. Therefore
\begin{equation*}
\cL_{1,2}\boxtimes \cP_{r,1} \cong \cP_{r,2} \oplus  \cL_{r+1,p} [\oplus \cL_{r-1,p}],
\end{equation*}
proving \eqref{more1prs} for $s = 1$.

Now when $p=2$, we need to replace the exact sequence \eqref{seq:more1prs_proof} with
\begin{equation*}
0\longrightarrow\cL_{r,2}\longrightarrow\cL_{1,2}\boxtimes\cZ_{r,1}\longrightarrow [\cL_{r-1,2}\oplus] \cL_{r+1,2}\longrightarrow 0.
\end{equation*}
Since both $\cL_{r\pm1,2}=\cP_{r\pm1,2}$ are projective, this exact sequence splits. The exact sequence
\begin{equation*}
 0\longrightarrow \cL_{1,2}\boxtimes\cZ_{r,1}\longrightarrow\cL_{1,2}\boxtimes\cP_{r,1}\longrightarrow \cL_{1,2}\boxtimes\cL_{r,1}\longrightarrow 0
\end{equation*}
also splits because $\cL_{1,2}\boxtimes\cL_{r,1}\cong\cL_{r,2}$ is projective. Then these two split exact sequences together imply \eqref{more2prs}.
\end{proof}

Finally, here are all fusion rules involving the simple modules $\cL_{r,s}$ and their projective covers in $\cO_c^0$:
\begin{thm}\label{generalfusionrules}
All tensor products in $\cO_c$ of the $V_c$-modules $\cL_{r,s}$ and $\cP_{r,s}$ are as follows, with sums taken to be empty if the lower bound exceeds the upper bound:
\begin{itemize}
\item[(1)] For $r, r' \geq 1$ and $1 \leq s, s' \leq p$,
\begin{align}\label{caser}
& \cL_{r,s}\boxtimes \cL_{r',s'}   \cong \bigoplus_{\substack{k = |r-r'|+1\\ k+r+r' \equiv 1\; ({\rm mod}\; 2)}}^{r+r'-1} \bigg(\bigoplus_{\substack{\ell = |s-s'|+1\\ \ell+s+s' \equiv 1\; (\mathrm{mod}\; 2)}}^{\min(s+s'-1, 2p-1-s-s')} \cL_{k, \ell} \oplus \bigoplus_{\substack{\ell = 2p+1-s-s'\\ \ell+s+s' \equiv 1\; (\mathrm{mod}\; 2)}}^{p} \cP_{k, \ell}\bigg).
\end{align}
\item[(2)] For $r, r' \geq 1$, $1 \leq s \leq p$, and $1 \leq s' \leq p-1$,
\begin{align}\label{casePM}
\cL_{r,s}\boxtimes \cP_{r', s'} & \cong \bigoplus_{\substack{k = |r-r'|+1\\ k+r+r' \equiv 1\; ({\rm mod}\; 2)}}^{r+r'-1}\bigg(\bigoplus_{\substack{\ell = |s-s'|+1\\ \ell+s+s' \equiv 1\; ({\rm mod}\; 2)}}^{\min(s+s'-1, p)}\cP_{k, \ell}\oplus \bigoplus_{\substack{\ell = 2p+1-s-s'\\ \ell+s+s' \equiv 1\; ({\rm mod}\; 2)}}^{p}\cP_{k, \ell}\bigg)\nonumber\\
&\qquad\oplus\bigoplus_{\substack{\ell = p-s+s'+1\\ \ell+p+s+s' \equiv 1\; ({\rm mod}\; 2)}}^{p}\bigg(\bigoplus_{\substack{k = \max(|r-r'|,1)\\ k+r+r' \equiv 0\; ({\rm mod}\; 2)}}^{r+r'-2} \cP_{k,\ell}\oplus\bigoplus_{\substack{k = |r-r'|+2\\ k+r+r' \equiv 0\; ({\rm mod}\; 2)}}^{r+r'}\cP_{k,\ell}\bigg).
\end{align}
\item[(3)]For $r,r'\geq 1$ and $1\leq s,s' \leq p-1$,
\begin{align}\label{casePP}
 \cP_{r,s}\tens\cP_{r',s'} & \cong2\cdot\bigoplus_{\substack{k = |r-r'|+1\\ k+r+r' \equiv 1\; ({\rm mod}\; 2)}}^{r+r'-1}\bigg(\bigoplus_{\substack{\ell = |s-s'|+1\\ \ell+s+s' \equiv 1\; ({\rm mod}\; 2)}}^{\min(s+s'-1, p)}\cP_{k, \ell}\oplus\bigoplus_{\substack{\ell = 2p+1-s-s'\\ \ell+s+s' \equiv 1\; ({\rm mod}\; 2)}}^{p}\cP_{k,\ell}\bigg)\nonumber\\
 &\qquad \oplus\bigoplus_{\substack{\ell = s+s'+1\\ \ell+s+s' \equiv 1\; ({\rm mod}\; 2)}}^{p}\bigg( \bigoplus_{\substack{k = \max(|r-r'|-1,1)\\ k+r+r' \equiv 1\; ({\rm mod}\; 2)}}^{r+r'-3}\cP_{k,\ell}\oplus \bigoplus_{\substack{k = \max(|r-r'|+1,2)\\ k+r+r' \equiv 1\; ({\rm mod}\; 2)}}^{r+r'-1}\cP_{k,\ell}\nonumber\\
&\qquad\qquad\qquad\qquad\qquad\qquad\oplus\bigoplus_{\substack{k = |r-r'|+1\\ k+r+r' \equiv 1\; ({\rm mod}\; 2)}}^{r+r'-1}\cP_{k,\ell}\oplus\bigoplus_{\substack{k = |r-r'|+3\\ k+r+r' \equiv 1\; ({\rm mod}\; 2)}}^{r+r'+1}\cP_{k,\ell} \bigg)\nonumber\\
 &\qquad\oplus\bigoplus_{\substack{k = \max(|r-r'|,1)\\ k+r+r' \equiv 0\; ({\rm mod}\; 2)}}^{r+r'-2}\bigg(\bigoplus_{\substack{\ell = |p-s-s'|+1\\ \ell+p+s+s' \equiv 1\; ({\rm mod}\; 2)}}^{p}\cP_{k, \ell}\oplus\bigoplus_{\substack{\ell = p-\vert s-s'\vert+1\\ \ell+p+s+s' \equiv 1\; ({\rm mod}\; 2)}}^{p}\cP_{k, \ell}\bigg)\nonumber\\
 &\qquad\oplus\bigoplus_{\substack{k = |r-r'|+2\\ k+r+r' \equiv 0\; ({\rm mod}\; 2)}}^{r+r'}\bigg(\bigoplus_{\substack{\ell = |p-s-s'|+1\\ \ell+p+s+s' \equiv 1\; ({\rm mod}\; 2)}}^{p}\cP_{k, \ell}\oplus\bigoplus_{\substack{\ell = p-\vert s-s'\vert+1\\ \ell+p+s+s' \equiv 1\; ({\rm mod}\; 2)}}^{p}\cP_{k, \ell}\bigg)
\end{align}
 \end{itemize}
\end{thm}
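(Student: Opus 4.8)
The plan is to bootstrap everything from the three ``second-index'' products $\cL_{1,s}\tens\cL_{1,s'}$, $\cL_{1,s}\tens\cP_{1,s'}$, and $\cP_{1,s}\tens\cP_{1,s'}$, reducing the general case to these $r=r'=1$ cases by peeling off the $\mathfrak{sl}_2$-type fusion in the first index. Recall from Theorems \ref{thm:Lr_Ls_fusion} and \ref{thm:Prs_structure} that $\cL_{r,s}\cong\cL_{r,1}\tens\cL_{1,s}$ and $\cP_{r,s}\cong\cL_{r,1}\tens\cP_{1,s}$, while $\cL_{k,1}\tens\cL_{1,\ell}\cong\cL_{k,\ell}$ and $\cL_{k,1}\tens\cP_{1,\ell}\cong\cP_{k,\ell}$. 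Using commutativity and associativity of $\tens$ together with \eqref{fr1}, I would rewrite each product, for example
\[
\cL_{r,s}\tens\cL_{r',s'}\cong\Big(\bigoplus_{k}\cL_{k,1}\Big)\tens\big(\cL_{1,s}\tens\cL_{1,s'}\big),
\]
where the $k$-sum is the Clebsch--Gordan range of \eqref{fr1}, and similarly for the mixed and double-projective products. Distributing $\cL_{k,1}\tens\bullet$ across the $r=1$ decomposition and re-summing then yields the stated ranges. The one subtlety is that $\cL_{1,s}\tens\cP_{1,s'}$ and $\cP_{1,s}\tens\cP_{1,s'}$ already contain projectives $\cP_{2,\ell}$ with first index $2$ (arising from the middle factor $\cL_{2,p-s'}$ in the Loewy diagram of $\cP_{1,s'}$); tensoring these by $\cL_{k,1}$ shifts the first index by $\pm 1$ via \eqref{fr1}, and this is exactly what produces the second $k$-block (the terms with $k+r+r'\equiv 0$) in \eqref{casePM} and \eqref{casePP}.

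It remains to establish the $r=r'=1$ cases, which I would prove by induction working with direct-sum decompositions in $\cO_c^0$ (legitimate since every module has finite length, so Krull--Schmidt applies and summands may be cancelled). The only inputs are the base fusion rules \eqref{fr2}, \eqref{more1prs}, and \eqref{more2prs}, together with the recursions $\cL_{1,2}\tens\cL_{1,s}\cong\cL_{1,s-1}\oplus\cL_{1,s+1}$ (for $2\le s\le p-1$) and $\cL_{1,2}\tens\cP_{1,s+1}\cong\cP_{1,s}\oplus\cP_{1,s+2}$ coming from the construction of the $\cP_{1,s}$ in Section \ref{subsec:more_proj_covers}. For $\cL_{1,s}\tens\cL_{1,s'}$ I would induct on $s'$: isolating $\cL_{1,s'+1}$ as the summand of $\cL_{1,2}\tens\cL_{1,s'}$ complementary to $\cL_{1,s'-1}$ gives
\[
(\cL_{1,s}\tens\cL_{1,s'+1})\oplus(\cL_{1,s}\tens\cL_{1,s'-1})\cong\cL_{1,2}\tens\big(\cL_{1,s}\tens\cL_{1,s'}\big),
\]
whose right-hand side is computable from the inductive hypothesis and the known action of $\cL_{1,2}$ on all $\cL_{1,\ell}$ and $\cP_{1,\ell}$; cancelling the known summand $\cL_{1,s}\tens\cL_{1,s'-1}$ determines $\cL_{1,s}\tens\cL_{1,s'+1}$. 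The product $\cL_{1,s}\tens\cP_{1,s'}$ is handled the same way via the $\cP$-recursion, inducting downward on $s'$ from the base cases $s'=p$ (where $\cP_{1,p}=\cL_{1,p}$ reduces to the first formula) and $s'=p-1$ (where $\cP_{1,p-1}=\cL_{1,2}\tens\cL_{1,p}$ again reduces to the already-proved $\cL\tens\cL$ rule). Finally $\cP_{1,s}\tens\cP_{1,s'}$ reduces to the mixed product by writing $\cP_{1,s}$ as the summand of $\cL_{1,2}\tens\cP_{1,s+1}$ complementary to $\cP_{1,s+2}$ and cancelling $\cP_{1,s+2}\tens\cP_{1,s'}$, known by induction.

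The main obstacle will not be any single tensor product but the combinatorial verification that the recursively generated decompositions coincide with the closed-form index ranges in \eqref{caser}--\eqref{casePP}. The essential phenomenon is reflection at the wall: whenever the naive $\mathfrak{sl}_2$-type sum over $\ell$ would run past $p$, the relevant simple modules reflect back below $p$ or assemble into projective covers, which is the source both of the $\cP$-summands and of the truncations to $\min(s+s'-1,2p-1-s-s')$ and $\min(s+s'-1,p)$, as well as of the multiplicity-$2$ prefactor in \eqref{casePP}. I would therefore organize the final step as a guess-and-verify induction, checking that the stated right-hand sides satisfy the defining recursions above together with the base cases $s'\in\{1,2\}$ and $s'\in\{p-1,p\}$; since the recursions determine the products uniquely by cancellation, this completes the proof. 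Care is needed at the boundary values $s+s'=p$, $s+s'=p+1$, and near $s,s'\in\{1,p\}$, where several sums become empty or overlap, but each such case is routine once the general pattern is in place.
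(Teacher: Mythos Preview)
Your proposal is correct and follows essentially the same overall architecture as the paper: reduce to $r=r'=1$ via the factorization $\cL_{r,s}\cong\cL_{r,1}\tens\cL_{1,s}$, $\cP_{r,s}\cong\cL_{r,1}\tens\cP_{1,s}$ and the Clebsch--Gordan rule \eqref{fr1}, then handle the second-index products by Krull--Schmidt cancellation against the $\cL_{1,2}$-recursions. For \eqref{caser} the paper in fact cites \cite[Theorem 5.2.1]{CMY2} rather than repeating the induction, and for \eqref{casePM} it inducts on $s$ (the $\cL$-index) rather than downward on $s'$, but these are cosmetic choices.

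The one place where the paper takes a genuinely different and shorter route is \eqref{casePP}. Rather than running another $\cL_{1,2}$-based induction, the paper tensors the short exact sequences coming from the Loewy filtration of $\cP_{1,s}$ (with composition factors $\cL_{1,s}$, $\cL_{2,p-s}$, $\cL_{1,s}$) by the projective object $\cP_{1,s'}$; both sequences split because their right-hand terms $\cL_{1,s}\tens\cP_{1,s'}$ and $\cL_{2,p-s}\tens\cP_{1,s'}$ are projective, giving directly
\[
\cP_{1,s}\tens\cP_{1,s'}\;\cong\;2\cdot\big(\cL_{1,s}\tens\cP_{1,s'}\big)\;\oplus\;\big(\cL_{2,p-s}\tens\cP_{1,s'}\big).
\]
This immediately explains the multiplicity-$2$ prefactor and reduces \eqref{casePP} to two instances of the already-proved \eqref{casePM}, without any further induction or boundary analysis. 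Your inductive scheme would also reach the answer, but with more bookkeeping at the walls $s\in\{1,p-2,p-1\}$ where the recursion $\cL_{1,2}\tens\cP_{1,s+1}\cong\cP_{1,s}\oplus\cP_{1,s+2}$ changes shape.
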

\begin{proof}
The proof of the $r=r'=1$ case of \eqref{caser} is exactly the same as the corresponding proof in \cite[Theorem 5.2.1]{CMY2}, so we omit the details:
\begin{equation*}\label{caser_1}
\cL_{1,s}\boxtimes \cL_{1,s'} \cong \bigoplus_{\substack{\ell = |s-s'|+1\\ \ell+s+s' \equiv 1\; (\mathrm{mod}\; 2)}}^{{\rm min}\{s+s'-1, 2p-1-s-s'\}}\cL_{1, \ell} \oplus \bigoplus_{\substack{\ell = 2p+1-s-s'\\ \ell+s+s' \equiv 1\; (\mathrm{mod}\; 2)}}^{p}\cP_{1, \ell}.
\end{equation*}
The general case then follows from the commutativity and associativity of tensor products in $\cO_c$ and the fusion rules \eqref{fr1} and \eqref{moreprs}:
\begin{align*}
 \cL_{r,s}\tens\cL_{r',s'} & \cong(\cL_{r,1}\tens\cL_{r',1})\tens(\cL_{1,s}\tens\cL_{1,s'})\nonumber\\
 & \cong\bigg(\bigoplus_{\substack{k = |r-r'|+1\\ k+r+r' \equiv 1\; (\mathrm{mod}\; 2)}}^{r+r'-1}\cL_{k, 1}\bigg)\tens\bigg(\bigoplus_{\substack{\ell = |s-s'|+1\\ \ell+s+s' \equiv 1\; (\mathrm{mod}\; 2)}}^{{\rm min}\{s+s'-1, 2p-1-s-s'\}}\cL_{1, \ell} \oplus \bigoplus_{\substack{\ell = 2p+1-s-s'\\ \ell+s+s' \equiv 1\; (\mathrm{mod}\; 2)}}^{p}\cP_{1, \ell}\bigg)\nonumber\\
 &\cong \bigoplus_{\substack{k = |r-r'|+1\\ k+r+r' \equiv 1\; (\mathrm{mod}\; 2)}}^{r+r'-1}\bigg(\bigoplus_{\substack{\ell = |s-s'|+1\\ \ell+s+s' \equiv 1\; (\mathrm{mod}\; 2)}}^{{\rm min}\{s+s'-1, 2p-1-s-s'\}}\cL_{k, \ell} \oplus \bigoplus_{\substack{\ell = 2p+1-s-s'\\ \ell+s+s' \equiv 1\; (\mathrm{mod}\; 2)}}^{p}\cP_{k, \ell}\bigg).
\end{align*}

Let us now consider the $r=r'=1$ case of \eqref{casePM}:
\begin{align}\label{eqn:LPr=r'=1}
\cL_{1,s}\tens\cP_{1,s'} & \cong \bigoplus_{\substack{\ell = |s-s'|+1\\ \ell+s+s' \equiv 1\; ({\rm mod}\; 2)}}^{\min(s+s'-1, p)}\cP_{1, \ell}\oplus \bigoplus_{\substack{\ell = 2p+1-s-s'\\ \ell+s+s' \equiv 1\; ({\rm mod}\; 2)}}^{p}\cP_{1, \ell} \oplus \bigoplus_{\substack{\ell = p-s+s'+1\\ \ell+p+s+s' \equiv 1\; ({\rm mod}\; 2)}}^{p}\cP_{2, \ell}.
\end{align}
The case $s=1$ is easy since $\cL_{1,1}$ is the unit object of $\cO_c$ and since only the first sum in \eqref{eqn:LPr=r'=1} is non-empty (because $s'\leq p-1$). Then for $s=2$, \eqref{eqn:LPr=r'=1} in the cases $s'=1$, $2\leq s'\leq p-2$, and $s'=p-1$ yields the corresponding cases of \eqref{more1prs} and \eqref{more2prs}. This proves \eqref{eqn:LPr=r'=1} when $p=2$, and for $p\geq 3$, we can finish the proof using induction on $s$.

Thus assume we have proved \eqref{eqn:LPr=r'=1} for some $s$ such that $2\leq s\leq p-1$, and consider the $s+1$ case. Since
\begin{equation*}
 \cL_{1,2}\tens(\cL_{1,s}\tens\cP_{1,s'}) \cong(\cL_{1,2}\tens\cL_{1,s})\tens\cP_{1,s'}\cong(\cL_{1,s-1}\tens\cP_{1,s'})\oplus(\cL_{1,s+1}\tens\cP_{1,s'})
\end{equation*}
and since all these tensor products have finite length, the Krull-Schmidt Theorem guarantees that we can determine the indecomposable summands of $\cL_{1,s+1}\tens\cP_{1,s'}$ by subtracting the indecomposable summands of $\cL_{1,s-1}\tens\cP_{1,s'}$ from those of $\cL_{1,2}\tens(\cL_{1,s}\tens\cP_{1,s'})$. So we get
\begin{align*}
 \cL_{1,s+1}\tens\cP_{1,s'}\cong \left\lbrace\begin{array}{lll}
                                              (\cL_{1,s}\tens\cP_{1,2})\oplus(\cL_{1,s}\tens\cL_{2,p})\ominus(\cL_{1,s-1}\tens\cP_{1,1}) & \text{if} & s'=1\\
                                              (\cL_{1,s}\tens\cP_{1,s'-1})\oplus(\cL_{1,s}\tens\cP_{1,s'+1})\ominus(\cL_{1,s-1}\tens\cP_{1,s'}) & \text{if} & 2\leq s'\leq p-2\\
                                              (\cL_{1,s}\tens\cP_{1,p-2})\oplus2\cdot(\cL_{1,s}\tens\cL_{1,p})\ominus(\cL_{1,s-1}\tens\cP_{1,p-1}) & \text{if} & s'=p-1\\
                                             \end{array}
                                             \right.
\end{align*}
using the fusion rules \eqref{more1prs}. Analysis of these three formulas using the $s$ and $s-1$ cases of \eqref{eqn:LPr=r'=1} (which hold by induction), as well as $s'=p$ cases of \eqref{caser}, then yields the $s+1$ case of \eqref{eqn:LPr=r'=1}. For $s'=p-1$, it is helpful to divide the analysis into cases $s<p-1$ and $s=p-1$.

Now we prove \eqref{casePM} for general $r,r'$ using the $r=r'=1$ case along with \eqref{fr1} and \eqref{moreprs}:
\begin{align*}
 \cL_{r,s}\tens\cP_{r',s'} & \cong(\cL_{r,1}\tens\cL_{r',1})\tens(\cL_{1,s}\tens\cP_{1,s'})\nonumber\\
 & \cong \bigoplus_{\substack{k = |r-r'|+1\\ k+r+r' \equiv 1\; ({\rm mod}\; 2)}}^{r+r'-1}\bigg(\bigoplus_{\substack{\ell = |s-s'|+1\\ \ell+s+s' \equiv 1\; ({\rm mod}\; 2)}}^{\min(s+s'-1, p)}(\cL_{k,1}\tens\cP_{1, \ell})\oplus \bigoplus_{\substack{\ell = 2p+1-s-s'\\ \ell+s+s' \equiv 1\; ({\rm mod}\; 2)}}^{p}(\cL_{k,1}\tens\cP_{1, \ell})\bigg)\nonumber\\
& \hspace{2em}\oplus \bigoplus_{\substack{k = |r-r'|+1\\ k+r+r' \equiv 1\; ({\rm mod}\; 2)}}^{r+r'-1}\bigoplus_{\substack{\ell = p-s+s'+1\\ \ell+p+s+s' \equiv 1\; ({\rm mod}\; 2)}}^{p}(\cL_{k,1}\tens\cP_{2, \ell})\nonumber\\
& \cong \bigoplus_{\substack{k = |r-r'|+1\\ k+r+r' \equiv 1\; ({\rm mod}\; 2)}}^{r+r'-1}\bigg(\bigoplus_{\substack{\ell = |s-s'|+1\\ \ell+s+s' \equiv 1\; ({\rm mod}\; 2)}}^{\min(s+s'-1, p)}\cP_{k, \ell}\oplus \bigoplus_{\substack{\ell = 2p+1-s-s'\\ \ell+s+s' \equiv 1\; ({\rm mod}\; 2)}}^{p}\cP_{k, \ell}\bigg)\nonumber\\
&\hspace{2em}\oplus\bigoplus_{\substack{\ell = p-s+s'+1\\ \ell+p+s+s' \equiv 1\; ({\rm mod}\; 2)}}^{p}\bigg(\bigoplus_{\substack{k = \max(|r-r'|,1)\\ k+r+r' \equiv 0\; ({\rm mod}\; 2)}}^{r+r'-2} \cP_{k,\ell}\oplus\bigoplus_{\substack{k = |r-r'|+2\\ k+r+r' \equiv 0\; ({\rm mod}\; 2)}}^{r+r'}\cP_{k,\ell}\bigg)
\end{align*}
as required.

To prove \eqref{casePP}, we again take $r=r'=1$ first. The exact sequences
\begin{equation*}
 0\longrightarrow\cZ_{1,s}\tens\cP_{1,s'}\longrightarrow\cP_{1,s}\tens\cP_{1,s'}\longrightarrow\cL_{1,s}\tens\cP_{1,s'}\longrightarrow 0
\end{equation*}
and
\begin{equation*}
 0\longrightarrow\cL_{1,s}\tens\cP_{1,s'}\longrightarrow\cZ_{1,s}\tens\cP_{1,s'}\longrightarrow\cL_{2,p-s}\tens\cP_{1,s'}\longrightarrow 0,
\end{equation*}
both of which split since $\cL_{1,s}\tens\cP_{1,s'}$ and $\cL_{2,p-s}\tens\cP_{1,s'}$ are projective in $\cO_c^0$, imply that
\begin{equation*}
 \cP_{1,s}\boxtimes\cP_{1,s'}\cong 2\cdot(\cL_{1,s}\boxtimes\cP_{1,s'})
 \oplus(\cL_{2,p-s}\boxtimes\cP_{1,s'}).
\end{equation*}
Thus by \eqref{casePM},
\begin{align*}
 \cP_{1,s}\tens\cP_{1,s'}  & \cong2\cdot\bigg(\bigoplus_{\substack{\ell = |s-s'|+1\\ \ell+s+s' \equiv 1\; ({\rm mod}\; 2)}}^{\min(s+s'-1, p)}\cP_{1, \ell}\oplus \bigoplus_{\substack{\ell = 2p+1-s-s'\\ \ell+s+s' \equiv 1\; ({\rm mod}\; 2)}}^{p}\cP_{1, \ell} \oplus \bigoplus_{\substack{\ell = p-s+s'+1\\ \ell+p+s+s' \equiv 1\; ({\rm mod}\; 2)}}^{p}\cP_{2, \ell}\bigg)\nonumber\\
 &\qquad \oplus \bigoplus_{\substack{\ell = |p-s-s'|+1\\ \ell+p+s+s' \equiv 1\; ({\rm mod}\; 2)}}^{\min(p-s+s'-1, p)}\cP_{2, \ell}\oplus \bigoplus_{\substack{\ell = p+s-s'+1\\ \ell+p+s+s' \equiv 1\; ({\rm mod}\; 2)}}^{p}\cP_{2, \ell} \oplus \bigoplus_{\substack{\ell = s+s'+1\\ \ell+s+s' \equiv 1\; ({\rm mod}\; 2)}}^{p}(\cP_{1, \ell}\oplus\cP_{3,\ell})\nonumber\\
 &\cong \bigoplus_{\substack{\ell = |s-s'|+1\\ \ell+s+s' \equiv 1\; ({\rm mod}\; 2)}}^{ p}\cP_{1, \ell}\oplus\bigoplus_{\substack{\ell = |s-s'|+1\\ \ell+s+s' \equiv 1\; ({\rm mod}\; 2)}}^{\min(s+s'-1, p)}\cP_{1, \ell}\oplus\bigoplus_{\substack{\ell = s+s'+1\\ \ell+s+s' \equiv 1\; ({\rm mod}\; 2)}}^{p}\cP_{3,\ell}\nonumber\\
 & \qquad\oplus \bigoplus_{\substack{\ell = 2p+1-s-s'\\ \ell+s+s' \equiv 1\; ({\rm mod}\; 2)}}^{p}(2\cdot\cP_{1, \ell}) \oplus \bigoplus_{\substack{\ell = |p-s-s'|+1\\ \ell+p+s+s' \equiv 1\; ({\rm mod}\; 2)}}^{p}\cP_{2, \ell}\oplus\bigoplus_{\substack{\ell = p-\vert s-s'\vert+1\\ \ell+p+s+s' \equiv 1\; ({\rm mod}\; 2)}}^{p}\cP_{2, \ell}.
\end{align*}
We now get \eqref{casePP} by tensoring this expression with $\cL_{r,1}\tens\cL_{r',1}$ as before.
\end{proof}

\begin{rem}
The fusion rules for irreducible $V_c$-modules in $\cO_c$ follow from the tensor product formula \eqref{caser}: For $r,r',r''\geq 1$ and $1\leq s,s',s''\leq p$,
\begin{equation*}
\dim \hom_{V_c}(\cL_{r,s}\tens\cL_{r',s'}, \cL_{r'',s''})\leq 1,
\end{equation*}
with equality if and only if
$$r''\in\lbrace r+r'-1,r+r'-3,\ldots,\vert r-r'\vert+1\rbrace$$
and
$$s''\in\lbrace s+s'-1,s+s'-3,\ldots,\vert s-s'\vert+1\rbrace$$
(with $s''\leq p$ also). This agrees with \cite[Theorem 2.3]{Lin}, but note that the fusion rule result of \cite{Lin} does not distinguish between $\cL_{r'',s''}$ or $\cP_{r'',s''}$ appearing as a summand of $\cL_{r,s}\tens\cL_{r',s'}$.
\end{rem}

\subsection{Semisimplification}\label{subsec:ss}

Theorem \ref{generalfusionrules} show that the full subcategory $\cO'_c\subseteq\cO_c$ whose objects are finite direct sums of modules $\cL_{r,s}$ and $\cP_{r,s}$ for $r\geq 1$, $1\leq s\leq p$ is an additive monoidal subcategory  of $\cO_c$ (but it is not abelian since it is not closed under submodules and quotients). Since the modules $\cL_{r,s}$ and $\cP_{r,s}$ are all self-dual, $\cO'_c$ is a ribbon category, and thus we can define its semisimplification $\overline{\cO_c'}$ as usual to be the quotient of $\cO_c'$ by the tensor ideal of negligible morphisms. Recall (see for example \cite[Definition 3.3.16]{BK}) that $f: \cW_1\rightarrow\cW_2$ in $\cO'_c$ is negligible if the categorical trace $\tr_{\cO'_c}f\circ g$ vanishes for all morphisms $g:\cW_2\rightarrow\cW_1$. Moreover, an object $\cW$ in $\cO'_c$ is negligible if $\Id_\cW$ is negligible; such objects are isomorphic to $0$ in the semisimplification $\overline{\cO_c'}$.

\begin{lem}
 An irreducible module $\cL_{r,s}$ is negligible in $\cO'_c$ if and only if $s=p$. Moreover, all projective modules $\cP_{r,s}$ are negligible.
\end{lem}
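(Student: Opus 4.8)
The plan is to reduce the whole statement to two facts: that a simple object is negligible exactly when its categorical dimension vanishes, and that the negligible morphisms form a tensor ideal, so that negligibility is inherited under tensoring and under passage to direct summands. For the simple modules this is immediate. Since $\cL_{r,s}$ is simple, $\Endo_{\cO_c}(\cL_{r,s}) = \CC\,\Id$, so $\Id_{\cL_{r,s}}$ is negligible if and only if $\tr_{\cO_c'}(\lambda\,\Id_{\cL_{r,s}}) = \lambda\dim_{\cO_c}\cL_{r,s}$ vanishes for all $\lambda\in\CC$, i.e.\ if and only if $\dim_{\cO_c}\cL_{r,s} = 0$. By Theorem~\ref{thm:cat_dim}, $\dim_{\cO_c}\cL_{r,s} = (-1)^{(p+1)(r+1)+s+1}\,r\,\tfrac{\sin(\pi s/p)}{\sin(\pi/p)}$; as $r\geq 1$ and $\sin(\pi/p)\neq 0$, this vanishes precisely when $\sin(\pi s/p) = 0$, which for $1\leq s\leq p$ occurs only at $s = p$. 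This gives the first assertion.

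For the projective modules I would avoid computing endomorphism traces directly and instead invoke that, in a ribbon category, the negligible morphisms form a tensor ideal (see \cite{BK}). Two consequences are what I need. First, if $\cW$ is negligible then $\cU\tens\cW$ is negligible for every object $\cU$, since $\Id_{\cU\tens\cW} = \Id_\cU\tens\Id_\cW$ lies in the ideal. Second, any direct summand of a negligible object is again negligible: if $\cW$ is negligible and $\imath\colon\cU\to\cW$, $p\colon\cW\to\cU$ satisfy $p\circ\imath = \Id_\cU$, then $\Id_\cU = p\circ\Id_\cW\circ\imath$ is negligible because the ideal is closed under pre- and post-composition.

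With these tools, negligibility propagates from the modules $\cL_{r,p}$ to all of the $\cP_{r,s}$. First, $\cP_{r,p} = \cL_{r,p}$ is negligible by the first part. Next, $\cP_{1,p-1} = \cL_{1,2}\tens\cL_{1,p}$ is negligible because $\cL_{1,p}$ is, and $\cP_{1,p-2}$, being a direct summand of $\cL_{1,2}\tens\cP_{1,p-1}$ in the construction of Section~\ref{subsec:more_proj_covers}, is then negligible as well. A downward induction on $s$ handles the remaining $r=1$ cases: for $1\leq s\leq p-3$ the module $\cP_{1,s}$ is a direct summand of $\cL_{1,2}\tens\cP_{1,s+1}$, which is negligible by the inductive hypothesis. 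Finally $\cP_{r,s}\cong\cL_{r,1}\tens\cP_{1,s}$ for $r\geq 2$ by Theorem~\ref{thm:Prs_structure}, so every $\cP_{r,s}$ is negligible.

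The main obstacle is conceptual rather than computational. For the non-simple indecomposable $\cP_{r,s}$ with $s<p$, vanishing of the categorical dimension (which one can verify directly from Theorem~\ref{thm:cat_dim} together with the Loewy diagrams, the contributions of the two $\cL_{r,s}$ factors cancelling those of $\cL_{r\pm 1,p-s}$) is necessary but \emph{not} obviously sufficient, since negligibility of $\cW$ requires that \emph{every} endomorphism, including the nilpotent elements of the local ring $\Endo_{\cO_c}(\cP_{r,s})$, have vanishing categorical trace. The tensor-ideal argument sidesteps this entirely by exhibiting each $\cP_{r,s}$ as a summand of an object built by tensoring the negligible module $\cL_{1,p}$ with other objects. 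The only point demanding care is to confirm that the recursion of Section~\ref{subsec:more_proj_covers} realizes each $\cP_{1,s}$ as an honest direct summand of $\cL_{1,2}\tens\cP_{1,s+1}$ (not merely a subquotient), which is exactly what that construction furnishes.
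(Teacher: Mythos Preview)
Your proof is correct and follows essentially the same approach as the paper: for the simple modules you invoke Schur's lemma and the dimension formula of Theorem~\ref{thm:cat_dim}, and for the projective modules you use that negligible morphisms form a tensor ideal together with the recursive construction of the $\cP_{r,s}$ from Section~\ref{sec:proj}. The paper states the second part more tersely, simply noting that each $\cP_{r,s}$ lies in the tensor ideal generated by the $\cL_{r,p}$; you have unpacked exactly what that means.
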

\begin{proof}
 Since $\cL_{r,s}$ is irreducible, $\mathrm{End}_{\cO'_c}(\cL_{r,s})=\CC\cdot \Id_{\cL_{r,s}}$ and thus $\cL_{r,s}$ is negligible if and only if its categorical dimension $\tr_{\cO'_c} \Id_{\cL_{r,s}}$ vanishes. Then \eqref{eqn:rs_cat_dim} shows that $\dim_{\cO_c}\cL_{r,s}=0$ if and only if $s=p$. For the projective modules, the definitions and constructions in Sections \ref{sec:fus_and_rig} and \ref{subsec:more_proj_covers} show that every $\cP_{r,s}$ is in the tensor ideal generated by the modules $\cL_{r,p}$. Since negligible morphisms are a tensor ideal containing all $\Id_{\cL_{r,p}}$, each $\cP_{r,s}$ is negligible.
\end{proof}

\begin{cor}
 The category $\overline{\cO_c'}$ is a semisimple abelian category with simple objects $\cL_{r,s}$ for $r\geq 1$ and $1\leq s\leq p-1$.
\end{cor}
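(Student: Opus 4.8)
The plan is to deduce this from the general theory of semisimplification of Krull--Schmidt ribbon categories (as in \cite[Section 3.3]{BK}), combined with the preceding lemma, which classifies the negligible indecomposable objects of $\cO_c'$.

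First I would verify that $\cO_c'$ is a Krull--Schmidt category. By definition its objects are the finite direct sums of the modules $\cL_{r,s}$ and $\cP_{r,s}$, each of which is indecomposable of finite length: the $\cL_{r,s}$ are simple, and indecomposability of the $\cP_{r,s}$ was proved in Propositions~\ref{prop:P1_structure} and \ref{prop:Pr_structure} and Theorems~\ref{thm:P1s_structure} and \ref{thm:Prs_structure}. Each of these indecomposables has a local, finite-dimensional endomorphism ring, so the decomposition of any object of $\cO_c'$ into indecomposables is unique, idempotents split, and the indecomposable objects of $\cO_c'$ are exactly the $\cL_{r,s}$ for $r\geq 1$, $1\leq s\leq p$ together with the $\cP_{r,s}$ for $r\geq 1$, $1\leq s\leq p-1$ (recall $\cP_{r,p}=\cL_{r,p}$). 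In particular $\cO_c'$ is a Karoubian ribbon category, so the semisimplification construction applies to it.

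Next I would recall the two structural facts that make up the semisimplification theorem: the quotient $\overline{\cO_c'}$ by the tensor ideal of negligible morphisms is a semisimple category, and its simple objects are in bijection with the isomorphism classes of non-negligible indecomposable objects of $\cO_c'$ (a negligible indecomposable becoming isomorphic to $0$ in $\overline{\cO_c'}$). Since every object of $\cO_c'$ is a finite direct sum of indecomposables, no further simple objects can arise.

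Finally I would apply the lemma directly. Among the indecomposables listed above, the lemma shows that the negligible ones are precisely the $\cL_{r,p}$ for $r\geq 1$ and all of the $\cP_{r,s}$, whereas $\cL_{r,s}$ is non-negligible for $1\leq s\leq p-1$. Therefore the simple objects of $\overline{\cO_c'}$ are exactly the images of $\cL_{r,s}$ for $r\geq 1$ and $1\leq s\leq p-1$; each of these remains simple since $\Endo_{\overline{\cO_c'}}(\cL_{r,s})$ is a nonzero quotient of $\Endo_{\cO_c}(\cL_{r,s})=\CC$. As any semisimple category is abelian, this yields the stated description of $\overline{\cO_c'}$. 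The one point demanding care is the verification that $\cO_c'$, which is deliberately not abelian, is nonetheless Karoubian and Krull--Schmidt so that the semisimplification theory and its enumeration of simples by non-negligible indecomposables are valid; once that is secured the conclusion follows at once from the lemma.
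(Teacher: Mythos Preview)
Your proposal is correct and follows essentially the same approach as the paper, which states the corollary without proof as an immediate consequence of the preceding lemma together with the general semisimplification theory of \cite[Section 3.3]{BK}. Your write-up simply makes explicit the Krull--Schmidt/Karoubian verification and the enumeration of non-negligible indecomposables that the paper leaves implicit.
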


Since negligible morphisms form a tensor ideal, the semisimplification $\overline{\cO_c'}$ is also a (ribbon) tensor category. Tensor products of simple objects in $\overline{\cO_c'}$ follow from Theorem \ref{generalfusionrules}(1):
\begin{prop}\label{prop:ss_fus_rules}
 Simple objects in $\overline{\cO_c'}$ have the following tensor products:
 \begin{equation*}
  \cL_{r,s}\boxtimes \cL_{r',s'}   \cong \bigoplus_{\substack{k = |r-r'|+1\\ k+r+r' \equiv 1\; ({\rm mod}\; 2)}}^{r+r'-1} \bigoplus_{\substack{\ell = |s-s'|+1\\ \ell+s+s' \equiv 1\; (\mathrm{mod}\; 2)}}^{\min(s+s'-1, 2p-1-s-s')} \cL_{k, \ell}
 \end{equation*}
for $r, r'\geq 1$ and $1\leq s,s'\leq p-1$.
\end{prop}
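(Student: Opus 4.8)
The plan is to deduce the formula directly from the tensor product decomposition \eqref{caser} in $\cO_c$ by pushing it through the semisimplification functor. Recall that $\overline{\cO_c'}$ is the quotient of $\cO_c'$ by the tensor ideal of negligible morphisms, and that the associated quotient functor $F:\cO_c'\to\overline{\cO_c'}$ is a (ribbon) tensor functor; in particular it is monoidal, so that $F(\cW_1)\boxtimes F(\cW_2)\cong F(\cW_1\boxtimes\cW_2)$ for all objects $\cW_1,\cW_2$ of $\cO_c'$. By the preceding lemma, $F$ annihilates every projective module $\cP_{k,\ell}$ as well as every simple module $\cL_{k,p}$, while it carries each $\cL_{k,\ell}$ with $1\leq\ell\leq p-1$ to a nonzero simple object of $\overline{\cO_c'}$, and distinct such $\cL_{k,\ell}$ remain pairwise non-isomorphic.

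First I would apply $F$ to both sides of \eqref{caser}. On the right-hand side the entire second inner sum $\bigoplus_\ell \cP_{k,\ell}$ consists of negligible modules and therefore becomes isomorphic to $0$ in $\overline{\cO_c'}$, so only the first inner sum $\bigoplus_\ell \cL_{k,\ell}$, with $\ell$ running from $|s-s'|+1$ to $\min(s+s'-1,\,2p-1-s-s')$, can survive.

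The one substantive point is to check that no simple summand $\cL_{k,\ell}$ in this first sum is itself negligible, i.e.\ that $\ell\neq p$ throughout the range. Writing $u=s+s'$, the upper bound is $\min(u-1,\,2p-1-u)$; since $1\leq s,s'\leq p-1$, a short case analysis comparing $u-1$ with $2p-1-u$ in the cases $u<p$, $u=p$, and $u>p$ (the two expressions coinciding at the common value $p-1$ when $u=p$) shows that $\min(u-1,\,2p-1-u)\leq p-1<p$. Hence every $\cL_{k,\ell}$ appearing in the first sum has $\ell\leq p-1$ and so maps to a nonzero simple object, with no collapsing of distinct summands. Applying $F$ to \eqref{caser} therefore produces exactly the asserted decomposition. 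I expect this verification of the bound $\min(s+s'-1,\,2p-1-s-s')\leq p-1$ to be the only real content of the argument; everything else is a formal consequence of $F$ being monoidal and killing precisely the negligible objects classified in the lemma.
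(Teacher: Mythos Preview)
Your proposal is correct and takes essentially the same approach as the paper, which simply states that the result follows from Theorem~\ref{generalfusionrules}(1) (i.e., \eqref{caser}) without further comment. You have supplied the routine verification---that the semisimplification functor kills the projective summands and that the surviving simple summands all satisfy $\ell\leq p-1$---which the paper leaves implicit.
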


From this proposition, we see that as an abelian category, $\overline{\cO_c'}$ decomposes as the Deligne product of two tensor subcategories. First, the modules $\cL_{r,1}$ are the simple objects of a tensor subcategory which we denote by $\cO_c^L$.  As discussed in the proof of Theorem \ref{thm:Lr1_fus_rules} (see also \cite[Corollary~14]{ACGY}), $\cO_c^L$ is braided tensor equivalent to an abelian $3$-cocycle twist of $\mathrm{Rep}\, SU(2)$ (or $\mathrm{Rep}\,\mathfrak{sl}_2$). This same cocycle twist of $\rep\mathfrak{sl}_2$ is also braided tensor equivalent to the Kazhdan-Lusztig category $KL_{-2+1/p}(\mathfrak{sl}_2)$ of modules for the simple affine vertex operator algebra $L_{-2+1/p}(\mathfrak{sl}_2)$ at level $-2+\frac{1}{p}$ \cite[Corollary~9]{ACGY}. Thus $\cO_c^L$ is braided tensor equivalent to $KL_{-2+1/p}(\mathfrak{sl}_2)$, although they have different ribbon twists because the conformal weights of $\cL_{2,1}$ differ from those of the corresponding simple $L_{-2+1/p}(\mathfrak{sl}_2)$-module.

Secondly, although the modules $\cL_{1,s}$ for $1\leq s\leq p-1$ do not form the simple objects of a tensor subcategory of $\cO_c$, they do in the semisimple subquotient $\overline{\cO_c'}$. We denote by $\cO_c^R$ the subcategory generated by $\cL_{1,s}$ for $1\leq s\leq p-1$. Then the $r=r'=1$ case of Proposition \ref{prop:ss_fus_rules} yields precisely the Frenkel-Zhu fusion rules \cite{FZ1} for the simple affine vertex operator algebra $L_{-2+p}(\mathfrak{sl}_2)$, under the identification of $\cL_{1,s}$ with the simple $L_{-2+p}(\mathfrak{sl}_2)$-module induced from the $s$-dimensional simple $\mathfrak{sl}_2$-module. We can actually prove a stronger relationship:
\begin{prop}
The subcategory $\cO_c^R$ is tensor equivalent to the category $KL_{-2+p}(\mathfrak{sl}_2)$ of modules for the simple affine vertex operator algebra $L_{-2+p}(\mathfrak{sl}_2)$.
\end{prop}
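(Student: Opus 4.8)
The plan is to invoke the Kazhdan--Wenzl classification of tensor categories with $\mathfrak{sl}_2$-type fusion rules \cite{KW}. First I would record that $\cO_c^R$ is a fusion category: it is a rigid tensor subcategory of the ribbon category $\overline{\cO_c'}$ (all its simple objects $\cL_{1,s}$, $1\leq s\leq p-1$, are self-dual), it has exactly $p-1$ simple objects, and by the $r=r'=1$ case of Proposition \ref{prop:ss_fus_rules} its fusion rules are
\begin{equation*}
 \cL_{1,s}\boxtimes\cL_{1,s'}\cong\bigoplus_{\substack{\ell=|s-s'|+1\\ \ell+s+s'\equiv 1\,(\mathrm{mod}\,2)}}^{\min(s+s'-1,\,2p-1-s-s')}\cL_{1,\ell},
\end{equation*}
which are exactly the Verlinde (Frenkel--Zhu) fusion rules of $SU(2)_{p-2}$. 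On the other side, since $-2+p$ is a non-negative integer, $L_{-2+p}(\mathfrak{sl}_2)$ is rational and $C_2$-cofinite, so $KL_{-2+p}(\mathfrak{sl}_2)$ is a modular (in particular fusion) tensor category with the same $p-1$ simple objects and the same fusion rules, matching the $s$-dimensional integrable module with $\cL_{1,s}$. Both categories are thus generated as tensor categories by their ``fundamental'' self-dual object, namely $\cL_{1,2}$ and the $2$-dimensional integrable module $X$, each satisfying $Y\boxtimes Y\cong\cL_{1,1}\oplus(\text{spin-}1\text{ object})$.

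Next I would apply the classification: the monoidal structure on a category with $SU(2)_{p-2}$ fusion rules generated by such a self-dual object is the Temperley--Lieb--Jones structure, determined entirely by a single scalar — the loop parameter $d=e\circ i$ attached to the generating object (equivalently, by the Kazhdan--Wenzl invariant $q+q^{-1}$ realizing its categorical dimension). Concretely, the fusion categories with these fusion rules are the Galois conjugates, distinguished by the value $\pm 2\cos(\pi m/p)$ of the fundamental object with $\gcd(m,2p)=1$, and $KL_{-2+p}(\mathfrak{sl}_2)$ is the case $m=1$, i.e. $q=e^{i\pi/p}$. Hence it suffices to show that the braiding- and twist-independent dimension of $\cL_{1,2}$ in $\cO_c^R$ agrees with that of $X$ in $KL_{-2+p}(\mathfrak{sl}_2)$ up to the symmetry $q\mapsto q^{-1}$.

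For this I would use equation \eqref{eqn:L12_left_trace}, which records the tensor-categorical invariant $e\circ i=-2\cos(\pi/p)\,\Id_{\cL_{1,1}}$, where $e$ and $i$ are the evaluation and coevaluation exhibiting $\cL_{1,2}$ as self-dual. Its absolute value $2\cos(\pi/p)=2\cos(\pi\cdot 1/p)$ forces $m=1$ and rules out the Galois conjugates. The residual sign is explained by the self-duality type of the fundamental object: just as the $2$-dimensional representation of $\mathfrak{sl}_2$ is symplectic (Frobenius--Schur indicator $-1$), the analogous quantity $e\circ i$ for $X$ in $KL_{-2+p}(\mathfrak{sl}_2)$ equals $-\dim X=-2\cos(\pi/p)$, matching $\cL_{1,2}$. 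With the distinguishing invariants in agreement, \cite{KW} then yields a tensor equivalence $\cO_c^R\simeq KL_{-2+p}(\mathfrak{sl}_2)$.

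I expect the main obstacle to be the precise bookkeeping in this last comparison: confirming the exact form of the Kazhdan--Wenzl invariant relevant to \emph{monoidal} (rather than braided) equivalence, and verifying that the sign coming from the symplectic self-duality of the fundamental object in $KL_{-2+p}(\mathfrak{sl}_2)$ is $-2\cos(\pi/p)$ and not $+2\cos(\pi/p)$, so that the $e\circ i$ values genuinely coincide. Should any residual sign persist, it would be absorbed by twisting the rigidity structure along the universal $\ZZ/2\ZZ$-grading (by which $\cL_{1,s}$ has degree $s+1\bmod 2$), an operation that does not alter the underlying tensor category; this is the analogue, on the $\cO_c^R$ side, of the abelian $3$-cocycle twist appearing for $\cO_c^L$.
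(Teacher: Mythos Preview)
Your approach is essentially the same as the paper's: both invoke Kazhdan--Wenzl and pin down the tensor category using the monoidal invariant $e\circ i=-2\cos(\pi/p)$ from \eqref{eqn:L12_left_trace}. The one place where your hedged remark goes astray is the claim that the $\ZZ/2\ZZ$ cocycle twist ``does not alter the underlying tensor category''---it does; the paper resolves the residual ambiguity instead by observing (via a second application of \cite{KW}) that $\cC(q,\mathfrak{sl}_2)$ and $\cC(-q,\mathfrak{sl}_2)^\tau$ are tensor equivalent, so either candidate matching the value $-e^{\pi i/p}-e^{-\pi i/p}$ already gives $KL_{-2+p}(\mathfrak{sl}_2)$.
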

\begin{proof}
From \cite{F}, the category $KL_{-2+p}(\mathfrak{sl}_2)$  is equivalent (as modular tensor categories) to the semisimplification of the category of tilting modules for the Lusztig quantum group $U_q(\mathfrak{sl}_2)$ at $q = e^{\pi i/p}$ \cite{AnP}. We denote this category by $\cC(q, \mathfrak{sl}_2)$.

Proposition \ref{prop:ss_fus_rules} shows that the Grothendieck rings of the categories $\cO_c^R$ and $\cC(q,\mathfrak{sl}_2)$ are isomorphic under the map $[\cL_{1,s}] \rightarrow [V_{s-1}]$, with $V_{s-1}$ the $s$-dimensional irreducible representation of $U_{q}(\mathfrak{sl}_2)$. Then by \cite[Theorem $A_\ell$]{KW}, $\cO_c^R$ is tensor equivalent to $\cC(\tilde{q}, \mathfrak{sl}_2)^{\tau}$, where $\tilde{q}^2$ is a primitive root of unity of order $p$ (unique up to $\tilde{q}^2\rightarrow\tilde{q}^{-2}$) and $\tau$ denotes modification of the associativity isomorphisms in $\cC(\tilde{q}, \mathfrak{sl}_2)$ by a $3$-cocycle on $\ZZ/2\ZZ$. Up to coboundaries, there is only one non-trivial $3$-cocycle $\tau$ on $\ZZ/2\ZZ$: it modifies the usual associativity isomorphism $V_1\otimes(V_1\otimes V_1)\rightarrow(V_1\otimes V_1)\otimes V_1$ in $\cC(\tilde{q},\mathfrak{sl}_2)$ by a sign.

The tensor categories $\cC(\tilde{q},\mathfrak{sl}_2)$ for various $2p$th roots of unity can be distinguished using the evaluation $e_{V_1}: V_1^*\otimes V_1\rightarrow\CC$ and coevaluation $i_{V_1}: \CC\rightarrow V_1\otimes V_1^*$ (see for example \cite[Exercise 8.18.8]{EGNO}). Specifically, if we identify $V_1=V_1^*=V_1^{**}$, then $e_{V_1}\circ i_{V_1}\in\CC$ is an invariant of the tensor category structure on $\cC(\tilde{q},\mathfrak{sl}_2)$, and in fact
\begin{equation}\label{eqn:quantum_left_tr}
 e_{V_1}\circ i_{V_1} = -\tilde{q}-\tilde{q}^{-1}.
\end{equation}
If $\tau$ is the non-trivial $3$-cocycle on $\ZZ/2\ZZ$, then  $e_{V_1}\circ i_{V_1}= \tilde{q}+\tilde{q}^{-1}$ in $\cC(\tilde{q},\mathfrak{sl}_2)^\tau$, since modification of $\cA_{V_1,V_1,V_1}$ by a sign means that either $e_{V_1}$ or $i_{V_1}$ should be modified by a sign to get rigidity.

For our tensor category $\cO_c^R$, we showed in \eqref{eqn:L12_left_trace} that
\begin{equation*}
 e_{\cL_{1,2}}\circ i_{\cL_{1,2}} =-2\cos(\pi/p) =-\frac{\sin(2\pi/p)}{\sin(\pi/p)}= -\frac{e^{2\pi i/p}-e^{-2\pi i/p}}{e^{\pi i/p}-e^{-\pi i/p}} =-e^{\pi i/p}-e^{-\pi i/p}.
\end{equation*}
Comparing with \eqref{eqn:quantum_left_tr}, we see that $\cO_c^R$ must be tensor equivalent to either $\cC(q,\mathfrak{sl}_2)$ or $\cC(-q,\mathfrak{sl}_2)^\tau$. But these two quantum group categories are equivalent to each other: Since $\pm q$ square to the same primitive $p$th root of unity, \cite{KW} implies that $\cC(q,\mathfrak{sl}_2)$ is tensor equivalent to a $3$-cocycle twist of $\cC(-q,\mathfrak{sl}_2)$, and this cocycle has to be the non-trivial one because $\cC(q,\mathfrak{sl}_2)$ and $\cC(-q,\mathfrak{sl}_2)$ are not tensor equivalent. We conclude that $\cO_c^R$ is tensor equivalent to $\cC(q,\mathfrak{sl}_2)$, and thus also to the tensor category of $L_{-2+p}(\mathfrak{sl}_2)$-modules.
\end{proof}

\begin{rem}
The appearance of affine $\mathfrak{sl}_2$ tensor categories in the semisimplification of $\cO_c$ is not surprising because the Virasoro algebra at central charge $13-6p-6p^{-1}$ is the quantum Drinfeld-Sokolov reduction \cite{FFr} of both universal affine vertex operator algebras $V_{-2+1/p}(\mathfrak{sl}_2)$ and $V_{-2+p}(\mathfrak{sl}_2)$  (see also \cite[Chapter 15]{FB}).
\end{rem}

\begin{rem}
As a braided tensor category, $\overline{\cO_c'}$ is not quite the Deligne product of $\cO_c^L$ and $\cO_c^R$, since these two subcategories do not quite centralize each other. Indeed, the balancing equation for monodromies implies
\begin{equation*}
 \cR_{\cL_{r,1},\cL_{1,s}}^2 =\theta_{\cL_{r,s}}\circ(\theta_{\cL_{r,1}}^{-1}\tens\theta_{\cL_{1,s}}^{-1}) = e^{2\pi i(h_{r,s}-h_{r,1}-h_{1,s})}=e^{\pi i (r+s-rs-1)},
\end{equation*}
which is not trivial if $r,s\in2\ZZ$.
\end{rem}

\section{Connections between Virasoro and triplet vertex operator algebras}

In this section, we show how to obtain basic results in the representation theory of triplet vertex operator algebras $\cW(p)$ using extension theory \cite{HKL, CKM, CMY1} applied to the Virasoro category $\cO_c^0$. Then, we show that the Virasoro category $\cO_c^0$ is braided tensor equivalent to the $PSL(2,\CC)$-equivariantization of the category of grading-restricted generalized $\cW(p)$-modules.

\subsection{Representation theory of triplet vertex operator algebras}

We have already used the vertex operator algebra embedding $V_c \subseteq \cW(p)$ in Section \ref{sec:fus_and_rig}, where $c=13-6p-6p^{-1}$ for $p > 1$ an integer. The triplet algebra $\cW(p)$ is $C_2$-cofinite \cite{AM_trip}, so by \cite{Hu_C2}, every grading-restricted generalized $\cW(p)$ module has finite length, the category $\cC_{\cW(p)}$ of grading-restricted generalized $\cW(p)$-modules has the vertex algebraic braided tensor category structure of \cite{HLZ8}, and every irreducible $\cW(p)$-module has a projective cover in $\cC_{\cW(p)}$. Two of these projective covers were constructed explicitly in \cite{AM_log_mods}, and the remaining ones were obtained in \cite{NT}. Fusion rules and rigidity of $\cC_{\cW(p)}$ were established in \cite{TW}. We now rederive these results as a straightforward consequence of the braided tensor category structure on $\cO_c^0$; we would especially like to emphasize that our tensor-categorical approach provides an alternative to the technical projective cover constructions in \cite{NT}.

To begin, we recall from \cite{AM_trip} that $\cW(p)$ has $2p$ distinct irreducible modules, which we label $\cW_{r,s}$ for $r=1,2$ and $1\leq s\leq p$, with $\cW_{1,1}$ isomorphic to $\cW(p)$ itself. As $V_c$-modules,
\begin{equation}\label{dec:triplet}
 \cW_{r,s}\cong\bigoplus_{n=0}^\infty (2n+r)\cdot\cL_{2n+r,s}.
\end{equation}
This means that every irreducible $\cW(p)$-module is an object in the direct limit completion $\ind(\cO_c)$, which consists of all generalized $V_c$-modules that are unions of their $C_1$-cofinite submodules. As shown in \cite{CMY1}, $\ind(\cO_c)$ has the vertex algebraic braided tensor category structure of \cite{HLZ8}, and thus $\cW(p)$ is a commutative algebra object in $\ind(\cO_c)$ \cite{HKL}. We can then define $\rep^0\cW(p)$ to be the category of generalized $\cW(p)$-modules which, as $V_c$-modules, are objects of $\ind(\cO_c)$. This category also has the vertex algebraic braided tensor category structure of \cite{HLZ8} (see \cite[Theorem 3.65]{CKM} and \cite[Theorem 7.7]{CMY1}). From Proposition 3.1.3 and Remark 3.1.4 of \cite{CMY2}, $\cC_{\cW(p)}$ is a subcategory of $\rep^0\cW(p)$; since $\cC_{\cW(p)}$ also has braided tensor category structure, it is a tensor subcategory of $\rep^0\cW(p)$.

We also have the category $\rep\cW(p)$ of not-necessarily-local $\cW(p)$-modules which, as $V_c$-modules, are objects of $\ind(\cO_c)$. The restriction functor $\cG:\rep\cW(p)\rightarrow\ind(\cO_c)$ which forgets the $\cW(p)$-action on objects of $\rep\cW(p)$ (but remembers the underlying $\cV ir$-module structure) has a left adjoint by \cite[Theorem 1.6(2)]{KO}; see also \cite[Lemma 7.8.12]{EGNO} or \cite[Lemma 2.61]{CKM}. The left adjoint $\cF$ is called induction, and it is a tensor functor; it is given on objects and morphisms of $\ind(\cO_c)$ by
\begin{align*}
 \cF(\cW) = \cW(p)\tens\cW,\qquad \cF(f) = \Id_{\cW(p)}\tens f.
\end{align*}
As $\cF$ and $\cG$ form an adjoint pair of functors, the Frobenius reciprocity relation
\begin{equation*}
 \hom_{\cW(p)}(\cF(\cW),\cX)\cong\hom_{V_c}(\cW,\cG(\cX))
\end{equation*}
holds for objects $\cW$ in $\ind(\cO_c)$ and $\cX$ in $\rep\cW(p)$. Moreover,
since the modules $\cL_{2n+1,1}$ appearing in the decomposition of $\cW(p)$ as a $V_c$-module are rigid, induction is an exact functor (see the similar \cite[Proposition 3.2.2]{CMY2} and its proof).

\begin{lem}\label{lem:ind_functor}
Induction restricts to a functor  $\cF:\cO_c^0 \rightarrow \cC_{\cW(p)}$.
\end{lem}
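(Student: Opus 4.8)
The plan is to verify the two conditions that characterize membership in $\cC_{\cW(p)}$ for the induced module $\cF(\cW)=\cW(p)\tens\cW$ when $\cW$ lies in $\cO_c^0$: first that $\cF(\cW)$ is a genuine (local) $\cW(p)$-module, i.e.\ an object of $\rep^0\cW(p)$, and second that it is grading-restricted. Since the preceding discussion has already recorded that $\cC_{\cW(p)}$ is exactly the full subcategory of grading-restricted objects inside $\rep^0\cW(p)$, these two checks suffice; functoriality of the restriction then follows at once, since $\cF$ is already a functor into $\rep\cW(p)$.

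For locality, I would invoke the general theory of induction from $\ind(\cO_c)$ to modules for the commutative algebra $\cW(p)$ developed in \cite{CKM, CMY1}: the induced module $\cF(\cW)$ is local precisely when the monodromy of $\cW$ with $\cW(p)$ is trivial, i.e.\ $\cR_{\cW(p),\cW}\circ\cR_{\cW,\cW(p)}=\Id$. Using the decomposition $\cW(p)\cong\bigoplus_{n\geq 0}(2n+1)\cdot\cL_{2n+1,1}$ as a $V_c$-module (the $(r,s)=(1,1)$ case of \eqref{dec:triplet}), together with the naturality and additivity of the braiding, this monodromy is trivial if and only if $\cR_{\cL_{2n+1,1},\cW}\circ\cR_{\cW,\cL_{2n+1,1}}=\Id_{\cW\tens\cL_{2n+1,1}}$ for every $n\in\NN$. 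But this is exactly the defining condition of $\cO_c^0$ in Definition \ref{def:oc0}. Hence $\cF(\cW)\in\rep^0\cW(p)$.

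For grading restriction, I would restrict $\cF(\cW)$ back to $V_c$: since the tensor product in $\ind(\cO_c)$ commutes with the direct limit defining $\cW(p)$ \cite{CMY1}, the forgetful functor gives
\begin{equation*}
 \cG\cF(\cW)\cong\bigoplus_{n\geq 0}(2n+1)\cdot\bigl(\cL_{2n+1,1}\tens\cW\bigr).
\end{equation*}
Each summand $\cL_{2n+1,1}\tens\cW$ is a finite-length module in $\cO_c$ and hence grading-restricted. Moreover, since $\cL_{2n+1,1}$ has lowest conformal weight $h_{2n+1,1}=(n^2+n)p-n$, the fusion rule \eqref{fr1} of Theorem \ref{thm:basic_fusion_rules} shows that the minimal conformal weight occurring in $\cL_{2n+1,1}\tens\cW$ grows without bound as $n\to\infty$. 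Consequently, for each fixed $h\in\CC$ only finitely many summands contribute to the generalized $L_0$-eigenspace $(\cG\cF(\cW))_{[h]}$, so this eigenspace is finite-dimensional; the weights are bounded below on each coset of $\CC/\ZZ$; and, because each $h_{2n+1,1}$ is an integer, they lie in the same finitely many cosets modulo $\ZZ$ as the weights of $\cW$. Thus $\cG\cF(\cW)$, and hence $\cF(\cW)$, is grading-restricted, so $\cF(\cW)\in\cC_{\cW(p)}$.

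The main obstacle is the locality step: it rests on correctly matching the categorical criterion for an induced module to be local with the monodromy condition defining $\cO_c^0$, and on checking that the braiding of $\cO_c$ extends to $\ind(\cO_c)$ compatibly with the infinite direct sum $\cW(p)$, so that triviality of the monodromy with each summand $\cL_{2n+1,1}$ does imply triviality of the monodromy with $\cW(p)$ as a whole. The grading-restriction step is comparatively routine once the growth of $h_{2n+1,1}$ is observed.
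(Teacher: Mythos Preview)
Your proof is correct and the locality argument is essentially identical to the paper's: both use the $V_c$-decomposition of $\cW(p)$ and naturality of the braiding to reduce the monodromy condition with $\cW(p)$ to the defining condition of $\cO_c^0$, then invoke \cite[Theorem~2.65]{CKM}.

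For grading restriction you take a different route from the paper. The paper argues abstractly: since induction is exact, finite-length modules in $\cO_c$ induce to finite-length objects of $\rep\cW(p)$, and finite-length objects of $\rep^0\cW(p)$ are automatically grading-restricted and hence lie in $\cC_{\cW(p)}$. You instead compute $\cG\cF(\cW)$ explicitly as $\bigoplus_{n\geq 0}(2n+1)\cdot(\cL_{2n+1,1}\tens\cW)$ and use the quadratic growth of $h_{2n+1,1}$ to conclude that each weight space receives contributions from only finitely many summands. Your approach is more hands-on and gives concrete information about the weight-space structure of $\cF(\cW)$; the paper's is shorter and uses only the already-established exactness of $\cF$. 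One small point: your invocation of \eqref{fr1} literally applies only to simple $\cW$, so for general $\cW\in\cO_c^0$ you should pass through a composition series and use exactness of $\cL_{2n+1,1}\tens\bullet$ to bound the lowest weight of $\cL_{2n+1,1}\tens\cW$; this is routine.
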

\begin{proof}
From the $r=s=1$ case of \eqref{dec:triplet} and naturality of the braiding,
\begin{align*}
\cR_{\cW, \cW(p)}\circ \cR_{\cW(p), \cW} & = \bigoplus_{n =0}^{\infty} (2n+1)\cdot\cR_{\cW, \cL_{2n+1}} \circ \cR_{\cL_{2n+1}, \cW}\nonumber\\
&= \bigoplus_{n =0}^{\infty}(2n+1)\cdot \Id_{\cL_{2n+1}\boxtimes \cW}
 =\Id_{\cW(p)\boxtimes \cW}
\end{align*}
if $\cW$ is in $\cO_c^0$. Then \cite[Theorem~2.65]{CKM} implies $\cF(\cW)$ is an object of $\rep^0\cW(p)$. Also, finite-length modules in $\cO_c$ induce to finite-length modules in $\rep\cW(p)$ because induction is exact and because simple modules in $\cO_c$ induce to finite-length $\cW(p)$-modules, as we will compute in Proposition \ref{prop:inductions} below. In particular, modules in $\cO_c^0$ induce to finite-length modules in $\rep^0\cW(p)$, which are necessarily grading restricted and thus are in $\cC_{\cW(p)}$.
\end{proof}

\begin{rem}
 Our definition of $\cO_c^0$ was chosen so that $\cO_c^0$ is precisely the subcategory of modules in $\cO_c$ that induce to local $\cW(p)$-modules (in $\rep^0\cW(p)$).
\end{rem}

We now compute the inductions of simple $V_c$-modules. First we need the following lemma, which is just basic algebra:
\begin{lem}
 Suppose $\cX$ is an object of $\rep\cW(p)$ and $\cW$ is an irreducible $\cW(p)$-module such that $\dim\hom_{\cW(p)}(\cX,\cW)<\infty$. Then there is a surjective $\cW(p)$-homomorphism $\cX\rightarrow\hom_{\cW(p)}(\cX,\cW)^*\otimes\cW$.
\end{lem}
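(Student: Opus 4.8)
The statement is essentially a piece of basic linear algebra, transported to the setting of $\cW(p)$-modules, so the plan is to construct the desired surjection by pairing $\cX$ against a basis of the finite-dimensional Hom-space. Let $H = \hom_{\cW(p)}(\cX,\cW)$ and set $n = \dim H < \infty$. First I would choose a basis $\varphi_1,\ldots,\varphi_n$ of $H$ and consider the single $\cW(p)$-homomorphism
\begin{equation*}
 \Phi = (\varphi_1,\ldots,\varphi_n): \cX \longrightarrow \cW^{\oplus n} \cong H^*\otimes\cW,
\end{equation*}
where the identification uses the dual basis of $H^*$ to label the copies of $\cW$. This $\Phi$ is manifestly a $\cW(p)$-module map since each $\varphi_i$ is, and the target is canonically $\hom_{\cW(p)}(\cX,\cW)^*\otimes\cW$.

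The only thing to verify is that $\Phi$ is surjective. Here I would use that $\cW$ is irreducible, so $\im\Phi$ is a $\cW(p)$-submodule of $\cW^{\oplus n}$ whose projection to each of the $n$ summands is all of $\cW$ (each $\varphi_i$ is nonzero, being a basis element, and its image is a nonzero, hence full, submodule of the irreducible $\cW$). The key step is then to rule out any proper submodule of $\cW^{\oplus n}$ that still surjects onto every factor. The clean way to do this is to argue by contradiction: if $\im\Phi \neq \cW^{\oplus n}$, then by irreducibility and finiteness $\cW^{\oplus n}/\im\Phi$ is a nonzero quotient all of whose composition factors are copies of $\cW$, and projecting, one produces a nontrivial linear relation among the $\varphi_i$ modulo a single copy of $\cW$, contradicting linear independence in $H$. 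Equivalently, I would observe that $\hom_{\cW(p)}(\cW^{\oplus n},\cW)$ has dimension exactly $n$ (since $\cW$ is irreducible, so that $\End_{\cW(p)}(\cW)$ is a division ring, and over $\CC$ with finite-length modules it is just $\CC$ by Schur), and that the composite maps $\cW^{\oplus n}\to\cW$ separating the summands pull back under $\Phi$ precisely to the $\varphi_i$; linear independence of the $\varphi_i$ forces $\Phi$ to have no kernel direction that could shrink its image.

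I expect the only genuine subtlety—and hence the main obstacle—to be the appeal to Schur's lemma guaranteeing $\End_{\cW(p)}(\cW)\cong\CC$, which requires that $\cW$, as an irreducible object in $\cC_{\cW(p)}$ (a finite-length category over $\CC$), have trivial endomorphism algebra. This holds because every irreducible module here is of finite length with a one-dimensional lowest graded piece on which endomorphisms act by scalars, so any nonzero endomorphism is an isomorphism acting as a single scalar. Once this is in hand, the counting argument showing $\Phi$ is surjective is completely formal, and no vertex-algebraic input beyond the fact that $\cC_{\cW(p)}$ (and $\rep\cW(p)$) is an abelian $\CC$-linear category with finite-length objects is needed.
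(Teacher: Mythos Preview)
Your proposal is correct and follows essentially the same approach as the paper: both construct the map via a basis $\{\varphi_i\}$ of the Hom space and prove surjectivity by showing that any nonzero map from the cokernel to $\cW$ would force a nontrivial linear relation among the $\varphi_i$. One small inaccuracy: the irreducible $\cW(p)$-modules do not all have one-dimensional lowest graded pieces (for instance $\cW_{2,s}$ has a two-dimensional lowest weight space), but Schur's lemma still holds by the usual argument using any eigenvalue of an endomorphism on a finite-dimensional weight space, so this does not affect the argument.
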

\begin{proof}
 Let $\lbrace f_i\rbrace_{i=1}^I$ be a basis of $\hom_{\cW(p)}(\cX,\cW)$ and let $\lbrace f_i^*\rbrace_{i=1}^I$ be the corresponding dual basis of $\hom_{\cW(p)}(\cX,\cW)^*$. Then we have the $\cW(p)$-homomorphism
 \begin{align*}
  F: \cX & \rightarrow \hom_{\cW(p)}(\cX,\cW)^*\otimes\cW\nonumber\\
  b & \mapsto \sum_{i=1}^I f_i^*\otimes f_i(b).
 \end{align*}
To show that $F$ is surjective, note that the cokernel $\coker F=\hom_{\cW(p)}(\cX,\cW)^*\otimes\cW/\im F$ is isomorphic to a finite direct sum of copies of $\cW$ (since $\cW$ is irreducible), so $F$ is surjective if and only if $\hom_{\cW(p)}(\coker F,\cW)=0$.

Thus suppose $g\in\hom_{\cW(p)}(\coker F,\cW)$; it is enough to show that $g\circ q=0$, where $q: \hom_{\cW(p)}(\cX,\cW)^*\otimes\cW\rightarrow\coker F$ is the natural quotient map. Now, because $\cW$ is irreducible, there is a linear isomorphism
\begin{align*}
 \hom_{\cW(p)}(\cX,\cW) & \rightarrow\hom_{\cW(p)}\big(\hom_{\cW(p)}(\cX,\cW)^*\otimes \cW,\cW\big)\nonumber\\
 f & \mapsto \left[f^*\otimes w\mapsto\langle f^*,f\rangle w\right]
\end{align*}
Thus $g\circ q$ has this form for some $f\in\hom_{\cW(p)}(\cX,\cW)$, and moreover, $g\circ q$ annihilates $\im F$. In other words,
\begin{align*}
 0 = (g\circ q)(F(b)) =\sum_{i=1}^I (g\circ q)\big(f_i^*\otimes f_i(b)\big) =\sum_{i=1}^I \langle f_i^*,f\rangle f_i(b)=f(b)
\end{align*}
for all $b\in\cX$. Thus $f=0$ and therefore $g\circ q=0$ as well, proving $F$ is surjective.
\end{proof}

\begin{prop}\label{prop:inductions}
 For $r\geq 1$ and $1\leq s\leq p$,
 \begin{equation}\label{ind:irred}
 \cF(\cL_{r,s})\cong r\cdot\cW_{\bar{r},s},
 \end{equation}
 where $r\cdot$ denotes the direct sum of $r$ copies and $\bar{r}=1$ or $2$ according as $r$ is even or odd.
\end{prop}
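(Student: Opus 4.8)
The plan is to produce a surjection from $\cF(\cL_{r,s})$ onto $r$ copies of a single irreducible $\cW(p)$-module by combining Frobenius reciprocity with the preceding lemma, and then to promote this surjection to an isomorphism by comparing graded dimensions over $V_c$.

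First I would restrict $\cF(\cL_{r,s})$ to $V_c$ and decompose it. Since $\cW(p)\cong\bigoplus_{n\geq 0}(2n+1)\cdot\cL_{2n+1,1}$ as a $V_c$-module by the $r=s=1$ case of \eqref{dec:triplet}, and since $\cF=\cW(p)\tens\bullet$ is exact, we get
\begin{equation*}
 \cF(\cL_{r,s})\big|_{V_c}\cong\bigoplus_{n\geq 0}(2n+1)\cdot\big(\cL_{2n+1,1}\tens\cL_{r,s}\big).
\end{equation*}
The $\mathfrak{sl}_2$-type fusion rule \eqref{fr1} shows that every $\cL_{k,s}$ occurring in $\cL_{2n+1,1}\tens\cL_{r,s}$ has $k\equiv r\pmod 2$. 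Comparing with \eqref{dec:triplet}, the unique irreducible $\cW(p)$-module whose $V_c$-composition factors $\cL_{k,s}$ all satisfy $k\equiv r\pmod 2$ is $\cW_{\bar r,s}$; hence $\cF(\cL_{r,s})$ can involve no other irreducible $\cW(p)$-module.

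Next I would count homomorphisms. By Frobenius reciprocity and \eqref{dec:triplet},
\begin{equation*}
 \dim\hom_{\cW(p)}\big(\cF(\cL_{r,s}),\cW_{a,s'}\big)=\dim\hom_{V_c}\big(\cL_{r,s},\cG(\cW_{a,s'})\big),
\end{equation*}
which vanishes unless $s'=s$ and $a=\bar r$, in which case $\cL_{r,s}=\cL_{2n+\bar r,s}$ with $n=(r-\bar r)/2$ occurs in $\cG(\cW_{\bar r,s})$ with multiplicity $2n+\bar r=r$. Thus $\dim\hom_{\cW(p)}(\cF(\cL_{r,s}),\cW_{\bar r,s})=r$, and applying the preceding lemma with $\cX=\cF(\cL_{r,s})$ and $\cW=\cW_{\bar r,s}$ produces a surjective $\cW(p)$-homomorphism
\begin{equation*}
 \cF(\cL_{r,s})\longrightarrow\hom_{\cW(p)}\big(\cF(\cL_{r,s}),\cW_{\bar r,s}\big)^*\otimes\cW_{\bar r,s}\cong r\cdot\cW_{\bar r,s}.
\end{equation*}

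Finally I would upgrade this surjection to an isomorphism by matching graded dimensions. Using \eqref{fr1} in the decomposition above, the multiplicity of a given $\cL_{m,s}$ (with $m\equiv r\bmod 2$) in $\cF(\cL_{r,s})$ is $\sum_n(2n+1)$ over those $n$ for which $\cL_{m,s}$ appears in $\cL_{2n+1,1}\tens\cL_{r,s}$, namely $|m-r|/2\leq n\leq (m+r)/2-1$; this arithmetic sum equals $\min(m,r)\cdot\max(m,r)=mr$. On the other side, $\cL_{m,s}$ occurs in $\cW_{\bar r,s}$ with multiplicity $m$ by \eqref{dec:triplet}, hence with multiplicity $rm$ in $r\cdot\cW_{\bar r,s}$. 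So $\cF(\cL_{r,s})$ and $r\cdot\cW_{\bar r,s}$ have equal dimensions in every $L_0$-eigenspace, and a degree-preserving surjection between two such modules is an isomorphism. I expect the only place with real content to be this graded-dimension computation, in particular the identity $\sum_n(2n+1)=mr$ extracted from the $\mathfrak{sl}_2$ fusion rules; everything else is formal once one knows $\cF$ is exact, satisfies Frobenius reciprocity, and lands in $\cC_{\cW(p)}$. The point needing care is that $\cF(\cL_{r,s})$ and $\cW_{\bar r,s}$ have infinite length over $V_c$, so the final step must be phrased as an equality of graded dimensions (finite in each conformal weight) rather than of composition series.
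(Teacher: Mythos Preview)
Your proposal is correct and follows essentially the same route as the paper: Frobenius reciprocity plus the preceding lemma yield a surjection onto $r\cdot\cW_{\bar r,s}$, and equality of $V_c$-multiplicities (equivalently, graded dimensions) forces this surjection to be an isomorphism. Your multiplicity computation $\sum_n(2n+1)=((m+r)/2)^2-(|m-r|/2)^2=mr$ is a cleaner packaging than the paper's, which splits into the two cases $m\leq(r-\bar r)/2$ and $m\geq(r-\bar r)/2$ and sums each separately; the paragraph identifying $\cW_{\bar r,s}$ as the only possible irreducible constituent is correct but not needed once you compare graded dimensions.
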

\begin{proof}
 By Frobenius reciprocity and \eqref{dec:triplet},
 \begin{equation*}
\dim\hom_{\cW(p)}(\cF(\cL_{r,s}),\cW_{\bar{r},s}) =\dim\hom_{V_c}(\cL_{r,s},\cG(\cW_{\bar{r},s})) = r,
 \end{equation*}
 so by the preceding lemma, there is a surjective homomorphism $F: \cF(\cL_{r,s})\rightarrow r\cdot\cW_{\bar{r},s}$. To show that $F$ is also injective, it is enough to show that $\cF(\cL_{r,s})$ and $r\cdot\cW_{\bar{r},s}$ are isomorphic as grading-restricted $V_c$-modules, since then they will have the same graded dimension. Indeed, using the fusion rules of Theorems \ref{thm:Lr1_fus_rules} and \ref{thm:Lr_Ls_fusion},
\begin{align*}
 \cG(\cF(\cL_{r,s})) & \cong \bigoplus_{n=0}^\infty (2n+1)\cdot(\cL_{2n+1,1}\tens\cL_{r,s})\nonumber\\
 &\cong\bigoplus_{n=0}^\infty\bigoplus_{k=0}^{\min(r-1,2n)} (2n+1)\cdot\cL_{2n+r-2k,s}.
\end{align*}
For any $m\in\NN$, we need to determine the multiplicity of $\cL_{2m+\bar{r},s}$ in this direct sum: we get $2n+1$ copies of $\cL_{2m+\bar{r}}$ for each $k=n-m+\frac{r-\bar{r}}{2}$ such that
\begin{equation*}
 0\leq n-m+\frac{r-\bar{r}}{2}\leq\min(r-1,2n),
\end{equation*}
that is,
\begin{equation*}
 \left\vert m-\frac{r-\bar{r}}{2}\right\vert\leq n\leq m-1+\frac{r+\bar{r}}{2}.
\end{equation*}
Thus for $m\leq\frac{r-\bar{r}}{2}$, the multiplicity of $\cL_{2m+\bar{r},s}$ is
\begin{align*}
 \sum_{i=0}^{2m+\bar{r}-1} & \left[2\left(-m+\frac{r-\bar{r}}{2}+i\right)+1\right]\nonumber\\  &=(2m+\bar{r})(-2m+r-\bar{r}+1)+2\cdot\frac{(2m+\bar{r}-1)(2m+\bar{r})}{2} =r\cdot(2m+\bar{r}),
\end{align*}
while for $m\geq\frac{r-\bar{r}}{2}$, the multiplicity of $\cL_{2m+\bar{r},s}$ is
\begin{equation*}
 \sum_{i=0}^{r-1} \left[2\left(m-\frac{r-\bar{r}}{2}+i\right)+1)\right] =r\cdot\left(2m-r+\bar{r}+1\right)+2\cdot\frac{(r-1)r}{2} =r\cdot(2m+\bar{r}).
\end{equation*}
We conclude that
\begin{equation*}
 \cG(\cF(\cL_{r,s}))\cong r\cdot\bigoplus_{m=0}^\infty (2m+\bar{r})\cdot\cL_{2m+\bar{r},s}\cong \cG(r\cdot\cW_{\bar{r},s})
\end{equation*}
as required, where the last isomorphism comes from \eqref{dec:triplet}.
\end{proof}

Now we use Proposition \ref{prop:inductions} together with the fusion rules \eqref{fr1} and \eqref{fr2} for irreducible $V_c$-modules to determine fusion rules of irreducible $\cW(p)$-modules, previously proved in \cite{TW}:
\begin{thm}\label{TW}
\begin{itemize}
\item[(1)] The $\cW(p)$-module $\cW_{2,1}$ is a self-dual simple current with
\begin{equation}\label{walgebra: fr2}
\cW_{2,1}\boxtimes \cW_{r,s} \cong \cW_{3-r, s}
\end{equation}
for $r=1,2$ and $1\leq s\leq p$.
\item[(2)] The $\cW(p)$-module $\cW_{1,2}$ has fusion rules
\begin{equation}\label{walgebra: fr3}
\cW_{1,2}\boxtimes \cW_{r,s} \cong
\begin{cases}
\cW_{r, 2} \;\;\; &\mbox{if}\;\;\; s = 1\\
\cW_{r, s-1}\oplus \cW_{r, s+1}\;\;\; &\mbox{if}\;\;\; 2 \leq s \leq p-1
\end{cases}
\end{equation}
for $r=1,2$ and $1\leq s\leq p-1$.
\end{itemize}
\end{thm}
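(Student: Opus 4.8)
The plan is to deduce every one of these $\cW(p)$-module fusion rules from the already-established $V_c$-module fusion rules of Theorem~\ref{thm:basic_fusion_rules} by exploiting that the induction functor $\cF\colon\cO_c^0\to\cC_{\cW(p)}$ of Lemma~\ref{lem:ind_functor} is a braided tensor functor. Concretely, by the extension theory of \cite{CKM}, for modules $\cW_1,\cW_2$ in $\cO_c^0$ there is a natural isomorphism
\begin{equation*}
 \cF(\cW_1\boxtimes\cW_2)\cong\cF(\cW_1)\boxtimes\cF(\cW_2),
\end{equation*}
where the left-hand tensor product is taken in $\cO_c$ and the right-hand one in $\cC_{\cW(p)}$. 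Combined with the explicit inductions of Proposition~\ref{prop:inductions}, which via \eqref{dec:triplet} give $\cF(\cL_{1,s})\cong\cW_{1,s}$ and $\cF(\cL_{2,s})\cong 2\cdot\cW_{2,s}$, this lets me compute the induction of a $V_c$-fusion product in two ways and then cancel multiplicities. Such cancellation is legitimate because $\cW(p)$ is $C_2$-cofinite, so $\cC_{\cW(p)}$ is a finite-length, hence Krull--Schmidt, category: an isomorphism $m\cdot\cX\cong m\cdot\cY$ forces $\cX\cong\cY$.

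For part~(1), I would use $\cF(\cL_{2,1})\cong 2\cdot\cW_{2,1}$. To obtain $\cW_{2,1}\boxtimes\cW_{1,s}$, I apply $\cF$ to the fusion product $\cL_{2,1}\boxtimes\cL_{1,s}\cong\cL_{2,s}$ from \eqref{fr1}: the tensor-functor identity gives $2\cdot(\cW_{2,1}\boxtimes\cW_{1,s})\cong\cF(\cL_{2,s})\cong 2\cdot\cW_{2,s}$, so $\cW_{2,1}\boxtimes\cW_{1,s}\cong\cW_{2,s}=\cW_{3-1,s}$. Similarly, applying $\cF$ to $\cL_{2,1}\boxtimes\cL_{2,s}\cong\cL_{1,s}\oplus\cL_{3,s}$ from \eqref{fr1} yields $4\cdot(\cW_{2,1}\boxtimes\cW_{2,s})\cong\cW_{1,s}\oplus 3\cdot\cW_{1,s}\cong 4\cdot\cW_{1,s}$, whence $\cW_{2,1}\boxtimes\cW_{2,s}\cong\cW_{1,s}=\cW_{3-2,s}$. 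In particular $\cW_{2,1}\boxtimes\cW_{2,1}\cong\cW_{1,1}=\cW(p)$, so $\cW_{2,1}$ is an invertible, self-dual simple current.

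For part~(2), the same bookkeeping applies with $\cW_{1,2}\cong\cF(\cL_{1,2})$ in place of $\cW_{2,1}$ and the fusion rules \eqref{fr2} in place of \eqref{fr1}. For $r=1$, applying $\cF$ to $\cL_{1,2}\boxtimes\cL_{1,s}$ (which equals $\cL_{1,2}$ if $s=1$ and $\cL_{1,s-1}\oplus\cL_{1,s+1}$ if $2\le s\le p-1$) gives the stated value of $\cW_{1,2}\boxtimes\cW_{1,s}$ with no cancellation needed. For $r=2$, applying $\cF$ to $\cL_{1,2}\boxtimes\cL_{2,s}$ produces $2\cdot(\cW_{1,2}\boxtimes\cW_{2,s})$ on one side and $2\cdot\cW_{2,2}$ (if $s=1$) or $2\cdot\cW_{2,s-1}\oplus 2\cdot\cW_{2,s+1}$ (if $2\le s\le p-1$) on the other, and Krull--Schmidt cancellation yields the desired fusion rules.

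The individual computations are routine once this framework is in place, so the genuine content lies entirely in the cited fact that induction is a tensor functor together with the parity bookkeeping of Proposition~\ref{prop:inductions}. The one point requiring care is that the modules $\cW_{2,s}$ always appear in inductions with even multiplicity and never arise as a single induced module, so every fusion rule involving them must be extracted by cancellation of a factor of two (or four); I expect no obstacle beyond organizing these cancellations correctly, since the hard inputs---rigidity of $\cO_c$, the decomposition \eqref{dec:triplet}, and the computation of $\cF(\cL_{r,s})$---are already available.
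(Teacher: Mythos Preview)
Your proposal is correct and follows essentially the same approach as the paper: both arguments use that induction $\cF$ is a monoidal functor together with Proposition~\ref{prop:inductions} and the Virasoro fusion rules of Theorem~\ref{thm:basic_fusion_rules}, then cancel the multiplicities. The paper phrases the cancellation as reading off composition factors and their multiplicities rather than invoking Krull--Schmidt, but the content is identical.
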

\begin{proof}
We use the fact that induction is a monoidal functor. For \eqref{walgebra: fr2}, we have
\begin{align*}
 2r\cdot(\cW_{2,1}\tens\cW_{r,s}) & \cong \cF(\cL_{2,1})\tens\cF(\cL_{r,s}) \cong\cF(\cL_{2,1}\tens\cL_{r,s})\nonumber\\
 & \cong\left\lbrace\begin{array}{lll}
                     \cF(\cL_{2,s}) & \text{if} & r=1\\
                     \cF(\cL_{1,s})\oplus\cF(\cL_{3,s}) & \text{if} & r=2\\
                    \end{array}\right. \nonumber\\
                    & \cong\left\lbrace\begin{array}{lll}
                     2\cdot\cW_{2,s} & \text{if} & r=1\\
                     (1+3)\cdot\cW_{1,s} & \text{if} & r=2\\
                    \end{array}\right. \cong 2r\cdot\cW_{3-r,s}.
\end{align*}
From this we see that $\cW_{3-r,s}$ can be the only composition factor of $\cW_{2,1}\tens\cW_{r,s}$, occurring with multiplicity $1$. The proof of \eqref{walgebra: fr3}, using \eqref{fr2}, is similar.
\end{proof}

The category $\cC_{\cW(p)}$ also inherits rigidity from $\cO_c^0$:
\begin{thm}\label{thm:CW(p)_rigid}
The category $\cC_{\cW(p)}$ is rigid.
\end{thm}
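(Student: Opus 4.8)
The plan is to transport rigidity from $\cO_c^0$ to $\cC_{\cW(p)}$ along the induction functor $\cF$, first establishing rigidity of the simple $\cW(p)$-modules and then promoting it to all finite-length modules, exactly mirroring the strategy used for $\cO_c$ in Theorem \ref{thm:rigidity}.

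First I would use that $\cF\colon\cO_c^0\to\cC_{\cW(p)}$ from Lemma \ref{lem:ind_functor} is a braided tensor functor: this is the induction functor for the commutative algebra $\cW(p)$ in $\ind(\cO_c)$, which is strong monoidal by \cite{CKM, CMY1}. A strong monoidal functor carries dualizable objects to dualizable objects, since applying $\cF$ to a rigidity datum (evaluation and coevaluation) for an object $\cW$ produces a rigidity datum for $\cF(\cW)$ with dual $\cF(\cW')$. Because $\cO_c^0$ is rigid by Theorem \ref{thm:rigidity} (it is a ribbon subcategory of the rigid category $\cO_c$), every induced module $\cF(\cW)$ with $\cW$ in $\cO_c^0$ is therefore rigid in $\cC_{\cW(p)}$.

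Next I would deduce rigidity of each simple $\cW(p)$-module. By Proposition \ref{prop:inductions} we have $\cF(\cL_{r,s})\cong r\cdot\cW_{\bar r,s}$, so each simple module $\cW_{\bar r,s}$ is a retract (direct summand) of the rigid module $\cF(\cL_{r,s})$. Since the full subcategory of dualizable objects in any monoidal category with split idempotents is closed under retracts, and $\cC_{\cW(p)}$ is abelian (so idempotents split), every $\cW_{r,s}$ is rigid; concretely, one restricts the evaluation and coevaluation of $\cF(\cL_{r,s})$ along the projection onto and inclusion of the summand $\cW_{\bar r,s}$.

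Finally I would extend rigidity from the simple objects to all of $\cC_{\cW(p)}$. Because $\cW(p)$ is $C_2$-cofinite, $\cC_{\cW(p)}$ is a braided tensor category all of whose objects have finite length \cite{Hu_C2}, equipped with the contragredient duality functor, so it lies in the setting of \cite[Theorem 4.4.1]{CMY2}. Having shown every simple object is rigid, invoking that theorem yields rigidity of every finite-length object, completing the proof. The hard part is precisely this last step: rigidity does not pass automatically through extensions, so the content of \cite[Theorem 4.4.1]{CMY2} is that, under the finite-length and braided hypotheses, the contragredient $\cX'$ serves as a genuine dual, with the required evaluation and coevaluation isomorphisms verified on composition factors. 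Thus the main obstacle reduces to checking that $\cC_{\cW(p)}$ satisfies the hypotheses of that theorem, which it does by $C_2$-cofiniteness; the induction-theoretic steps above are then routine applications of the monoidal functoriality of $\cF$ and the retract stability of dualizable objects. (One could alternatively first note that the projective covers of the $\cW_{r,s}$ are retracts of the rigid projective modules $\cF(\cP_{r,s})$, giving rigidity of all projectives as well, but this is not needed once the extension theorem is in hand.)
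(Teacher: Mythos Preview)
Your proof is correct and follows essentially the same approach as the paper: reduce to rigidity of simple modules via \cite[Theorem 4.4.1]{CMY2} (using that $\cC_{\cW(p)}$ has finite-length objects and is closed under contragredients), then observe that each simple $\cW(p)$-module is a direct summand of the rigid induced module $\cF(\cL_{r,s})$ by Proposition \ref{prop:inductions}, hence rigid. The paper's proof is simply a condensed version of what you wrote; the only cosmetic difference is that for the transport step one needs $\cF$ to be monoidal rather than braided, and the paper cites \cite[Lemma 1.16]{KO}, \cite[Exercise 2.10.6]{EGNO}, or \cite[Proposition 2.77]{CKM} for the fact that summands of rigid objects are rigid.
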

\begin{proof}
Since $\cC_{\cW(p)}$ is the category of finite-length $\cW(p)$-modules, it is closed under contragredients and \cite[Theorem~4.4.1]{CMY2} implies that it is enough to prove that simple $\cW(p)$-modules are rigid. But this holds because by Proposition \ref{prop:inductions}, every simple $\cW(p)$-module is a summand of the induction of a rigid $V_c$-module (see for example \cite[Lemma 1.16]{KO}, \cite[Exercise 2.10.6]{EGNO}, or \cite[Proposition 2.77]{CKM}).
\end{proof}

Next, we use fusion rules and rigidity in $\cC_{\cW(p)}$  to obtain all projective covers of simple modules in $\cC_{\cW(p)}$; these modules have been constructed previously in \cite{AM_log_mods, NT}. The next proposition was obtained in \cite[Section 5.1]{NT} using results from \cite{AM_trip}, but we will repeat the proof for completeness:
\begin{prop}\label{prop:Wrp_proj}
 The simple $\cW(p)$-modules $\cW_{r,p}$ for $r=1,2$ are projective in $\cC_{\cW(p)}$. In particular, each $\cW_{r,p}$ is its own projective cover.
\end{prop}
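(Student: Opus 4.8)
The plan is to deduce both cases at once from the projectivity of $\cL_{r,p}$ in $\cO_c^0$ (Theorem \ref{projoflrp}) by pushing it through the induction functor $\cF$. Proposition \ref{prop:inductions} gives $\cF(\cL_{1,p})\cong\cW_{2,p}$ and $\cF(\cL_{2,p})\cong 2\cdot\cW_{1,p}$, so if I can show that $\cF$ sends projective objects of $\cO_c^0$ to projective objects of $\cC_{\cW(p)}$, then $\cW_{2,p}$ is projective and $\cW_{1,p}$ is projective as a direct summand of a projective module. Since $\cW_{1,p}$ and $\cW_{2,p}$ are simple, a projective cover of each is furnished by the identity map, so each is its own projective cover.

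The main step is therefore to show that $\cF$ preserves projectivity. First I would recall from Lemma \ref{lem:ind_functor} and the discussion around it that $\cF\colon\cO_c^0\to\cC_{\cW(p)}$ is exact and that Frobenius reciprocity exhibits the forgetful functor $\cG$ as a right adjoint of $\cF$, namely $\hom_{\cW(p)}(\cF(\cW),\cX)\cong\hom_{V_c}(\cW,\cG(\cX))$. Since $\cG$ is exact, the standard homological principle that a left adjoint of an exact functor preserves projectives applies: for a short exact sequence $0\to\cX_1\to\cX_2\to\cX_3\to 0$ in $\cC_{\cW(p)}$, the sequence $0\to\cG(\cX_1)\to\cG(\cX_2)\to\cG(\cX_3)\to 0$ is exact, and exactness of $\hom_{V_c}(\cL_{r,p},\cG(-))$ on it transports, via the adjunction isomorphism, to exactness of $\hom_{\cW(p)}(\cF(\cL_{r,p}),-)$. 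This is what projectivity of $\cF(\cL_{r,p})$ amounts to.

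The hard part is that Theorem \ref{projoflrp} only gives projectivity of $\cL_{r,p}$ inside the subcategory $\cO_c^0$, whereas $\cG(\cX_i)$ lives in the larger direct-limit completion $\ind(\cO_c)$. Here I would invoke the remark following Lemma \ref{lem:ind_functor}: the modules in $\cC_{\cW(p)}$ are exactly the local $\cW(p)$-modules whose restrictions lie in the $\ind$-completion of $\cO_c^0$, so each $\cG(\cX_i)$ is a filtered union of objects of $\cO_c^0$. Because $\cL_{r,p}$ is finitely generated (indeed simple), $\hom_{V_c}(\cL_{r,p},-)$ commutes with these filtered colimits; combining this with the projectivity of $\cL_{r,p}$ inside $\cO_c^0$ upgrades it to projectivity against all of $\cG(\cX_i)$, giving the required exactness of $\hom_{V_c}(\cL_{r,p},\cG(-))$ on sequences coming from $\cC_{\cW(p)}$. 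This colimit--compactness bridge between $\cO_c^0$ and its $\ind$-completion is the only delicate point; everything else is formal once Proposition \ref{prop:inductions} and the adjunction are in hand.
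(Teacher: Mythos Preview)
Your strategy has a genuine gap at the ``only delicate point.'' The remark after Lemma \ref{lem:ind_functor} says that $\cO_c^0$ is exactly the subcategory of $\cO_c$ whose objects \emph{induce} to local $\cW(p)$-modules; it does not say that finite-length $V_c$-submodules of $\cG(\cX)$, for $\cX\in\cC_{\cW(p)}$, lie in $\cO_c^0$. These are different statements. Recall from the proof of Theorem \ref{projoflrp} that $\cV_{1,p}/\cV_{5,p}$, with composition factors $\cL_{1,p}$ and $\cL_{3,p}$, is \emph{not} in $\cO_c^0$. A hypothetical non-split self-extension $0\to\cW_{1,p}\to\cX\to\cW_{1,p}\to 0$ in $\cC_{\cW(p)}$ would have $\cG(\cX)$ with both $\cL_{1,p}$ and $\cL_{3,p}$ among its composition factors; nothing you cite rules out that the cyclic $V_c$-submodule of $\cG(\cX)$ generated by a preimage of a lowest-weight vector is isomorphic to $\cV_{1,p}/\cV_{5,p}$. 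Ruling this out is tantamount to showing no such non-split self-extension exists---which is the hardest case of the projectivity you are trying to prove. So the argument is circular here: Frobenius reciprocity alone cannot close the gap, because $\cL_{r,p}$ is projective only in $\cO_c^0$, not in $\cO_c$ or $\ind(\cO_c)$.

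The paper's proof is entirely different and uses input specific to $\cW(p)$ from \cite{AM_trip}. It reduces to length-two extensions $0\to\cW_{r',s'}\to\cX\to\cW_{r,p}\to 0$. When $(r',s')\neq(r,p)$, one checks $\cX$ is non-logarithmic and then the block decomposition of \cite[Theorem 4.4]{AM_trip} forces a splitting. The self-extension case $(r',s')=(r,p)$ is handled via the Zhu algebra: by \cite[Theorem 5.9]{AM_trip} the relevant block of $A(\cW(p))$ is a simple matrix algebra, so the lowest-weight space is semisimple and the extension splits. (A minor aside: Proposition \ref{prop:inductions} actually gives $\cF(\cL_{1,p})\cong\cW_{1,p}$ and $\cF(\cL_{2,p})\cong 2\cdot\cW_{2,p}$, since $\bar r$ has the same parity as $r$; this swap does not affect the logic of your argument.)
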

\begin{proof}
As in the proof of Theorem \ref{projoflrp}, it is enough to show that all length-$2$ exact sequences
\begin{equation}\label{eqn:Wrp_proj_exact}
 0\longrightarrow \cW_{r',s'}\longrightarrow\cX\longrightarrow\cW_{r,p}\longrightarrow 0
\end{equation}
in $\cC_{\cW(p)}$ split. We first claim that $L_0$ acts semisimply on $\cX$ if $(r',s')\neq(r,p)$. This is because the nilpotent part $L_0^{nil}$ of $L_0$ is a $\cW(p)$-module endomorphism of $\cX$ such that $\mathrm{Im}\, L_0^{nil}\subseteq\cW_{r',s'} \subseteq\ker L_0^{nil}$. Thus $L_0^{nil}\neq 0$ would imply 
\begin{equation*}
 \cW_{r',s'}\cong\mathrm{Im}\,L_0^{nil}\cong\cX/\ker L_0^{nil}\cong\cW_{r,p},
\end{equation*}
which contradicts the assumption that $\cW_{r',s'}$ and $\cW_{r,p}$ are non-isomorphic. Now because $\cX$ is a non-logarithmic module and because the irreducible modules $\cW_{r',s'}$, $\cW_{r,p}$ lie in different Virasoro blocks by \eqref{dec:triplet}, the block decomposition of the category of non-logarithmic $\cW(p)$-modules proved in \cite[Theorem 4.4]{AM_trip} implies that \eqref{eqn:Wrp_proj_exact} splits.

It remains to consider the $(r',s')=(r,p)$ case of \eqref{eqn:Wrp_proj_exact}.  Let $A(\cW(p))$ be the Zhu algebra of $\cW(p)$; then the lowest conformal weight space $\cX_{[h_{r,p}]}$ is a two-dimensional self-extension of the irreducible $A(\cW(p))$-module $\cW_{[h_{r,p}]}$. By \cite[Theorem 5.9]{AM_trip}, $A(\cW(p)) \cong I\times M_r(\CC)$ where $I$ is an ideal that acts trivially on $(\cW_{r,p})_{[h_{r,p}]}$ (and any of its self-extensions) and $M_r(\CC)$ is the simple $r\times r$ matrix algebra. Thus $\cX_{[h_{r,p}]}$ is a semisimple $A(\cW(p))$-module that generates $\cX$ as a $\cW(p)$-module. This means that $\cX$ is a homomorphic image of $\overline{F}((\cW_{r,p})_{[h_{r,p}]})\oplus \overline{F}((\cW_{r,p})_{[h_{r,p}]})$, where for a finite-dimensional $A(\cW(p))$-module $M$, $\overline{F}(M)$ denotes the generalized Verma $\cW(p)$-module defined in \cite[Definition 2.7]{Li}. In particular, $\cX$ has to be the length-$2$ quotient $\cW_{r,p}\oplus\cW_{r,p}$ of the direct sum of generalized Verma $\cW(p)$-modules, and thus \eqref{eqn:Wrp_proj_exact} splits in this case as well.
\end{proof}

 To obtain the remaining projective covers, we define $\cR_{1,s}=\cF(\cP_{1,s})$ and then $\cR_{2,s}=\cW_{2,1}\boxtimes\cR_{1,s}$ for $1\leq s\leq p$. With this notation, $\cR_{r,p}\cong\cW_{r,p}$ for $r=1,2$. To show that the modules $\cR_{r,s}$ are projective, we will need their fusion products with $\cW_{2,1}$ and $\cW_{1,2}$ (see \cite[Proposition~38]{TW} where, however, the slightly different formula in the $p=2$ case is omitted):
\begin{prop}\label{prop:Wp_proj_fus}
For $r=1, 2$ and $1\leq s\leq p$,
\begin{equation}\label{walgebra:proj:fr1}
\cW_{2,1}\boxtimes \cR_{r,s} \cong \cR_{3-r,s},
\end{equation}
\begin{equation}\label{walgebra:proj:fr2}
\cW_{1,2}\boxtimes \cR_{r,s} \cong
\begin{cases}
\cR_{r,2} \oplus 2\cdot\cR_{3-r,p} &\mbox{if}\; s = 1\\
\cR_{r,s-1}\oplus \cR_{r,s+1} &\mbox{if}\; 2\leq s \leq p-2\\
\cR_{r,p-2} \oplus 2\cdot \cR_{r,p} &\mbox{if}\; s = p-1\\
\cR_{r,p-1} & \mbox{if}\; s=p
\end{cases} \quad\text{if}\quad p\geq 3,
\end{equation}
\begin{equation}\label{walgebra:proj:fr3}
 \cW_{1,2}\tens\cR_{r,s} \cong 
 \begin{cases}
  2\cdot\cR_{r,2}\oplus 2\cdot\cR_{3-r,2} &\mbox{if}\; s=1\\
  \cR_{r,1} &\mbox{if}\;s=2\\
 \end{cases}
\quad\text{if}\quad p=2.
\end{equation}
\end{prop}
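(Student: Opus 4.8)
The plan is to deduce all three formulas from the monoidality of the induction functor $\cF$, combined with the Virasoro tensor products of Theorem \ref{thm:prs_fusion} and the simple-current structure of $\cW_{2,1}$ from Theorem \ref{TW}. First I would dispatch \eqref{walgebra:proj:fr1}, which is immediate from the definitions: since $\cR_{2,s}=\cW_{2,1}\boxtimes\cR_{1,s}$ and $\cW_{2,1}\boxtimes\cW_{2,1}\cong\cW_{1,1}=\cW(p)$ is the unit (the $r=2$ case of \eqref{walgebra: fr2}), associativity gives $\cW_{2,1}\boxtimes\cR_{2,s}\cong(\cW_{2,1}\boxtimes\cW_{2,1})\boxtimes\cR_{1,s}\cong\cR_{1,s}$, whereas $\cW_{2,1}\boxtimes\cR_{1,s}=\cR_{2,s}$ holds by definition; together these are exactly \eqref{walgebra:proj:fr1} for $r=1,2$.

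The key observation for the remaining two formulas is that $\cF$ is monoidal, so for the $r=1$ cases one has $\cW_{1,2}\boxtimes\cR_{1,s}=\cF(\cL_{1,2})\boxtimes\cF(\cP_{1,s})\cong\cF(\cL_{1,2}\boxtimes\cP_{1,s})$, using $\cF(\cL_{1,2})\cong\cW_{1,2}$ (the $r=1$, $s=2$ case of Proposition \ref{prop:inductions}). Thus it suffices to apply $\cF$ to the right-hand side of the Virasoro fusion rule \eqref{more1prs} (for $p\geq 3$) or \eqref{more2prs} (for $p=2$) and identify each induced summand. The summands $\cP_{1,\ell}$ induce to $\cR_{1,\ell}$ by definition, while the modules $\cP_{2,p}=\cL_{2,p}$ appearing in the $s=1$ row induce, by Proposition \ref{prop:inductions}, to $\cF(\cL_{2,p})\cong 2\cdot\cW_{2,p}\cong 2\cdot\cR_{2,p}$ (recall $\cR_{r,p}\cong\cW_{r,p}$). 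For the boundary case $s=p$ of \eqref{walgebra:proj:fr2} I would instead use $\cF(\cP_{1,p})=\cF(\cL_{1,p})\cong\cW_{1,p}\cong\cR_{1,p}$ together with $\cL_{1,2}\boxtimes\cL_{1,p}\cong\cP_{1,p-1}$ from \eqref{fr2}, which yields $\cW_{1,2}\boxtimes\cR_{1,p}\cong\cF(\cP_{1,p-1})=\cR_{1,p-1}$. Carrying out this translation case by case produces precisely the $r=1$ rows of \eqref{walgebra:proj:fr2} and \eqref{walgebra:proj:fr3}; the $p=2$ formula \eqref{walgebra:proj:fr3} is handled the same way, with $\cF(\cP_{2,2})=\cF(\cL_{2,2})\cong 2\cdot\cW_{2,2}\cong 2\cdot\cR_{2,2}$.

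Finally I would extend from $r=1$ to $r=2$ purely formally. Writing $\cR_{2,s}=\cW_{2,1}\boxtimes\cR_{1,s}$ and using commutativity and associativity of $\boxtimes$ in $\cC_{\cW(p)}$, one gets $\cW_{1,2}\boxtimes\cR_{2,s}\cong\cW_{2,1}\boxtimes(\cW_{1,2}\boxtimes\cR_{1,s})$; tensoring the already-established $r=1$ answer with $\cW_{2,1}$ and applying \eqref{walgebra:proj:fr1} to each summand (which sends $\cR_{r',\ell}\mapsto\cR_{3-r',\ell}$, interchanging the first index $1\leftrightarrow 2$) gives the $r=2$ rows. This last step is essentially bookkeeping, and indeed the whole argument after \eqref{walgebra:proj:fr1} is a mechanical translation of Virasoro data. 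The only point requiring genuine care — and the main, if routine, obstacle — is the correct identification of the inductions $\cF(\cP_{k,p})=\cF(\cL_{k,p})$ of the projective-equals-simple modules, which carry induction multiplicity $k$ and whose first index gets relabeled by $3-r'$ under the $\cW_{2,1}$-action; matching these multiplicities and labels against the stated right-hand sides is where I would be most careful.
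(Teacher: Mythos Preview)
Your proposal is correct and follows essentially the same route as the paper: both arguments establish \eqref{walgebra:proj:fr1} directly from the definition of $\cR_{2,s}$ and the simple-current relation $\cW_{2,1}\boxtimes\cW_{2,1}\cong\cW_{1,1}$, then obtain the $r=1$ cases of \eqref{walgebra:proj:fr2} and \eqref{walgebra:proj:fr3} by applying the monoidal induction functor $\cF$ to the Virasoro formulas \eqref{fr2}, \eqref{more1prs}, \eqref{more2prs} and identifying the induced summands, and finally pass to $r=2$ by tensoring with $\cW_{2,1}$. The only cosmetic difference is that the paper records the general identification $\cF(\cP_{2,s})\cong\cF(\cL_{2,1}\boxtimes\cP_{1,s})\cong 2\cdot(\cW_{2,1}\boxtimes\cR_{1,s})\cong 2\cdot\cR_{2,s}$, whereas you invoke Proposition \ref{prop:inductions} directly for the specific $\cP_{2,p}=\cL_{2,p}$ and $\cP_{2,2}=\cL_{2,2}$ that actually occur; these amount to the same computation.
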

\begin{proof}
The $r=1$ case of \eqref{walgebra:proj:fr1} is the definition of $\cR_{2,s}$, and then the $r=2$ case follows using $\cW_{2,1}\tens\cW_{2,1}\cong\cW_{1,1}$.

The $r=1$ cases of \eqref{walgebra:proj:fr2} and \eqref{walgebra:proj:fr3} follow from
\begin{equation*}
 \cW_{1,2}\tens\cR_{1,s}\cong\cF(\cL_{1,2})\tens\cF(\cP_{1,s})\cong\cF(\cL_{1,2}\tens\cP_{1,s})
\end{equation*}
together with \eqref{fr2}, \eqref{more1prs}, \eqref{more2prs}, and the formula
$$\cF(\cP_{2,s}) \cong \cF(\cL_{2,1}\boxtimes \cP_{1,s}) \cong 2\cdot (\cW_{2,1}\boxtimes \cR_{1,s})\cong 2\cdot \cR_{2,s}.$$
Then the $r=2$ cases follow by tensoring the $r=1$ cases with $\cW_{2,1}$ and applying \eqref{walgebra:proj:fr1}.
\end{proof}

Now we can show that the modules $\cR_{r,s}$ are projective covers:
\begin{thm}
For  $r=1,2$ and $1\leq s \leq p-1$, the $\cW(p)$-module $\cR_{r,s}$ is a projective cover of $\cW_{r,s}$ in $\cC_{\cW(p)}$ with Loewy diagram
\begin{equation*}
 \xymatrixrowsep{1pc}
\xymatrixcolsep{.75pc}
 \xymatrix{
 & & \cW_{r,s} & \\
 \cR_{r,s}: & \cW_{3-r,p-s} \ar[ru] & & \cW_{3-r,p-s} \ar[lu] \\
& & \cW_{r,s} \ar[lu] \ar[ru] & \\
 }
\end{equation*}
\end{thm}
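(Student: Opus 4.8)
The plan is to define $\cR_{1,s}=\cF(\cP_{1,s})$ and $\cR_{2,s}=\cW_{2,1}\boxtimes\cR_{1,s}$ as in the statement, and to deduce everything from the structure of the Virasoro projective cover $\cP_{1,s}$ (Proposition \ref{prop:P1_structure} and Theorem \ref{thm:P1s_structure}) together with the exactness and monoidality of induction. First I would establish projectivity of all $\cR_{r,s}$. Since $\cR_{r,p}\cong\cW_{r,p}$ is projective by Proposition \ref{prop:Wrp_proj}, and projective objects form a tensor ideal in the rigid category $\cC_{\cW(p)}$ (Theorem \ref{thm:CW(p)_rigid}), one descends in $s$ using Proposition \ref{prop:Wp_proj_fus}: for $p\geq 3$, the $s=p$ case of \eqref{walgebra:proj:fr2} gives $\cR_{r,p-1}\cong\cW_{1,2}\boxtimes\cW_{r,p}$, the $s=p-1$ case exhibits $\cR_{r,p-2}$ as a summand of the projective module $\cW_{1,2}\boxtimes\cR_{r,p-1}$, and the cases $2\leq s\leq p-2$ exhibit $\cR_{r,s-1}$ as a summand of $\cW_{1,2}\boxtimes\cR_{r,s}$. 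Downward induction on $s$ then shows every $\cR_{r,s}$ is projective; the case $p=2$ is identical using \eqref{walgebra:proj:fr3}.

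Next I would compute the composition factors and the head. Applying the exact functor $\cF$ to the socle filtration $0\subset\cL_{1,s}\subset\cZ_{1,s}\subset\cP_{1,s}$ of the length-$3$ module $\cP_{1,s}$, and using $\cF(\cL_{1,s})\cong\cW_{1,s}$ and $\cF(\cL_{2,p-s})\cong 2\cdot\cW_{2,p-s}$ from Proposition \ref{prop:inductions}, yields a filtration of $\cR_{1,s}$ with successive quotients $\cW_{1,s}$, $\cW_{2,p-s}\oplus\cW_{2,p-s}$, $\cW_{1,s}$ from bottom to top; in particular $\cR_{1,s}$ has composition factors $2\cdot\cW_{1,s}$ and $2\cdot\cW_{2,p-s}$, and $\cR_{1,s}/\cF(\cZ_{1,s})\cong\cW_{1,s}$. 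To see this quotient is the whole head, I would invoke Frobenius reciprocity together with the fact that $\cG(\cW_{r',s'})=\bigoplus_{n\geq 0}(2n+r')\cdot\cL_{2n+r',s'}$ is semisimple as a $V_c$-module: for any simple module $\cW_{r',s'}$,
\[
\hom_{\cW(p)}(\cR_{1,s},\cW_{r',s'})\cong\hom_{V_c}(\cP_{1,s},\cG(\cW_{r',s'})).
\]
Every $V_c$-homomorphism from $\cP_{1,s}$ into a semisimple module factors through the simple head $\cL_{1,s}$, so the right-hand space has dimension equal to the multiplicity of $\cL_{1,s}$ in $\cG(\cW_{r',s'})$, namely $\delta_{(r',s'),(1,s)}$. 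Hence the head of $\cR_{1,s}$ is the simple module $\cW_{1,s}$, so $\cR_{1,s}$ is indecomposable, and being projective it is the projective cover of $\cW_{1,s}$.

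Finally I would pin down the socle and the radical filtration. Since $\cP_{1,s}$ is self-dual (Proposition \ref{prop:P1_structure} and the remark after Theorem \ref{thm:P1s_structure}) and induction commutes with contragredients in the extension setup of \cite{CKM, CMY1}, the module $\cR_{1,s}$ is self-dual. Dualizing the simple head and noting that the contragredient $\cW_{1,s}'$ has lowest conformal weight $h_{1,s}\neq h_{2,p-s}$, hence $\cW_{1,s}'\cong\cW_{1,s}$, I conclude that $\mathrm{soc}(\cR_{1,s})\cong\cW_{1,s}$ is simple. Consequently $\cF(\cZ_{1,s})=\mathrm{rad}(\cR_{1,s})$ is the unique maximal submodule, and it cannot be semisimple: otherwise it would lie in the socle and force $\cW_{2,p-s}\subseteq\mathrm{soc}(\cR_{1,s})$, contradicting simplicity of the socle. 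As $\cF(\cZ_{1,s})$ has the semisimple quotient $\cF(\cZ_{1,s})/\cF(\cL_{1,s})\cong\cW_{2,p-s}\oplus\cW_{2,p-s}$, we get $\mathrm{rad}^2(\cR_{1,s})\subseteq\cF(\cL_{1,s})\cong\cW_{1,s}$, which is therefore equal to $\cF(\cL_{1,s})=\mathrm{soc}(\cR_{1,s})$; thus the layers are $\cW_{1,s}$, $\cW_{2,p-s}\oplus\cW_{2,p-s}$, $\cW_{1,s}$, exactly the claimed Loewy diagram for $r=1$. The $r=2$ case follows at once by applying the simple-current autoequivalence $\cW_{2,1}\boxtimes\bullet$, which is exact and preserves projectivity and indecomposability, using \eqref{walgebra: fr2} to send $\cW_{1,s}\mapsto\cW_{2,s}$ and $\cW_{2,p-s}\mapsto\cW_{1,p-s}$.

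The main obstacle is the determination of the socle and the proof that the two copies of $\cW_{3-r,p-s}$ genuinely fill a single middle layer, rather than forming a uniserial length-$4$ module or splitting off a semisimple radical. This is precisely where self-duality of $\cR_{1,s}$ does the essential work, reducing the question to simplicity of the head, which Frobenius reciprocity supplies cleanly. A secondary point needing care is the repeated use of exactness of $\cF$ and of its compatibility with contragredients; both are available because $\cW(p)$ decomposes over $V_c$ into the rigid modules $\cL_{2n+1,1}$ (see \eqref{dec:triplet}) within the framework of \cite{CKM, CMY1}.
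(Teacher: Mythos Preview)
Your argument is correct and follows the same overall strategy as the paper (exactness and monoidality of induction, Frobenius reciprocity, Proposition \ref{prop:inductions}, downward induction on $s$ via Proposition \ref{prop:Wp_proj_fus} for projectivity, and the simple-current functor $\cW_{2,1}\boxtimes\bullet$ for $r=2$). The one genuine tactical difference is how the socle of $\cR_{1,s}$ is determined. The paper first computes
\[
\cG(\cR_{1,s})\cong\bigoplus_{n\geq 0}(2n+1)\cdot\cP_{2n+1,s}
\]
from \eqref{moreprs} and \eqref{dec:triplet}, and then applies Frobenius reciprocity in the form $\hom_{\cW(p)}(\cF(\cL_{r',s'}),\cR_{1,s})\cong\hom_{V_c}(\cL_{r',s'},\cG(\cR_{1,s}))$ to read off the socle directly; it then repeats the reciprocity argument for $\cR_{1,s}/\cW_{1,s}$ to get the middle layer. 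You instead use reciprocity in the opposite direction, $\hom_{\cW(p)}(\cR_{1,s},\cW_{r',s'})\cong\hom_{V_c}(\cP_{1,s},\cG(\cW_{r',s'}))$, to compute the \emph{head} (exploiting that $\cG(\cW_{r',s'})$ is semisimple and $\cP_{1,s}$ has simple head), and then transfer this to the socle via self-duality of $\cR_{1,s}$. Your route avoids computing $\cG(\cR_{1,s})$ but requires two ingredients the paper does not invoke here: that $\cP_{1,s}$ is self-dual for all $1\leq s\leq p-1$ (this is true and used elsewhere in the paper, though Proposition \ref{prop:P1_structure} alone only covers $s=p-1$; a cleaner justification is that $\cP_{1,s}$ is a direct summand of a self-dual tensor product and is the projective cover of the self-dual simple $\cL_{1,s}$), and that induction commutes with contragredients in the present setting. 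Both are legitimate, but the second deserves a more precise reference than a bare citation of \cite{CKM,CMY1}.
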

\begin{proof}
We take $r=1$ first. Recall from Theorem \ref{thm:P1s_structure} that $\cP_{1,s}$ has Loewy diagram
\begin{equation*}
\xymatrixrowsep{1pc}
\xymatrixcolsep{.75pc}
 \xymatrix{
 & & \cL_{1,s} \\
 \cP_{1,s}: & \cL_{2,p-s} \ar[ru] & \\
& & \cL_{1,s} \ar[lu]\\
 }
\end{equation*}
Applying the exact functor $\cF$ to $\cP_{1,s}$ and using \eqref{ind:irred},
we see that $\cW_{1, s}$ and $\cW_{2, p-s}$ are the only composition factors of $\cR_{1,s}$, both occurring with multiplicity $2$. We also see that $\cW_{1,s}$ is a submodule of $\cR_{1,s}$, and there is a surjective $\cW(p)$-module map $\cR_{1,s} \rightarrow \cW_{1,s}$.

To determine the Loewy diagram of $\cR_{1,s}$, we first note that the fusion rules \eqref{moreprs} and the decomposition \eqref{dec:triplet} implies that
\[
\cG(\cR_{1,s}) \cong \bigoplus_{n=0}^{\infty}(2n+1)\cdot\cP_{2n+1,s}.
\]
Then by Frobenius reciprocity,
\begin{align*}
\dim\hom_{\cW(p)}(\cW_{1, s}, \cR_{1,s}) & = \dim\hom_{\cW(p)}(\cF(\cL_{1, s}), \cR_{1,s})\nonumber\\
&= \dim\hom_{V_c}\left(\cL_{1, s}, \bigoplus_{n=0}^{\infty}(2n+1)\cdot\cP_{2n+1,s}\right) = 1,
\end{align*}
while
\begin{align*}
2\cdot\dim\hom_{\cW(p)}(\cW_{2, p-s}, \cR_{1,s}) & = \dim\hom_{\cW(p)}(\cF(\cL_{2, p-s}), \cR_{1,s})\nonumber\\
&= \dim\hom_{V_c}\left(\cL_{2, p-s}, \bigoplus_{n=0}^{\infty}(2n+1)\cdot\cP_{2n+1,s}\right) = 0.
\end{align*}
From these, we see that ${\rm Soc}(\cR_{1,s}) = \cW_{1,s}$.

Next, if we apply the exact functor $\cF$ to the exact sequence
\begin{equation*}
 0\longrightarrow\cL_{2,p-s}\longrightarrow\cP_{1,s}/\cL_{1,s}\longrightarrow\cL_{1,s}\longrightarrow 0,
\end{equation*}
we get the exact sequence
\begin{equation*}
  0\longrightarrow2\cdot\cW_{2,p-s}\longrightarrow\cR_{1,s}/\cW_{1,s}\longrightarrow\cW_{1,s}\longrightarrow 0.
\end{equation*}
This sequence does not split because by exactness of induction and Frobenius reciprocity,
\begin{align*}
 \hom_{\cW(p)}(\cR_{1,s}/\cW_{1,s}, \cW_{2,p-s})&\cong\hom_{\cW(p)}(\cF(\cP_{1,s}/\cL_{1,s}),\cW_{2,p-s})\nonumber\\
 & \cong\hom_{V_c}\left(\cP_{1,s}/\cL_{1,s},\bigoplus_{n=0}^\infty (2n+2)\cdot\cL_{2n+2,p-s}\right) =0.
\end{align*}
Consequently, $\mathrm{Soc}(\cR_{1,s}/\cW_{1,s})=2\cdot\cW_{2,p-s}$, and we have verified the row structure of the Loewy diagram for $\cR_{1,s}$. Moreover, all four length-$2$ subquotients indicated in the Loewy diagram for $\cR_{1,s}$ are indecomposable because
\begin{equation*}
 \hom_{\cW(p)}(\cW_{2,p-s},\cR_{1,s})=\hom_{\cW(p)}(\cR_{1,s},\cW_{2,p-s})=0.
\end{equation*}
This completes the verification of the Loewy diagram for $r=1$.

The Loewy diagram for $\cR_{2,s}=\cW_{2,1}\tens\cR_{1,s}$ now follows from that of $\cR_{1,s}$ by \cite[Proposition 2.5]{CKLR} since $\cW_{2,1}$ is a simple current.

Next, the fusion rules \eqref{walgebra:proj:fr2} and \eqref{walgebra:proj:fr3} show that each $\cR_{r,s}$ for $1\leq s\leq p-1$ is a direct summand of $\cW_{1,2}\tens\cR_{r,s+1}$. Since $\cR_{r,p}\cong\cW_{r,p}$ is projective by Proposition \ref{prop:Wrp_proj}, and since the subcategory of projective objects in $\cC_{\cW(p)}$ is closed under direct summands and tensoring with rigid objects, it follows that each $\cR_{r,s}$ is projective. Then the same argument as in the proof of Proposition \ref{prop:Prp-1_proj_cover} shows that it is a projective cover of $\cW_{r,s}$.
\end{proof}

\subsection{The Virasoro category \texorpdfstring{$\cO_c^0$}{Oc0} as an equivariantization} 

Here we prove a relation between the $V_c$-module category $\cO_c^0$ and the $\cW(p)$-module category $\cC_{\cW(p)}$ that was conjectured in \cite[Conjecture 11.6]{Ne}. We recall from \cite[Theorem 2.3]{ALM} that the full automorphism group of $\cW(p)$ is $PSL(2,\CC)$ for any integer $p>1$. Moreover, the action of $PSL(2,\CC)$ on $\cW(p)$ is continuous in the sense that every finite-dimensional conformal weight space of $\cW(p)$ with the Euclidean topology is a continuous $PSL(2,\CC)$-module. The group $PSL(2,\CC)$ also acts on the category $\cC_{\cW(p)}$ of grading-restricted generalized $\cW(p)$-modules by
\begin{equation}\label{eqn:G-action}
 g\cdot(\cX,Y_\cX) = (\cX,Y_\cX(g^{-1}(\cdot),x))
\end{equation}
for $g\in PSL(2,\CC)$. Thus we can form the $PSL(2,\CC)$-equivariantization of the category $\cC_{\cW(p)}$, as defined for example in \cite[Section 2.7]{EGNO}, which consists of $PSL(2,\CC)$-equivariant objects in $\cC_{\cW(p)}$. We will show that $\cO_c^0$ is braided tensor equivalent to the $PSL(2,\CC)$-equivariantization of $\cC_{\cW(p)}$; the proof is a straightforward generalization of \cite[Theorem 4.17]{McR2} to infinite automorphism groups.

First, we recall a slightly variant definition of equivariantization that is more convenient for our purposes. We use \cite[Section 2.3]{McR2} as a reference, but note that there, equivariantizations of categories involving twisted modules for a superalgebra were considered. Here, we only need to consider untwisted modules for a vertex operator algebra, so the situation is simpler. Let $V$ be a vertex operator algebra, $G$ a complex reductive Lie group acting continuously on $V$ by automorphisms, and $\cC$ a braided tensor category of grading-restricted generalized $V$-modules. Assume also that $\cC$ is closed under the action of $G$ given by \eqref{eqn:G-action}.
\begin{defi}
 The \textit{$G$-equivariantization} $\cC^G$ of $\cC$ is the following category:
 \begin{itemize}
  \item Objects of $\cC^G$ are pairs $(\cX,Y_\cX;\varphi_\cX)$ where $(\cX,Y_\cX)$ is an object of $\cC$ and $\varphi_\cX: G\rightarrow GL(\cX)$ is a continuous group representation such that
  \begin{equation}\label{eqn:G-equiv_compat_cond}
   \varphi_\cX(g)\cdot Y_\cX(v,x)=Y_\cX(g\cdot v,x)\varphi_\cX(g)
  \end{equation}
for all $g\in G$.

\item Morphisms from $(\cX_1,Y_{\cX_1};\varphi_{\cX_1})$ to $(\cX_2,Y_{\cX_2};\varphi_{\cX_2})$ in $\cC^G$ consist of all $V\times G$-module homomorphisms $f:\cX_1\rightarrow\cX_2$, that is,
\begin{equation*}
 f\circ Y_{\cX_1}(v,x)=Y_{\cX_2}(v,x)\circ f\qquad\text{and}\qquad f\circ\varphi_{\cX_1}(g)=\varphi_{\cX_2}(g)\circ f
\end{equation*}
for all $v\in V$, $g\in G$.
 \end{itemize}
\end{defi}
\begin{rem}
 The compatibility condition \eqref{eqn:G-equiv_compat_cond} implies that each $\varphi_\cX(g)$ commutes with the action of $L_0$ on $\cX$. Thus the condition that $\varphi_\cX$ be continuous simply means that each finite-dimensional conformal weight space of $\cX$ is a continuous $G$-module.
\end{rem}

As explained for example in \cite[Section 2.3]{McR2}, $\cC^G$ is a braided tensor category in the setting that $G$ is a finite group and that all modules in $\cC$ are objects of a braided tensor category of modules for the $G$-fixed-point subalgebra $V^G\subseteq V$. The same constructions work when $G$ is infinite, but we need to make sure that the action of $G$ on a tensor product $\cX_1\tens\cX_2$ is continuous:
\begin{lem}\label{lem:G-equiv_tens_prod}
 If $V$-modules $\cX_1$ and $\cX_2$ are objects of $\cC^G$, then $\cX_1\tens\cX_2$ is also an object of $\cC^G$ with $G$-action $\varphi_{\cX_1\tens\cX_2}$ characterized by
 \begin{equation}\label{eqn:G-action_tens_prod_def}
  \varphi_{\cX_1\tens\cX_2}(g)\cdot\cY_\tens(b_1,x)b_2 =\cY_\tens(\varphi_{\cX_1}(g) b_1,x)\varphi_{\cX_2}(g) b_2
 \end{equation}
for $g\in G$, $b_1\in\cX_1$, and $b_2\in\cX_2$, where $\cY_\tens$ is the tensor product intertwining operator of type $\binom{\cX_1\tens\cX_2}{\cX_1\,\cX_2}$.
\end{lem}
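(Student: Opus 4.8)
The plan is to define the $G$-action on $\cX_1\tens\cX_2$ using the universal property of the tensor product, and then to verify continuity separately, since the latter is the only place where the infiniteness of $G$ genuinely matters. First I would fix $g\in G$ and consider the linear map $\cY^{(g)}$ appearing on the right-hand side of \eqref{eqn:G-action_tens_prod_def}, namely $\cY^{(g)}(b_1,x)b_2=\cY_\tens(\varphi_{\cX_1}(g)b_1,x)\varphi_{\cX_2}(g)b_2$. I claim this is an intertwining operator of type $\binom{g^{-1}\cdot(\cX_1\tens\cX_2)}{\cX_1\,\cX_2}$, where $g^{-1}\cdot(\cX_1\tens\cX_2)$ is the twisted module of \eqref{eqn:G-action}, with vertex operator $v\mapsto Y_{\cX_1\tens\cX_2}(g\cdot v,x)$. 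Lower truncation is immediate, and the $L_{-1}$-derivative property follows because \eqref{eqn:G-equiv_compat_cond} forces $\varphi_{\cX_1}(g)$ to commute with $L_{-1}$. For the Jacobi identity I would start from the Jacobi identity for $\cY_\tens$ with $g\cdot v$ inserted in place of $v$, and push the operators $Y_{\cX_i}(g\cdot v,x)$ through $\varphi_{\cX_i}(g)$ via the rewritten compatibility relation $Y_{\cX_i}(v,x)\varphi_{\cX_i}(g)=\varphi_{\cX_i}(g)Y_{\cX_i}(g^{-1}\cdot v,x)$; the $\varphi_{\cX_2}(g)$ factors recombine into $\cY^{(g)}$ acting on $b_2$, while the target term retains $Y_{\cX_1\tens\cX_2}(g\cdot v,x)$, which is exactly the vertex operator of the twisted module. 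Granting this, the universal property of $\cX_1\tens\cX_2$ yields a unique $V$-module homomorphism $\varphi_{\cX_1\tens\cX_2}(g)\colon\cX_1\tens\cX_2\to g^{-1}\cdot(\cX_1\tens\cX_2)$ with $\varphi_{\cX_1\tens\cX_2}(g)\circ\cY_\tens=\cY^{(g)}$; this is precisely \eqref{eqn:G-action_tens_prod_def}, and reading it as a homomorphism into the twisted module is exactly the compatibility condition \eqref{eqn:G-equiv_compat_cond}.

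Next I would check that $\varphi_{\cX_1\tens\cX_2}$ is a group representation. The relations $\varphi_{\cX_1\tens\cX_2}(gh)=\varphi_{\cX_1\tens\cX_2}(g)\varphi_{\cX_1\tens\cX_2}(h)$ and $\varphi_{\cX_1\tens\cX_2}(e)=\Id$ follow from uniqueness in the universal property, since both sides satisfy the defining relation \eqref{eqn:G-action_tens_prod_def} for $gh$ (using that $\varphi_{\cX_1}$ and $\varphi_{\cX_2}$ are representations); in particular each $\varphi_{\cX_1\tens\cX_2}(g)$ is invertible with inverse $\varphi_{\cX_1\tens\cX_2}(g^{-1})$. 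These algebraic verifications are identical to the finite-group case of \cite{McR2}, so I would cite that reference rather than repeat them.

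The main obstacle is continuity, because $\varphi_{\cX_1\tens\cX_2}(g)$ is defined only implicitly and we have no closed formula for it. The resolution is to combine \eqref{eqn:G-action_tens_prod_def} with the surjectivity of $\cY_\tens$ \cite[Proposition 4.23]{HLZ3}. Fixing a conformal weight $h$, the space $(\cX_1\tens\cX_2)_{[h]}$ is finite-dimensional and, by Corollary \ref{cor:intw_op_surjectivity}, is spanned by the projections to weight $h$ of vectors $\cY_\tens(b_1,x)b_2$ with $b_1\in\cX_1$ and $b_2\in\cX_2$ homogeneous; from these I would extract a basis. Since $\varphi_{\cX_1\tens\cX_2}(g)$ commutes with $L_0$ by \eqref{eqn:G-equiv_compat_cond}, it preserves $(\cX_1\tens\cX_2)_{[h]}$, and \eqref{eqn:G-action_tens_prod_def} identifies its value on each basis vector with the corresponding coefficient of $\cY_\tens(\varphi_{\cX_1}(g)b_1,x)\varphi_{\cX_2}(g)b_2$. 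The maps $g\mapsto\varphi_{\cX_i}(g)b_i$ are continuous into fixed finite-dimensional weight spaces of $\cX_i$ (as $b_i$ is homogeneous and $\varphi_{\cX_i}$ is continuous), while extraction of a fixed coefficient of $\cY_\tens$ is a bilinear, hence continuous, map of those finite-dimensional spaces. Thus $g\mapsto\varphi_{\cX_1\tens\cX_2}(g)u$ is continuous for each basis vector $u$, which forces the matrix-valued map $g\mapsto\varphi_{\cX_1\tens\cX_2}(g)$ on $(\cX_1\tens\cX_2)_{[h]}$ to be continuous. As $h$ is arbitrary, every finite-dimensional conformal weight space of $\cX_1\tens\cX_2$ is a continuous $G$-module, completing the proof that $\cX_1\tens\cX_2$ is an object of $\cC^G$.
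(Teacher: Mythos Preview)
Your algebraic steps---constructing the twisted intertwining operator $\cY^{(g)}$, invoking the universal property to obtain $\varphi_{\cX_1\tens\cX_2}(g)$, and verifying the representation axioms by uniqueness---match the paper's argument exactly. The continuity argument, however, is genuinely different. The paper observes that each coefficient map $\psi_{h,k}:\cX_1\otimes\cX_2\to\cX_1\tens\cX_2$, $b_1\otimes b_2\mapsto(b_1)_{h,k}b_2$, is a $G$-module homomorphism, then invokes the hypothesis that $G$ is \emph{complex reductive} to decompose $\cX_1\otimes\cX_2$ as a direct sum of finite-dimensional irreducible continuous $G$-modules; the image of each $\psi_{h,k}$ inherits this structure, and surjectivity of $\cY_\tens$ finishes the job. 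Your route instead fixes a weight space $(\cX_1\tens\cX_2)_{[h]}$, chooses a finite basis arising from homogeneous $b_1,b_2$ via Corollary~\ref{cor:intw_op_surjectivity}, and checks directly that $g\mapsto\varphi_{\cX_1\tens\cX_2}(g)u$ is continuous on each basis vector by composing the continuous maps $g\mapsto\varphi_{\cX_i}(g)b_i$ (into fixed finite-dimensional weight spaces) with a bilinear coefficient extraction. This is more elementary: it uses only the surjectivity of $\cY_\tens$ and never appeals to semisimplicity, so in particular it does not need the reductive hypothesis on $G$. The paper's approach, on the other hand, packages the continuity in a way that is later reused (e.g.\ in Lemma~\ref{lem:ind_BTF}) and makes explicit that $\cX_1\tens\cX_2$ is in fact a semisimple $G$-module, a structural fact your argument does not directly yield.
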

\begin{proof}
 Using \eqref{eqn:G-equiv_compat_cond} for $\varphi_{\cX_1}$ and $\varphi_{\cX_2}$, it is easy to check that $$\cY_\tens(\varphi_{\cX_1}(g)(\cdot),x)\varphi_{\cX_2}(g): \cX_1\otimes \cX_2\rightarrow(\cX_1\tens \cX_2)[\log x]\lbrace x\rbrace$$
is an intertwining operator of type $\binom{g^{-1}\cdot(\cX_1\tens \cX_2)}{\cX_1\,\,\cX_2}$ for any $g\in G$. Thus the universal property of tensor products induces a unique $V$-module homomorphism
\begin{equation*}
 \varphi_{\cX_1\tens\cX_2}(g): \cX_1\tens\cX_2\rightarrow g^{-1}\cdot(\cX_1\tens\cX_2)
\end{equation*}
such that \eqref{eqn:G-action_tens_prod_def} holds. The definition of the vertex operator for $g^{-1}\cdot(\cX_1\tens\cX_2)$ implies that $\varphi_{\cX_1\tens\cX_2}(g)$ satisfies \eqref{eqn:G-equiv_compat_cond} for all $g\in G$. Moreover, \eqref{eqn:G-action_tens_prod_def} and the fact that $\varphi_{\cX_1}$ and $\varphi_{\cX_2}$ are group homomorphisms implies that $\varphi_{\cX_1\tens\cX_2}$ defines a homomorphism from $G$ to $GL(\cX_1\tens\cX_2)$.

We still need to check that the $G$-action on each finite-dimensional conformal weight space of $\cX_1\tens\cX_2$ is continuous. Recall that for $b_1\in\cX_1$, $b_2\in\cX_2$, $h\in\CC$, and $k\in\NN$, the coefficient of $x^{-h-1}(\log x)^k$ in $\cY_\tens(b_1,x_2)b_2$ is denoted by $(b_1)_{h,k}b_2$. Thus \eqref{eqn:G-action_tens_prod_def} implies that for each $h\in\CC$ and $k\in\NN$,
\begin{align*}
 \psi_{h,k}: \cX_1\otimes\cX_2 &\rightarrow \cX_1\tens\cX_2\nonumber\\
 b_1\otimes b_2 & \mapsto (b_1)_{h,k}b_2 
\end{align*}
is a $G$-module homomorphism. Then because $G$ is a complex reductive Lie group acting continuously on the finite-dimensional conformal weight spaces of $\cX_1$ and $\cX_2$, each of $\cX_1$ and $\cX_2$, and thus $\cX_1\otimes\cX_2$ as well, decomposes as the direct sum of finite-dimensional irreducible continuous $G$-modules. Consequently, the image of each $\psi_{h,k}$ is a direct sum of finite-dimensional irreducible continuous $G$-modules. Finally, because $\cY_\tens$ is a surjective intertwining operator, $\cX_1\tens\cX_2$ is a continuous $G$-module.
\end{proof}

It is now easy to see using \eqref{eqn:G-action_tens_prod_def} and the vertex algebraic tensor category structure on $\cC$ (see \cite{HLZ8} or the exposition in \cite[Section 3.3]{CKM}) that $\cC^G$ is a tensor category. For example, if $f_1:\cX_1\rightarrow\til{\cX}_1$ and $f_2:\cX_2\rightarrow\til{\cX}_2$ are morphisms in $\cC^G$, then the $V$-module homomorphism $f_1\tens f_2: \cX_1\tens\cX_2\rightarrow\til{\cX}_1\tens\til{\cX}_2$ is also a $G$-module homomorphism because
\begin{align*}
 (f_1\tens f_2)\left(\varphi_{\cX_1\tens\cX_2}(g)\cdot\cY_\tens(b_1,x)b_2\right) & =\cY_\tens(f_1(\varphi_{\cX_1}(g)b_1),x)f_2(\varphi_{\cX_2}(g)b_2)\nonumber\\
 & =\cY_\tens(\varphi_{\til{X}_1}(g)f_1(b_1),x)\varphi_{\til{X}_2}(g)f_2(b_2)\nonumber\\
 & =\varphi_{\til{X}_1\tens\til{X}_2}(g)\cdot(f_1\tens f_2)\left(\cY_\tens(b_1,x)b_2\right)
\end{align*}
for $b_1\in\cX_1$, $b_2\in\cX_2$, and $g\in G$. The unit object of $\cC^G$ is $V$ with $G$-action $\varphi_V(g)=g$ for $g\in G$. Then \eqref{eqn:G-equiv_compat_cond}, \eqref{eqn:G-action_tens_prod_def}, and the definitions of the structure isomorphisms in $\cC$ show that the unit, associativity, and braiding isomorphisms in $\cC$ all commute with the $G$-actions on objects of $\cC^G$ and thus define braided tensor category structure on $\cC^G$.

Now take $V=\cW(p)$, $G=PSL(2,\CC)$, and $\cC=\cC_{\cW(p)}$. In this case, recall that Lemma \ref{lem:ind_functor} and the rigidity of $\cO_c^0$ show that induction defines an exact functor $\cF: \cO_c^0\rightarrow\cC_{\cW(p)}$. But just as explained in \cite[Section 2.3]{McR2}, induction actually defines a functor into the $PSL(2,\CC)$-equivariantization:
\begin{lem}\label{lem:ind_BTF}
 Induction defines an exact braided tensor functor $\cF: \cO_c^0\rightarrow(\cC_{\cW(p)})^{PSL(2,\CC)}$.
\end{lem}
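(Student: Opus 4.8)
The plan is to promote the induction functor $\cF$, which Lemma~\ref{lem:ind_functor} together with the extension theory of \cite{CKM} already shows to be an exact braided tensor functor into $\cC_{\cW(p)}$, to a functor into the equivariantization by equipping each $\cF(\cW)=\cW(p)\tens\cW$ with a continuous $PSL(2,\CC)$-action. For $g\in PSL(2,\CC)$, the corresponding algebra automorphism of $\cW(p)$ fixes $V_c$ pointwise (since $V_c$ is the fixed-point subalgebra), hence is a $V_c$-module automorphism $g\colon\cW(p)\to\cW(p)$; I would set $\varphi_{\cF(\cW)}(g)=g\tens\Id_\cW$. Since $\tens$ is a bifunctor, $(g_1g_2)\tens\Id_\cW=(g_1\tens\Id_\cW)(g_2\tens\Id_\cW)$, so $\varphi_{\cF(\cW)}$ is a group representation; and for a morphism $f\colon\cW_1\to\cW_2$ the interchange law $(g\tens\Id)\circ(\Id\tens f)=g\tens f=(\Id\tens f)\circ(g\tens\Id)$ shows that $\cF(f)=\Id_{\cW(p)}\tens f$ is $PSL(2,\CC)$-equivariant. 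The compatibility condition \eqref{eqn:G-equiv_compat_cond} between $\varphi_{\cF(\cW)}(g)$ and $Y_{\cF(\cW)}$ holds because the $\cW(p)$-action on the induced module is built from the multiplication on $\cW(p)$, which $g$ respects as an algebra automorphism; this is the same verification as in the finite-group argument of \cite[Theorem~4.17]{McR2}.

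The one genuinely new issue, compared with the finite groups of \cite{McR2}, is continuity: each finite-dimensional conformal weight space of $\cF(\cW)$ must be a continuous $PSL(2,\CC)$-module. Here I would repeat the argument of Lemma~\ref{lem:G-equiv_tens_prod}. The tensor product intertwining operator $\cY_\tens$ of type $\binom{\cF(\cW)}{\cW(p)\,\cW}$ is surjective, and from the characterization $(g\tens\Id_\cW)\bigl(\cY_\tens(b_1,x)b_2\bigr)=\cY_\tens(g\,b_1,x)b_2$ each coefficient map $\psi_{h,k}\colon\cW(p)\otimes\cW\to\cF(\cW)$, $b_1\otimes b_2\mapsto(b_1)_{h,k}b_2$, intertwines the action $g\otimes\Id_\cW$ (with $g$ on the $\cW(p)$-factor and trivial action on $\cW$) with $\varphi_{\cF(\cW)}(g)$. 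Since $\cW(p)$ is a continuous $PSL(2,\CC)$-module with finite-dimensional conformal weight spaces and $PSL(2,\CC)$ is complex reductive, $\cW(p)\otimes\cW$ is a direct sum of finite-dimensional irreducible continuous $G$-modules, hence so is each $\im\psi_{h,k}$; surjectivity of $\cY_\tens$ then shows that $\cF(\cW)$ is a continuous $G$-module. Thus $\cF(\cW)$ is an object of $(\cC_{\cW(p)})^{PSL(2,\CC)}$.

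It remains to check that the tensor and braiding structure morphisms of $\cF$ are equivariant. The tensor structure isomorphism $\cF(\cW_1)\tens_{\cW(p)}\cF(\cW_2)\xrightarrow{\sim}\cF(\cW_1\tens\cW_2)$ is defined via intertwining operators; comparing the $G$-action \eqref{eqn:G-action_tens_prod_def} on the source (coming from Lemma~\ref{lem:G-equiv_tens_prod}) with $g\tens\Id_{\cW_1\tens\cW_2}$ on the target, both act by applying $g$ to the $\cW(p)$-factors, so the isomorphism is $G$-equivariant; the unit isomorphism is handled identically, and this is again the finite-group computation of \cite{McR2} carried out for all $g\in PSL(2,\CC)$. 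Since the braiding on $(\cC_{\cW(p)})^{PSL(2,\CC)}$ is inherited from $\cC_{\cW(p)}$, where $\cF$ is already braided, and exactness is inherited from Lemma~\ref{lem:ind_functor}, this completes the proof. The only real obstacle is the continuity step, which is precisely where the reductive Lie group $PSL(2,\CC)$ must replace the finite group of \cite{McR2}; every other step is a formal consequence of $g$ being an algebra automorphism fixing $V_c$ and of the bifunctoriality of $\tens$.
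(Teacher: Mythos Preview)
Your proposal is correct and follows essentially the same approach as the paper: define the $G$-action on $\cF(\cW)$ by $g\tens\Id_\cW$, verify the compatibility condition via the automorphism property (citing \cite{McR2}), prove continuity by repeating the argument of Lemma~\ref{lem:G-equiv_tens_prod} with $\cW$ carrying the trivial $G$-action, and then invoke \cite{McR2} for the braided tensor functor structure. Your write-up is in fact slightly more explicit than the paper's on the group-homomorphism property, the equivariance of $\cF(f)$, and the equivariance of the monoidal structure isomorphisms; the paper handles the last point by a direct citation to \cite[Theorems~2.11 and 2.12]{McR2}.
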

\begin{proof}
 For an object $\cW$ in $\cO_c^0$, recall that $\cF(\cW)=\cW(p)\tens\cW$ as a generalized $V_c$-module, where $\tens$ is the tensor product in $\ind(\cO_c)$. Thus $\cF(\cW)$ admits the $PSL(2,\CC)$-action
 \begin{equation*}
  \varphi_{\cF(\cW)}(g)=g\tens\Id_\cW
 \end{equation*}
for $g\in PSL(2,\CC)$. Just as in \cite[Section 2.3]{McR2}, $\varphi_{\cF(\cW)}(g)$ satisfies \eqref{eqn:G-equiv_compat_cond} because $g$ is an automorphism of $\cW(p)$, but we need to check that $\varphi_{\cF(\cW)}$ is continuous. As in the proof of Lemma \ref{lem:G-equiv_tens_prod}, we have the tensor product intertwining operator
\begin{align*}
 \cY_\tens: \cW(p)\otimes\cW & \rightarrow (\cW(p)\tens\cW)[\log x]\lbrace x\rbrace\nonumber\\
 v\otimes w & \mapsto \cY_\tens(v,x)w=\sum_{h\in\CC}\sum_{k\in\NN} v_{h,k} w\,x^{-h-1}(\log x)^k,
\end{align*}
and for any $h\in\CC$, $k\in\NN$, we have a $G$-module homomorphism
\begin{align*}
 \psi_{h,k}: \cW(p)\otimes\cW & \rightarrow\cW(p)\tens\cW\nonumber\\
 v\otimes w & \mapsto v_{h,k} w
\end{align*}
where $\cW$ is a trivial $G$-module. Since $\cW(p)\otimes\cW$ is a direct sum of finite-dimensional irreducible continuous $G$-modules, the same is true of the image of each $\psi_{h,k}$. Thus because $\cY_\tens$ is a surjective intertwining operator, $\cF(\cW)$ is a continuous $G$-module.

We have now shown that $(\cF(\cW);\varphi_{\cF(\cW)})$ is an object of $(\cC_{\cW(p)})^{PSL(2,\CC)}$, and it is clear that if $f:\cW_1\rightarrow\cW_2$ is a homomorphism in $\cO_c^0$, then $\cF(f)=\Id_{\cW(p)}\tens f$ is also a homomorphism of $G$-modules and thus a morphism in $(\cC_{\cW(p)})^{PSL(2,\CC)}$. Thus induction defines a functor $\cF: \cO_c^0\rightarrow(\cC_{\cW(p)})^{PSL(2,\CC)}$ which is exact because $\cO_c^0$ is rigid, as mentioned previously. See \cite[Theorems 2.11 and 2.12]{McR2} for a proof that $\cF$ is additionally a braided tensor functor. Note that these results in \cite{McR2} do not require the group to be finite and that the braided tensor category structure on $(\cC_{\cW(p)})^{PSL(2,\CC)}$ defined in this subsection indeed agrees with that in \cite[Section 2.3]{McR2}. 
\end{proof}

Now we can prove the main result of this section, that $\cF$ is actually an equivalence of braided tensor categories. The proof largely repeats that of \cite[Theorem 4.17]{McR2}, but some additional care is needed because $PSL(2,\CC)$ is an infinite group:
\begin{thm}\label{thm:G-equiv}
The induction functor $\cF: \cO_c^0\rightarrow(\cC_{\cW(p)})^{PSL(2,\CC)}$ is an equivalence of braided tensor categories.
\end{thm}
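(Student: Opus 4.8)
Since Lemma \ref{lem:ind_BTF} already establishes that $\cF$ is an exact braided tensor functor, it remains only to prove that the underlying functor of abelian categories is an equivalence, i.e.\ fully faithful and essentially surjective; the braided tensor structure then transports automatically. The plan is to follow the two-part argument of \cite[Theorem 4.17]{McR2}: fully faithfulness via Frobenius reciprocity, and essential surjectivity via the counit of an induction--invariants adjunction. The one genuinely new feature is that $PSL(2,\CC)$ is an infinite (though reductive) group, so every place where \cite{McR2} sums over the irreducibles of a finite group must instead use that a continuous representation of $PSL(2,\CC)$ on a space with finite-dimensional conformal weight spaces is a direct sum of finite-dimensional irreducibles, the sum being controlled degree by degree.

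For fully faithfulness, a morphism $\cF(\cW_1)\to\cF(\cW_2)$ in $(\cC_{\cW(p)})^{PSL(2,\CC)}$ is precisely a $PSL(2,\CC)$-invariant element of $\hom_{\cW(p)}(\cF(\cW_1),\cF(\cW_2))$. Frobenius reciprocity identifies this $\cW(p)$-morphism space with $\hom_{V_c}(\cW_1,\cW(p)\boxtimes\cW_2)$, and since $\cW_1$ carries the trivial action, one checks (using $\varphi_{\cF(\cW_i)}(g)=g\boxtimes\Id$ together with $g\vac=\vac$) that this identification is $PSL(2,\CC)$-equivariant, with $PSL(2,\CC)$ acting on the right-hand side through $\cW(p)\boxtimes\cW_2$ only. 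Thus the equivariant morphisms are $\hom_{V_c}(\cW_1,(\cW(p)\boxtimes\cW_2)^{PSL(2,\CC)})$. Refining \eqref{dec:triplet} to the $PSL(2,\CC)\times V_c$-decomposition $\cW(p)\cong\bigoplus_{n\geq 0}M_{2n+1}\otimes\cL_{2n+1,1}$, with $M_{2n+1}$ the $(2n+1)$-dimensional irreducible, we get $\cW(p)\boxtimes\cW_2\cong\bigoplus_{n\geq 0}M_{2n+1}\otimes(\cL_{2n+1,1}\boxtimes\cW_2)$, and only the trivial summand $M_1$ (i.e.\ $n=0$) survives upon taking invariants. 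Hence $(\cW(p)\boxtimes\cW_2)^{PSL(2,\CC)}\cong\cL_{1,1}\boxtimes\cW_2\cong\cW_2$, so the equivariant morphism space is $\hom_{V_c}(\cW_1,\cW_2)=\hom_{\cO_c^0}(\cW_1,\cW_2)$ and $\cF$ is fully faithful.

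For essential surjectivity, the same computation shows that $\cF$ has a right adjoint sending $(\cX;\varphi_\cX)$ to the $V_c$-module $\cX^{PSL(2,\CC)}$ of invariants (which is a $V_c$-submodule because, by \eqref{eqn:G-equiv_compat_cond}, $\varphi_\cX(g)$ commutes with the $V_c$-action). Because $PSL(2,\CC)$ is reductive and acts continuously with finite-dimensional conformal weight spaces, this invariants functor is exact and, after a short verification using $C_2$-cofiniteness of $\cW(p)$, takes values in $\cO_c^0$. Fully faithfulness is equivalent to the unit $\Id_{\cO_c^0}\to(-)^{PSL(2,\CC)}\circ\cF$ being an isomorphism, so $\cF$ is an equivalence if and only if the counit $\epsilon_\cX\colon\cW(p)\boxtimes\cX^{PSL(2,\CC)}\to\cX$ is an isomorphism for every $\cX$. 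Since $\cW(p)$ is $C_2$-cofinite, every object of $(\cC_{\cW(p)})^{PSL(2,\CC)}$ has finite length; as both $\cF$ and $(-)^{PSL(2,\CC)}$ are exact, the five lemma reduces the claim to simple objects $\cX$. There one decomposes $\cG(\cX)$ into $PSL(2,\CC)$-isotypic components and uses the ``regular'' pairing exhibited by $\cW(p)\cong\bigoplus_n M_{2n+1}\otimes\cL_{2n+1,1}$ (each irreducible $M_{2n+1}$ paired with a distinct simple Virasoro module, with multiplicity equal to $\dim M_{2n+1}$) to show that $\cX^{PSL(2,\CC)}$ is a simple object $\cL_{r,s}$ of $\cO_c^0$ and that $\epsilon_\cX$ is a nonzero map between simple objects, hence an isomorphism.

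I expect the main obstacle to be exactly this last step: proving that the counit is an isomorphism on simple objects, or equivalently that $(-)^{PSL(2,\CC)}$ is conservative (so that a nonzero simple equivariant $\cW(p)$-module has nonzero invariants generating it over $\cW(p)$), together with the matching of simple objects of the equivariantization with the $\cF(\cL_{r,s})$. In \cite{McR2} this is where the Schur--Weyl/Galois structure of the extension is invoked; here the identical structural input is supplied by the decomposition refining \eqref{dec:triplet}, but one must additionally ensure that the infinite isotypic decomposition, the continuity of $\varphi_\cX$, and the exactness of the invariants functor all cohere. These are precisely the points requiring ``additional care'' for the infinite group $PSL(2,\CC)$, and I anticipate them to be the technical heart of the proof, with the categorical skeleton identical to that of \cite[Theorem 4.17]{McR2}.
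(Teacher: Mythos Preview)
Your fully-faithfulness argument is correct and is equivalent to the paper's verification that the unit $\cW\to\cF(\cW)^G$ is an isomorphism. The difference lies in essential surjectivity, and there your proposal has two genuine gaps.

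First, the ``short verification'' that $\cX^G$ lies in $\cO_c^0$ is not short. The paper does \emph{not} prove this before establishing the counit isomorphism: rather, it first shows $\Psi_\cX:\cF(\cX^G)\to\cX$ is an isomorphism (working only in the ambient category $\ind(\cO_c)$ where $\cX^G$ is known to live), and only afterward deduces that $\cX^G$ has finite length by transporting the ascending/descending chain conditions along the isomorphism $\cF(\cX^G)\cong\cX$ and using exactness of $\cF$. Your five-lemma reduction implicitly assumes this finite-length statement up front.

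Second, and more seriously, the reduction to simple $\cX$ does not actually simplify the problem, and your conclusion ``$\epsilon_\cX$ is a nonzero map between simple objects, hence an isomorphism'' is unjustified. Even granting that $\cX^G$ is simple in $\cO_c^0$ (which you have not shown), full faithfulness only tells you $\cF(\cX^G)$ has no proper subobjects \emph{in the essential image of $\cF$}; it could still have equivariant subobjects outside that image, so is not known to be simple in $(\cC_{\cW(p)})^G$. What you actually need is that $\ker\epsilon_\cX=0$, and since $\epsilon_\cX$ restricts to an isomorphism on $G$-invariants, $\ker\epsilon_\cX$ has no $G$-invariants. So you are back to proving the conservativity fact you flagged: a nonzero equivariant $\cW(p)$-module (semisimple as $G$-module) has nonzero $G$-invariants. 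The paper applies this fact directly to both kernel and cokernel of $\Psi_\cX$ for \emph{arbitrary} $\cX$, citing the concluding argument of \cite[Theorem~4.17]{McR2}; your detour through simple objects neither avoids this step nor makes it easier.
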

\begin{proof} 
For notational simplicity, set $G = PSL(2, \CC)$. Since $\cF$ is a braided tensor functor by Lemma \ref{lem:ind_BTF}, we just need to show it is an equivalence of categories. Thus, we will show there is a $G$-invariants functor $\cI: (\cC_{\cW(p)})^G\rightarrow\cO_c^0$ such that $\cI\circ\cF$ and $\cF\circ\cI$ are naturally isomorphic to the respective identity functors.

For an object $(\cX, Y_\cX; \varphi_\cX)$ in $(\cC_{\cW(p)})^{G}$, define
\[
\cX^{G} = \{b \in \cX\,|\,\varphi_\cX(g)b = b\; \mathrm{for\; all}\; g \in G\}.
\]
By \eqref{eqn:G-equiv_compat_cond}, for any $g\in G$, $v\in V_c=\cW(p)^G$, and $b\in\cX^G$, we have
\[
\varphi_\cX(g)\cdot Y_\cX(v,x)b = Y_\cX(g\cdot v,x)\varphi_\cX(g)b= Y_\cX(v, x)b,
\]
and it follows that $\cX^{G}$ is a $V_{c}$-submodule of $\cX$. 
Then since objects of $\cC_{\cW(p)}$ are modules in $\ind(\cO_c)$ when viewed as $V_c$-modules  (see Proposition 3.1.3 and Remark 3.1.4 of \cite{CMY2}) and since $\ind(\cO_c)$ is closed under submodules, $\cX^G$ is a module in $\ind(\cO_c)$.

For a morphism $f: (\cX_1, Y_{\cX_1}; \varphi_{\cX_1}) \rightarrow (\cX_2, Y_{\cX_2}; \varphi_{\cX_2})$ in $(\cC_{\cW(p)})^{G}$, define $f^G = f|_{\cX_1^{G}}$. Since $f$ intertwines the $G$-actions on $\cX_1$ and $\cX_2$, the image of $f^G$ is contained in $\cX_2^G$ and hence
    \[
    f^G: \cX_1^G \rightarrow \cX_2^G
    \]
    is a morphism in $\ind(\cO_c)$. Thus we have a $G$-invariants functor $\cI: \cC_{\cW(p)}\rightarrow\ind(\cO_c)$, and we will show below that the image of $\cI$ is actually contained in $\cO_c^0$.

Now we show that for a module $\cW \in \cO_c^0$, we have a natural isomorphism $\cF(\cW)^{G} \cong \cW$. Since $\cW(p)$ is a semisimple $G$-module, there is a $V_c$-module projection $\varepsilon_{\cW(p)}: \cW(p) \rightarrow \cW(p)^G$ that is a one-sided inverse to the inclusion $\iota_{\cW(p)}: \cW(p)^G \rightarrow \cW(p)$. Then recalling that $\cF(\cW) = \cW(p) \boxtimes \cW$ and $\varphi_{\cF(\cW)}(g) = g \boxtimes \Id_\cW$ for $g \in G$, we see that
\[
\cF(\cW)^G\hookrightarrow\cW(p)\tens\cW\xrightarrow{\varepsilon_{\cW(p)}\tens\Id_\cW} \cW(p)^G\tens\cW\xrightarrow{l_\cW} \cW
\]
is a natural isomorphism, with inverse $(\iota_{\cW(p)} \boxtimes \Id_W)\circ l_\cW^{-1}$, just as in the proof of \cite[Theorem 4.17]{McR2}.

Next, for $(\cX, Y_\cX; \varphi_\cX)$ in $(\cC_{\cW(p)})^{G}$, recall that $\cF(\cX^G)$ is at first an object of the category $\rep\cW(p)$ of not-necessarily-local $\cW(p)$-modules which are objects of $\ind(\cO_c)$ when viewed as $V_c$-modules. Moreover, as in the proof of Lemma \ref{lem:ind_BTF}, $\cF(\cX^G)$ is a semisimple $G$-module:
\begin{equation*}
 \cF(\cX^G)\cong\bigoplus_{\chi\in\widehat{G}} \cW(p)_\chi\tens\cX^G,
\end{equation*}
where the sum runs over the finite-dimensional irreducible continuous characters of $G$ and $\cW(p)_\chi$ is the isotypical component of $\cW(p)$ corresponding to $\chi$. We have a similar decomposition $\cX=\bigoplus_{\chi\in\widehat{G}} \cX_\chi$ because by assumption, $G$ acts continuously on the finite-dimensional conformal weight spaces of $\cX$.

Now we show that we have a natural isomorphism $\cF(\cX^G)\cong\cX$. Let $\iota_\cX: \cX^G \rightarrow \cX$ denote the inclusion, and let $\mu_\cX: \cW(p)\tens\cX\rightarrow\cX$ denote the unique $V_c$-module homomorphism induced by the intertwining operator $Y_\cX$. Then just as in the proof of \cite[Theorem 4.17]{McR2}, 
\[
\Psi_\cX = \mu_\cX\circ(\Id_{\cW(p)} \boxtimes \iota_\cX): \cW(p) \boxtimes \cX^G \rightarrow \cX
\]
is a $\cW(p)\times G$-module homomorphism, and $\Psi_\cX$ defines a natural transformation from $\cF\circ\cI$ to the inclusion of $(\cC_{\cW(p)})^G$ into the equivariantization $(\rep\cW(p))^G$. Moreover, since $\Psi_\cX$ is a $G$-module homomorphism, it restricts to a map $\cF(\cX^G)^G\rightarrow\cX^G$ which is an isomorphism because it amounts to the left unit isomorphism $\cW(p)^G\tens\cX^G\rightarrow\cX^G$ in $\ind(\cO_c)$. Thus the kernel and cokernel of $\Psi_\cX$ are both $\cW(p)\times G$-modules in $(\rep\cW(p))^G$ with no $G$-invariants, and both are semisimple as $G$-modules because $\cF(\cX^G)$ and $\cX$ are semisimple $G$-modules. Then the argument that concludes the proof of \cite[Theorem 4.17]{McR2} applies to show that the kernel and cokernel of $\Psi_\cX$ are both $0$, so that $\Psi_\cX$ is an isomorphism.

It still remains to show that if $(\cX,Y_\cX;\varphi_\cX)$ is in $(\cC_{\cW(p)})^G$, then $\cX^G$ is in $\cO_c^0$. It is enough to show that $\cX^G$ has finite length, which is equivalent to showing that any decreasing $V_c$-submodule sequence
\[
\cX^G\supseteq\cW_0 \supseteq \cW_1 \supseteq \cdots \supseteq \cW_n \supseteq \cdots
\]
and any increasing sequence
\[
\cW_0 \subseteq \cW_1 \subseteq \cdots \subseteq \cW_n \subseteq \cdots \subseteq \cX^G
\]
are stationary, that is, $\cW_n = \cW_{n+1}$ for $n$ sufficiently large (\cite[Theorem~2.1]{S}; see also \cite[Exercise~8.20]{KS}). Applying the exact functor $\cF$ to the decreasing sequence yields a decreasing sequence of $\cW(p)$-submodules in $\cF(\cX^G)\cong\cX$. Because $\cX$ has finite length, $\cF(\cW_{n}) = \cF(\cW_{n+1})$ for $n$ sufficiently large, which means $\cF(\cW_n/\cW_{n+1}) = 0$ since $\cF$ is exact. Moreover, since $\cW(p)$ is a semisimple $V_c$-module, $\cF(\cW_n/\cW_{n+1})$ contains $V_c\tens(\cW_n/\cW_{n+1})\cong\cW_n/\cW_{n+1}$ as a $V_c$-submodule. So $\cW_n/\cW_{n+1}=0$ for $n$ sufficiently large. Similarly, the increasing series is also stationary. 
\end{proof}

\begin{rem}
 Consider the one-dimensional torus $T^{\vee}\subseteq PSL(2,\CC)$. The fixed-point subalgebra $\cW(p)^{T^\vee}$ is the singlet vertex operator algebra $\cM(p)$, whose tensor categories were studied in \cite{CMY2}. Then similar arguments as above show that induction yields a braided tensor equivalence from the $\cM(p)$-module category $\cC_{\cM(p)}^0$ defined in \cite{CMY2} to the $T^\vee$-equivariantization of $\cC_{\cW(p)}$. In a little more detail:
 \begin{itemize}
  \item The definition of $\cC_{\cM(p)}^0$ in \cite[Definition 3.1.2]{CMY2}, combined with \cite[Proposition 3.2.2]{CMY2} and the argument in the proof of Lemma \ref{lem:ind_BTF}, shows that induction defines an exact braided tensor functor $\cF: \cC_{\cM(p)}^0\rightarrow(\cC_{\cW(p)})^{T^\vee}$.
  
  \item Taking $T^\vee$-invariants yields a functor $\cI$ from $(\cC_{\cW(p)})^{T^\vee}$ to the category $\rep^0\cM(p)$ of generalized $\cM(p)$-modules which are objects of $\ind(\cO_c)$ when viewed as $V_c$-modules. Induction extends to an exact functor from $\rep^0\cM(p)$ to the $T^\vee$-equivariantization of the category $\rep\cW(p)$ of not-necessarily-local $\cW(p)$-modules which are objects of $\rep^0\cM(p)$ when viewed as generalized $\cM(p)$-modules.
  
  \item Because $\cW(p)$ is a semisimple $\cM(p)\times T^\vee$-module, and because objects of $(\cC_{\cW(p)})^{T^\vee}$ are semisimple $T^\vee$-modules, the arguments in the proof of Theorem \ref{thm:G-equiv} show that $\cI\circ\cF$ is naturally isomorphic to the identity on $\cC_{\cM(p)}^0$, and that $\cF\circ\cI$ is naturally isomorphic to the inclusion of $(\cC_{\cW(p)})^{T^\vee}$ into $(\rep\cW(p))^{T^\vee}$.
  
  \item Since for any module $\cX$ in $(\cC_{\cW(p)})^{T^\vee}$, $\cF(\cX^{T^\vee})\cong\cX$ is a $\cW(p)$-module in $\cC_{\cW(p)}$, $\cX^{T^\vee}$ is by definition an object of $\cC_{\cM(p)}^0$. Thus the image of $\cI$ is actually $\cC_{\cM(p)}^0$.
 \end{itemize}
Note that \cite[Conjecture 11.6]{Ne} predicted that taking $T^\vee$-invariants should yield an embedding of $(\cC_{\cW(p)})^{T^\vee}$ into the category of $\cM(p)$-modules. Thus the above argument proves a strong form of this conjecture: $\cI$ in fact yields a braided tensor equivalence with the specific subcategory $\cC_{\cM(p)}^0$ of $\cM(p)$-modules.
\end{rem}


\begin{thebibliography}{CJORY}

\bibitem[Ad]{A}
D. Adamovi\'{c}, Classification of irreducible modules of certain subalgebras of free boson vertex algebra, \textit{J. Algebra} \textbf{270} (2003), no. 1, 115--132.

\bibitem[ACGY]{ACGY}
D. Adamovi\'{c}, T.~Creutzig, N.~Genra and J.~Yang,
The vertex algebras $\mathcal R^{(p)}$ and $\mathcal V^{(p)}$, \textit{Comm. Math. Phys.} \textbf{383} (2021), no. 2, 1207--1241.

\bibitem[ALM]{ALM}
D. Adamovi\'{c}, X. Lin and A. Milas, $ADE$ subalgebras of the triplet vertex algebra $\cW(p)$: $A$-series, \textit{Commun. Contemp. Math.} \textbf{15} (2013), no. 6, 1350028, 30 pp.

\bibitem[AM1]{AM_log_intw}
D. Adamovi\'{c} and A. Milas, Logarithmic intertwining operators and $\cW(2,2p-1)$
algebras, \textit{J. Math. Phys.} \textbf{48} (2007), no. 7, 073503, 20 pp.

\bibitem[AM2]{AM_trip}
D. Adamovi\'{c} and A. Milas, On the triplet vertex algebra $\cW(p)$, {\em Adv. Math.} \textbf{217} (2008), no. 6, 2664--2699.

\bibitem[AM3]{AM_log_mods}
D. Adamovi\'{c} and A. Milas, Lattice construction of logarithmic modules for certain vertex
algebras, \textit{Selecta Math. (N.S.)} \textbf{15} (2009), no. 4, 535--561.

\bibitem[AM4]{AM_doub}
D. Adamovi\'{c} and A. Milas, The doublet vertex operator superalgebras $\cA(p)$ and $\cA_{2,p}$, \textit{Recent Developments in Algebraic and Combinatorial Aspects of Representation Theory}, 23--38, Contemp. Math., \textbf{602}, Amer. Math. Soc., Providence, RI, 2013.

\bibitem[AP]{AnP}
H. Andersen and J. Paradowski, Fusion categories arising from semisimple Lie algebras, {\em Comm. Math. Phy.} {\bf 169} (1995), no. 3, 563--588.

\bibitem[ACKR]{AuCKR}
J. Auger, T. Creutzig, S. Kanade and M. Rupert, Braided tensor categories related to $\mathcal{B}_p$ vertex algebras, \textit{Comm. Math. Phys.} \textbf{378} (2020), no. 1, 219--260.

\bibitem[BK]{BK} B. Bakalov and A. Kirillov, Jr., \textit{Lectures on Tensor Categories and Modular Functors}, University Lecture Series, \textbf{21}, American Mathematical Society, Providence, RI, 2001, x+221 pp.

\bibitem[BPZ]{BPZ}
A. Belavin, A. Polyakov and A. Zamolodchikov, Infinite conformal symmetry in two-dimensional quantum field theory, \textit{Nuclear Phys. B} \textbf{241} (1984), no. 2, 333--380.

\bibitem[BFGT]{BFGT}
P. Bushlanov, B. Feigin, A. Gainutdinov and I. Tipunin, Lusztig limit of quantum $sl(2)$ at root of unity and fusion of $(1,p)$ Virasoro logarithmic minimal models, \textit{Nuclear Phys. B} \textbf{818} (2009), no. 3, 179--195.

\bibitem[BGT]{BGT}
P. Bushlanov, A. Gainutdinov and I. Tipunin, Kazhdan-Lusztig equivalence and fusion of Kac modules in Virasoro logarithmic models, \textit{Nuclear Phys. B} \textbf{862} (2012), no. 1, 232--269.

\bibitem[CF]{CF}
N.~Carqueville and M.~Flohr, Nonmeromorphic operator product expansion and $C_2$-cofiniteness for a family of
$\cW$-algebras, \textit{J. Phys. A} \textbf{39} (2006), no. 4, 951--966.

\bibitem[CGR]{CGR}
T. Creutzig, A. Gainutdinov and I. Runkel, A quasi-Hopf algebra for the triplet vertex operator algebra, {\em Commun. Contemp. Math.} \textbf{22} (2020), no. 3, 1950024, 71 pp.

 \bibitem[CJORY]{CJORY}
T. Creutzig, C. Jiang, F. Orosz Hunziker, D. Ridout and J. Yang, Tensor categories arising from the Virasoro algebra, \textit{Adv. Math.} \textbf{380} (2021), 107601, 35 pp.

\bibitem[CKLR]{CKLR}
T. Creutzig, S. Kanade, A. Linshaw and D. Ridout, Schur-Weyl duality for Heisenberg cosets, {\em Transform. Groups} \textbf{24} (2019), no. 2, 301--354.

\bibitem[CKM]{CKM}
T. Creutzig, S. Kanade and R. McRae, Tensor categories for vertex operator superalgebra extensions, \textit{Mem. Amer. Math. Soc.} \textbf{295} (2024), no. 1472, vi+181 pp.

\bibitem[CMY1]{CMY1} T. Creutzig, R. McRae and J. Yang, Direct limit completions of vertex tensor categories, \textit{Commun. Contemp. Math.} \textbf{24} (2022), no. 2, Paper No. 2150033, 60 pp. 

\bibitem[CMY2]{CMY2}
T. Creutzig, R. McRae and J. Yang, On ribbon categories for singlet vertex algebras, \textit{Comm. Math. Phys.} \textbf{387} (2021),  no. 2, 865--925.

\bibitem[CMY3]{CMY3}
T. Creutzig, R. McRae and J. Yang, Tensor structure on the Kazhdan-Lusztig category for affine $\mathfrak{gl}(1|1)$, \textit{Int. Math. Res. Not. IMRN} 2022, no. 16, 12462--12515.

\bibitem[CMR]{CMR}
T. Creutzig, A. Milas and M. Rupert, Logarithmic link invariants of $\overline{U}^H_q(\mathfrak{sl}_2)$ and asymptotic dimensions of singlet vertex algebras, \textit{J. Pure Appl. Algebra} \textbf{222} (2018), no. 10, 3224--3247.

\bibitem[CRW]{CRW}
T.~Creutzig, D.~Ridout and S.~Wood, Coset constructions of logarithmic $(1,p)$ models, \textit{Lett. Math. Phys.} \textbf{104} (2014), no. 5, 553--583.

\bibitem[DLMF]{DLMF}
\textit{NIST Digital Library of Mathematical Functions}, http://dlmf.nist.gov/, Release 1.0.27 of 2020-06-15, F.  Olver, A. Olde Daalhuis, D. Lozier, B. Schneider, R. Boisvert, C. Clark, B. Miller, B. Saunders, H. Cohl and M. McClain, eds.

\bibitem[DLM]{DLM}
C. Dong, H. Li and G. Mason, Modular-invariance of trace functions in orbifold theory and generalized
Moonshine, {\em Comm. Math. Phys.} {\bf 214} (2000), no. 1, 1--56.

\bibitem[EGNO]{EGNO} P. Etingof, S. Gelaki, D. Nikshych and V. Ostrik, \textit{Tensor Categories},  Mathematical Surveys and Monographs, \textbf{205}, American Mathematical Society, Providence, RI, 2015, xvi+343 pp.

\bibitem[FFr]{FFr}
B. Feigin and E. Frenkel, Affine Kac-Moody algebras at the critical level and Gelfand-Dikii algebras, \textit{Infinite Analysis, Part A, B (Kyoto, 1991)}, 197--215, Adv. Ser. Math. Phys., \textbf{16}, World Sci. Publ., River Edge, NJ, 1992.

\bibitem[FFu]{FF}
B. Feigin and D. Fuchs, Representations of the Virasoro algebra, \textit{Representation of Lie Groups and Related Topics}, 465--554, Adv. Stud. Contemp. Math., \textbf{7}, Gordon and Breach, New York, 1990.

\bibitem[FGST1]{FGST1}
B.~Feigin, A.~Gainutdinov, A.~Semikhatov and I.~Tipunin,
Modular group representations and fusion in logarithmic conformal field theories and in the quantum group center, \textit{Comm. Math. Phys.} \textbf{265} (2006), no. 1, 47--93.

\bibitem[FGST2]{FGST2}
B.~Feigin, A.~Gainutdinov, A.~Semikhatov and I.~Tipunin, Logarithmic extensions of minimal models: characters and modular transformations, \textit{Nuclear Phys. B} \textbf{757} (2006), no. 3, 303--343.

\bibitem[Fi]{F}
M. Finkelberg, An equivalence of fusion categories, {\em Geom. Funct. Anal.} {\bf 6} (1996), no. 2, 249--267; erratum ibid. \textbf{23} (2013), no. 2, 810--811.

\bibitem[dFMS]{Fran}
P. Di Francesco, P. Mathieu and D. Sénéchal, \textit{Conformal Field Theory}, Grad. Texts Contemp. Phys.,
Springer-Verlag, New York, 1997, xxii+890 pp.

\bibitem[FB]{FB}
E. Frenkel and D. Ben-Zvi, \textit{Vertex Algebras and Algebraic Curves}, 2nd ed., Mathematical Surveys and Monographs, \textbf{88}, American Mathematical Society, Providence, RI, 2004. xiv+400 pp.

\bibitem[FHL]{FHL}
I. Frenkel, Y.-Z. Huang and J. Lepowsky, On axiomatic approaches to vertex operator algebras
and modules, \textit{Mem. Amer. Math. Soc.} \textbf{104} (1993), no. 494, viii+64 pp.

\bibitem[FZ1]{FZ1}
I.  Frenkel and Y. Zhu, Vertex operator algebras associated to representations of affine and Virasoro
algebras, {\em Duke Math. J.} {\bf 66} (1992), no. 1, 123--168.

\bibitem[FZ2]{FZ2}
I.~Frenkel and M.~Zhu, Vertex algebras associated to modified regular representations of the Virasoro algebra, {\em Adv. Math.} {\bf 229} (2012), no. 6, 3468--3507.

\bibitem[FHST]{FHST}
J.~Fuchs, S.~Hwang, A.~Semikhatov and I.~Tipunin, Nonsemisimple fusion algebras and the Verlinde formula,
\textit{Comm. Math. Phys.} \textbf{247} (2004), no. 3, 713--742.

\bibitem[GaK]{GaK}
M. Gaberdiel and H. Kausch, Indecomposable fusion products, \textit{Nuclear Phys. B} \textbf{477} (1996), no. 1, 293--318.

\bibitem[GR]{GR}
M. Gaberdiel and I. Runkel, From boundary to bulk in logarithmic CFT, \textit{J. Phys. A} \textbf{41} (2008), no. 7, 075402, 29 pp.

\bibitem[GN]{GN}
T. Gannon and C. Negron, Quantum $SL(2)$ and logarithmic vertex operator algebras at $(p, 1)$-central charge, \textit{J. Eur. Math. Soc. (JEMS)} (2024), DOI: 10.4171/JEMS/1489.

\bibitem[GoK]{GK}
M. Gorelik and V. Kac, On complete reducibility for infinite-dimensional Lie algebras, \textit{Adv. Math.} \textbf{226} (2011), no. 2, 1911--1972.

\bibitem[Hu1]{Hu_Vir_tens}
Y.-Z. Huang, Virasoro vertex operator algebras, the (nonmeromorphic) operator product expansion and the tensor product theory, \textit{J. Algebra} \textbf{182} (1996), no. 1, 201--234.

\bibitem[Hu2]{Hu_rigid}
Y.-Z. Huang, Rigidity and modularity of vertex tensor categories, {\it Commun. Contemp. Math.} {\bf 10} (2008), suppl. 1, 871--911.

\bibitem[Hu3]{Hu_C2}
Y.-Z. Huang, Cofiniteness conditions, projective covers and the logarithmic tensor product theory, {\em J. Pure Appl. Algebra}, {\bf 213}, (2009), no. 4, 458--475.

\bibitem[HKL]{HKL}
Y.-Z. Huang, A. Kirillov, Jr. and J. Lepowsky, Braided tensor categories and extensions of vertex operator algebras, \textit{Comm. Math. Phys.} \textbf{337} (2015), no. 3, 1143--1159.

\bibitem[HLZ1]{HLZ1}
Y.-Z. Huang, J. Lepowsky and L. Zhang, Logarithmic tensor category theory for generalized modules for a
conformal vertex algebra, I: Introduction and strongly graded algebras and their generalized modules, \textit{Conformal Field Theories and Tensor Categories}, 169--248, Math. Lect. Peking Univ., Springer, Heidelberg, 2014.
	
\bibitem[HLZ2]{HLZ2}
Y.-Z. Huang, J. Lepowsky and L. Zhang, Logarithmic tensor category theory for
generalized modules for a conformal vertex algebra, II: Logarithmic formal
calculus and properties of logarithmic intertwining operators, arXiv:1012.4196.
	
\bibitem[HLZ3]{HLZ3}
Y.-Z. Huang, J. Lepowsky and L. Zhang, Logarithmic tensor category theory for
generalized modules for a conformal vertex algebra, III: Intertwining maps and
tensor product bifunctors, arXiv:1012.4197.
	
\bibitem[HLZ4]{HLZ4}
Y.-Z. Huang, J. Lepowsky and L. Zhang, Logarithmic tensor category theory for
generalized modules for a conformal vertex algebra, IV: Constructions of tensor
product bifunctors and the compatibility conditions, arXiv:1012.4198.
	
\bibitem[HLZ5]{HLZ5}
Y.-Z. Huang, J. Lepowsky and L. Zhang, Logarithmic tensor category theory for
generalized modules for a conformal vertex algebra, V: Convergence condition
for intertwining maps and the corresponding compatibility condition,
arXiv:1012.4199.
	
\bibitem[HLZ6]{HLZ6}
Y.-Z. Huang, J. Lepowsky and L. Zhang, Logarithmic tensor category theory for
generalized modules for a conformal vertex algebra, VI: Expansion condition,
associativity of logarithmic intertwining operators, and the associativity
isomorphisms, arXiv:1012.4202.
	
\bibitem[HLZ7]{HLZ7}
Y.-Z. Huang, J. Lepowsky and L. Zhang, Logarithmic tensor category theory for
generalized modules for a conformal vertex algebra, VII: Convergence and
extension properties and applications to expansion for intertwining maps,
arXiv:1110.1929.
	
\bibitem[HLZ8]{HLZ8}
Y.-Z. Huang, J. Lepowsky and L. Zhang, Logarithmic tensor category theory for
generalized modules for a conformal vertex algebra, VIII: Braided tensor
category structure on categories of generalized modules for a conformal vertex
algebra, arXiv:1110.1931.

\bibitem[HY]{HY}
Y.-Z. Huang and J. Yang, Logarithmic intertwining operators and associative algebras, {\em J. Pure Appl. Algebra}
{\bf 216} (2012), no. 6, 1467--1492.

\bibitem[IK]{IK}
K. Iohara and Y. Koga, \textit{Representation Theory of the Virasoro Algebra}, Springer Monographs in Mathematics, Springer-Verlag London, Ltd., London, 2011, xviii+474 pp.

\bibitem[KaR]{KR}
S. Kanade and D. Ridout, NGK and HLZ: Fusion for physicists and mathematicians, \textit{Affine, Vertex and $W$-algebras}, 135--181, INdAM Series, \textbf{37}, Springer, Cham, 2019. 

\bibitem[KS]{KS}
M. Kashiwara and P. Schapira, \textit{Categories and Sheaves}, Grundlehren der Mathematischen Wissenschaften, \textbf{332}, Springer-Verlag, Berlin, 2006, x+497 pp.

\bibitem[Ka]{Ka}
H.~Kausch,
Extended conformal algebras generated by a multiplet of primary fields, \textit{Phys. Lett. B} \textbf{259} (1991), no. 4, 448--455.

\bibitem[KW]{KW}
D. Kazhdan and H. Wenzl, Reconstructing monoidal categories, \textit{I. M. Gel'fand Seminar}, 111--136, Adv. Soviet Math., \textbf{16}, Part 2, Amer. Math. Soc., Providence, RI, 1993.

\bibitem[KO]{KO} A. Kirillov, Jr. and V. Ostrik, On a $q$-analogue of the McKay correspondence and the $ADE$ classification of $\mathfrak{sl}_2$ conformal field theories, \textit{Adv. Math.} \textbf{171} (2002), no. 2, 183--227.

\bibitem[KyR]{KyR} 
K. Kyt\"{o}l\"{a} and D. Ridout, On staggered indecomposable Virasoro modules, \textit{J. Math. Phys.} \textbf{50} (2009), no. 12, 123503, 51 pp.

\bibitem[LL]{LL}
J. Lepowsky and H. Li, \textit{Introduction to Vertex
Operator Algebras and Their Representations}, Progress in Mathematics, \textbf{227}, Birkh\"{a}user Boston, Inc., Boston, MA, 2004. xiv+318 pp.

\bibitem[Li]{Li}
H. Li, Determining fusion rules by $A(V)$-modules and bimodules, \textit{J.
Algebra}  \textbf{212}  (1999), no. 2,  515--556.

\bibitem[Lin]{Lin}
X. Lin, Fusion rules of Virasoro vertex operator algebras, \textit{Proc. Amer. Math. Soc.} \textbf{143} (2015), no. 9, 3765--3776.

\bibitem[McR1]{McR} R. McRae, On the tensor structure of modules for compact orbifold vertex operator algebras, \textit{Math. Z.} \textbf{296} (2020), no. 1-2, 409--452.

\bibitem[McR2]{McR2} R. McRae, Twisted modules and $G$-equivariantization in logarithmic conformal field theory, \textit{Comm. Math. Phys.} \textbf{383} (2021), no. 3, 1939--2019.

\bibitem[Mil]{Mi}
A. Milas, Fusion rings for degenerate minimal models, \textit{J. Algebra} \textbf{254} (2002), no. 2, 300--335.

\bibitem[Miy]{Miy}
M. Miyamoto, $C_1$-cofiniteness and fusion products of vertex operator algebras, \textit{Conformal Field Theories and Tensor Categories}, 271--279, Math. Lect. Peking Univ., Springer, Heidelberg, 2014.

\bibitem[MRR]{MRR}
A. Morin-Duchesne, J. Rasmussen and D. Ridout, Boundary algebras and Kac modules for logarithmic minimal models, \textit{Nuclear Phys. B} \textbf{899} (2015), 677--769.

\bibitem[NT]{NT}
K. Nagatomo and A. Tsuchiya, The triplet vertex operator algebra $W(p)$ and the restricted quantum group $\overline{U}_q(sl_2)$ at $q=e^{\frac{\pi i}{p}}$, {\em Exploring New Structures and Natural Constructions in Mathematical Physics}, 1--49, Adv. Stud. Pure Math., \textbf{61}, Math. Soc. Japan, Tokyo, 2011.

\bibitem[Na]{Na}
W. Nahm, Quasi-rational fusion products, \textit{Internat. J. Modern Phys. B} \textbf{8} (1994), no. 25-26, 3693--3702.

\bibitem[Ne]{Ne}
C. Negron, Log-modular quantum groups at even roots of unity and the quantum Frobenius I, \textit{Comm. Math. Phys.} \textbf{382} (2021), no. 2, 773--814.

\bibitem[OH]{OH}
F. Orosz Hunziker, Fusion rules for the Virasoro algebra of central charge $25$, \textit{Algebr. Represent. Theory} \textbf{23} (2020), no. 5, 2013--2031.

\bibitem[PRZ]{PRZ}
P. Pearce, J. Rasmussen and J.-B. Zuber, Logarithmic minimal models, \textit{J. Stat. Mech. Theory Exp.} 2006, no. 11, P11017, 36 pp.

\bibitem[Ra]{Ra}
J. Rasmussen, Classification of Kac representations in the logarithmic minimal models $\mathcal{LM}(1,p)$, \textit{Nuclear Phys. B} \textbf{853} (2011), no. 2, 404--435.

\bibitem[RP]{RP}
J. Rasmussen and P. Pearce, Fusion algebras of logarithmic minimal models, \textit{J. Phys. A} \textbf{40} (2007), no. 45, 13711--13733.

\bibitem[RS]{RS}
N. Read and H. Saleur, Associative-algebraic approach to logarithmic conformal field theories, \textit{Nuclear Phys. B} \textbf{777} (2007), no. 3, 316--351.

\bibitem[Se]{S} C. Seshadri, Space of unitary vector bundles on a compact Riemann surface, \textit{Ann. of Math. (2)} \textbf{85} (1967), 303--336.

\bibitem[TW]{TW}
A. Tsuchiya and S. Wood, The tensor structure on the representation category of the $\cW_{p}$ triplet algebra, \textit{J. Phys. A} \textbf{46} (2013), no. 44, 445203, 40 pp.

\bibitem[Wa]{Wa}
W. Wang, Rationality of Virasoro vertex operator algebras, \textit{Internat. Math. Res. Notices} 1993, no. 7, 197--211.

\bibitem[Zh]{Zh} Y. Zhu, Modular invariance of characters of vertex operator algebras, \textit{J. Amer.
Math. Soc.} \textbf{9} (1996), no. 1, 237--302.

\end{thebibliography}
\end{document}